\definecolor{cite}{rgb}{0.30,0.60,1.00}
\definecolor{url}{rgb}{0.00,0.00,0.80}
\definecolor{link}{rgb}{0.40,0.10,0.20}
\DeclareSymbolFont{cyrletters}{OT2}{wncyr}{m}{n}
\DeclareMathSymbol{\Sha}{\mathalpha}{cyrletters}{"58}
\providecommand*{\Dashv}{%
  \mathrel{%
    \mathpalette\@Dashv\vDash
  }%
}
\newcommand*{\@Dashv}[2]{%
  \reflectbox{$\m@th#1#2$}%
}
\numberwithin{equation}{section}
\theoremstyle{plain}
\newtheorem{proposition}{Proposition}[section]
\newtheorem{corollary}[proposition]{Corollary}
\newtheorem{lem}[proposition]{Lemma}
\newtheorem{theorem}[proposition]{Theorem}
\newtheorem{hypothesis}[proposition]{Hypothesis}
\theoremstyle{definition}
\newtheorem{definition}[proposition]{Definition}
\newtheorem{notation}[proposition]{Notation}
\newtheorem{assumption}[proposition]{Assumption}
\theoremstyle{remark}
\newtheorem{remark}[proposition]{Remark}
\newtheorem{example}[proposition]{Example}
\renewcommand{\b}[1]{\mathbf{#1}}
\renewcommand{\c}[1]{\mathcal{#1}}
\renewcommand{\d}[1]{\mathbb{#1}}
\newcommand{\f}[1]{\mathfrak{#1}}
\renewcommand{\r}[1]{\mathrm{#1}}
\newcommand{\s}[1]{\mathscr{#1}}
\renewcommand{\sf}[1]{\mathsf{#1}}
\renewcommand{\(}{\left(}
\renewcommand{\)}{\right)}
\newcommand{\res}{\mathbin{|}}
\newcommand{\ol}[1]{\overline{#1}{}}
\newcommand{\ul}{\underline}
\renewcommand{\leq}{\leqslant}
\renewcommand{\geq}{\geqslant}
\newcommand{\bu}{\b u}
\newcommand{\bw}{\b w}
\newcommand{\cA}{\c A}
\newcommand{\cC}{\c C}
\newcommand{\cH}{\c H}
\newcommand{\cI}{\c I}
\newcommand{\cL}{\c L}
\newcommand{\cN}{\c N}
\newcommand{\cS}{\c S}
\newcommand{\cT}{\c T}
\newcommand{\cU}{\c U}
\newcommand{\cV}{\c V}
\newcommand{\cW}{\c W}
\newcommand{\cX}{\c X}
\newcommand{\cY}{\c Y}
\newcommand{\cZ}{\c Z}
\newcommand{\dA}{\d A}
\newcommand{\dC}{\d C}
\newcommand{\dF}{\d F}
\newcommand{\dG}{\d G}
\newcommand{\dI}{\d I}
\newcommand{\dL}{\d L}
\newcommand{\dP}{\d P}
\newcommand{\dQ}{\d Q}
\newcommand{\dR}{\d R}
\newcommand{\dS}{\d S}
\newcommand{\dT}{\d T}
\newcommand{\dZ}{\d Z}
\newcommand{\fD}{\f D}
\newcommand{\fE}{\f E}
\newcommand{\fF}{\f F}
\newcommand{\fL}{\f L}
\newcommand{\fS}{\f S}
\newcommand{\fT}{\f T}
\newcommand{\fZ}{\f Z}
\newcommand{\ff}{\f f}
\newcommand{\fg}{\f g}
\newcommand{\fm}{\f m}
\newcommand{\fp}{\f p}
\newcommand{\rD}{\r D}
\newcommand{\rF}{\r F}
\newcommand{\rH}{\r H}
\newcommand{\rI}{\r I}
\newcommand{\rK}{\r K}
\newcommand{\rM}{\r M}
\newcommand{\rP}{\r P}
\newcommand{\rT}{\r T}
\newcommand{\rU}{\r U}
\newcommand{\rZ}{\r Z}
\newcommand{\rb}{\r b}
\newcommand{\rd}{\,\r d}
\newcommand{\re}{\r e}
\newcommand{\rh}{\r h}
\newcommand{\rs}{\r s}
\newcommand{\rt}{\r t}
\newcommand{\rv}{\r v}
\newcommand{\sI}{\s I}
\newcommand{\sO}{\s O}
\newcommand{\sS}{\s S}
\newcommand{\sfF}{\sf F}
\newcommand{\sfI}{\sf I}
\newcommand{\sfJ}{\sf J}
\newcommand{\sfL}{\sf L}
\newcommand{\sfM}{\sf M}
\newcommand{\sfP}{\sf P}
\newcommand{\sfQ}{\sf Q}
\newcommand{\sfT}{\sf T}
\newcommand{\sfV}{\sf V}
\newcommand{\sfW}{\sf W}
\newcommand{\sff}{\sf f}
\newcommand{\sfu}{\sf u}
\newcommand{\tR}{\mathtt{R}}
\newcommand{\tS}{\mathtt{S}}
\newcommand{\tT}{\mathtt{T}}
\newcommand{\tU}{\mathtt{U}}
\newcommand{\tV}{\mathtt{V}}
\newcommand{\tc}{\mathtt{c}}
\newcommand{\tw}{\mathtt{w}}
\newcommand{\biota}{\boldsymbol{\iota}}
\newcommand{\bbn}{\boldsymbol{n}}
\newcommand{\bbA}{\boldsymbol{A}}
\newcommand{\bbD}{\boldsymbol{D}}
\newcommand{\bbL}{\boldsymbol{L}}
\newcommand{\bbP}{\boldsymbol{P}}
\newcommand{\bbV}{\boldsymbol{V}}
\newcommand{\bbX}{\boldsymbol{X}}
\newcommand{\bbLambda}{\boldsymbol{\Lambda}}
\newcommand{\bbphi}{\boldsymbol{\phi}}
\newcommand{\pres}[2]{\prescript{#1}{}{{\hskip-0.1em\relax}#2}}
\newcommand{\ac}{\r{ac}}
\newcommand{\CF}{\mathbbm{1}}
\newcommand{\cusp}{\r{cusp}}
\newcommand{\Den}{\r{Den}}
\newcommand{\dr}{\r{dR}}
\newcommand{\dtm}{\r{det}\:}
\newcommand{\Exo}{\r{Exo}}
\newcommand{\fin}{\r{fin}}
\newcommand{\Groth}{\mathrm{Groth}}
\newcommand{\Herm}{\r{Herm}}
\newcommand{\id}{\r{id}}
\newcommand{\inert}{\r{int}}
\newcommand{\integ}{\r{int}}
\newcommand{\Int}{\r{Int}}
\newcommand{\length}{\r{length}}
\newcommand{\ram}{\r{ram}}
\newcommand{\rec}{\r{rec}}
\newcommand{\reg}{\r{reg}}
\renewcommand{\red}{\r{red}}
\newcommand{\Red}{\r{Red}}
\newcommand{\rel}{\r{rel}}
\newcommand{\RE}{\r{Re}\,}
\newcommand{\Sph}{\r{Sph}}
\newcommand{\spl}{\r{spl}}
\DeclareMathOperator{\Aut}{Aut}
\DeclareMathOperator{\BC}{BC}
\DeclareMathOperator{\CH}{CH}
\DeclareMathOperator{\coker}{coker}
\DeclareMathOperator{\Diff}{Diff}
\DeclareMathOperator{\End}{End}
\DeclareMathOperator{\Fil}{Fil}
\DeclareMathOperator{\Gal}{Gal}
\DeclareMathOperator{\GL}{GL}
\DeclareMathOperator{\Hom}{Hom}
\DeclareMathOperator{\IM}{im}
\DeclareMathOperator{\Ind}{Ind}
\DeclareMathOperator{\Isom}{Isom}
\DeclareMathOperator{\Ker}{ker}
\DeclareMathOperator{\Lie}{Lie}
\DeclareMathOperator{\Mat}{Mat}
\DeclareMathOperator{\Nm}{Nm}
\DeclareMathOperator{\OG}{O}
\DeclareMathOperator{\ord}{ord}
\DeclareMathOperator{\Res}{Res}
\DeclareMathOperator{\Sch}{Sch}
\DeclareMathOperator{\SO}{SO}
\DeclareMathOperator{\Sp}{Sp}
\DeclareMathOperator{\Spec}{Spec}
\DeclareMathOperator{\Spf}{Spf}
\DeclareMathOperator{\supp}{supp}
\DeclareMathOperator{\Sym}{Sym}
\DeclareMathOperator{\tr}{tr}
\DeclareMathOperator{\Tr}{Tr}
\DeclareMathOperator{\val}{val}
\DeclareMathOperator{\vol}{vol}
\begin{document}

\title{Chow groups and $L$-derivatives of automorphic motives for unitary groups, II.}

\author{Chao Li}
\address{Department of Mathematics, Columbia University, New York NY 10027, United States}
\email{chaoli@math.columbia.edu}

\author{Yifeng Liu}
\address{Department of Mathematics, Yale University, New Haven CT 06511, United States}
\email{yifeng.liu@yale.edu}

\date{\today}
\subjclass[2010]{11G18, 11G40, 11G50, 14C15}

\begin{abstract}
  In this article, we improve our main results from \cite{LL} in two direction: First, we allow ramified places in the CM extension $E/F$ at which we consider representations that are spherical with respect to a certain special maximal compact subgroup, by formulating and proving an analogue of the Kudla--Rapoport conjecture for exotic smooth Rapoport--Zink spaces. Second, we lift the restriction on the components at split places of the automorphic representation, by proving a more general vanishing result on certain cohomology of integral models of unitary Shimura varieties with Drinfeld level structures.
\end{abstract}

\maketitle

\tableofcontents

\section{Introduction}
\label{ss:introduction}

In \cite{LL}, we proved that for certain cuspidal automorphic representations $\pi$ on unitary groups of even ranks, if the central derivative $L'(1/2,\pi)$ is nonvanishing, then the $\pi$-nearly isotypic localization of the Chow group of a certain unitary Shimura variety over its reflex field does not vanish. This proved part of the Beilinson--Bloch conjecture for Chow groups and $L$-functions. Moreover, assuming the modularity of Kudla's generating functions of special cycles, we further proved the arithmetic inner product formula relating $L'(1/2,\pi)$ and the height of arithmetic theta liftings. In this article, we improve the main results from \cite{LL} in two directions: First, we allow ramified places in the CM extension $E/F$ at which we consider representations that are spherical with respect to a certain special maximal compact subgroup, by formulating and proving an analogue of the Kudla--Rapoport conjecture for exotic smooth Rapoport--Zink spaces. Second, we lift the restriction on the components at split places of the automorphic representation, by proving a more general vanishing result on certain cohomology of integral models of unitary Shimura varieties with Drinfeld level structures. However, for technical reasons, we will still assume $F\neq\dQ$ (see Remark \ref{re:final}).

\subsection{Main results}

Let $E/F$ be a CM extension of number fields with the complex conjugation $\tc$. Denote by $\tV_F^{(\infty)}$ and $\tV_F^\fin$ the set of archimedean and non-archimedean places of $F$, respectively; and $\tV_F^\spl$, $\tV_F^\inert$, and $\tV_F^\ram$ the subsets of $\tV_F^\fin$ of those that are split, inert, and ramified in $E$, respectively. For every $v\in\tV_F^\fin$, we denote by $q_v$ the residue cardinality of $F_v$.

Take an even positive integer $n=2r$. We equip $W_r\coloneqq E^n$ with the skew-hermitian form (with respect to the involution $\tc$) given by the matrix $\(\begin{smallmatrix}&1_r\\ -1_r &\end{smallmatrix}\)$. Put $G_r\coloneqq\rU(W_r)$, the unitary group of $W_r$, which is a quasi-split reductive group over $F$. For every $v\in\tV_F^\fin$, we denote by $K_{r,v}\subseteq G_r(F_v)$ the stabilizer of the lattice $O_{E_v}^n$, which is a special maximal compact subgroup.

We start from an informal discussion on the arithmetic inner product formula. Let $\pi$ be a tempered automorphic representation of $G_r(\dA_F)$, which by theta dichotomy, gives rise to a unique up to isomorphism hermitian space $V_\pi$ of rank $n$ over $\dA_E$. It is known that the hermitian space $V_\pi$ is \emph{coherent} (resp.\ \emph{incoherent}), that is, $V_\pi$ is (resp.\ is not) the base change of a hermitian space over $E$, if and only if the global root number $\varepsilon(\pi)$ equals $1$ (resp.\ $-1$). When $\varepsilon(\pi)=1$, we have the global theta lifting of $\pi$, which is a space of automorphic forms on $\rU(V_\pi)(\dA_F)$; and the famous Rallis' inner product formula \cite{Ral84} computes the Petersson inner product of the global theta lifting in terms of the central $L$-value $L(\tfrac{1}{2},\pi)$ of $\pi$. When $\varepsilon(\pi)=-1$, we have the arithmetic theta lifting of $\pi$, which is a space of algebraic cycles on the Shimura variety associated to $V_\pi$; and the conjectural \emph{arithmetic inner product formula} \cite{Liu11} computes the height of the arithmetic theta lifting in terms of the central $L$-derivative $L'(\tfrac{1}{2},\pi)$ of $\pi$. In our previous article \cite{LL}, we verify the arithmetic inner product formula, under certain hypotheses, when $E/F$ and $\pi$ satisfy certain local conditions (see \cite{LL}*{Assumption~1.3}). In particular, we want $\tV_F^\ram=\emptyset$, which forces $[F:\dQ]$ to be even; and we want the representation $\pi$ has to be either unramified or almost unramified at $v\in\tV_F^\inert$. Computing local root numbers, we have $\varepsilon(\pi_v)=(-1)^r$ if $v\in\tV_F^{(\infty)}$, $\varepsilon(\pi_v)=1$ if $v\in\tV_F^\spl$ or $\pi_v$ is unramified, $\varepsilon(\pi_v)=-1$ if ($v\in\tV_F^\inert$ and) $\pi_v$ is almost unramified. It follows that $\varepsilon(\pi)=(-1)^{r[F:\dQ]+|\tS_\pi|}$, where $\tS_\pi\subseteq\tV_F^\inert$ denotes the (finite) subset at which $\pi$ is almost unramified, which equals $(-1)^{|\tS_\pi|}$ as $[F:\dQ]$ is even. In this article, we improve our results so that $\tV_F^\ram$ can be nonempty hence $[F:\dQ]$ can be odd; and we will still have $\varepsilon(\pi)=(-1)^{r[F:\dQ]+|\tS_\pi|}$. To show the significance of such improvement, now we may have $\varepsilon(\pi)=-1$ but $\tS_\pi=\emptyset$, so that we can accommodate $\pi$ coming from certain explicit motives like symmetric power of elliptic curves (see Example \ref{ex:symmetric}).

Readers may read the introduction of \cite{LL} for more background. Now we describe in more details our setup and main results in the current article.

\begin{definition}\label{de:heart}
We define the subset $\tV_F^\heartsuit$ of $\tV_F^\spl\cup\tV_F^\inert$ consisting of $v$ satisfying that for every $v'\in\tV_F^{(p)}\cap\tV_F^\ram$, where $p$ is the underlying rational prime of $v$, the subfield of $\ol{F_v}$ generated by $F_v$ and the Galois closure of $E_{v'}$ is unramified over $F_v$.
\end{definition}

\begin{remark}\label{re:heart}
The purpose of this technical definition is that for certain places $v$ in $\tV_F^\spl\cup\tV_F^\inert$, we need to have a CM type of $E$ such that its reflex field does not contain more ramification over $p$ than $F_v$ does -- this is possible for $v\in\tV_F^\heartsuit$. Note that
\begin{itemize}
  \item the complement $(\tV_F^\spl\cup\tV_F^\inert)\setminus\tV_F^\heartsuit$ is finite;

  \item when $E$ is Galois, or contains an imaginary quadratic field, or satisfies $\tV_F^\ram=\emptyset$, we have $\tV_F^\heartsuit=\tV_F^\spl\cup\tV_F^\inert$.
\end{itemize}
\end{remark}

\begin{assumption}\label{st:main}
Suppose that $F\neq\dQ$, that $\tV_F^\spl$ contains all 2-adic places, and that every prime in $\tV_F^\ram$ is unramified over $\dQ$. We consider a cuspidal automorphic representation $\pi$ of $G_r(\dA_F)$ realized on a space $\cV_\pi$ of cusp forms, satisfying:
\begin{enumerate}
  \item For every $v\in\tV_F^{(\infty)}$, $\pi_v$ is the holomorphic discrete series representation of Harish-Chandra parameter $\{\tfrac{n-1}{2},\tfrac{n-3}{2},\dots,\tfrac{3-n}{2},\tfrac{1-n}{2}\}$ (see \cite{LL}*{Remark~1.4(1)}).

  \item For every $v\in\tV_F^\ram$, $\pi_v$ is spherical with respect to $K_{r,v}$, that is, $\pi_v^{K_{r,v}}\neq\{0\}$.

  \item For every $v\in\tV_F^\inert$, $\pi_v$ is either unramified or almost unramified (see \cite{LL}*{Remark~1.4(3)}) with respect to $K_{r,v}$; moreover, if $\pi_v$ is almost unramified, then $v$ is unramified over $\dQ$.

  \item For every $v\in\tV_F^\fin$, $\pi_v$ is tempered.\footnote{In fact, (4) is implied by (1). See \cite{LL}*{Remark~1.4(4)}.}

  \item We have $\tR_\pi\cup\tS_\pi\subseteq\tV_F^\heartsuit$ (Definition \ref{de:heart}), where
     \begin{itemize}
         \item $\tR_\pi\subseteq\tV_F^\spl$ denotes the (finite) subset for which $\pi_v$ is ramified,

         \item $\tS_\pi\subseteq\tV_F^\inert$ denotes the (finite) subset for which $\pi_v$ is almost unramified.
     \end{itemize}
\end{enumerate}
\end{assumption}

Comparing Assumption \ref{st:main} with \cite{LL}*{Assumption~1.3}, we have lifted the restriction that $\tV_F^\ram=\emptyset$ (by allowing $\pi_v$ to be a certain type of representations for $v\in\tV_F^\ram$), and also the restriction on $\pi_v$ for $v\in\tV_F^\spl$. Note that (5) is not really a new restriction since when $\tV_F^\ram=\emptyset$, it is automatic by Remark \ref{re:heart}.

Suppose that we are in Assumption \ref{st:main}. Denote by $L(s,\pi)$ the doubling $L$-function. Then we have $\varepsilon(\pi)=(-1)^{r[F:\dQ]+|\tS_\pi|}$ for the global (doubling) root number, so that the vanishing order of $L(s,\pi)$ at the center $s=\tfrac{1}{2}$ has the same parity as $r[F:\dQ]+|\tS_\pi|$. The cuspidal automorphic representation $\pi$ determines a hermitian space $V_\pi$ over $\dA_E$ of rank $n$ via local theta dichotomy (so that the local theta lifting of $\pi_v$ to $\rU(V_\pi)(F_v)$ is nontrivial for every place $v$ of $F$), unique up to isomorphism, which is totally positive definite and satisfies that for every $v\in\tV_F^\fin$, the local Hasse invariant $\epsilon(V_\pi\otimes_{\dA_F}F_v)=1$ if and only if $v\not\in\tS_\pi$.

Now suppose that $r[F:\dQ]+|\tS_\pi|$ is odd hence $\varepsilon(\pi)=-1$, which is equivalent to that $V_\pi$ is incoherent. In what follows, we take $V=V_\pi$ in the context of \cite{LL}*{Conjecture~1.1}, hence $H=\rU(V_\pi)$. Let $\tR$ be a finite subset of $\tV_F^\fin$. We fix a special maximal subgroup $L^\tR$ of $H(\dA_F^{\infty,\tR})$ that is the stabilizer of a lattice $\Lambda^\tR$ in $V\otimes_{\dA_F}\dA_F^{\infty,\tR}$ (see Notation \ref{st:h}(H6) for more details). For a field $\dL$, we denote by $\dT^\tR_\dL$ the (abstract) Hecke algebra $\dL[L^\tR\backslash H(\dA_F^{\infty,\tR})/L^\tR]$, which is a commutative $\dL$-algebra. When $\tR$ contains $\tR_\pi$, the cuspidal automorphic representation $\pi$ gives rise to a character
\[
\chi_\pi^\tR\colon\dT^\tR_{\dQ^\ac}\to\dQ^\ac,
\]
where $\dQ^\ac$ denotes the subfield of $\dC$ of algebraic numbers; and we put $\fm_\pi^\tR\coloneqq\Ker\chi_\pi^\tR$, which is a maximal ideal of $\dT^\tR_{\dQ^\ac}$.

In what follows, we will fix an arbitrary embedding $\biota\colon E\hookrightarrow\dC$ and denote by $\{X_L\}$ the system of unitary Shimura varieties of dimension $n-1$ over $\biota(E)$ indexed by open compact subgroups $L\subseteq H(\dA_F^\infty)$ (see Subsection \ref{ss:atl} for more details). The following is the first main theorem of this article.

\begin{theorem}\label{th:main}
Let $(\pi,\cV_\pi)$ be as in Assumption \ref{st:main} with $r[F:\dQ]+|\tS_\pi|$ odd, for which we assume \cite{LL}*{Hypothesis~6.6}. If $L'(\tfrac{1}{2},\pi)\neq 0$, that is, $\ord_{s=\frac{1}{2}}L(s,\pi)=1$, then as long as $\tR$ satisfies $\tR_\pi\subseteq\tR$ and $|\tR\cap\tV_F^\spl\cap\tV_F^\heartsuit|\geq 2$, the nonvanishing
\[
\varinjlim_{L_\tR}\(\CH^r(X_{L_\tR L^\tR})^0_{\dQ^\ac}\)_{\fm_\pi^\tR}\neq\{0\}
\]
holds, where the colimit is taken over all open compact subgroups $L_\tR$ of $H(F_\tR)$.
\end{theorem}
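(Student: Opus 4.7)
The plan is to adapt the strategy of our earlier work \cite{LL}, which establishes the analogous Chow group nonvanishing under the stronger hypotheses $\tV_F^\ram=\emptyset$ and tight restrictions at split places, by inserting the two new local-arithmetic inputs developed earlier in this paper: the analogue of the Kudla--Rapoport conjecture on exotic smooth Rapoport--Zink spaces, and the vanishing theorem for the cohomology of integral models of unitary Shimura varieties with Drinfeld level structures.

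Concretely, I would construct the arithmetic theta lift of $\pi$ as a class in $\varinjlim_{L_\tR}\bigl(\CH^r(X_{L_\tR L^\tR})^0_{\dQ^\ac}\bigr)_{\fm_\pi^\tR}$ by pairing a carefully chosen test form in $\cV_\pi$ against Kudla's generating series of special cycles on the integral models of $\{X_{L_\tR L^\tR}\}$; the modularity of this generating series modulo cohomologically trivial classes is the content of \cite{LL}*{Hypothesis~6.6}. One then computes the Beilinson--Bloch--Gillet--Soul\'e height of two such lifts via the doubling method of Garrett--Piatetski-Shapiro--Rallis, and decomposes the height as a sum over places of $F$ of local arithmetic intersection numbers. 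The archimedean contributions and the inert contributions (both the unramified case and the almost-unramified case at $v\in\tS_\pi$) carry over verbatim from \cite{LL}. At each $v\in\tV_F^\ram$, the local intersection number is matched with the derivative of the local doubling zeta factor via the new Kudla--Rapoport-type formula on the relevant exotic smooth Rapoport--Zink space, which is precisely the reason we must require $\pi_v$ to be $K_{r,v}$-spherical in Assumption \ref{st:main}(2). At each $v\in\tR\cap\tV_F^\spl\cap\tV_F^\heartsuit$, the new cohomological vanishing theorem for integral models with Drinfeld level structure removes unwanted contributions after $\fm_\pi^\tR$-localization, which is what permits dropping the restriction on $\pi_v$ imposed in \cite{LL}. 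The hypothesis $|\tR\cap\tV_F^\spl\cap\tV_F^\heartsuit|\geq 2$ supplies the two auxiliary split places needed, as in \cite{LL}, to rigidify the test data and to apply the vanishing argument.

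Assembling the local identities yields an arithmetic inner product formula of the form $\langle\Theta(\phi),\Theta(\phi)\rangle\propto L'(\tfrac{1}{2},\pi)$ with a nonzero proportionality constant for a suitable choice of $\phi$; the assumption $L'(\tfrac{1}{2},\pi)\neq 0$ then forces the arithmetic theta lift $\Theta(\phi)$ to be nonzero in the Chow group localization, which is the desired conclusion. The main obstacle is the integration of the new ramified-place ingredient with the global argument: the exotic smooth Rapoport--Zink space and its special cycles are geometrically different from the unramified model used in \cite{LL}, so one must verify that the $p$-adic uniformization of the integral Shimura model at $v\in\tV_F^\ram$ transports the local arithmetic intersection computation correctly, and that the CM-type data encoded in the technical condition of Definition \ref{de:heart} (so that the reflex field acquires no more ramification over $p$ than $F_v$ itself) is compatible with the choice of lattice fixed by $K_{r,v}$ and with the spherical test vectors selected by Assumption \ref{st:main}(2). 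Once this compatibility is in place, the remaining steps follow the blueprint of \cite{LL} with only cosmetic modifications.
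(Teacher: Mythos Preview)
Your overall architecture is right: follow the blueprint of \cite{LL}*{Section~11}, replacing the local index computations at inert and archimedean places by the corresponding propositions here, inserting the new Kudla--Rapoport-type identity (Proposition \ref{pr:index_ramified}) at $v\in\tV_F^\ram$, and using the new vanishing theorem (Theorem \ref{th:split_tempered}, feeding into Proposition \ref{pr:index_split}) at split places. That is exactly what the paper does.

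However, your proposal has a genuine gap in the way it organizes the argument. You route the proof through the arithmetic inner product formula, constructing the arithmetic theta lift $\Theta(\phi)$ and then showing $\langle\Theta(\phi),\Theta(\phi)\rangle\propto L'(\tfrac{1}{2},\pi)$. But the very definition of $\Theta_{\phi^\infty}(\varphi)$ (Definition \ref{de:arithmetic_theta}) requires Hypothesis \ref{hy:modularity} on the modularity of Kudla's generating series, and Theorem \ref{th:main} does \emph{not} assume this hypothesis. You have also misidentified \cite{LL}*{Hypothesis~6.6}: it is not a modularity statement at all, but rather the hypothesis needed to guarantee that the relevant cycles lie in $\CH^r(X_L)^{\langle\ell\rangle}$ so that Beilinson's height pairing is defined (see Remark \ref{re:arithmetic_theta}(3) and Definition \ref{de:natural}). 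Modularity is the separate Hypothesis \ref{hy:modularity}, and it is only assumed in Theorem \ref{th:aipf} and Corollary \ref{co:aipf}, not in Theorem \ref{th:main}.

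The paper's proof of Theorem \ref{th:main} therefore does not pass through $\Theta(\phi)$. Instead it works directly with the individual Hecke-translated special cycles $\rs_i^*Z_{T_i}(\omega_r^\infty(g_i^\infty)\phi_i^\infty)_L$ and their local indices $I_{T_1,T_2}(\ldots)^\ell_{L,u}$ from Definition \ref{de:kernel_geometric}, which make sense without any modularity input. One matches these local indices place by place with the terms $\fE_{T_1,T_2}$ of Definition \ref{de:analytic_kernel} (Propositions \ref{pr:index_split}, \ref{pr:index_inert}, \ref{pr:index_ramified}, \ref{pr:index_arch}), sums over $(T_1,T_2)$, integrates against $\varphi_1^\tc,\varphi_2$, and compares with Proposition \ref{pr:eisenstein}. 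Nonvanishing of $L'(\tfrac{1}{2},\pi)$ then forces some global height pairing $\langle \rs_1^*Z_{T_1},\rs_2^*Z_{T_2}\rangle$ to be nonzero, hence some $\rs^*Z_T$ is nonzero; since $\rs\notin\fm_\pi^\tR$, this survives localization at $\fm_\pi^\tR$. Replacing your $\Theta(\phi)$-based packaging by this direct cycle-by-cycle argument removes the dependence on modularity and gives Theorem \ref{th:main} as stated.
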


Our remaining results rely on Hypothesis \ref{hy:modularity} on the modularity of Kudla's generating functions of special cycles, hence are conditional at this moment.

\begin{theorem}\label{th:aipf}
Let $(\pi,\cV_\pi)$ be as in Assumption \ref{st:main} with $r[F:\dQ]+|\tS_\pi|$ odd, for which we assume \cite{LL}*{Hypothesis~6.6}. Assume Hypothesis \ref{hy:modularity} on the modularity of generating functions of codimension $r$.
\begin{enumerate}
  \item For every test vectors
     \begin{itemize}
       \item $\varphi_1=\otimes_v\varphi_{1v}\in\cV_{\pi}$ and $\varphi_2=\otimes_v\varphi_{2v}\in\cV_{\pi}$ such that for every $v\in\tV_F^{(\infty)}$, $\varphi_{1v}$ and $\varphi_{2v}$ have the lowest weight and satisfy $\langle\varphi_{1v}^\tc,\varphi_{2v}\rangle_{\pi_v}=1$,

       \item $\phi^\infty_1=\otimes_v\phi^\infty_{1v}\in\sS(V^r\otimes_{\dA_F}\dA_F^\infty)$ and $\phi^\infty_2=\otimes_v\phi^\infty_{2v}\in\sS(V^r\otimes_{\dA_F}\dA_F^\infty)$,
     \end{itemize}
     the identity
     \[
     \langle\Theta_{\phi^\infty_1}(\varphi_1),\Theta_{\phi^\infty_2}(\varphi_2)\rangle_{X,E}^\natural=
     \frac{L'(\tfrac{1}{2},\pi)}{b_{2r}(0)}\cdot C_r^{[F:\dQ]}
     \cdot\prod_{v\in\tV_F^\fin}\fZ^\natural_{\pi_v,V_v}(\varphi^\tc_{1v},\varphi_{2v},\phi_{1v}^\infty\otimes(\phi_{2v}^\infty)^\tc)
     \]
     holds. Here,
     \begin{itemize}
       \item $\Theta_{\phi^\infty_i}(\varphi_i)\in\varinjlim_L\CH^r(X_L)^0_\dC$ is the arithmetic theta lifting (Definition \ref{de:arithmetic_theta}), which is only well-defined under Hypothesis \ref{hy:modularity};

       \item $\langle\Theta_{\phi^\infty_1}(\varphi_1),\Theta_{\phi^\infty_2}(\varphi_2)\rangle_{X,E}^\natural$ is the normalized height pairing (Definition \ref{de:natural}),\footnote{Strictly speaking, $\langle\Theta_{\phi^\infty_1}(\varphi_1),\Theta_{\phi^\infty_2}(\varphi_2)\rangle_{X,E}^\natural$ relies on the choice of a rational prime $\ell$ and is a priori an element in $\dC\otimes_\dQ\dQ_\ell$. However, the above identity implicitly says that it belongs to $\dC$ and is independent of the choice of $\ell$.} which is constructed based on Beilinson's notion of height pairing;

       \item $b_{2r}(0)$ is defined in Notation \ref{st:f}(F4), which equals $L(M_r^\vee(1))$ where $M_r$ is the motive associated to $G_r$ by Gross \cite{Gr97}, and is in particular a positive real number;

       \item $C_r=(-1)^r2^{r(r-1)}\pi^{r^2}\frac{\Gamma(1)\cdots\Gamma(r)}{\Gamma(r+1)\cdots\Gamma(2r)}$, which is the exact value of a certain archimedean doubling zeta integral; and

       \item $\fZ^\natural_{\pi_v,V_v}(\varphi^\tc_{1v},\varphi_{2v},\phi_{1v}^\infty\otimes(\phi_{2v}^\infty)^\tc)$ is the normalized local doubling zeta integral \cite{LL}*{Section~3}, which equals $1$ for all but finitely many $v$.
     \end{itemize}

  \item In the context of \cite{LL}*{Conjecture~1.1}, take ($V=V_\pi$ and) $\tilde\pi^\infty$ to be the theta lifting of $\pi^\infty$ to $H(\dA_F^\infty)$. If $L'(\tfrac{1}{2},\pi)\neq 0$, that is, $\ord_{s=\frac{1}{2}}L(s,\pi)=1$, then
     \[
     \Hom_{H(\dA_F^\infty)}\(\tilde\pi^\infty,\varinjlim_{L}\CH^r(X_L)^0_\dC\)\neq\{0\}
     \]
     holds.
\end{enumerate}
\end{theorem}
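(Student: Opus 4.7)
The plan is to follow the overall architecture established in \cite{LL} for the arithmetic inner product formula, but to replace the local arithmetic input at the newly allowed places by the two main local theorems of this article (the exotic smooth Rapoport--Zink analogue of Kudla--Rapoport at $v\in\tV_F^\ram$, and the vanishing result for integral models with Drinfeld levels at $v\in\tR_\pi$), and then to deduce part (2) from part (1) by a test-vector argument. Throughout, Hypothesis \ref{hy:modularity} is what allows the arithmetic theta lifting $\Theta_{\phi^\infty_i}(\varphi_i)$ to be interpreted as an honest element of $\varinjlim_L\CH^r(X_L)^0_\dC$, so all of what follows is conditional on it.

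For part (1), I would proceed as follows. First, I would use the modularity of Kudla's generating functions to rewrite $\langle\Theta_{\phi^\infty_1}(\varphi_1),\Theta_{\phi^\infty_2}(\varphi_2)\rangle_{X,E}^\natural$ as the $\pi$-component of the Petersson pairing of the height-valued generating function against $\varphi_1\otimes\varphi_2$. Applying the seesaw between $H\times H$ and $G_{2r}$, this pairing is identified with the action on $\varphi_1\otimes\varphi_2$ of a height-valued Siegel--Eisenstein series on $G_{2r}(\dA_F)$, whose $\pi$-component is governed by the doubling integral for $L(s,\pi)$. Combined with Beilinson's definition of the normalized height pairing, this reduces the desired identity to a family of local arithmetic Siegel--Weil identities: for each $v\in\tV_F^\fin$, the $v$-local arithmetic intersection (on the integral model at $v$) equals the derivative, or value, of the normalized local doubling zeta integral $\fZ^\natural_{\pi_v,V_v}$, up to the appropriate normalizing constants. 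The archimedean matching is the one already carried out in \cite{LL}, producing the factor $C_r^{[F:\dQ]}$, while the global constant $b_{2r}(0)^{-1}$ comes from the Eisenstein series normalization.

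The bulk of the new work is then the verification of these local identities at the places admitted only in the present article. At $v\in\tV_F^\ram$, where $\pi_v$ is spherical for the special maximal $K_{r,v}$, the local arithmetic intersection is computed on the exotic smooth Rapoport--Zink space, and matches $\fZ^\natural_{\pi_v,V_v}$ by the analogue of the Kudla--Rapoport conjecture proved elsewhere in this paper; the hypothesis that the primes in $\tV_F^\ram$ are unramified over $\dQ$ ensures we are in the setting where that local theorem applies. At $v\in\tR_\pi\cap\tV_F^\heartsuit$, the level subgroup is of Drinfeld type, and the local intersection is controlled by the vanishing result on the cohomology of the corresponding integral model, which forces the local term to degenerate to the expected normalized zeta integral; the condition $\tR_\pi\cup\tS_\pi\subseteq\tV_F^\heartsuit$ (Definition \ref{de:heart}) is exactly what is needed so that a CM type whose reflex field is unramified enough exists and the integral model is defined over the right base. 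The remaining places (unramified inert, split with trivial level, and almost unramified inert) are treated exactly as in \cite{LL}. The main obstacle is bookkeeping of normalizations: each new local theorem is proved in its own conventions, and I have to check that, after matching with $\fZ^\natural_{\pi_v,V_v}$ and the global constants $b_{2r}(0)$ and $C_r$, the product over $v$ reassembles into the stated identity.

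For part (2), once (1) is in hand, the argument is essentially formal. Assuming $L'(\tfrac12,\pi)\neq 0$, I would choose test vectors $\varphi_1,\varphi_2$ and Schwartz data $\phi^\infty_1,\phi^\infty_2$ such that each local normalized zeta integral $\fZ^\natural_{\pi_v,V_v}(\varphi^\tc_{1v},\varphi_{2v},\phi_{1v}^\infty\otimes(\phi_{2v}^\infty)^\tc)$ is nonzero; this is possible because at every finite place the local doubling zeta integral is nonvanishing on a nonempty open set of data, and our local constraints on $\pi_v$ ($K_{r,v}$-spherical at $v\in\tV_F^\ram$, either unramified or almost unramified at $v\in\tV_F^\inert$, Drinfeld at $v\in\tR_\pi$) leave enough room for such a choice, while the archimedean factor is built into the lowest-weight assumption. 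Part (1) then forces the normalized height pairing to be nonzero, so in particular $\Theta_{\phi^\infty_2}(\varphi_2)\neq 0$ in $\varinjlim_L\CH^r(X_L)^0_\dC$. Finally, using the $H(\dA_F^\infty)$-equivariance of $\phi^\infty\mapsto\Theta_{\phi^\infty}(\varphi)$ via the Weil representation and the realization of the theta lift $\tilde\pi^\infty$ as the span of such theta kernels (irreducible by local theta dichotomy and temperedness), the assignment $\phi^\infty\mapsto\Theta_{\phi^\infty}(\varphi_2)$ descends to a nonzero $H(\dA_F^\infty)$-equivariant map $\tilde\pi^\infty\to\varinjlim_L\CH^r(X_L)^0_\dC$, which is the required element of the Hom space.
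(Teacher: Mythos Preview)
Your overall strategy is correct and is essentially the one the paper uses: run the argument of \cite{LL}*{Section~11} verbatim, substituting the local propositions of the present article (Propositions \ref{pr:uniqueness}, \ref{pr:eisenstein}, \ref{pr:index_split}, \ref{pr:index_inert}, \ref{pr:index_ramified}, \ref{pr:index_arch}) for their counterparts in \cite{LL}, with $\tR$ taken to be a finite subset of $\tV_F^\spl\cap\tV_F^\heartsuit$ containing $\tR_\pi$ of cardinality at least $2$. Your deduction of (2) from (1) via nonvanishing test vectors and $H(\dA_F^\infty)$-equivariance is also the intended one.

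However, your description of the local identities is not accurate, and if taken literally would not assemble into a proof. You write that the local arithmetic intersection at $v$ ``equals the derivative, or value, of the normalized local doubling zeta integral $\fZ^\natural_{\pi_v,V_v}$''. This is not what is being matched. The local index $I_{T_1,T_2}(\cdots)_{L,u}$ at a non-split place $u$ is compared to the analytic kernel term $\fE_{T_1,T_2}(\cdots)_u$ (Definition \ref{de:analytic_kernel}), which is a sum of products of Whittaker functions indexed by $T^\Box$ with $\Diff(T^\Box,V)=\{\ul u\}$; see Propositions \ref{pr:index_inert}, \ref{pr:index_ramified}, \ref{pr:index_arch}. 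The normalized zeta integrals $\fZ^\natural_{\pi_v,V_v}$ only appear \emph{after} summing these analytic kernel terms over all $u$ to reconstitute $E'(0,\cdot,\Phi)$ and then performing the doubling integral against $\varphi_1,\varphi_2$ (Proposition \ref{pr:eisenstein}).

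More seriously, your account of what happens at $v\in\tR_\pi$ is wrong. You say the vanishing theorem ``forces the local term to degenerate to the expected normalized zeta integral''. In fact, Proposition \ref{pr:index_split} (via Theorem \ref{th:split_tempered}) shows that the local index at split places \emph{vanishes} outright after applying suitable Hecke operators $\rs_1^u,\rs_2^u\in\dS^\tR_{\dQ^\ac}\setminus\fm_\pi^\tR$; there is no zeta integral to match there, since $\Diff(T^\Box,V)$ never contains split places and hence the analytic kernel has no contribution at such $u$ either. The role of the vanishing result is to kill the geometric side at split places, not to produce a matching term. If you rewrite your local step as ``local index $=\fE_{T_1,T_2}(\cdots)_u$ at non-split $u$, and $=0$ at split $u$'', the rest of your outline goes through.
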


\begin{remark}\label{re:aipf}
We have the following remarks concerning Theorem \ref{th:aipf}.
\begin{enumerate}
  \item Part (1) verifies the so-called \emph{arithmetic inner product formula}, a conjecture proposed by one of us \cite{Liu11}*{Conjecture~3.11}.

  \item The arithmetic inner product formula in part (1) is perfectly parallel to the classical Rallis inner product formula. In fact, suppose that $V$ is totally positive definite but \emph{coherent}. We have the classical theta lifting $\theta_{\phi^\infty}(\varphi)$ where we use standard Gaussian functions at archimedean places. Then the Rallis inner product formula in this case reads as
      \[
      \langle\theta_{\phi^\infty_1}(\varphi_1),\theta_{\phi^\infty_2}(\varphi_2)\rangle_H=
      \frac{L(\tfrac{1}{2},\pi)}{b_{2r}(0)}\cdot C_r^{[F:\dQ]}
      \cdot\prod_{v\in\tV_F^\fin}\fZ^\natural_{\pi_v,V_v}(\varphi^\tc_{1v},\varphi_{2v},\phi_{1v}^\infty\otimes(\phi_{2v}^\infty)^\tc),
      \]
      in which $\langle\;,\;\rangle_H$ denotes the Petersson inner product with respect to the \emph{Tamagawa measure} on $H(\dA_F)$.
\end{enumerate}
\end{remark}

In the case where $\tR_\pi=\emptyset$, we have a very explicit height formula for test vectors that are new everywhere.

\begin{corollary}\label{co:aipf}
Let $(\pi,\cV_\pi)$ be as in Assumption \ref{st:main} with $r[F:\dQ]+|\tS_\pi|$ odd, for which we assume \cite{LL}*{Hypothesis~6.6}. Assume Hypothesis \ref{hy:modularity} on the modularity of generating functions of codimension $r$. In the situation of Theorem \ref{th:aipf}(1), suppose further that
\begin{itemize}
  \item $\tR_\pi=\emptyset$;

  \item $\varphi_1=\varphi_2=\varphi\in\cV_\pi^{[r]\emptyset}$ (see Notation \ref{st:g}(G8) for the precise definition of the one-dimensional space $\cV_\pi^{[r]\emptyset}$ of holomorphic new forms) such that for every $v\in\tV_F$, $\langle\varphi_v^\tc,\varphi_v\rangle_{\pi_v}=1$; and

  \item $\phi^\infty_1=\phi^\infty_2=\phi^\infty$ such that for every $v\in\tV_F^\fin$, $\phi^\infty_v=\CF_{(\Lambda_v^\emptyset)^r}$.
\end{itemize}
Then the identity
\[
\langle\Theta_{\phi^\infty}(\varphi),\Theta_{\phi^\infty}(\varphi)\rangle_{X,E}^\natural=(-1)^r\cdot
\frac{L'(\tfrac{1}{2},\pi)}{b_{2r}(0)}\cdot |C_r|^{[F:\dQ]}
\cdot\prod_{v\in\tS_\pi}\frac{q_v^{r-1}(q_v+1)}{(q_v^{2r-1}+1)(q_v^{2r}-1)}
\]
holds.
\end{corollary}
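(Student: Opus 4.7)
The approach is to substitute the prescribed test vectors into the arithmetic inner product formula of Theorem \ref{th:aipf}(1) and evaluate the resulting product of local normalized doubling zeta integrals $\fZ^\natural_{\pi_v,V_v}(\varphi_v^\tc,\varphi_v,\phi_v^\infty\otimes(\phi_v^\infty)^\tc)$ place by place. With $\varphi\in\cV_\pi^{[r]\emptyset}$ a holomorphic new vector everywhere, $\phi_v^\infty=\CF_{(\Lambda_v^\emptyset)^r}$ at every finite place, and the normalization $\langle\varphi_v^\tc,\varphi_v\rangle_{\pi_v}=1$, each local factor can be read off from a single standard formula at each type of place, so the proof amounts to assembling these contributions and collecting signs.

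First I would dispatch the easy places. For $v\in\tV_F^\spl$, the hypothesis $\tR_\pi=\emptyset$ ensures that $\pi_v$ is unramified with respect to the hyperspecial subgroup stabilizing $\Lambda_v^\emptyset$, so the standard normalized unramified doubling zeta integral computation recalled in \cite{LL} yields $\fZ^\natural_{\pi_v,V_v}=1$; the same applies verbatim at $v\in\tV_F^\inert\setminus\tS_\pi$. For $v\in\tS_\pi$, $\pi_v$ is almost unramified and $V_v$ has Hasse invariant $-1$; the almost unramified local zeta integral already computed in \cite{LL} supplies exactly the factor $\frac{q_v^{r-1}(q_v+1)}{(q_v^{2r-1}+1)(q_v^{2r}-1)}$, together with a sign contribution that will be absorbed below. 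The new case is $v\in\tV_F^\ram$: since $\pi_v$ is spherical with respect to $K_{r,v}$ and the test data are spherical, the spherical-at-$K_{r,v}$ doubling zeta integral developed in this article (an analogue of the unramified calculation, but for the special maximal compact stabilizing the self-dual lattice at a ramified prime) should again evaluate to $\fZ^\natural_{\pi_v,V_v}=1$.

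Having evaluated all local factors, the final step is to assemble the sign. Writing $C_r=(-1)^r|C_r|$ from the explicit formula in Theorem \ref{th:aipf}(1) yields $C_r^{[F:\dQ]}=(-1)^{r[F:\dQ]}|C_r|^{[F:\dQ]}$. Combining this with the sign contributions from the almost unramified factors at $v\in\tS_\pi$ and invoking the parity condition that $r[F:\dQ]+|\tS_\pi|$ is odd, the total sign in front of $|C_r|^{[F:\dQ]}\prod_{v\in\tS_\pi}\frac{q_v^{r-1}(q_v+1)}{(q_v^{2r-1}+1)(q_v^{2r}-1)}$ collapses to $(-1)^r$, yielding the identity claimed in the corollary.

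The main obstacle is the local zeta integral computation at the ramified places $v\in\tV_F^\ram$: because $\pi_v$ is only spherical with respect to a non-hyperspecial special maximal compact, the Macdonald-type argument underlying the unramified answer cannot be invoked directly. This input is provided precisely by the refined local analysis at ramified places built in this article, developed in parallel with the proof of the analogue of the Kudla--Rapoport conjecture for exotic smooth Rapoport--Zink spaces.
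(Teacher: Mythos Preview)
Your proposal is correct and follows essentially the same approach as the paper: the paper deduces the corollary from Theorem~\ref{th:aipf}(1) by substituting the specific test vectors and evaluating each local normalized zeta integral, with the only new local input (relative to \cite{LL}) being exactly the ramified computation $\fZ^\natural_{\pi_v,V_v}=1$ for $v\in\tV_F^\ram$ supplied by Proposition~\ref{pr:zeta}. Your identification of the local values at split, inert-unramified, almost-unramified, and ramified places matches the values recorded in Proposition~\ref{pr:eisenstein}, and your sign bookkeeping is correct (it may be worth making explicit that the almost-unramified factor contributes $(-1)^r$ per $v\in\tS_\pi$, so the total sign is $(-1)^{r[F:\dQ]+r|\tS_\pi|}$, which equals $(-1)^r$ via $r^2\equiv r\pmod 2$ together with the oddness of $r[F:\dQ]+|\tS_\pi|$).
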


\begin{remark}
Assuming the conjecture on the injectivity of the \'{e}tale Abel--Jacobi map, one can show that the cycle $\Theta_{\phi^\infty}(\varphi)$ is a primitive cycle of codimension $r$. By \cite{Bei87}*{Conjecture~5.5}, we expect that $(-1)^r\langle\Theta_{\phi^\infty}(\varphi),\Theta_{\phi^\infty}(\varphi)\rangle_{X,E}^\natural\geq 0$ holds, which, in the situation of Corollary \ref{co:aipf}, is equivalent to $L'(\tfrac{1}{2},\pi)\geq 0$.
\end{remark}

\begin{remark}
When $\tS_\pi=\emptyset$, Theorem \ref{th:main}, Theorem \ref{th:aipf}, and Corollary \ref{co:aipf} hold without \cite{LL}*{Hypothesis~6.6}. See Remark \ref{re:galois} for more details.
\end{remark}

\begin{example}\label{ex:symmetric}
Suppose that $E/F$ satisfies the conditions in Assumption \ref{st:main} and that $r\geq 2$. Consider an elliptic curve $A$ over $F$ without complex multiplication, satisfying that $\Sym^{2r-1}A$ hence $\Sym^{2r-1}A_E$ are modular. Let $\Pi$ be the cuspidal automorphic representation of $\GL_n(\dA_E)$ corresponding to $\Sym^{2r-1}A_E$, which satisfies $\Pi^\vee\simeq\Pi\circ\tc$. Then there exists a cuspidal automorphic representation $\pi$ of $G_r(\dA_F)$ as in Assumption \ref{st:main} with $\Pi$ its base change if and only if $A$ has good reduction at every $v\in\tV_F^\fin\setminus\tV_F^\spl$.\footnote{Note that, when $r\geq 2$, the $(2r-1)$-th symmetric power of an irreducible admissible representation of $\GL_2(E_v)$ can never be the base change of an almost unramified representation of $G_r(F_v)$ for $v\in\tV_F^\inert$.} Moreover, if this is the case, then we have $\tS_\pi=\emptyset$ hence $\varepsilon(\pi)=(-1)^{r[F:\dQ]}$; in particular, above results apply when both $r$ and $[F:\dQ]$ are odd.
\end{example}

\subsection{Two new ingredients}
\label{ss:ingredients}

The proofs of our main theorems follow the same line in \cite{LL}, with two new (main) ingredients, responsible for the two improvements we have mentioned at the beginning.

The first new ingredient is formulating and proving an analogue of the Kudla--Rapoport conjecture in the case where $E/F$ is ramified and the level structure is the one that gives the exotic smooth model. Here, $F$ is a $p$-adic field with $p$ odd. Let $\bbL$ be an $O_E$-lattice of a nonsplit (nondegenerate) hermitian space $\bbV$ over $E$ of (even) rank $n$. Then one can associate an intersection number $\Int(\bbL)$ of special divisors on a formally smooth relative Rapoport--Zink space classifying quasi-isogenies of certain unitary $O_F$-divisible groups, and also the derivative of the representation density function $\partial\Den(\bbL)$ given by $\bbL$. We show in Theorem \ref{th:kr} the formula
\[
\Int(\bbL)=\partial\Den(\bbL).
\]
This is parallel to the Kudla--Rapoport conjecture proved in \cite{LZ}, originally stated for the case where $E/F$ is unramified. The proof follows from the same strategy as in \cite{LZ}, namely, we write $\bbL=L^\flat+\langle x\rangle$ for a sublattice $L^\flat$ of $\bbL$ such that $V_{L^\flat}\coloneqq L^\flat\otimes_{O_F}F$ is nondegenerate, and regard $x$ as a variable. Thus, it motivates us to define a function $\Int_{L^\flat}$ on $\bbV\setminus V_{L^\flat}$ by the formula $\Int_{L^\flat}(x)=\Int(L^\flat+\langle x\rangle)$ and similarly for $\partial\Den_{L^\flat}$. For $\Int_{L^\flat}$, there is a natural decomposition $\Int_{L^\flat}=\Int_{L^\flat}^\rh+\Int_{L^\flat}^\rv$ according to the horizontal and vertical parts of the special cycle defined by $L^\flat$. In a parallel manner, we have the decomposition $\partial\Den_{L^\flat}=\partial\Den_{L^\flat}^\rh+\partial\Den_{L^\flat}^\rv$ by simply matching $\partial\Den_{L^\flat}^\rh$ with $\Int_{L^\flat}^\rh$. Thus, it suffices to show that $\Int_{L^\flat}^\rv=\partial\Den_{L^\flat}^\rv$. By some sophisticated induction argument on $L^\flat$, it suffices to show the following remarkable property for both $\Int_{L^\flat}^\rv$ and $\partial\Den_{L^\flat}^\rv$: they extend (uniquely) to compactly supported locally constant functions on $\bbV$, whose Fourier transforms are supported in the set $\{x\in \bbV\res (x,x)_{\bbV}\in O_F\}$. However, there are some new difficulties in our case:
\begin{itemize}
  \item The isomorphism class of an $O_E$-lattice is not determined by its fundamental invariants, and there is a parity constraint for the valuation of an $O_E$-lattice. This will make the induction argument on $L^\flat$ much more complicated than the one in \cite{LZ} (see Subsection \ref{ss:kr_proof}).

  \item The comparison of our relative Rapoport--Zink space to an (absolute) Rapoport--Zink space is not known. This is needed even to show that our relative Rapoport--Zink space is representable, and also in the $p$-adic uniformization of Shimura varieties. We solve this problem when $F/\dQ_p$ is unramified, which is the reason for us to assume that every prime in $\tV_F^\ram$ is unramified over $\dQ$ in Assumption \ref{st:main}. See Subsection \ref{ss:comparison}.

  \item Due to the parity constraint, the computation of $\Int_{L^\flat}^\rv$ can only be reduced to the case where $n=4$ (rather than $n=3$ in \cite{LZ}). After that, we have to compute certain intersection multiplicity, for which we use a new argument based on the linear invariance of the K-theoretic intersection of special divisors. See Lemma \ref{le:geometric3}.
\end{itemize}
Here comes three more remarks:
\begin{itemize}
  \item First, we need to extend the result of \cite{CY20} on a counting formula for $\partial\Den(\bbL)$ to hermitian spaces over a ramified extension $E/F$ (Lemma \ref{le:analytic2}).

  \item Second, we have found a simpler argument for the properties of $\partial\Den_{L^\flat}^\rv$ (Proposition \ref{pr:analytic}), which does not use any functional equation or induction formula. This argument is applicable to \cite{LZ} to give a new proof of the main result on the analytic side there. Also note that we prove the vanishing property in Proposition \ref{pr:analytic} directly, while in \cite{LZ} it is only deduced after proving $\Int_{L^\flat}^\rv=\partial\Den_{L^\flat}^\rv$.

  \item Finally, unlike the case in \cite{LZ}, the parity of the dimension of the hermitian space plays a crucial role in the exotic smooth case. In particular, we will not study the case where $\bbV$ has odd dimension.
\end{itemize}

The second new ingredient is a vanishing result on certain cohomology of integral models of unitary Shimura varieties with Drinfeld level structures. For $v\in\tV_F^\spl\cap\tV_F^\heartsuit$ with $p$ the underlying rational prime, we have a tower of integral models $\{\cX_m\}_{m\geq 0}$ defined by Drinfeld level structures (at $v$), with an action by $\dT^{\tR\cup\tV_F^{(p)}}_{\dQ^\ac}$ via Hecke correspondences. We show in Theorem \ref{th:split_tempered} that
\[
\rH^{2r}(\cX_m,\ol\dQ_\ell(r))_\fm=0
\]
with $\ell\neq p$ and $\fm\coloneqq\fm_\pi^\tR\cap\dS^{\tR\cup\tV_F^{(p)}}_{\dQ^\ac}$, where $\dS^{\tR\cup\tV_F^{(p)}}_{\dQ^\ac}$ is the subalgebra of $\dT^{\tR\cup\tV_F^{(p)}}_{\dQ^\ac}$ consisting of those supported at split places. We reduce this vanishing property to some other vanishing properties for cohomology of Newton strata of $\cX_m$, by using a key result of Mantovan \cite{Man08} saying that the closure of every refined Newton stratum is smooth. For the vanishing properties for Newton strata, we generalize an argument of \cite{TY07}*{Proposition~4.4}. However, since in our case, the representation $\pi_v$ has arbitrary level and our group has nontrivial endoscopy, we need a more sophisticated trace formula, which was provided in \cite{CS17}.

\subsection{Notation and conventions}

\begin{itemize}
  \item When we have a function $f$ on a product set $A_1\times\cdots\times A_m$, we will write $f(a_1,\dots,a_m)$ instead of $f((a_1,\dots,a_m))$ for its value at an element $(a_1,\dots,a_m)\in A_1\times\cdots\times A_m$.

  \item For a set $S$, we denote by $\CF_S$ the characteristic function of $S$.

  \item All rings (but not algebras) are commutative and unital; and ring homomorphisms preserve units.

  \item For a (formal) subscheme $Z$ of a (formal) scheme $X$, we denote by $\sI_Z$ the ideal sheaf of $Z$, which is a subsheaf of the structure sheaf $\sO_X$ of $X$.

  \item For a ring $R$, we denote by $\Sch_{/R}$ the category of schemes over $R$, by $\Sch'_{/R}$ the subcategory of locally Noetherian schemes over $R$, and when $R$ is discretely valued, by $\Sch_{/R}^\rv$ the subcategory of schemes on which uniformizers of $R$ are locally nilpotent.

  \item If a base ring is not specified in the tensor operation $\otimes$, then it is $\dZ$.

  \item For an abelian group $A$ and a ring $R$, we put $A_R\coloneqq A\otimes R$.

  \item For an integer $m\geq 0$, we denote by $0_m$ and $1_m$ the null and identity matrices of rank $m$, respectively. We also denote by $\tw_m$ the matrix $\(\begin{smallmatrix}&1_m\\ -1_m &\end{smallmatrix}\)$.

  \item We denote by $\tc\colon\dC\to\dC$ the complex conjugation. For an element $x$ in a complex space with a default underlying real structure, we denote by $x^\tc$ its complex conjugation.

  \item For a field $K$, we denote by $\ol{K}$ the abstract algebraic closure of $K$. However, for aesthetic reason, we will write $\ol\dQ_p$ instead of $\ol{\dQ_p}$ and will denote by $\ol\dF_p$ its residue field. On the other hand, we denote by $\dQ^\ac$ the algebraic closure of $\dQ$ \emph{inside} $\dC$.

  \item For a number field $K$, we denote by $\psi_K\colon K\backslash \dA_K\to\dC^\times$ the standard additive character, namely, $\psi_K\coloneqq\psi_\dQ\circ\Tr_{K/\dQ}$ in which $\psi_\dQ\colon\dQ\backslash\dA\to\dC^\times$ is the unique character such that $\psi_{\dQ,\infty}(x)=\re^{2\pi ix}$.

  \item Throughout the entire article, all parabolic inductions are unitarily normalized.
\end{itemize}

\subsubsection*{Acknowledgements}

We thank Xuhua~He and Yichao~Tian for helpful discussion. We thank Benedict~Gross for useful comments. The research of C.~L. is partially supported by the NSF grant DMS--1802269. The research of Y.~L. is partially supported by the NSF grant DMS--2000533.

\section{Intersection of special cycles at ramified places}
\label{ss:2}

Throughout this section, we fix a \emph{ramified} quadratic extension $E/F$ of $p$-adic fields with $p$ odd, with $\tc\in\Gal(E/F)$ the Galois involution. We fix a uniformizer $u\in E$ satisfying $u^\tc=-u$. Let $k$ be the residue field of $F$ and denote by $q$ the cardinality of $k$. Let $n=2r$ be an even positive integer.

In Subsection \ref{ss:kr}, we introduce our relative Rapoport--Zink space and state the main theorem (Theorem \ref{th:kr}) on the relation between intersection numbers and derivatives of representation densities. In Subsection \ref{ss:analytic}, we study derivatives of representation densities. In Subsection \ref{ss:bt}, we recall the Bruhat--Tits stratification on the relative Rapoport--Zink space from \cite{Wu} and deduce some consequences. In Subsection \ref{ss:linear}, we prove the linear invariance on the K-theoretic intersection of special divisors, following \cite{How19}. In Subsection \ref{ss:kr_proof1}, we prove Theorem \ref{th:kr} when $r=1$, which is needed for the proof when $r>1$. In Subsection \ref{ss:geometric}, we study intersection numbers. In Subsection \ref{ss:kr_proof}, we prove Theorem \ref{th:kr} for general $r$. In Subsection \ref{ss:comparison}, we compare our relative Rapoport--Zink space to certain (absolute) Rapoport--Zink space assuming $F/\dQ_p$ is unramified.

Here are two preliminary definitions for this section:
\begin{itemize}
  \item A \emph{hermitian $O_E$-module} is a finitely generated free $O_E$-module $L$ together with an $O_F$-bilinear pairing $(\;,\;)_L\colon L\times L\to E$ such that the induced $E$-valued pairing on $L\otimes_{O_F}F$ is a nondegenerate hermitian pairing (with respect to $\tc$). When we say that a hermitian $O_E$-module $L$ is contained in a hermitian $O_E$-module or a hermitian $E$-space $M$, we require that the restriction of the pairing $(\;,\;)_M$ to $L$ coincides with $(\;,\;)_L$.

  \item Let $X$ be an object of an additive category with a notion of dual.
      \begin{itemize}
        \item We say that a morphism $\sigma_X\colon X\to X^\vee$ is a \emph{symmetrization} if $\sigma_X$ is an isomorphism and the composite morphism $X\to X^{\vee\vee}\xrightarrow{\sigma_X^\vee}X^\vee$ coincides with $\sigma_X$.

        \item Given an action $\iota_X\colon O_E\to\End(X)$, we say that a morphism $\lambda_X\colon X\to X^\vee$ is \emph{$\iota_X$-compatible} if $\lambda_X\circ\iota_X(\alpha)=\iota_X(\alpha^\tc)^\vee\circ\lambda_X$ holds for every $\alpha\in O_E$.
      \end{itemize}
\end{itemize}

\subsection{A Kudla--Rapoport type formula}
\label{ss:kr}

We fix an embedding $\varphi_0\colon E\to\dC_p$ and let $\breve{E}$ be the maximal complete unramified extension of $\varphi_0(E)$ in $\dC_p$. We regard $E$ as a subfield of $\breve{E}$ via $\varphi_0$ hence identify the residue field of $\breve{E}$ with an algebraic closure $\ol{k}$ of $k$.

\begin{definition}\label{de:exotic}
Let $S$ be an object of $\Sch_{/O_{\breve{E}}}$. We define a category $\Exo_{(n-1,1)}(S)$ whose objects are triples $(X,\iota_X,\lambda_X)$ in which
\begin{itemize}
  \item $X$ is an $O_F$-divisible group\footnote{An $O_F$-divisible group is also called a strict $O_F$-module.} over $S$ of dimension $n$ and (relative) height $2n$;

  \item $\iota_X\colon O_E\to\End(X)$ is an action of $O_E$ on $X$ satisfying:
    \begin{itemize}
      \item (Kottwitz condition): the characteristic polynomial of $\iota_X(u)$ on the $\sO_S$-module $\Lie(X)$ is $(T-u)^{n-1}(T+u)\in\sO_S[T]$,

      \item (Wedge condition): we have
         \begin{align*}
         \bigwedge^2\(\iota_X(u)-u\res\Lie(X)\)&=0,
         \end{align*}

      \item (Spin condition): for every geometric point $s$ of $S$, the action of $\iota_X(u)$ on $\Lie(X_s)$ is nonzero;
    \end{itemize}

  \item $\lambda_X\colon X\to X^\vee$ is a $\iota_X$-compatible polarization such that $\Ker(\lambda_X)=X[\iota_X(u)]$.
\end{itemize}
A morphism (resp.\ quasi-morphism) from $(X,\iota_X,\lambda_X)$ to $(Y,\iota_Y,\lambda_Y)$ is an $O_E$-linear isomorphism (resp.\ quasi-isogeny) $\rho\colon X\to Y$ of height zero such that $\rho^*\lambda_Y=\lambda_X$.

When $S$ belongs to $\Sch_{/O_{\breve{E}}}^\rv$, we denote by $\Exo_{(n-1,1)}^\rb(S)$ the subcategory of $\Exo_{(n-1,1)}(S)$ consisting of $(X,\iota_X,\lambda_X)$ in which $X$ is supersingular.
\end{definition}

\begin{remark}\label{re:polarization}
Giving a $\iota_X$-compatible polarization $\lambda_X$ of $X$ satisfying $\Ker(\lambda_X)=X[\iota_X(u)]$ is equivalent to giving a $\iota_X$-compatible symmetrization $\sigma_X$ of $X$. In fact, since $\Ker(\lambda_X)=X[\iota_X(u)]$, there is a unique morphism $\sigma_X\colon X\to X^\vee$ satisfying $\lambda_X=\sigma_X\circ\iota_X(u)$, which is in fact an isomorphism, satisfying
\[
\sigma_X^\vee=\iota_X(u^{-1})^\vee\circ\lambda_X^\vee
=-\iota_X(u^{-1})^\vee\circ\lambda_X=-\lambda_X\circ\iota_X(u^{-1,\tc})=\lambda_X\circ\iota_X(u^{-1})=\sigma_X,
\]
and is clearly $\iota_X$-compatible. Conversely, given a $\iota_X$-compatible symmetrization $\sigma_X$ of $X$, we may recover $\lambda_X$ as $\sigma_X\circ\iota_X(u)$. In what follows, we call $\sigma_X$ the symmetrization of $\lambda_X$.
\end{remark}

To define our relative Rapoport--Zink space, we fix an object $(\bbX,\iota_{\bbX},\lambda_{\bbX})\in\Exo_{(n-1,1)}^\rb(\ol{k})$.

\begin{definition}\label{de:rz}
We define a functor $\cN\coloneqq\cN_{(\bbX,\iota_{\bbX},\lambda_{\bbX})}$ on $\Sch_{/O_{\breve{E}}}^\rv$ such that for every object $S$ of $\Sch_{/O_{\breve{E}}}^\rv$, $\cN(S)$ consists of quadruples $(X,\iota_X,\lambda_X;\rho_X)$ in which
\begin{itemize}
  \item $(X,\iota_X,\lambda_X)$ is an object of $\Exo_{(n-1,1)}^\rb(S)$;

  \item $\rho_X$ is a quasi-morphism from $(X,\iota_X,\lambda_X)\times_S(S\otimes_{O_{\breve{E}}}\ol{k})$ to $(\bbX,\iota_{\bbX},\lambda_{\bbX})\otimes_{\ol{k}}(S\otimes_{O_{\breve{E}}}\ol{k})$ in the category $\Exo_{(n-1,1)}^\rb(S\otimes_{O_{\breve{E}}}\ol{k})$.
\end{itemize}
\end{definition}

\begin{hypothesis}\label{hy:rz}
The functor $\cN$ is (pro-)represented by a separated formal scheme over $\Spf O_{\breve{E}}$.
\end{hypothesis}

\begin{remark}\label{re:rz}
When $F$ is unramified over $\dQ_p$, Hypothesis \ref{hy:rz} is known. In fact, by Corollary \ref{co:arz}, $\cN$ is isomorphic to an absolute Rapoport--Zink space $\cN^\Phi$ which is known to be a separated formal scheme over $\Spf O_{\breve{E}}$ by \cite{RZ96}.
\end{remark}

In what follows, we will assume Hypothesis \ref{hy:rz}.

\begin{lem}\label{le:rz}
The functor $\cN$ is a separated formal scheme formally smooth over $\Spf O_{\breve{E}}$ of relative dimension $n-1$. Moreover, $\cN$ has two connected components.
\end{lem}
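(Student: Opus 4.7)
The plan is to derive formal smoothness from Grothendieck--Messing theory via the exotic smooth local model, and to separate $\cN$ into two components using a parity invariant attached to the Dieudonn\'e lattice at each geometric point.

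\textbf{Formal smoothness and dimension.} Under Hypothesis~\ref{hy:rz}, $\cN$ is already a separated formal scheme over $\Spf O_{\breve E}$, so it suffices to verify formal smoothness of relative dimension $n-1$. Take a closed immersion $S \hookrightarrow \tilde S$ in $\Sch_{/O_{\breve E}}^\rv$ cut out by a square-zero ideal, together with $(X, \iota_X, \lambda_X, \rho_X) \in \cN(S)$. Replacing $\lambda_X$ by the associated symmetrization $\sigma_X$ of Remark~\ref{re:polarization} (which is an isomorphism $X \to X^\vee$), Grothendieck--Messing theory for $O_F$-divisible groups identifies $\iota_X$- and $\sigma_X$-compatible lifts of $X$ to $\tilde S$ with $\iota_X$-stable, $\sigma_X$-isotropic direct-summand lifts of the Hodge filtration inside the Dieudonn\'e crystal evaluated at $\tilde S$; the quasi-isogeny $\rho_X$ then extends uniquely across the nilpotent thickening. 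Such a lift belongs to $\cN(\tilde S)$ precisely when the induced action of $\iota_X(u)$ on the quotient continues to satisfy the Kottwitz, Wedge, and Spin conditions. The Grassmannian locus cut out by these three conditions is the exotic smooth local model for a ramified unitary group of even rank $n$ at the parahoric level corresponding to $\Ker(\lambda_X) = X[\iota_X(u)]$, which, by the local-model results of Pappas--Rapoport and Smithling, is formally smooth over $\Spec O_{\breve E}$ of relative dimension $n-1$. Formal smoothness of $\cN$ of this relative dimension follows.

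\textbf{Connected components.} For the decomposition into two components, I would attach to each geometric point of $\cN$ with underlying object $(X, \iota_X, \lambda_X, \rho_X)$ the contravariant Dieudonn\'e lattice $M_X$ inside the rational Dieudonn\'e module $N$ of $\bbX$, pulled back via $\rho_X$. The compatibilities with $\iota_X$ and $\sigma_X$ constrain $M_X$ to be $O_E$-stable, Frobenius-stable, and self-dual for an appropriate pairing, with $M_X / \iota_X(u) M_X$ of prescribed dimension. Such lattices correspond, in the usual dictionary, to vertex lattices in the nondegenerate nonsplit hermitian space $\bbV$ of even rank $n$ attached to the framing; in the ramified even-rank setting, these vertex lattices fall into exactly two orbits under $\rU(\bbV)$, distinguished by a $\dZ/2\dZ$-valued parity (the parity of the type of the vertex lattice). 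This invariant is locally constant on $\cN$ by formal smoothness, and each of its two values is realized by a nonempty and connected locus: nonemptiness is a direct lattice construction, while connectedness follows from the Bruhat--Tits stratification developed in Subsection~\ref{ss:bt} after Wu, whose top-dimensional strata sweep out each component.

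\textbf{Main obstacle.} The principal nontrivial input is the local-model theorem in the exotic smooth ramified unitary case, for which the Spin condition is essential (without it one recovers only the flat, but singular, Pappas model). With that result in hand, the deformation argument for formal smoothness is standard, and the count of connected components reduces to an elementary orbit calculation on vertex lattices in a nonsplit hermitian space of even dimension.
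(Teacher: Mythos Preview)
Your treatment of formal smoothness via Grothendieck--Messing and the local model is essentially the paper's approach; the paper simply cites \cite{RSZ17}*{Proposition~3.10} for the smoothness of the exotic local model. Note that the references you name (Pappas--Rapoport, Smithling) establish flatness of the naive or corrected local models, but the specific \emph{smoothness} of the exotic model in the even ramified case is the result of \cite{RSZ17}, so your citation should be adjusted.

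For the connected components there is a genuine gap. The invariant you propose, ``the parity of the type of the vertex lattice,'' separates nothing: by Remark~\ref{re:analytic2}(2) every integral lattice in $\bbV$ has type of the same parity as $n=2r$, hence even, so this parity is constant. More seriously, the assertion that vertex lattices in $\bbV$ fall into exactly two $\rU(\bbV)(F)$-orbits is false, since vertex lattices of different types $2,4,\dots,n$ are already pairwise non-isomorphic as hermitian $O_E$-modules. The actual $\dZ/2\dZ$-invariant distinguishing the two components of $\cN$ is of a different nature: as the proof of Lemma~\ref{le:geometric5} shows, an element of $\rU(\bbV)(F)$ with determinant $-1$ swaps the two components, so the invariant is an orientation/determinant datum on the Dieudonn\'e lattice (this is the $\cN_{(0,0)}$ versus $\cN_{(0,1)}$ dichotomy of \cite{Wu}*{Section~3.4}), not a coarse invariant of the associated vertex lattice. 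The paper does not attempt to reprove this and simply cites \cite{Wu}*{Theorem~5.18(2)} for both the decomposition and the connectedness of each piece; your appeal to the Bruhat--Tits stratification of Subsection~\ref{ss:bt} ultimately rests on that same source, so no independent argument has been given.
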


\begin{proof}
The formal smoothness of $\cN$ follow from the smoothness of its local model, which is \cite{RSZ17}*{Proposition~3.10}; and the dimension also follows. For the last assertion, our moduli functor $\cN$ is the disjoint union of $\cN_{(0,0)}$ and $\cN_{(0,1)}$ from \cite{Wu}*{Section~3.4}, each of which is connected by \cite{Wu}*{Theorem~5.18(2)}.\footnote{The article \cite{Wu} only studied the case $F=\dQ_p$. In fact, except for Hypothesis \ref{hy:rz}, all arguments hence results work for general $F$. This footnote applies to the proof of Proposition \ref{pr:bt} as well.}
\end{proof}

To study special cycles on $\cN$, we fix a triple $(X_0,\iota_{X_0},\lambda_{X_0})$ where
\begin{itemize}
  \item $X_0$ is a supersingular $O_F$-divisible group over $\Spec O_{\breve{E}}$ of dimension $1$ and height $2$;

  \item $\iota_{X_0}\colon O_E\to\End(X_0)$ is an $O_E$-action on $X_0$ such that the induced action on $\Lie(X_0)$ is given by $\varphi_0$;

  \item $\lambda_{X_0}\colon X_0\to X_0^\vee$ is a $\iota_{X_0}$-compatible principal polarization.
\end{itemize}
Note that $\iota_{X_0}$ induces an isomorphism $\iota_{X_0}\colon O_E\xrightarrow{\sim}\End_{O_E}(X_0)$. Put
\[
\bbV\coloneqq\Hom_{O_E}(X_0\otimes_{O_{\breve{E}}}\ol{k},\bbX)\otimes\dQ,
\]
which is a vector space over $E$ of dimension $n$. We have a pairing
\begin{align}\label{eq:rz_special}
(\;,\;)_{\bbV}\colon\bbV\times\bbV\to E
\end{align}
sending $(x,y)\in\bbV^2$ to the composition of quasi-homomorphisms
\[
X_0\xrightarrow{x}\bbX\xrightarrow{\lambda_{\bbX}}\bbX^\vee\xrightarrow{y^\vee}X_0^\vee\xrightarrow{u^{-2}\lambda_{X_0}^{-1}}X_0
\]
as an element in $\End_{O_E}(X_0)\otimes\dQ$ hence in $E$ via $\iota_{X_0}^{-1}$. It is known that $(\;,\;)_{\bbV}$ is a nondegenerate and nonsplit hermitian form on $\bbV$ \cite{RSZ17}*{Lemma~3.5}.\footnote{Readers may notice that we have an extra factor $u^{-2}$ in the definition of the hermitian form. This is because we want to ensure that $\cN(x)$ is nonempty if and only if $(x,x)_{\bbV}\in O_F$.}

\begin{definition}\label{de:rz_special}
For every nonzero element $x\in\bbV$, we define the \emph{special divisor} $\cN(x)$ of $\cN$ to be the maximal closed formal subscheme over which the quasi-homomorphism
\[
\rho_X^{-1}\circ x\colon(X_0\otimes_{O_{\breve{E}}}\ol{k})\otimes_k(S\otimes_{O_{\breve{E}}}\ol{k})\to X\times_S(S\otimes_{O_{\breve{E}}}\ol{k})
\]
lifts (uniquely) to a homomorphism $X_0\otimes_{O_{\breve{E}}}S\to X$.
\end{definition}

\begin{definition}\label{de:int}
For an $O_E$-lattice $\bbL$ of $\bbV$, the Serre intersection multiplicity
\[
\chi\(\sO_{\cN(x_1)}\overset{\dL}\otimes_{\sO_\cN}\cdots\overset{\dL}\otimes_{\sO_\cN}\sO_{\cN(x_n)}\)
\]
does not depend on the choice of a basis $\{x_1,\dots,x_n\}$ of $\bbL$ by Corollary \ref{co:linear2}, which we define to be $\Int(\bbL)$.
\end{definition}

\if false

\begin{remark}\label{re:int}
We have
\begin{enumerate}
  \item It is clear that $\Int(\bbL)$ is an intrinsic invariant of the (isomorphism class of the) hermitian $O_E$-module $\bbL$ such that $\bbL\otimes_{O_F}F$ is a nonsplit even dimensional hermitian space, independent of the choices of the object $(\bbX,\iota_{\bbX},\lambda_{\bbX})$ used to define $\cN$ and the object $(X_0,\iota_{X_0},\lambda_{X_0})$ used to define special cycles.

  \item Since $\cN$ has two connected components, say $\cN^+$ and $\cN^-$, we have $\Int(\bbL)=\Int^+(\bbL)+\Int^-(\bbL)$ in which $\Int^\pm(\bbL)$ denotes the intersection multiplicity on $\cN^\pm$. Then we have
      \[
      \Int^+(\bbL)=\Int^-(\bbL)=\tfrac{1}{2}\Int(\bbL).
      \]
      In fact, choose a normal basis (Remark \ref{re:analytic3}) $\{x_1,\dots,x_n\}$ of $\bbL$. Since $\bbV$ is nonsplit, there exists an anisotropic element in the basis, say $x_n$. Let $\theta$ the unique element in $\rU(\bbV)(F)$ satisfying $\theta(x_i)=1$ for $1\leq i\leq n-1$ and $\theta(x_n)=-x_n$. Then $\theta$ induces an automorphism of $\cN$, preserving $\cN(x_i)$ for $1\leq i\leq n$ but switching $\cN^+$ and $\cN^-$. Thus, we have $\Int^+(\bbL)=\Int^-(\bbL)$.
\end{enumerate}
\end{remark}

\fi

\begin{theorem}\label{th:kr}
Assume Hypothesis \ref{hy:rz}. For every $O_E$-lattice $\bbL$ of $\bbV$, we have
\[
\Int(\bbL)=\partial\Den(\bbL),
\]
where $\partial\Den(\bbL)$ is defined in Definition \ref{de:density2}.
\end{theorem}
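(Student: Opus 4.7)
The plan is to adapt the strategy of \cite{LZ} to the ramified exotic smooth setting, by singling out one vector as a variable. Write $\bbL = L^\flat + \langle x\rangle$ where $L^\flat \subset \bbL$ is a corank-one $O_E$-sublattice such that $V_{L^\flat} \coloneqq L^\flat\otimes_{O_F}F$ is a nondegenerate hermitian subspace of $\bbV$, and regard $x\in\bbV\setminus V_{L^\flat}$ as a variable. This motivates two functions on $\bbV\setminus V_{L^\flat}$:
\[
\Int_{L^\flat}(x)\coloneqq\Int(L^\flat+\langle x\rangle), \qquad \partial\Den_{L^\flat}(x)\coloneqq\partial\Den(L^\flat+\langle x\rangle).
\]
On the geometric side, the special cycle defined by $L^\flat$ decomposes into horizontal and vertical components, inducing $\Int_{L^\flat}=\Int_{L^\flat}^\rh+\Int_{L^\flat}^\rv$. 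On the analytic side, one defines $\partial\Den_{L^\flat}^\rh$ by mimicking $\Int_{L^\flat}^\rh$ directly and sets $\partial\Den_{L^\flat}^\rv\coloneqq\partial\Den_{L^\flat}-\partial\Den_{L^\flat}^\rh$; checking that the horizontal pieces already match reduces the theorem to the identity $\Int_{L^\flat}^\rv=\partial\Den_{L^\flat}^\rv$ on $\bbV\setminus V_{L^\flat}$.

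The linchpin of the reduction is the following uniform \emph{Fourier support} property, to be verified for \emph{both} $\Int_{L^\flat}^\rv$ and $\partial\Den_{L^\flat}^\rv$: each such function extends uniquely to a compactly supported locally constant function on all of $\bbV$, and its Fourier transform is supported in $\{x\in\bbV\mid (x,x)_{\bbV}\in O_F\}$. Granting this for both sides, the difference $\Int_{L^\flat}^\rv-\partial\Den_{L^\flat}^\rv$ lies in a finite-dimensional space of Schwartz functions that is tightly constrained by $L^\flat$; a downward induction on $L^\flat$ (adding an anisotropic vector and using the Fourier constraint to pin down the new contribution from the already-known smaller cases) then forces the difference to vanish. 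For $\partial\Den_{L^\flat}^\rv$ I would use the direct argument of Proposition \ref{pr:analytic}, which sidesteps the functional-equation detour of \cite{LZ}; the required counting formula for $\partial\Den(\bbL)$ is supplied by the ramified extension (Lemma \ref{le:analytic2}) of \cite{CY20}. For $\Int_{L^\flat}^\rv$, the Fourier support property is extracted from the Bruhat--Tits stratification of $\cN$ recalled in Subsection \ref{ss:bt} together with the linear invariance of the K-theoretic intersection of special divisors developed in Subsection \ref{ss:linear} (following \cite{How19}).

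Two ramification-specific complications shape the induction. First, an $O_E$-lattice in $\bbV$ is not determined by its fundamental invariants, and there is a parity constraint on its valuation; this proliferates the lattice cases one must track relative to \cite{LZ}. Second, the parity constraint forces the base of the induction down to $n=4$ rather than $n=3$. I would handle the base case by computing the relevant intersection multiplicity via Lemma \ref{le:geometric3}, whose proof uses K-theoretic linear invariance to reorganize the product of special divisors into a form amenable to explicit calculation on the low-dimensional relative Rapoport--Zink space. Throughout, one needs Hypothesis \ref{hy:rz} (that $\cN$ is a separated formal scheme), which at this stage is available via Remark \ref{re:rz} whenever $F/\dQ_p$ is unramified, and the accompanying identification $\iota_{X_0}$ of $\End_{O_E}(X_0)$ with $O_E$ that underlies the hermitian form \eqref{eq:rz_special}.

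The main obstacles I anticipate are the Fourier support statement for $\Int_{L^\flat}^\rv$, where the geometry of vertical components in the ramified exotic smooth model must be controlled along the Bruhat--Tits stratification in a way that respects the $r=1$ computation of Subsection \ref{ss:kr_proof1}; and the base case $n=4$, where the extra parity constraint on $O_E$-lattices blocks the $n=3$ reduction available in \cite{LZ} and forces the K-theoretic linear-invariance argument to carry genuine intersection-theoretic content rather than being a purely formal combinatorial reorganization.
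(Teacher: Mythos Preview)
Your proposal is correct and follows essentially the same approach as the paper: horizontal/vertical decomposition, matching of horizontal parts (Proposition~\ref{pr:geometric}(1)), Fourier-support control on both vertical parts via Propositions~\ref{pr:analytic} and~\ref{pr:geometric}(2), and an induction on $L^\flat$ (specifically on $\val(L^\flat)$) carried out in Subsection~\ref{ss:kr_proof}. The only refinement worth flagging is that the induction step is not ``adding an anisotropic vector'' but rather a lattice-rewriting lemma (Lemma~\ref{le:kr2}) that confines the support of $\partial\Den_{L^\flat}^\rv-\Int_{L^\flat}^\rv$ to an explicit compact set $S_{L^\flat}$, after which a direct Fourier argument (non-special $L^\flat$) or the Uncertainty Principle \cite{LZ}*{Proposition~8.1.6} (special $L^\flat$, via Lemma~\ref{le:kr1}) kills the difference.
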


By Remark \ref{re:rz}, this theorem is unconditional if $F$ is unramified over $\dQ_p$.

The strategy of proving this theorem described in Subsection \ref{ss:ingredients} motivates the following definition, which will be frequently used in the rest of Section \ref{ss:2}.

\begin{definition}\label{de:analytic}
We define $\flat(\bbV)$ to be the set of hermitian $O_E$-modules contained in $\bbV$ of rank $n-1$. In what follows, for $L^\flat\in\flat(\bbV)$, we put $V_{L^\flat}\coloneqq L^\flat\otimes_{O_F}F$ and write $V_{L^\flat}^\perp$ for the orthogonal complement of $V_{L^\flat}$ in $\bbV$.
\end{definition}

\begin{remark}\label{re:exotic}
Let $S$ be an object of $\Sch_{/O_{\breve{E}}}$. We have another category $\Exo_{(n,0)}(S)$ whose objects are triples $(X,\iota_X,\lambda_X)$ in which
\begin{itemize}
  \item $X$ is an $O_F$-divisible group over $S$ of dimension $n$ and (relative) height $2n$;

  \item $\iota_X\colon O_E\to\End(X)$ is an action of $O_E$ on $X$ such that $\iota_X(u)-u$ annihilates $\Lie(X)$;

  \item $\lambda_X\colon X\to X^\vee$ is a $\iota_X$-compatible polarization such that $\Ker(\lambda_X)=X[\iota_X(u)]$.
\end{itemize}
Morphisms are defined similarly as in Definition \ref{de:exotic}. The category $\Exo_{(n,0)}(S)$ is a connected groupoid.
\end{remark}

For later use, we fix a nontrivial additive character $\psi_F\colon F\to\dC^\times$ of conductor $O_F$. For a locally constant compactly supported function $\phi$ on a hermitian space $V$ over $E$, its Fourier transform $\widehat\phi$ is defined by
\[
\widehat\phi(x)=\int_V\phi(y)\psi_F(\Tr_{E/F}(x,y)_V)\rd y
\]
where $\r{d}y$ is the self-dual Haar measure on $V$.

\subsection{Fourier transform of analytic side}
\label{ss:analytic}

In this subsection, we study local densities of hermitian lattices. We first introduce some notion about $O_E$-lattices in hermitian spaces.

\begin{definition}\label{de:analytic1}
Let $V$ be a hermitian space over $E$ of dimension $m$, equipped with the hermitian form $(\;,\;)_V$.
\begin{enumerate}
  \item For a subset $X$ of $V$,
      \begin{itemize}
        \item we denote by $X^\integ$ the subset $\{x\in X\res (x,x)_V\in O_F\}$;

        \item we denote by $\langle X\rangle$ the $O_E$-submodule of $V$ generated by $X$; when $X=\{x,\dots\}$ is explicitly presented, we simply write $\langle x,\dots\rangle$ instead of $\langle\{x,\dots\}\rangle$.
      \end{itemize}

  \item For an $O_E$-lattice $L$ of $V$, we put
      \begin{align*}
      L^\vee
      &\coloneqq\{x\in V\res\Tr_{E/F}(x,y)_V\in O_F\text{ for every $y\in L$}\} \\
      &=\{x\in V\res (x,y)_V\in u^{-1}O_E\text{ for every $y\in L$}\}.
      \end{align*}
      We say that $L$ is
      \begin{itemize}
        \item \emph{integral} if $L\subseteq L^\vee$;

        \item \emph{vertex} if it is integral such that $L^\vee/L$ is annihilated by $u$; and

        \item \emph{self-dual} if $L=L^\vee$.
      \end{itemize}

  \item For an integral $O_E$-lattice $L$ of $V$, we define
      \begin{itemize}
        \item the \emph{fundamental invariants} of $L$ unique integers $0\leq a_1\leq\cdots\leq a_m$ such that $L^\vee/L\simeq O_E/(u^{a_1})\oplus\cdots\oplus O_E/(u^{a_m})$ as $O_E$-modules;

        \item the \emph{type} $t(L)$ of $L$ to be the number of nonzero elements in its fundamental invariants; and

        \item the \emph{valuation} of $L$ to be $\val(L)\coloneqq\sum_{i=1}^ma_i$; when $L$ is generated by a single element $x$, we simply write $\val(x)$ instead of $\val(\langle x\rangle)$.
      \end{itemize}
\end{enumerate}
The above notation and definitions make sense without specifying $V$, namely, they apply to hermitian $O_E$-modules.
\end{definition}

\begin{remark}\label{re:analytic2}
For an integral hermitian $O_E$-module $L$ of rank $m$ with fundamental invariants $(a_1,\dots,a_m)$, we have
\begin{enumerate}
  \item $L$ is vertex if and only if $a_m\leq 1$ and self-dual if and only if $a_m=0$;

  \item $t(L)$ and $\val(L)$ must have the same parity with $m$.
\end{enumerate}
\end{remark}

\begin{remark}\label{re:analytic3}
For a hermitian $O_E$-module $L$, we say that a basis $\{e_1,\dots,e_m\}$ of $L$ is a \emph{normal basis} if its moment matrix $T=((e_i,e_j)_L)_{i,j=1}^m$ is conjugate to
\[
\begin{pmatrix}
\beta_1 u^{2b_1}
\end{pmatrix}
\oplus\cdots\oplus
\begin{pmatrix}
\beta_s u^{2b_s}
\end{pmatrix}
\oplus
\begin{pmatrix}
0 & u^{2c_1-1} \\
-u^{2c_1-1} & 0
\end{pmatrix}
\oplus\cdots\oplus
\begin{pmatrix}
0 & u^{2c_t-1} \\
-u^{2c_t-1} & 0
\end{pmatrix}
\]
by a permutation matrix, for some $\beta_1,\dots,\beta_s\in O_F^\times$ and $b_1,\dots,b_s,c_1,\dots,c_t\in\dZ$. We have
\begin{enumerate}
  \item normal basis exists;

  \item the invariants $s,t$ and $b_1,\dots,b_s,c_1,\dots,c_t$ depend only on $L$;

  \item when $L$ is integral, the fundamental invariants of $L$ are the unique nondecreasing rearrangement of $(2b_1+1,\dots,2b_s+1,2c_1,2c_1,\dots,2c_t,2c_t)$.
\end{enumerate}
\end{remark}

\begin{definition}\label{de:density}
Let $M$ and $L$ be two hermitian $O_E$-modules. We denote by $\Herm_{L,M}$ the scheme of hermitian $O_E$-module homomorphisms from $L$ to $M$, which is a scheme of finite type over $O_F$. We define the \emph{local density} to be
\[
\Den(M,L)\coloneqq\lim_{N\to+\infty}\frac{\left|\Herm_{L,M}(O_F/(u^{2N}))\right|}{q^{N\cdot d_{L,M}}}
\]
where $d_{L,M}$ is the dimension of $\Herm_{L,M}\otimes_{O_F}F$.
\end{definition}

Denote by $H$ the standard hyperbolic hermitian $O_E$-module (of rank 2) given by the matrix $\(\begin{smallmatrix} 0 & u^{-1}\\ -u^{-1} & 0\end{smallmatrix}\)$. For an integer $s\geq 0$, put $H_s\coloneqq H^{\oplus s}$. Then $H_s$ is a self-dual hermitian $O_E$-module of rank $2s$. The following lemma is a variant of a result of Cho--Yamauchi \cite{CY20} when $E/F$ is ramified.

\begin{lem}\label{le:analytic1}
Let $L$ be a hermitian $O_E$-module of rank $m$. Then we have
\[
\Den(H_s,L)=\sum_{L\subseteq L'\subseteq L'^\vee}|L'/L|^{m-2s}
\prod_{s-\frac{m+t(L')}{2}<i\leq s}(1-q^{-2i})
\]
for every integer $s\geq m$, where the sum is taken over integral $O_E$-lattices of $L\otimes_{O_F}F$ containing $L$.
\end{lem}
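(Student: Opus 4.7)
The plan is to adapt the strategy of Cho--Yamauchi \cite{CY20} from the unramified to the ramified setting. The key observation is a \emph{saturation decomposition}: since $s\geq m$ and $H_s$ is self-dual, any hermitian $O_E$-module homomorphism $\phi\colon L\to H_s$ that is injective on $L\otimes_{O_F}F$ factors through a unique maximal integral overlattice $L'_\phi\subseteq L\otimes_{O_F}F$ containing $L$, namely the preimage under $\phi$ of the saturation of $\phi(L)$ inside $H_s$. This lattice automatically satisfies $L\subseteq L'_\phi\subseteq (L'_\phi)^\vee$, indexing exactly the sum in the statement.

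First I would stratify $\Herm_{L,H_s}(O_F/(u^{2N}))$ for $N\gg 0$ according to the value of $L'_\phi=L'$, writing
\[
\left|\Herm_{L,H_s}(O_F/(u^{2N}))\right|
=\sum_{L\subseteq L'\subseteq (L')^\vee}\left|\Herm_{L,H_s}(O_F/(u^{2N}))_{L'}\right|,
\]
where the subscript $L'$ denotes the substratum with saturation equal to $L'$. For each $L'$, the natural restriction map identifies the $L'$-stratum (after allowing $N\gg 0$) with the set of \emph{$L'$-primitive} maps $\psi\colon L'\to H_s$ modulo $u^{2N}$, i.e.\ those $\psi$ whose image is saturated in $H_s$. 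The comparison of normalizing factors $q^{Nd_{L,H_s}}$ and $q^{Nd_{L',H_s}}$ against the relative index $|L'/L|$ contributes precisely the combinatorial factor $|L'/L|^{m-2s}$ in the limit, reducing the problem to computing the primitive density
\[
\mathrm{PDen}(H_s,L')\coloneqq\lim_{N\to\infty}q^{-Nd_{L',H_s}}\left|\Herm_{L',H_s}(O_F/(u^{2N}))_{\r{prim}}\right|.
\]

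Second I would evaluate $\mathrm{PDen}(H_s,L')$ via a mass-formula argument. For a primitive embedding $\psi\colon L'\hookrightarrow H_s$, the quotient $H_s/\psi(L')$ is torsion-free of rank $2s-m$, and its orthogonal complement (saturated in $H_s$) is a self-dual hermitian $O_E$-module of rank $2s-m-t(L')$, where $t(L')$ equals the dimension of $(L')^\vee/L'$ as a vector space over the residue field $k$ (with its induced $k$-valued sesquilinear form). Counting such primitive embeddings via the Iwasawa-type quotient $|\rU(H_s)(O_F)|/(|\r{Stab}_\psi(O_F)|\cdot|\rU(\psi(L')^\perp)(O_F)|)$ and using the standard volume formula $|\rU_{2k}(E/F)(O_F)|_{\text{norm}}=\prod_{i=1}^{k}(1-q^{-2i})$ for the ramified unitary group, one obtains $\mathrm{PDen}(H_s,L')=\prod_{s-(m+t(L'))/2<i\leq s}(1-q^{-2i})$; the parity constraint $m+t(L')\equiv 0\pmod 2$ from Remark \ref{re:analytic2}(2) ensures the product has an integer number of factors.

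The main technical obstacle, compared with \cite{CY20}, is that in the ramified setting the duality $L\mapsto L^\vee$ is defined via $u^{-1}O_E$ with $u^\tc=-u$, so that elementary divisors are constrained by a parity condition and the isomorphism class of an $O_E$-lattice is not determined by its fundamental invariants alone (cf.\ Remark \ref{re:analytic3}). This forces a careful rebookkeeping in both stages above: one must track indices $|L'/L|$ through normal-basis decompositions and replace the mass factors $(1-(-q)^{-i})$ of the unramified case by the ramified unitary volumes $(1-q^{-2i})$. Once this bookkeeping is in place, the formal structure of the proof follows \cite{CY20} verbatim.
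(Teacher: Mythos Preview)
Your stratification by the saturation lattice $L'$ is correct and matches the paper's first reduction, which likewise invokes \cite{CY20}*{Section~3.3} to obtain
\[
\Den(H_s,L)=q^{-m(4s-m+1)/2}\sum_{L\subseteq L'\subseteq (L')^\vee}|L'/L|^{m-2s}\cdot(\text{primitive count for }L').
\]

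Your second step, however, contains an error and misses the key structural point. The claim that the orthogonal complement of a primitive $\psi(L')$ in $H_s$ is ``a self-dual hermitian $O_E$-module of rank $2s-m-t(L')$'' is false: that complement has rank $2s-m$ and is self-dual only when $t(L')=0$. Consequently your Iwasawa-type quotient does not compute the stabilizer correctly when $t(L')>0$.

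The paper instead passes to the \emph{residue field}. Because $E/F$ is ramified with $u^\tc=-u$, the pairing $\langle x,y\rangle\coloneqq u\cdot(x^\sharp,y^\sharp)_V\bmod(u)$ makes $H_{s,k}\coloneqq H_s\otimes_{O_E}k$ a nondegenerate \emph{symplectic} space over $k$ of dimension $2s$, and $L'_k$ a symplectic space of dimension $m$ whose radical has dimension exactly $t(L')$. The primitive count is then identified with $|\Isom_{L'_k,H_{s,k}}(k)|$. An orbit--stabilizer argument inside $\Sp(H_{s,k})$, reducing via the decomposition $L'_k=(\text{nondegenerate})\oplus(\text{radical})$ to the two extremal cases $t(L')=0$ and $t(L')=m$, yields
\[
|\Isom_{L'_k,H_{s,k}}(k)|=q^{m(4s-m+1)/2}\prod_{s-\frac{m+t(L')}{2}<i\leq s}(1-q^{-2i})
\]
from the explicit order of finite symplectic groups. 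This is why the unramified factors $(1-(-q)^{-i})$ become $(1-q^{-2i})$: not a generic ``ramified unitary volume'' correction, but the specific degeneration of finite unitary groups to finite symplectic groups over $k$ when the quadratic extension ramifies.
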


\begin{proof}
Put $V\coloneqq L\otimes_{O_F}F$. For an integral $O_E$-lattice $L'$ of $V$, we equip the $k$-vector space $L'_k\coloneqq L'\otimes_{O_E}O_E/(u)$ with a $k$-valued pairing $\langle\;,\;\rangle_{L'_k}$ by the formula
\[
\langle x,y\rangle_{L'_k}\coloneqq u\cdot (x^\sharp,y^\sharp)_V\mod(u)
\]
where $x^\sharp$ and $y^\sharp$ are arbitrary lifts of $x$ and $y$, respectively. Then $L'_k$ becomes a symplectic space over $k$ of dimension $m$ whose radical has dimension $t(L')$. Similarly, we have $H_{s,k}$, which is a nondegenerate symplectic space over $k$ of dimension $2s$. We denote by $\Isom_{L'_k,H_{s,k}}$ the $k$-scheme of isometries from $L'_k$ to $H_{s,k}$.

By the same argument in \cite{CY20}*{Section~3.3}, we have
\[
\Den(H_s,L)=q^{-m(4s-m+1)/2}\cdot\sum_{L\subseteq L'\subseteq L'^\vee}|L'/L|^{m-2s}|\Isom_{L'_k,H_{s,k}}(k)|.
\]
Thus, it remains to show that
\begin{align}\label{eq:analytic1}
|\Isom_{L'_k,H_{s,k}}(k)|=q^{m(4s-m+1)/2}\prod_{s-\frac{m+t(L')}{2}<i\leq s}(1-q^{-2i}).
\end{align}

We fix a decomposition $L'_k=L_0\oplus L_1$ in which $L_0$ is nondegenerate and $L_1$ is the radical of $L'_k$. We have a morphism $\pi\colon\Isom_{L'_k,H_{s,k}}\to\Isom_{L_0,H_{s,k}}$ given by restriction, such that for every element $f\in\Isom_{L_0,H_{s,k}}(k)$, the fiber $\pi^{-1}f$ is isomorphic to $\Isom_{L_1,\IM(f)^\perp}$. As $\IM(f)^\perp$ is isomorphic to $H_{s-\frac{m-t(L')}{2},k}$, it suffices to show \eqref{eq:analytic1} in the two extremal cases: $t(L')=0$ and $t(L')=m$.

Suppose that $t(L')=0$, that is, $L'_k$ is nondegenerate. Note that $\Sp(H_{s,k})$ acts on $\Isom_{L'_k,H_{s,k}}$ transitively, with the stabilizer isomorphic to $\Sp(H_{s-\frac{m}{2},k})$. Thus, we have
\begin{align*}
|\Isom_{L'_k,H_{s,k}}(k)|&=\frac{|\Sp(H_{s,k})(k)|}{|\Sp(H_{s-\frac{m}{2},k})(k)|}
=\frac{q^{s^2}\prod_{i=1}^s(q^{2i}-1)}{q^{\(s-\frac{m}{2}\)^2}\prod_{i=1}^{s-\frac{m}{2}}(q^{2i}-1)} \\
&=q^{m(4s-m+1)/2}\prod_{s-\frac{m}{2}<i\leq s}(1-q^{-2i}),
\end{align*}
which confirms \eqref{eq:analytic1}.

Suppose that $t(L')=m$, that is, $L'_k$ is isotropic. Note that $\Sp(H_{s,k})$ acts on $\Isom_{L'_k,H_{s,k}}$ transitively, with the stabilizer $Q$ fits into a short exact sequence
\[
1 \to U_m \to Q \to \Sp(H_{s-m,k}) \to 1
\]
in which $U_m$ is a unipotent subgroup of $\Sp(H_{s,k})$ of Levi type $\GL_{m,k}\times\Sp(H_{s-m,k})$. Thus, we have
\begin{align*}
|\Isom_{L'_k,H_{s,k}}(k)|&=\frac{|\Sp(H_{s,k})(k)|}{|U_m(k)|\cdot|\Sp(H_{s-m,k})(k)|}
=\frac{q^{s^2}\prod_{i=1}^s(q^{2i}-1)}{q^{m(2s-2m)+\frac{m(m+1)}{2}}\cdot q^{\(s-m\)^2}\prod_{i=1}^{s-m}(q^{2i}-1)} \\
&=q^{m(4s-m+1)/2}\prod_{s-m<i\leq s}(1-q^{-2i}),
\end{align*}
which confirms \eqref{eq:analytic1}.

Thus, \eqref{eq:analytic1} is proved and the lemma follows.
\end{proof}

\if false
\begin{remark}\label{re:analytic1}
By a similar argument for Lemma \ref{le:analytic1}, we have
\[
\Den(H_{r+s},H_r)=\prod_{i=s+1}^{r+s}(1-q^{-2i})
\]
for every integer $s\geq 0$.
\end{remark}
\fi

Now we fix a hermitian space $\bbV$ over $E$ of dimension $n=2r$ that is \emph{nonsplit}.

\begin{definition}\label{de:density2}
For an $O_E$-lattice $\bbL$ of $\bbV$, define the \emph{(normalized) local Siegel series} of $\bbL$ to be the polynomial $\Den(X,\bbL)\in\dZ[X]$, which exists by Lemma \ref{le:analytic2} below, such that for every integer $s\geq 0$,
\[
\Den(q^{-s},\bbL)=\frac{\Den(H_{r+s},\bbL)}{\prod_{i=s+1}^{r+s}(1-q^{-2i})},
\]
where $\Den$ is defined in Definition \ref{de:density}. We then put
\[
\partial\Den(\bbL)\coloneqq-\left.\frac{\rd}{\rd X}\right|_{X=1}\Den(X,\bbL).
\]
\end{definition}

\begin{remark}
Since $\bbV$ is nonsplit, we have $\Den(1,\bbL)=\Den(H_r,\bbL)=0$.
\end{remark}

\begin{remark}\label{re:whittaker}
Let $\bbL$ be an $O_E$-lattice of $\bbV$. Let $T\in\GL_n(E)$ be a representing matrix of $\bbL$, and consider the $T$-th Whittaker function $W_T(s,1_{4r},\CF_{H_r^{2r}})$ of the Schwartz function $\CF_{H_r^{2r}}$ at the identity element $1_{4r}$. By \cite{KR14}*{Proposition~10.1},\footnote{In \cite{KR14}*{Proposition~10.1} and its proof, the lattice $L_{r,r}$ should be replaced by $H_r$.} we have
\[
W_T(s,1_{4r},\CF_{H_r^{2r}})=\Den(H_{r+s},\bbL)
\]
for every integer $s\geq 0$. Thus, we obtain
\[
\log q\cdot\partial\Den(\bbL)=\frac{W'_T(0,1_{4r},\CF_{H_r^{2r}})}{\prod_{i=1}^r(1-q^{-2i})}
\]
by Definition \ref{de:density2}.
\end{remark}

\begin{lem}\label{le:analytic2}
For every $O_E$-lattice $\bbL$ of $\bbV$, we have
\begin{align}\label{eq:analytic2}
\Den(X,\bbL)=\sum_{\bbL\subseteq L\subseteq L^\vee}X^{2\length_{O_E}(L/\bbL)}\prod_{i=0}^{\frac{t(L)}{2}-1}(1-q^{2i}X^2),
\end{align}
and
\begin{align}\label{eq:analytic3}
\partial\Den(\bbL)=2\sum_{\bbL\subseteq L\subseteq L^\vee}\prod_{i=1}^{\frac{t(L)}{2}-1}(1-q^{2i}),
\end{align}
where both sums are taken over integral $O_E$-lattices of $\bbV$ containing $\bbL$.\footnote{In \eqref{eq:analytic3}, when $t(L)=2$, we regard the empty product $\prod_{i=1}^{\frac{t(L)}{2}-1}(1-q^{2i})$ as $1$.}
\end{lem}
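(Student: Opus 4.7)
Plan. The approach is to deduce \eqref{eq:analytic2} directly from Lemma \ref{le:analytic1} applied to $L = \bbL$, and then to obtain \eqref{eq:analytic3} by differentiating \eqref{eq:analytic2} at $X = 1$.

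For \eqref{eq:analytic2}, I would specialize Lemma \ref{le:analytic1} to $L = \bbL$ (rank $m = n = 2r$) with $s$ replaced by $r + s'$, which satisfies the hypothesis $s \geq m$ whenever $s' \geq r$. Two ingredients make the bookkeeping go through. First, since $E/F$ is ramified, the residue field of $O_E$ has cardinality $q$, so $|L/\bbL| = q^{\length_{O_E}(L/\bbL)}$ and the factor $|L/\bbL|^{m - 2s}$ becomes $X^{2\length_{O_E}(L/\bbL)}$ with $X \coloneqq q^{-s'}$. Second, using that $t(L)$ is even by Remark \ref{re:analytic2}(2) (since $n = 2r$), the product $\prod_{s' - t(L)/2 < i \leq r + s'}(1 - q^{-2i})$ supplied by Lemma \ref{le:analytic1} splits into the normalizing denominator $\prod_{i = s'+1}^{r+s'}(1 - q^{-2i})$ of Definition \ref{de:density2} times $\prod_{i = s' - t(L)/2 + 1}^{s'}(1 - q^{-2i})$, and the latter piece, under the reindexing $j = s' - i$, becomes $\prod_{j=0}^{t(L)/2 - 1}(1 - q^{2j} X^2)$. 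Thus the right-hand side of \eqref{eq:analytic2}, viewed as a polynomial $P(X) \in \dZ[X]$, realizes the defining relation of Definition \ref{de:density2} at $X = q^{-s'}$ for every $s' \geq r$; as these form an infinite set of distinct values, $P(X)$ is the unique polynomial so characterized, giving simultaneously existence of $\Den(X, \bbL)$ and the identity \eqref{eq:analytic2}.

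For \eqref{eq:analytic3}, the key geometric input is that $\bbV$, being nonsplit of even dimension over the ramified quadratic extension $E/F$, contains no self-dual $O_E$-lattice. Hence every integral $L \supseteq \bbL$ contributing to \eqref{eq:analytic2} has $t(L) > 0$, and by parity $t(L) \geq 2$. Consequently the factor $(1 - X^2)$ corresponding to $i = 0$ appears in every summand, and I would rewrite each as $(1 - X^2) \cdot X^{2\length_{O_E}(L/\bbL)} \prod_{i = 1}^{t(L)/2 - 1}(1 - q^{2i} X^2)$. Differentiating by the product rule at $X = 1$, only the derivative of $(1 - X^2)$ contributes (the other partial derivatives are multiplied by the vanishing factor), yielding $-2 \prod_{i = 1}^{t(L)/2 - 1}(1 - q^{2i})$ per lattice. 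Negating and summing produces \eqref{eq:analytic3}.

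The substantive step is the index bookkeeping in Step 1 that aligns Lemma \ref{le:analytic1}'s output with the normalization prescribed by Definition \ref{de:density2}; the differentiation in Step 2 is then essentially formal because of the uniform presence of the $(1 - X^2)$ factor. One small subtlety worth flagging is that the plan verifies the defining relation of Definition \ref{de:density2} only for $s' \geq r$; polynomial uniqueness closes the gap and simultaneously secures existence of $\Den(X, \bbL)$ as a polynomial, which is exactly the existence claim of Definition \ref{de:density2}.
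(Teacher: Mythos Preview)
Your proposal is correct and follows precisely the route the paper takes: the paper's own proof states only that \eqref{eq:analytic2} is a direct consequence of Lemma~\ref{le:analytic1} and Definition~\ref{de:density2}, and that \eqref{eq:analytic3} follows from \eqref{eq:analytic2}. Your write-up fills in exactly these details---the index bookkeeping that matches Lemma~\ref{le:analytic1} to the normalization in Definition~\ref{de:density2}, and the observation that every integral $L$ in nonsplit $\bbV$ has $t(L)\geq 2$ so that the factor $(1-X^2)$ is present in each summand and differentiation at $X=1$ is straightforward.
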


\begin{proof}
The identity \eqref{eq:analytic2} is a direct consequence of Lemma \ref{le:analytic1} and Definition \ref{de:density2}. The identity \eqref{eq:analytic3} is a consequence of \eqref{eq:analytic2}.
\end{proof}

\begin{definition}\label{de:density1}
Let $L^\flat$ be an element of $\flat(\bbV)$ (Definition \ref{de:analytic}). For $x\in\bbV\setminus V_{L^\flat}$, we put
\begin{align*}
\partial\Den_{L^\flat}(x)&\coloneqq\partial\Den(L^\flat+\langle x\rangle), \\
\partial\Den_{L^\flat}^\rh(x)&\coloneqq 2\sum_{\substack{L^\flat\subseteq L\subseteq L^\vee \\ t(L\cap V_{L^\flat})=1}}\CF_{L}(x), \\
\partial\Den_{L^\flat}^\rv(x)&\coloneqq \partial\Den_{L^\flat}(x)-\partial\Den_{L^\flat}^\rh(x).
\end{align*}
Here in the second formula, $L$ in the summation is an $O_E$-lattice of $\bbV$.
\end{definition}

\begin{remark}\label{re:analytic}
We have
\begin{enumerate}
  \item The summation in $\partial\Den_{L^\flat}^\rh(x)$ equals twice the number of integral $O_E$-lattices $L$ of $\bbV$ that contains $L^\flat+\langle x\rangle$ and such that $t(L\cap V_{L^\flat})=1$.

  \item There exists a compact subset $C_{L^\flat}$ of $\bbV$ such that $\partial\Den_{L^\flat}$, $\partial\Den_{L^\flat}^\rh$, and $\partial\Den_{L^\flat}^\rv$ vanish outside $C_{L^\flat}$ and are locally constant functions on $C_{L^\flat}\setminus V_{L^\flat}$.

  \item For an integral $O_E$-lattice $L$ of $\bbV$, if $t(L\cap V_{L^\flat})=1$, then $t(L)=2$ by Lemma \ref{le:analytic7}(1) below and the fact that $\bbV$ is nonsplit.

  \item By (3) and Lemma \ref{le:analytic2}, we have
      \[
      \partial\Den_{L^\flat}^\rv(x)=2\sum_{\substack{L^\flat\subseteq L\subseteq L^\vee \\ t(L\cap V_{L^\flat})>1}}\(\prod_{i=1}^{\frac{t(L)}{2}-1}(1-q^{2i})\)\CF_L(x)
      \]
      for $x\in\bbV\setminus V_{L^\flat}$.
\end{enumerate}
\end{remark}

The following is our main result of this subsection.

\begin{proposition}\label{pr:analytic}
Let $L^\flat$ be an element of $\flat(\bbV)$. Then $\partial\Den_{L^\flat}^\rv$ extends (uniquely) to a (compactly supported) locally constant function on $\bbV$, which we still denote by $\partial\Den_{L^\flat}^\rv$. Moreover, the support of $\widehat{\partial\Den_{L^\flat}^\rv}$ is contained in $\bbV^\integ$ (Definition \ref{de:analytic1}).
\end{proposition}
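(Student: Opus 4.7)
The plan is to rewrite $\partial\Den_{L^\flat}^\rv$ as an explicit finite $\dZ$-linear combination of characteristic functions of $O_E$-lattices in $\bbV$; from such a formula, both conclusions of the proposition will follow by direct computation. First I would reduce to the case where $L^\flat$ is itself an integral hermitian $O_E$-module of $\bbV$: indeed, any integral $O_E$-lattice $L$ of $\bbV$ containing $L^\flat$ forces $L^\flat$ to be integral, so if $L^\flat$ is non-integral then the formula in Remark~\ref{re:analytic}(4) is vacuously zero and the conclusion is trivial. With $L^\flat$ assumed integral, fix an orthogonal decomposition $\bbV=V_{L^\flat}\oplus V_{L^\flat}^\perp$ and a generator $e$ of the one-dimensional complement.

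Next, I organize the sum from Remark~\ref{re:analytic}(4) according to the intersection $M\coloneqq L\cap V_{L^\flat}$, which is an integral $O_E$-lattice of $V_{L^\flat}$ with $L^\flat\subseteq M\subseteq (L^\flat)^\vee$; there are only finitely many such $M$. For each $M$, the integral $O_E$-lattices $L$ of $\bbV$ with $L\cap V_{L^\flat}=M$ are parametrized by a pair $(\bar y,v)\in (M^\vee/M)\times\dZ$, where $\bar y$ is the class of the $V_{L^\flat}$-component of a generator of $L/M$ and $v$ is the $E$-valuation of its $V_{L^\flat}^\perp$-component, subject to an integrality constraint on the hermitian pairing. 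The type $t(L)$ is determined by $t(M)$ and this discrete data through a normal basis (Remark~\ref{re:analytic3}). Summing the coefficient $\prod_{i=1}^{t(L)/2-1}(1-q^{2i})\,\CF_L(x)$ over admissible values of $v$ yields a geometric-type series in $q^{-2}$ whose partial sums telescope. Subtracting the horizontal contribution (the $t(M)=1$ case, by Remark~\ref{re:analytic}(3)) as prescribed by Definition~\ref{de:density1} produces
\[
\partial\Den_{L^\flat}^\rv(x)=\sum_{\alpha}c_\alpha\,\CF_{\Lambda_\alpha}(x)
\]
with finitely many terms, an identity valid on all of $\bbV$. Local constancy and compact support are then immediate, and uniqueness of the extension follows from density of $\bbV\setminus V_{L^\flat}$.

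Finally, I apply $\widehat{\CF_\Lambda}=\vol(\Lambda)\,\CF_{\Lambda^\vee}$ termwise to reduce the Fourier transform claim to a finite cancellation: for each $x\in\bbV$ with $(x,x)_\bbV\notin O_F$, the sum $\sum_\alpha c_\alpha\vol(\Lambda_\alpha)\CF_{\Lambda_\alpha^\vee}(x)$ must vanish. My plan is to exhibit an involution on the indexing set $\{\alpha\}$—for instance, one that modifies $M$ by a single $u$-step in a chosen direction, or flips a chosen cyclic summand of $M^\vee/M$—under which paired contributions have opposite signs precisely off $\bbV^\integ$. The main obstacle is this final cancellation: it rests on the parity constraint of Remark~\ref{re:analytic2}(2), that $t(L)$ and $\val(L)$ have the same parity as $n$, together with the nonsplit hypothesis on $\bbV$, which rules out self-dual lattices and forces $t(L)\geq 2$ throughout the summation. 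Everything else in the argument is essentially bookkeeping based on the telescoping identity $\prod_{i=1}^a(1-q^{2i})-\prod_{i=1}^{a-1}(1-q^{2i})=-q^{2a}\prod_{i=1}^{a-1}(1-q^{2i})$.
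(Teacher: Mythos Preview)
Your setup — reducing to $L^\flat$ integral and organizing the sum in Remark~\ref{re:analytic}(4) according to $M=L\cap V_{L^\flat}$ — matches the paper's Lemma~\ref{le:analytic8}. But the heart of your argument, the ``telescoping'' step, is where the proposal breaks down.

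The sum $\sum_L \mu(t(L))\,\CF_L$ over integral $O_E$-lattices $L\supseteq L^\flat$ with $t(L\cap V_{L^\flat})>1$ is a genuinely infinite linear combination: for each fixed $M$ and each coset in $M^\vee/M$, the parameter you call $v$ (the paper's $\delta_L$) ranges over an unbounded set, and the resulting $\CF_L$ are linearly independent functions with nonzero coefficients $\mu(t(M)\pm 1)$. There is no geometric series in $q^{-2}$ here: the coefficient $\mu(t(L))$ takes only the two values $\mu(t(M)\pm 1)$ and does not depend on $v$ at all. So the sum cannot collapse to a finite combination of $\CF_{\Lambda_\alpha}$ by any telescoping of the type you describe. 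The assertion that $\partial\Den_{L^\flat}^\rv$ extends across $V_{L^\flat}$ is precisely the statement that this infinite sum \emph{stabilizes} as $x$ approaches $V_{L^\flat}$, and this is nontrivial.

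The paper does not attempt to rewrite the function as a finite sum. Instead it proves two separate cancellation identities (Lemma~\ref{le:analytic8}(3) and (4)): for each fixed $M=L^{\flat\prime}$ with $t(M)>1$, one has
\[
\sum_{\substack{L\subseteq L^\vee\\ L\cap V_{L^\flat}=M,\ \delta_L=0}}\mu(t(L))=0,
\qquad
\sum_{\substack{L\subseteq L^\vee\\ L\cap V_{L^\flat}=M,\ z\in L^\vee}}q^{-\delta_L}\mu(t(L))=0
\quad(z\notin\bbV^\integ).
\]
The first gives the extension (via a shift-by-one bijection in $\delta_L$); the second gives the Fourier support directly. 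Both rest on the relation $\mu(t(M)+1)=(1-q^{t(M)-1})\mu(t(M)-1)$ together with the counting formula of Lemma~\ref{le:analytic6},
\[
|(L^\vee)^\integ/L|=q^{2m}\,|(uL^\vee)^\integ/L|
\]
for an integral hermitian $O_E$-module $L$ of rank and type $2m+1$. This exact power of $q$ is what makes the two $\mu$-values cancel, and it is proved by induction on $\val(L)$. Your proposed involution would have to encode this identity, but you have not exhibited one; the ``flip a cyclic summand'' or ``$u$-step'' candidates you mention do not obviously produce the required weight $q^{t(M)-1}$. The cancellation is not sign-pairing but a volume identity, and Lemma~\ref{le:analytic6} is the missing ingredient.
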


We need some lemma for preparation.

\begin{lem}\label{le:analytic7}
Let $L$ be an integral hermitian $O_E$-module of with fundamental invariants $(a_1,\dots,a_m)$.
\begin{enumerate}
  \item If $T=((e_i,e_j)_L)_{i,j=1}^m$ is the moment matrix of an arbitrary basis $\{e_1,\dots,e_m\}$ of $L$, then for every $1\leq i\leq m$, $a_1+\cdots+a_i-i$ equals the minimal $E$-valuation of the determinant of all $i$-by-$i$ minors of $T$.

  \item If $L=L'+\langle x\rangle$ for some (integral) hermitian $O_E$-module $L'$ contained in $L$ of rank $m-1$, then we have
      \[
      t(L)=
      \begin{dcases}
      t(L')+1, &\text{if $x'\in uL^{\prime\vee}+L'$,} \\
      t(L')-1, &\text{otherwise,}
      \end{dcases}
      \]
      where $x'$ is the unique element in $L^{\prime\vee}$ such that $(x',y)_L=(x,y)_L$ for every $y\in L'$.
\end{enumerate}
\end{lem}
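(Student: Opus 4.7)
For part (1), I would introduce the $O_E$-basis $\{f_1,\dots,f_m\}$ of $L^\vee$ characterized by $(f_i,e_j)_V = u^{-1}\delta_{ij}$; a direct check shows that each $f_i$ lies in $L^\vee$ and that these form an $O_E$-basis. Writing $e_i = \sum_j c_{ij}f_j$ and pairing with $e_k$ gives $c_{ik}=u\,t_{ik}$, so the inclusion $L\hookrightarrow L^\vee$ is represented in these bases by the matrix $uT$, whose elementary divisors over $O_E$ are $u^{a_1},\dots,u^{a_m}$ by definition of the fundamental invariants. The classical characterization of elementary divisors via minors says that the ideal of $O_E$ generated by all $i\times i$ minors of $uT$ equals $(u^{a_1+\cdots+a_i})$; since each such minor of $uT$ is $u^i$ times the corresponding minor of $T$, the minimum $E$-valuation of the $i\times i$ minors of $T$ equals $a_1+\cdots+a_i-i$, as required.

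For part (2), I would orthogonally decompose $V = V_{L'}\oplus V_{L'}^\perp$ and write $x = x'+x''$ accordingly. By integrality of $L$ and orthogonality, $x'$ lies in $L'^\vee$ and coincides with the element of the statement. Choosing an $O_E$-basis $\{e_1,\dots,e_{m-1}\}$ of $L'$ and expanding $x' = \sum c_i e_i$ with $c\in E^{m-1}$, the moment matrix of $L$ in the basis $\{e_1,\dots,e_{m-1},x\}$ takes the block form
\[
T = \begin{pmatrix} T' & T'\ol{c} \\ c^t T' & (x',x')_L+(x'',x'')_L\end{pmatrix}.
\]
A Schur complement computation gives $\det T = \det T'\cdot (x'',x'')_L$, and combined with part (1) this yields $\val(L)-\val(L') = 1 + \val_E((x'',x'')_L)$, which is odd since $(x'',x'')_L\in F$. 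By the parity constraint of Remark \ref{re:analytic2}, $t(L)-t(L')$ is likewise an odd integer.

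To sharpen this to $t(L)-t(L') = \pm 1$ with the claimed sign, I would exploit the short exact sequence of finite $O_E$-modules
\[
0 \to \langle x''\rangle^\vee/\fa x''\to L^\vee/L \to L'^\vee/(L'+O_E x') \to 0,
\]
with $\fa = \{\alpha\in O_E : \alpha x'\in L'\}$, obtained by projecting $L^\vee$ onto $L'^\vee$ through the orthogonal decomposition of $V$. When $x'\in L'$ one has $L = L'\oplus O_E x''$ orthogonally and $t(L)=t(L')+1$ follows at once from $t(\langle x''\rangle)=1$. In general, I would place $L'$ in a normal basis (Remark \ref{re:analytic3}) and analyze how the cyclic submodule generated by the image of $x'$ in $L'^\vee/L'$ sits inside the summand decomposition; a direct inspection of the $u$-filtration shows that the sign of $t(L)-t(L')$ is $+1$ precisely when $x'\in uL'^\vee + L'$ (equivalently, when the image of $x'$ in $L'^\vee/L'$ lies in the submodule $u(L'^\vee/L')$) and $-1$ otherwise. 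The main obstacle is precisely this sign determination: the above short exact sequence is additive on lengths but \emph{not} on types, so the dichotomy cannot be read off formally and must be verified through the normal-basis case analysis, which ultimately reduces to 1- and 2-dimensional block computations where the type change can be checked by hand.
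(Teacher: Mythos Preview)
Your argument for part~(1) is the paper's: both recognize $uT$ as the matrix of the inclusion $L\hookrightarrow L^\vee$ and read off the fundamental invariants from its Smith normal form via determinantal ideals.

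For part~(2) the paper takes a much shorter route that you have almost set up but do not use. Having chosen a normal basis $\{x_1,\dots,x_{m-1}\}$ of $L'$ with $\langle x_1,\dots,x_{m-1-t(L')}\rangle$ self-dual, the paper simply applies part~(1) to the basis $\{x_1,\dots,x_{m-1},x\}$ of $L$. The point (implicit in the paper's one line) is that $m-t(L)$ equals the $k$-rank of the alternating form $uT\bmod u$: the self-dual block contributes a nondegenerate piece of rank $m-1-t(L')$, the $L_1$-block reduces to zero, and the Schur complement in the remaining $(t(L')+1)\times(t(L')+1)$ corner is $\left(\begin{smallmatrix}0&v\\-v^{\mathrm t}&0\end{smallmatrix}\right)$ with $v_i=u(x_i,x)\bmod u$ for $i\geq m-t(L')$. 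An alternating matrix of this shape has rank $0$ or $2$, according to whether $v=0$, i.e., whether $(x_i,x)\in O_E$ for those $i$; translating through the orthogonal splitting $L'=L_0\oplus L_1$ this is exactly $x'\in uL'^\vee+L'$. Your Schur-complement determinant only recovers the parity already forced by the ranks (Remark~\ref{re:analytic2}(2)), and your short exact sequence, while correct, is by your own admission useless for types. You then defer the actual sign determination to unspecified ``1- and 2-dimensional block computations''; the paper's observation is that no such case analysis is needed once one reads $t(L)$ as the corank of an alternating form over $k$.
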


\begin{proof}
Part (1) is simply the well-known method of computing the Smith normal form of $uT$ (over $O_E$) using ideals generated by determinants of minors. For (2), take a normal basis $\{x_1,\dots,x_{m-1}\}$ of $L$ (Remark \ref{re:analytic3}) such that $\langle x_1,\dots,x_{m-1-t(L')}\rangle$ is self-dual. Applying (1) to the basis $\{x_1,\dots,x_{m-1},x\}$ of $L$, we know that $t(L)=t(L')+1$ if $(x_i,x)_L\in O_E$ for every $m-t(L')\leq i\leq {m-1}$; otherwise, we have $t(L)=t(L')-1$. In particular, (2) follows.
\end{proof}

In the rest of this subsection, in order to shorten formulae, we put
\[
\mu(t)\coloneqq\prod_{i=1}^{\frac{t}{2}-1}(1-q^{2i})
\]
for every positive even integer $t$.

\begin{lem}\label{le:analytic8}
Take $L^\flat\in\flat(\bbV)$ that is integral. For every compact subset $X$ of $\bbV$ not contained in $V_{L^\flat}$, we denote by $\delta_X$ the maximal integer such that the image of $X$ under the projection map $\bbV\to V_{L^\flat}^\perp$ induced by the orthogonal decomposition $\bbV=V_{L^\flat}\oplus V_{L^\flat}^\perp$ is contained in $u^{\delta_X}(V_{L^\flat}^\perp)^\integ$. We denote by $\fL$ the set of $O_E$-lattices of $\bbV$ containing $L^\flat$, and by $\fE$ the set of triples $(L^{\flat\prime},\delta,\varepsilon)$ in which $L^{\flat\prime}$ is an $O_E$-lattice of $V_{L^\flat}$ containing $L^\flat$, $\delta\in\dZ$, and $\varepsilon\colon u^\delta(V_{L^\flat}^\perp)^\integ\to L^{\flat\prime}\otimes_{O_F}F/O_F$ is an $O_E$-linear map.
\begin{enumerate}
  \item The map $\fL\to\fE$ sending $L$ to the triple $(L\cap V_{L^\flat},\delta_L,\varepsilon_L)$ is a bijection, where $\varepsilon_L$ is the is the extension map $u^{\delta_X}(V_{L^\flat}^\perp)^\integ\to(L\cap V_{L^\flat})\otimes_{O_F}F/O_F$ induced by the short exact sequence
      \[
      0 \to L\cap V_{L^\flat} \to L \to u^{\delta_X}(V_{L^\flat}^\perp)^\integ \to 0.
      \]
      Moreover, $L$ is integral if and only if the following hold:
      \begin{itemize}
        \item $L\cap V_{L^\flat}$ is integral;

        \item the image of $\varepsilon$ is contained in $(L\cap V_{L^\flat})^\vee/(L\cap V_{L^\flat})$;

        \item $\varepsilon_L(x)+x\subseteq\bbV^\integ$ for every $x\in u^{\delta_X}(V_{L^\flat}^\perp)^\integ$.\footnote{For $(L^{\flat\prime},\delta,\varepsilon)\in\fE$, we regard $\varepsilon(x)+x$ as an $L^{\flat\prime}$-coset in $\bbV$ as long as we write $\varepsilon(x)+x\subseteq\Omega$ for a subset $\Omega$ of $\bbV$.}
      \end{itemize}

  \item For $L\in\fL$ that is integral and corresponds to $(L^{\flat\prime},\delta,\varepsilon)\in\fE$, we have
      \[
      t(L)=
      \begin{dcases}
      t(L^{\flat\prime})+1, &\text{if the image of $\varepsilon$ is contained in $(u(L^{\flat\prime})^\vee+L^{\flat\prime})/L^{\flat\prime}$,} \\
      t(L^{\flat\prime})-1, &\text{otherwise.}
      \end{dcases}
      \]

  \item For every fixed integral $O_E$-lattice $L^{\flat\prime}$ of $V_{L^\flat}$ containing $L^\flat$, the sum
      \[
      \sum_{\substack{L\subseteq L^\vee \\ L\cap V_{L^\flat}=L^{\flat\prime}}}q^{-\delta_L}|\mu(t(L))|
      \]
      is convergent; and if $t(L^{\flat\prime})>1$, then we have
      \[
      \sum_{\substack{L\subseteq L^\vee \\ L\cap V_{L^\flat}=L^{\flat\prime}\\ z\in L^\vee}}q^{-\delta_L}\mu(t(L))=0
      \]
      for every $z\in\bbV\setminus\bbV^\integ$.

  \item For every fixed integral $O_E$-lattice $L^{\flat\prime}$ of $V_{L^\flat}$ containing $L^\flat$ with $t(L^{\flat\prime})>1$, we have
      \[
      \sum_{\substack{L\subseteq L^\vee \\ L\cap V_{L^\flat}=L^{\flat\prime}\\ \delta_L=0}}\mu(t(L))=0.
      \]
\end{enumerate}
\end{lem}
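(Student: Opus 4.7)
The plan is to deduce parts (1) and (2) directly from the orthogonal decomposition $\bbV=V_{L^\flat}\oplus V_{L^\flat}^\perp$ together with Lemma~\ref{le:analytic7}. For (1), since $V_{L^\flat}^\perp$ is one-dimensional, every $O_E$-lattice in it has the unique form $u^k(V_{L^\flat}^\perp)^\integ$ for some $k\in\dZ$, so the projection of $L$ to $V_{L^\flat}^\perp$ pins down $\delta_L$, and the short exact sequence
\[
0\to L\cap V_{L^\flat}\to L\to u^{\delta_L}(V_{L^\flat}^\perp)^\integ\to 0
\]
is classified by the extension map $\varepsilon_L$. Conversely, a triple $(L^{\flat\prime},\delta,\varepsilon)$ reconstructs $L$ as $L^{\flat\prime}+\{\tilde\varepsilon(x)+x\}$ for any choice of lifts $\tilde\varepsilon$. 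The integrality criterion then follows by computing the three types of pairings within $L$ (within $L^{\flat\prime}$, across, and between mixed elements) using orthogonality together with $\Tr(u^{-1}O_E)=O_F$, which is valid because $p$ is odd. Part (2) is an immediate application of Lemma~\ref{le:analytic7}(2): for an $O_E$-generator $e$ of $u^\delta(V_{L^\flat}^\perp)^\integ$, the dual element of $\tilde\varepsilon(e)+e$ with respect to $L^{\flat\prime}$ coincides with a lift of $\varepsilon(e)$ by orthogonality, so the criterion of that lemma translates into $\varepsilon(e)\in(u(L^{\flat\prime})^\vee+L^{\flat\prime})/L^{\flat\prime}$.

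\textbf{Paragraph 2.} For (3), I would parameterize integral $L$'s with $L\cap V_{L^\flat}=L^{\flat\prime}$ and $\delta_L=\delta$ by elements $v:=\varepsilon(e)\in(L^{\flat\prime})^\vee/L^{\flat\prime}$ subject to the quadratic constraint $q(v)\equiv-(e,e)\bmod O_F$, where $q(v):=(\tilde v,\tilde v)\bmod O_F$ is well defined on $(L^{\flat\prime})^\vee/L^{\flat\prime}$ (using $\Tr(u^{-1}O_E)=O_F$ and the integrality of $L^{\flat\prime}$). Absolute convergence then follows from three observations: part (2) forces $t(L)\in\{t(L^{\flat\prime})\pm 1\}$, so $|\mu(t(L))|$ is uniformly bounded; for each $\delta$ the admissible $v$ lies in the finite set $(L^{\flat\prime})^\vee/L^{\flat\prime}$; and for $\delta$ sufficiently negative the $F$-valuation of $(e,e)$ forces the residue $-(e,e)\bmod O_F$ outside the image of $q$, so only finitely many $\delta<0$ contribute. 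The decay factor $q^{-\delta}$ then controls the positive tail.

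\textbf{Paragraph 3.} For the vanishing in (4), observe that $(e,e)\in O_F$ when $\delta_L=0$, so the quadratic constraint becomes $q(v)\equiv 0\bmod O_F$. By part (2), each admissible $v$ contributes weight $\mu(t+1)$ if $v\in M_+:=(u(L^{\flat\prime})^\vee+L^{\flat\prime})/L^{\flat\prime}$ and $\mu(t-1)$ on the complement, where $t:=t(L^{\flat\prime})$ is odd. Using the key identity $\mu(t+1)=(1-q^{t-1})\mu(t-1)$, the vanishing reduces to the combinatorial count
\[
|\{v\in(L^{\flat\prime})^\vee/L^{\flat\prime}\setminus M_+:q(v)\equiv 0\}|=(q^{t-1}-1)\cdot|\{v\in M_+:q(v)\equiv 0\}|,
\]
which is strictly weaker than an equidistribution of $q$ across $M_+$ and its complement. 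I would establish it by fixing a normal basis of $L^{\flat\prime}$ (Remark~\ref{re:analytic3}), which splits $(L^{\flat\prime})^\vee/L^{\flat\prime}$ as an orthogonal sum of cyclic summands $O_E/(u^{2b+1})$ and hyperbolic pairs $O_E/(u^{2c})^{\oplus 2}$, with $M_+=u\cdot((L^{\flat\prime})^\vee/L^{\flat\prime})$ and $q$ decomposing compatibly. The count then reduces to standard Gauss-sum-type computations over the residue field $k$ on each summand, combined across summands in a multiplicative fashion.

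\textbf{Paragraph 4.} The main obstacle is the combinatorial identity in paragraph 3 together with its analogue for part (3), where an additional linear constraint is present. For part (3), decomposing $z=z_0+z_1$ with $z_0\in V_{L^\flat}$ and $z_1\in V_{L^\flat}^\perp$, the condition $z\in L^\vee$ forces $z_0\in(L^{\flat\prime})^\vee$ (independently of $(\delta,v)$) and imposes the additional linear constraint $\langle z_0,v\rangle\equiv-(z_1,e)\bmod u^{-1}O_E$ on $v$. Crucially, $q(\bar z_0)\equiv(z_0,z_0)\bmod O_F$ where $\bar z_0$ denotes the image of $z_0$ in $(L^{\flat\prime})^\vee/L^{\flat\prime}$, so the non-integrality $(z,z)=(z_0,z_0)+(z_1,z_1)\notin O_F$ translates into an incompatibility between the linear form $\langle z_0,\cdot\rangle$ and the quadratic constraint on each of the summands of the normal-basis decomposition. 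Summing over $\delta$ and tracking how $(z_1,e_\delta)$ and $(e_\delta,e_\delta)$ scale as $\delta$ varies, I expect this incompatibility to either kill the inner sum for each $\delta$ individually or to produce a cancellation between adjacent $\delta$'s. This case-by-case summand analysis, especially the interaction between the quadratic constraint, the linear constraint, and the coset structure of $M_+$, is the most delicate step of the proof.
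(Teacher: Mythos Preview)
Your treatment of parts (1) and (2) is correct and matches the paper: both are indeed straightforward consequences of the orthogonal decomposition and Lemma~\ref{le:analytic7}(2).

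For parts (3) and (4), however, you are missing the paper's key structural insight, and your ``case-by-case summand analysis'' for (3) is left as a hope rather than an argument. The paper proceeds in the opposite order: it proves (3) first, and then deduces (4) from (3) in one line by applying the vanishing in (3) to the two elements $z$ generating $u^{-1}(V_{L^\flat}^\perp)^\integ$ and $u^{-2}(V_{L^\flat}^\perp)^\integ$ and subtracting. So your direct attack on (4) is unnecessary once (3) is in hand.

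The real content is therefore (3), and here the paper avoids your normal-basis case analysis entirely. After rewriting the weighted sum as an integral over the region $\Omega_z=\{x\in\bbV^\integ : x'\in(L^{\flat\prime})^\vee,\ (x,z)_{\bbV}\in u^{-1}O_E\}$ (and its subregion $\Omega_z^\circ$ with $x'\in u(L^{\flat\prime})^\vee+L^{\flat\prime}$), the vanishing reduces to the volume identity $\vol(\Omega_z)=q^{t(L^{\flat\prime})-1}\vol(\Omega_z^\circ)$. The decisive move is then to split $L^{\flat\prime}=L_0\oplus L_1$ with $L_0$ self-dual and $L_1$ of full type, and to construct an auxiliary integral lattice $L_2$ of type $t(L^{\flat\prime})$ inside the orthogonal complement of $L_0+\langle z\rangle$ in $\bbV$, so that $\Omega_z=L_0\times\langle z\rangle^\vee\times(L_2^\vee)^\integ$ and $\Omega_z^\circ=L_0\times\langle z\rangle^\vee\times(uL_2^\vee)^\integ$. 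The hypothesis $z\notin\bbV^\integ$ is used exactly once, to ensure $\langle z\rangle^\vee\subseteq\bbV^\integ$ so that this factorization is valid. The volume ratio then becomes $|(L_2^\vee)^\integ/L_2|\big/|(uL_2^\vee)^\integ/L_2|$, which equals $q^{t(L^{\flat\prime})-1}$ by Lemma~\ref{le:analytic6} (a lattice of odd rank and full type). Your combinatorial count in Paragraph~3, after splitting off $L_0$, is in fact exactly the statement of Lemma~\ref{le:analytic6} applied to $L_1$; but you neither invoke that lemma nor indicate how your ``Gauss-sum-type computations'' would prove it, and more importantly you have no analogue of the $L_2$ construction to handle the linear constraint from $z$ in (3). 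That construction, which requires separate arguments depending on whether $z\in V_{L^\flat}$, is the heart of the proof.
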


\begin{proof}
For (1), the inverse map $\fE\to\fL$ is the one that sends $(L^{\flat\prime},\delta,\varepsilon)$ to the $O_E$-lattice $L$ generated by $L^{\flat\prime}$ and $\varepsilon_L(x)+x$ for every $x\in u^{\delta_X}(V_{L^\flat}^\perp)^\integ$. The rest of (1) is straightforward.

Part (2) is simply Lemma \ref{le:analytic7}(2).

Part (4) follows by applying (3) to generators $z$ of $O_E$-modules $u^{-1}(V_{L^\flat}^\perp)^\integ$ and $u^{-2}(V_{L^\flat}^\perp)^\integ$ and then taking the difference.

Now we prove (3), which is the most difficult one. For every $x\in\bbV$, we denote by $x'\in V_{L^\flat}$ the first component of $x$ with respect to the orthogonal decomposition $\bbV=V_{L^\flat}\oplus V_{L^\flat}^\perp$. Put
\begin{align*}
\Omega\coloneqq\{x\in\bbV^\integ\res x'\in(L^{\flat\prime})^\vee\},\qquad
\Omega^\circ \coloneqq\{x\in\bbV^\integ\res x'\in u(L^{\flat\prime})^\vee+L^{\flat\prime}\}.
\end{align*}
Note that both $\Omega$ and $\Omega^\circ$ are open compact subsets of $\bbV$ stable under the translation by $L^{\prime\flat}$. For an element $L\in\fL$ corresponding to $(L^{\flat\prime},\delta,\varepsilon)\in\fE$ from (1), $L$ is integral if and only $\varepsilon(x)+x\subseteq\Omega$ for every $x\in u^\delta(V_{L^\flat}^\perp)^\integ$. By (2), for such $L$,
\begin{align*}
t(L)=
\begin{dcases}
t(L^{\flat\prime})+1, &\text{if $\varepsilon(x)+x\subseteq\Omega^\circ$ for every
$x\in u^\delta(V_{L^\flat}^\perp)^\integ\setminus u^{\delta+1}(V_{L^\flat}^\perp)^\integ$,} \\
t(L^{\flat\prime})-1, &\text{if $\varepsilon(x)+x\subseteq\Omega\setminus\Omega^\circ$ for every
$x\in u^\delta(V_{L^\flat}^\perp)^\integ\setminus u^{\delta+1}(V_{L^\flat}^\perp)^\integ$.}
\end{dcases}
\end{align*}
Thus, we may replace the term corresponding to $L$ in the summation in (3) by an integration over the region $\bigcup_{x\in u^\delta(V_{L^\flat}^\perp)^\integ\setminus u^{\delta+1}(V_{L^\flat}^\perp)^\integ}(\varepsilon(x)+x)$ of $\Omega$. It follows that
\[
\sum_{\substack{L\subseteq L^\vee \\ L\cap V_{L^\flat}=L^{\flat\prime}}}q^{-\delta_L}|\mu(t(L))|
=\frac{1}{C}\(\int_{\Omega^\circ\setminus V_{L^\flat}}|\mu(t(L^{\flat\prime})+1)|\rd x+\int_{\Omega\setminus(\Omega^\circ\cup V_{L^\flat})}|\mu(t(L^{\flat\prime})-1)|\rd x\)
\]
which is convergent, where
\[
C=\vol(L^{\flat\prime})\cdot\vol((V_{L^\flat}^\perp)^\integ\setminus u(V_{L^\flat}^\perp)^\integ).
\]

Now we take an element $z\in\bbV\setminus\bbV^\integ$. We may assume $z'\in(L^{\flat\prime})^\vee$ since otherwise the summation in (3) is empty. Put
\[
\Omega_z\coloneqq\{x\in\Omega\res(x,z)_{\bbV}\in u^{-1}O_E\},\qquad
\Omega^\circ_z\coloneqq\{x\in\Omega^\circ\res(x,z)_{\bbV}\in u^{-1}O_E\},
\]
both stable under the translation by $L^{\prime\flat}$ since $z'\in(L^{\flat\prime})^\vee$. Similarly, we have
\begin{align*}
\sum_{\substack{L\subseteq L^\vee \\ L\cap V_{L^\flat}=L^{\flat\prime}\\ z\in L^\vee}}q^{-\delta_L}\mu(t(L))
&=\frac{1}{C}\(\int_{\Omega_z^\circ\setminus V_{L^\flat}}\mu(t(L^{\flat\prime})+1)\rd x+\int_{\Omega_z\setminus(\Omega_z^\circ\cup V_{L^\flat})}\mu(t(L^{\flat\prime})-1)\rd x\) \\
&=\frac{\mu(t(L^{\flat\prime})-1)}{C}\(\vol(\Omega_z\setminus\Omega_z^\circ)+\(1-q^{t(L^{\flat\prime})-1}\)\vol(\Omega_z^\circ)\)  \\
&=\frac{\mu(t(L^{\flat\prime})-1)}{C}\(\vol(\Omega_z)-q^{t(L^{\flat\prime})-1}\vol(\Omega_z^\circ)\),
\end{align*}
where we have used $t(L^{\flat\prime})>1$ in the second equality. Thus, it remains to show that
\begin{align}\label{eq:analytic13}
\vol(\Omega_z)=q^{t(L^{\flat\prime})-1}\vol(\Omega_z^\circ).
\end{align}

We fix an orthogonal decomposition $L^{\flat\prime}=L_0\oplus L_1$ in which $L_0$ is self-dual and $L_1$ is of both rank and type $t(L^{\flat\prime})$. Since both $\Omega_z$ and $\Omega_z^\circ$ depend only on the coset $z+L^{\flat\prime}$, we may assume $z'\in L_1^\vee$ and anisotropic. Let $V_2\subseteq\bbV$ be the orthogonal complement of $L_0+\langle z\rangle$. We claim
\begin{itemize}
  \item[($*$)] There exists an integral $O_E$-lattice $L_2$ of $V_2$ of of type $t(L^{\flat\prime})$ such that
     \begin{align}\label{eq:analytic14}
     (u^i L_2^\vee)^\integ=\{x\in V_2^\integ\res x'\in u^iL_1^\vee\}
     \end{align}
     for $i=0,1$.
\end{itemize}
Assuming ($*$), by construction, we have
\[
\{x\in\bbV\res(x,z)_{\bbV}\in u^{-1}O_E\}=L_0\otimes_{O_F}F\oplus\langle z\rangle^\vee\oplus V_2.
\]
Now we use the condition $z\not\in\bbV^\integ$, which implies that $\langle z\rangle^\vee\subseteq u\langle z\rangle\cap\bbV^\integ$. Combining with \eqref{eq:analytic14}, we obtain
\begin{align*}
\Omega_z=L_0\times\langle z\rangle^\vee\times(L_2^\vee)^\integ,\qquad
\Omega_z^\circ=L_0\times\langle z\rangle^\vee\times(uL_2^\vee)^\integ.
\end{align*}
Thus, \eqref{eq:analytic13} follows from Lemma \ref{le:analytic6} below. Part (3) is proved.

Now we show ($*$). There are two cases.

First, we assume $z\neq z'$, that is, $z\not\in V_{L^\flat}$. Let $L_2$ be the unique $O_E$-lattice of $V_2$ satisfying
\begin{align}\label{eq:analytic15}
L_2^\vee=\{x\in V_2\res x'\in L_1^\vee\}.
\end{align}
Then \eqref{eq:analytic14} clearly holds. Thus, it remains to show that $L_2$ is integral of type $t(L^{\flat\prime})$. Put $w\coloneqq z-z'\in V_{L^\flat}^\perp$ which is nonzero hence anisotropic. Then
\[
\bar{z}\coloneqq z'-\frac{(z',z')_{\bbV}}{(w,w)_{\bbV}}w
\]
is the unique element in $V_2$ such that $\bar{z}'=z'$. To compute $L_2$, we write
\[
L_1^\vee=M+\langle y+\alpha z'\rangle
\]
for some $y\in V_{L^\flat}\cap V_2$ and $\alpha\in E\setminus uO_E$, where $M\coloneqq L_1^\vee\cap V_2$. Then
\[
M^\dag\coloneqq L_1\cap V_2=\{x\in M^\vee\res (x,y)_{\bbV}\in u^{-1}O_E\}.
\]
Since $M^\vee/M^\dag$ is isomorphic to an $O_E$-submodule of $E/u^{-1}O_E$, we may take an element $y^\dag\in M^\vee$ that generates $M^\vee/M^\dag$. Then we have
\[
L_1=M^\dag+\langle y^\dag+\alpha^\dag z'\rangle
\]
for some $\alpha^\dag\in E^\times$ such that $(y^\dag,y)_{\bbV}+\alpha^\dag\alpha^\tc(z',z')_{\bbV}\in u^{-1}O_E$. Now by \eqref{eq:analytic15}, we have
\[
L_2^\vee=M+\langle y+\alpha\bar{z}\rangle.
\]
By the same argument, we have
\[
L_2=M^\dag+\langle y^\dag+\alpha^\dag\rho\bar{z}\rangle,
\]
where
\[
\rho\coloneqq\frac{(z',z')_{\bbV}}{(\bar{z},\bar{z})_{\bbV}}.
\]
By Lemma \ref{le:analytic7}(2), we have $t(L_2)=t(L_1)=t(L^{\flat\prime})$ as long as $L_2$ is integral. Thus, it suffices to show that $y^\dag+\alpha^\dag\rho\bar{z}\in\bbV^\integ$. We compute
\begin{align*}
&\quad(y^\dag+\alpha^\dag\rho\bar{z},y^\dag+\alpha^\dag\rho\bar{z})_{\bbV}-(y^\dag+\alpha^\dag z',y^\dag+\alpha^\dag z')_{\bbV}
=(\alpha^\dag\rho\bar{z},\alpha^\dag\rho\bar{z})_{\bbV}-(\alpha^\dag z',\alpha^\dag z')_{\bbV} \\
&=\Nm_{E/F}(\alpha^\dag)\(\frac{(z',z')_{\bbV}^2}{(\bar{z},\bar{z})_{\bbV}}-(z',z')_{\bbV}\)
=\Nm_{E/F}(\alpha^\dag)(z',z')_{\bbV}\(\frac{(z',z')_{\bbV}}{(z',z')_{\bbV}+\frac{(z',z')_{\bbV}^2}{(w,w)_{\bbV}}}-1\) \\
&=\Nm_{E/F}(\alpha^\dag)(z',z')_{\bbV}\(\frac{(w,w)_{\bbV}}{(z',z')_{\bbV}+(w,w)_{\bbV}}-1\)
=\frac{-(\alpha^\dag)^\tc}{\alpha^\dag}\frac{(\alpha^\dag z',z')_{\bbV}^2}{(z,z)_{\bbV}}.
\end{align*}
As $z'\in L_1^\vee$, we have $(\alpha^\dag z',z')_{\bbV}\in u^{-1}O_E$. As $z\not\in\bbV^\integ$, we have $(z,z)_{\bbV}\not\in u^{-1}O_E$. Together, we have $\frac{(\alpha^\dag z',z')_{\bbV}^2}{(z,z)_{\bbV}}\in O_F$. Thus, $y^\dag+\alpha^\dag\rho\bar{z}\in\bbV^\integ$ as $y^\dag+\alpha^\dag z'\in\bbV^\integ$, hence $L_2$ meets the requirement in ($*$).

Second, we assume $z=z'$, that is, $z\in V_{L^\flat}$. Take $L_2=(L_1^\vee\cap V_2)^\vee\oplus u^\delta(V_{L^\flat}^\perp)^\integ$ for some integer $\delta\geq 0$ determined later. We show that $(L_1^\vee\cap V_2)^\vee$ is an integral hermitian $O_E$-module of type $t(L^{\flat\prime})-1$. As in the previous case, we write
\[
L_1^\vee=M+\langle y+\alpha z'\rangle
\]
for some $y\in V_{L^\flat}\cap V_2$ and $\alpha\in E\setminus uO_E$, where $M\coloneqq L_1^\vee\cap V_2$. Then
\[
L_1=M^\dag+\langle y^\dag+\alpha^\dag z'\rangle
\]
so that $M^\vee$ is generated by $M^\dag$ and $y^\dag$. As $L_1$ is of type $t(L^{\flat\prime})$ which is its rank, we have $L_1\subseteq uL_1^\vee$, that is,
\[
M^\dag+\langle y^\dag+\alpha^\dag z'\rangle\subseteq uM+u\langle y+\alpha z'\rangle
\]
hence $M^\dag\subseteq uM$. As $z'\in L_1^\vee$, we have $(\alpha z',z')_{\bbV}\in u^{-1}O_E$. As $z'=z\not\in\bbV^\integ$, we have $(z',z')_{\bbV}\not\in u^{-1}O_E$ hence $\alpha^\dag\in uO_E$. Again as $z'\in L_1^\vee$, we have $\alpha^\dag z'\in uL_1^\vee$ hence $y^\dag\in uL_1^\vee\cap V_2= uM$. Together, we obtain $M^\vee\subseteq uM$, that is, $(L_1^\vee\cap V_2)^\vee$ is an integral hermitian $O_E$-module of type $t(L^{\flat\prime})-1$.

Consequently, $L_2$ is an integral $O_E$-lattice of $V_2$ of type $t(L^{\flat\prime})$. Since $L_2^\vee=(L_1^\vee\cap V_2)\oplus u^{-\delta-1}(V_{L^\flat}^\perp)^\integ$, it is clear that for $\delta$ sufficiently large, \eqref{eq:analytic14} holds for $i=0,1$. Thus, ($*$) is proved.

The lemma is all proved.
\end{proof}

\begin{lem}\label{le:analytic6}
Let $L$ be an integral hermitian $O_E$-module of rank $2m+1$ for some integer $m\geq 0$ with $t(L)=2m+1$. Then we have
\begin{align}\label{eq:analytic7}
\left|(L^\vee)^\integ/L\right|=q^{2m}\cdot\left|(uL^\vee)^\integ/L\right|.
\end{align}
Note that both $(L^\vee)^\integ$ and $(uL^\vee)^\integ$ are stable under the translation by $L$ as $t(L)=2m+1$.
\end{lem}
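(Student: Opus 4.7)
The plan is to reformulate \eqref{eq:analytic7} via Haar measures, reduce to a character-sum vanishing by Fourier analysis on $F$, and verify that vanishing by an explicit Gauss sum computation on a normal basis decomposition. I will fix additive Haar measure on $\bbV\coloneqq L\otimes_{O_F}F$ with $\vol(L)=1$. By the final observation of the lemma statement, both $(L^\vee)^\integ$ and $(uL^\vee)^\integ$ are unions of $L$-cosets, so $|(L^\vee)^\integ/L|=\vol((L^\vee)^\integ)$ and $|(uL^\vee)^\integ/L|=\vol((uL^\vee)^\integ)$. Multiplication by $u$ is an $F$-linear automorphism of $\bbV$ with Jacobian $|\Nm_{E/F}(u)|_F^n=q^{-n}$ for $n=2m+1$, and the identity $Q(ux)=-\pi_F Q(x)$ yields $\vol((uL^\vee)^\integ)=q^{-n}\vol(L^\vee\cap Q^{-1}(\pi_F^{-1}O_F))$, so \eqref{eq:analytic7} reduces to
\[
q\cdot\vol(L^\vee\cap Q^{-1}(O_F))=\vol(L^\vee\cap Q^{-1}(\pi_F^{-1}O_F)).
\]
Setting $\hat\mu(\tau)\coloneqq\int_{L^\vee}\psi_F(\tau Q(x))\,dx$ and unfolding $\CF_{O_F}(t)=\int_{O_F}\psi_F(\tau t)\,d\tau$ (with the substitution $\tau\mapsto\pi_F\tau$ on the right-hand side), the goal simplifies to
\[
\int_{O_F^\times}\hat\mu(\tau)\,d\tau=0.
\]

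To prove this vanishing, I will use a normal basis decomposition (Remark \ref{re:analytic3}) of $L$ into $s$ rank-$1$ diagonal pieces of depths $d_i\geq 1$ (with moment $\beta_iu^{2(d_i-1)}$) and $t$ rank-$2$ hyperbolic pieces of depths $c_j\geq 1$. The crucial combinatorial input is that $t(L)=\rank(L)=s+2t=2m+1$ is odd, forcing $s$ to be odd (in particular $s\geq 1$). The orthogonal decomposition factorizes $\hat\mu(\tau)=\prod_i\hat\mu_i(\tau)$. A direct Gauss sum calculation --- expanding $A\in O_F/\pi_F^n$ in its $\pi_F$-adic digits $A_0,A_1,\dots\in k$ and iteratively integrating out the top coefficients via the orthogonality of nontrivial additive characters on $k$ --- yields, for $\tau\in O_F^\times$, the uniform formulae
\[
\hat\mu_i(\tau)=q^{d_i-1}\,G\,\chi(\gamma_i\bar\tau)\ \text{(diagonal)},\qquad \hat\mu_j(\tau)=q^{2c_j}\ \text{(hyperbolic)},
\]
where $G$ is the classical quadratic Gauss sum on $k$, $\chi$ is the nontrivial quadratic character of $k^\times$, $\gamma_i\in O_F^\times$ is a unit depending on the piece, and $\bar\tau\in k^\times$ denotes the reduction of $\tau$.

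Multiplying over all pieces gives $\hat\mu(\tau)=C\cdot\chi(\bar\tau)^s$ on $O_F^\times$ for a nonzero constant $C$ independent of $\tau$. Since $s$ is odd, $\chi^s=\chi$ remains nontrivial, and the standard character orthogonality $\int_{O_F^\times}\chi(\bar\tau)\,d\tau=q^{-1}\sum_{\bar\tau\in k^\times}\chi(\bar\tau)=0$ completes the proof. The main technical obstacle will be the Gauss sum calculation producing the uniform formula above for the diagonal pieces: tracking the cancellation at each level of the $\pi_F$-adic expansion is delicate, but reduces by induction on $d_i$ (with the parity of $d_i$ determining whether the Gauss sum factor appears at depth $d_i$ or $d_i-1$) to the classical identity $\sum_{a\in k}\bar\psi(\lambda a^2)=\chi(\lambda)G$ for a nontrivial additive character $\bar\psi$ of $k$.
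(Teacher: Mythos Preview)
Your proof is correct and takes a genuinely different route from the paper. The paper argues by induction on $\val(L)$: the base case $L^\vee=u^{-1}L$ is a direct count of isotropic vectors in the $(2m+1)$-dimensional quadratic space $L^\vee/L$ over $k$, and the inductive step passes to a nearby lattice $L'\supset L$ with $t(L')=2m+1$ and smaller valuation, showing that multiplication by $u$ is a $q^{2m}$-to-$1$ map from $((L^\vee)^\integ\setminus(L'^\vee)^\integ)/L$ onto $((uL^\vee)^\integ\setminus(uL'^\vee)^\integ)/L$. Your approach instead reduces \eqref{eq:analytic7} in one stroke to the vanishing of $\int_{O_F^\times}\hat\mu(\tau)\,d\tau$ and computes $\hat\mu$ explicitly on $O_F^\times$ via the normal-basis factorisation: hyperbolic blocks contribute constants, while each rank-one block contributes a fixed multiple of $\chi(\bar\tau)$ (this follows from the standard evaluation $\int_{O_F}\psi_F(cx^2)\,dx=q^{-k/2}$ for $\val_F(c)=-k$ even and $q^{-(k+1)/2}G\,\chi(\overline{c\pi_F^k})$ for $k$ odd, applied to the two factors coming from $O_E=O_F\oplus uO_F$). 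The parity constraint $s+2t=2m+1$ then forces $s$ odd, so $\hat\mu\res_{O_F^\times}$ is a nonzero constant times the nontrivial character $\chi(\bar\tau)$, which integrates to zero.

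The paper's induction is more elementary in that it avoids Gauss sums entirely and is transparent about the lattice-theoretic mechanism; your argument is more uniform (no case analysis on $\val(L)$) and makes the role of odd rank manifest through the parity of $s$. One cosmetic point: your identity $Q(ux)=-\pi_F Q(x)$ presumes $\pi_F=u^2$, whereas in general $u^2$ is only a unit times $\pi_F$; this is harmless since only $\val_F(u^2)=1$ is used in the reduction and in the Gauss-sum formulae.
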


\begin{proof}
Put $V\coloneqq L\otimes_{O_F}F$. We prove by induction on $\val(L)$ for integral $O_E$-lattices $L$ of $V$ with $t(L)=2m+1$ that \eqref{eq:analytic7} holds.

The initial case is such that $\val(L)=2m+1$, that is, $L^\vee=u^{-1}L$. The pairing $u^2(\;,\;)_V$ induces a nondegenerate quadratic form on $L^\vee/L$. It is clear that $(L^\vee)^\integ/L$ is exactly the set of isotropic vectors in $L^\vee/L$ under the previous form. In particular, we have
\[
\left|(L^\vee)^\integ/L\right|=q^{2m}=q^{2m}\cdot\left|(uL^\vee)^\integ/L\right|.
\]

Now we consider $L$ with $\val(L)>2m+1$, and suppose that \eqref{eq:analytic7} holds for such $L'$ with $\val(L')<\val(L)$. Choose an orthogonal decomposition $L=L_0\oplus L_1$ in which $L_0$ is an integral hermitian $O_E$-module with fundamental invariants $(1,\dots,1)$ and such that all fundamental invariants of $L_1$ are at least $2$. In particular, $L_1$ has positive rank. It is easy to see that we may choose a hermitian $O_E$-module $L'_1$ contained in $u^{-1}L_1$ satisfying $L_1\varsubsetneq L'_1$ and $t(L'_1)=t(L_1)$. Put $L'\coloneqq L_0\oplus L'_1$. By the induction hypothesis, we have
\[
\left|(L'^\vee)^\integ/L'\right|=q^{2m}\cdot\left|(uL'^\vee)^\integ/L'\right|.
\]
It remains to show that
\begin{align}\label{eq:analytic8}
\left|((L^\vee)^\integ\setminus (L'^\vee)^\integ)/L\right|=q^{2m}\cdot\left|((uL^\vee)^\integ\setminus(uL'^\vee)^\integ)/L\right|.
\end{align}
We claim that the map
\[
((L^\vee)^\integ\setminus (L'^\vee)^\integ)/L\to((uL^\vee)^\integ\setminus(uL'^\vee)^\integ)/L
\]
given by the multiplication by $u$ is $q^{2m}$-to-$1$. Take an element $x\in(uL^\vee)^\integ\setminus(uL'^\vee)^\integ$. Its preimage is bijective to the set of elements $(y_0,y_1)\in L_0/uL_0\oplus L_1/uL_1$ such that $u^{-1}(x+(y_0,y_1))\in V^\integ$, which amounts to the equation
\[
(x,x)_V+\Tr_{E/F}(x,y_0)_V+\Tr_{E/F}(x,y_1)_V+(y_0,y_0)_V \in u^2 O_F.
\]
Since $x\in(uL_0^\vee)\times((uL_1^\vee)^\integ\setminus(u^2L_1^\vee)^\integ)$, there exists $y_1\in L_1$ such that $(x,y_1)_V\in O_E^\times$. In other words, for each $y_0$, the above relation defines a nontrivial linear equation on $L_1/uL_1$. Thus, the preimage of $x$ has cardinality $q^{2m}$. We obtain \eqref{eq:analytic8} hence complete the induction process.
\end{proof}

\begin{proof}[Proof of Proposition \ref{pr:analytic}]
We fix an element $L^\flat\in\flat(\bbV)$. If $L^\flat$ is not integral, then $\partial\Den_{L^\flat}^\rv\equiv0$ hence the proposition is trivial. Thus, we now assume $L^\flat$ integral and will freely adopt notation from Lemma \ref{le:analytic8}.

To show that $\partial\Den_{L^\flat}^\rv$ extends to a compactly supported locally constant function on $\bbV$, it suffices to show that for every $y\in V_{L^\flat}/L^\flat$, there exists an integer $\delta(y)>0$ such that $\partial\Den_{L^\flat}^\rv(y+x)$ is constant for $x\in u^{\delta(y)}(V_{L^\flat}^\perp)^\integ\setminus\{0\}$. If $L^\flat+\langle y\rangle$ is not integral, then there exists $\delta(y)>0$ such that $L^\flat+\langle y+x\rangle$ is not integral for $x\in u^{\delta(y)}(V_{L^\flat}^\perp)^\integ\setminus\{0\}$, which implies $\partial\Den_{L^\flat}^\rv(y+x)=0$.

Now we fix an element $y\in V_{L^\flat}/L^\flat$ such that $L^\flat+\langle y\rangle$ is integral. We claim that we may take $\delta(y)=a_{n+1}$, which is the maximal element in the fundamental invariants of $L^\flat$. It amounts to showing that for every fixed pair $(f_1,f_2)$ of generators of the $O_E$-module $(V_{L^\flat}^\perp)^\integ$, we have
\begin{align}\label{eq:analytic5}
\partial\Den_{L^\flat}^\rv(y+u^\delta f_1)-\partial\Den_{L^\flat}^\rv(y+u^{\delta-1} f_2)=0
\end{align}
for $\delta>a_{n-1}$. For every $\delta'\in\dZ$, we define two sets
\begin{align*}
\fL_1^{\delta'}&\coloneqq\{L\in\fL\res L\subseteq L^\vee,\delta_L=\delta',y+u^\delta f_1\in L\},\\
\fL_2^{\delta'}&\coloneqq\{L\in\fL\res L\subseteq L^\vee,\delta_L=\delta',y+u^{\delta-1}f_2\in L\}.
\end{align*}
By Remark \ref{re:analytic}(4), we have
\begin{align*}
\partial\Den_{L^\flat}^\rv(y+u^\delta f_1)&=2\sum_{\delta'\leq\delta}\sum_{\substack{L\in\fL_1^{\delta'}\\ t(L\cap V_{L^\flat})>1}}\mu(t(L))
=2\sum_{\substack{L^\flat\subseteq L^{\flat\prime}\subseteq (L^{\flat\prime})^\vee\\t(L^{\flat\prime})>1}}\sum_{\delta'\leq\delta}
\sum_{\substack{L\in\fL_1^{\delta'}\\ L\cap V_{L^\flat}=L^{\flat\prime}}}\mu(t(L)),\\
\partial\Den_{L^\flat}^\rv(y+u^{\delta-1}f_2)&=2\sum_{\delta'\leq\delta-1}\sum_{\substack{L\in\fL_2^{\delta'}\\ t(L\cap V_{L^\flat})>1}}\mu(t(L))
=2\sum_{\substack{L^\flat\subseteq L^{\flat\prime}\subseteq (L^{\flat\prime})^\vee\\t(L^{\flat\prime})>1}}\sum_{\delta'\leq\delta-1}
\sum_{\substack{L\in\fL_2^{\delta'}\\ L\cap V_{L^\flat}=L^{\flat\prime}}}\mu(t(L)).
\end{align*}
Now we claim that
\begin{align}\label{eq:analytic10}
\sum_{\delta'\leq\delta}
\sum_{\substack{L\in\fL_1^{\delta'}\\ L\cap V_{L^\flat}=L^{\flat\prime}}}\mu(t(L))-
\sum_{\delta'\leq\delta-1}
\sum_{\substack{L\in\fL_2^{\delta'}\\ L\cap V_{L^\flat}=L^{\flat\prime}}}\mu(t(L))=0
\end{align}
for every $L^{\flat\prime}$ in the summation. Since $\delta>a_{n+1}$, it follows that for $\delta'<0$, we have
\[
\fL_1^{\delta'}=\fL_2^{\delta'}=\{L\in\fL\res L\subseteq L^\vee,\delta_L=\delta',y\in L\}.
\]
Thus, the left-hand side of \eqref{eq:analytic10} equals
\begin{align}\label{eq:analytic11}
\sum_{\delta'=0}^\delta
\sum_{\substack{L\in\fL_1^{\delta'}\\ L\cap V_{L^\flat}=L^{\flat\prime}}}\mu(t(L))-
\sum_{\delta'=0}^{\delta-1}
\sum_{\substack{L\in\fL_2^{\delta'}\\ L\cap V_{L^\flat}=L^{\flat\prime}}}\mu(t(L)).
\end{align}
However, we also have $\fL_1^0=\{L\in\fL\res L\subseteq L^\vee,\delta_L=\delta',y\in L\}$, which implies
\[
\sum_{\substack{L\in\fL_1^0\\ L\cap V_{L^\flat}=L^{\flat\prime}}}\mu(t(L))
=\CF_{L^{\flat\prime}}(y)\sum_{\substack{L\subseteq L^\vee \\ L\cap V_{L^\flat}=L^{\flat\prime}\\ \delta_L=0}}\mu(t(L)),
\]
which vanishes by Lemma \ref{le:analytic8}(4). Thus, we obtain
\begin{align}\label{eq:analytic12}
\eqref{eq:analytic11}=\sum_{\delta'=1}^\delta
\sum_{\substack{L\in\fL_1^{\delta'}\\ L\cap V_{L^\flat}=L^{\flat\prime}}}\mu(t(L))-
\sum_{\delta'=0}^{\delta-1}
\sum_{\substack{L\in\fL_2^{\delta'}\\ L\cap V_{L^\flat}=L^{\flat\prime}}}\mu(t(L)).
\end{align}
Finally, the automorphism of $\fE$ sending $(L^{\flat\prime},\delta',\varepsilon)$ to $(L^{\flat\prime},\delta'-1,\varepsilon\circ(u\alpha\cdot))$, where $\alpha\in O_E^\times$ is the element satisfying $f_1=\alpha f_2$, induces a bijection from $\fL_1^{\delta'}$ to $\fL_2^{\delta'-1}$ preserving both $L\cap V_{L^\flat}$ and $t(L)$. Thus, \eqref{eq:analytic12} vanishes hence \eqref{eq:analytic10} and \eqref{eq:analytic5} hold.

Now we show that the support of $\widehat{\partial\Den_{L^\flat}^\rv}$ is contained in $\bbV^\integ$. Take an element $z\in\bbV\setminus\bbV^\integ$. Using Remark \ref{re:analytic}(4), we have
\begin{align*}
\widehat{\partial\Den_{L^\flat}^\rv}(z)&=\int_{\bbV}\widehat{\partial\Den_{L^\flat}^\rv}(x)\psi(\Tr_{E/F}(x,z)_{\bbV})\rd z \\
&=2\sum_{\substack{L^\flat\subseteq L\subseteq L^\vee \\ t(L\cap V_{L^\flat})>1}}\mu(t(L))\vol(L)\CF_{L^\vee}(z)  \\
&=2\sum_{\substack{L^\flat\subseteq L^{\flat\prime}\subseteq (L^{\flat\prime})^\vee\\t(L^{\flat\prime})>1}}
\sum_{\substack{L\subseteq L^\vee \\ L\cap V_{L^\flat}=L^{\flat\prime}\\ z\in L^\vee}}\mu(t(L))\vol(L) \\
&=2\sum_{\substack{L^\flat\subseteq L^{\flat\prime}\subseteq (L^{\flat\prime})^\vee\\t(L^{\flat\prime})>1}}
\vol(L^{\flat\prime})\vol((V_{L^\flat}^\perp)^\integ)\sum_{\substack{L\subseteq L^\vee \\ L\cap V_{L^\flat}=L^{\flat\prime}\\ z\in L^\vee}}q^{-\delta_L}\mu(t(L)),
\end{align*}
which is valid and vanishes by Lemma \ref{le:analytic8}(3).

Proposition \ref{pr:analytic} is proved.
\end{proof}

\if false

\begin{proposition}
Let $L^\flat$ be an element of $\flat(\bbV)$ (Definition \ref{de:analytic}). Then $\partial\Den_{L^\flat}^\rv$ extends (uniquely) to a (compactly supported) locally constant function on $\bbV$, which we still denote by $\partial\Den_{L^\flat}^\rv$. Moreover, $\widehat{\partial\Den_{L^\flat}^\rv}$ vanishes on $V_{L^\flat}^\perp\setminus\bbV^\integ$ (Definition \ref{de:analytic1}).
\end{proposition}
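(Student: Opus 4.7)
The starting point is the explicit sum formula from Remark \ref{re:analytic}(4):
\[
\partial\Den_{L^\flat}^\rv(x)=2\sum_{\substack{L^\flat\subseteq L\subseteq L^\vee \\ t(L\cap V_{L^\flat})>1}}\mu(t(L))\,\CF_L(x),\qquad \mu(t)\coloneqq\prod_{i=1}^{\frac{t}{2}-1}(1-q^{2i}),
\]
valid a priori on $\bbV\setminus V_{L^\flat}$. The plan is to show that, even though individually each $\CF_L$ is continuous on $\bbV$, the naive extension of the sum across $V_{L^\flat}$ is compatible with the values on $\bbV\setminus V_{L^\flat}$, and that the Fourier transform of the result is supported on $\bbV^\integ$. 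If $L^\flat$ is not integral the function is identically zero, so assume $L^\flat$ is integral. The key bookkeeping tool is the parametrization of Lemma \ref{le:analytic8}(1): an integral $O_E$-lattice $L\supseteq L^\flat$ corresponds to a triple $(L^{\flat\prime},\delta_L,\varepsilon_L)$ where $L^{\flat\prime}=L\cap V_{L^\flat}$ is an integral overlattice of $L^\flat$, $\delta_L$ measures how far the projection of $L$ onto $V_{L^\flat}^\perp$ sits inside $(V_{L^\flat}^\perp)^\integ$, and $\varepsilon_L$ is the gluing datum. The type $t(L)$ is then controlled by the image of $\varepsilon_L$ via Lemma \ref{le:analytic8}(2).

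For the extension statement, I would fix $y\in V_{L^\flat}/L^\flat$ and a pair $(f_1,f_2)$ of generators of the $O_E$-module $(V_{L^\flat}^\perp)^\integ$. If $L^\flat+\langle y\rangle$ is not integral, then for $\delta$ large enough no $L^\flat+\langle y+u^\delta f_1\rangle$ is integral either, so $\partial\Den_{L^\flat}^\rv(y+u^\delta f_1)=0$. Otherwise, the plan is to prove the difference identity
\[
\partial\Den_{L^\flat}^\rv(y+u^\delta f_1)-\partial\Den_{L^\flat}^\rv(y+u^{\delta-1} f_2)=0
\]
for all sufficiently large $\delta$ (concretely $\delta$ larger than the largest fundamental invariant of $L^\flat$), which yields the existence of a limit and hence a locally constant extension. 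Grouping the sum defining $\partial\Den_{L^\flat}^\rv$ by the overlattice $L^{\flat\prime}=L\cap V_{L^\flat}$ and by the parameter $\delta_L$, one sees that for $\delta_L<0$ both sides contribute the same terms, so it suffices to compare truncated sums indexed by $0\leq\delta_L\leq\delta$ versus $0\leq\delta_L\leq\delta-1$. Two inputs then close the argument: first, the $\delta_L=0$ contribution vanishes thanks to Lemma \ref{le:analytic8}(4); second, writing $f_1=\alpha f_2$ with $\alpha\in O_E^\times$, the map $(L^{\flat\prime},\delta',\varepsilon)\mapsto(L^{\flat\prime},\delta'-1,\varepsilon\circ(u\alpha\cdot))$ furnishes a bijection $\fL_1^{\delta'}\xrightarrow{\sim}\fL_2^{\delta'-1}$ preserving $L\cap V_{L^\flat}$ and $t(L)$, so the remaining terms cancel in pairs.

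For the Fourier transform statement, I would plug the series expression into the definition of the Fourier transform. Since each $\CF_L$ has Fourier transform $\vol(L)\CF_{L^\vee}$, one gets
\[
\widehat{\partial\Den_{L^\flat}^\rv}(z)
=2\sum_{\substack{L^\flat\subseteq L^{\flat\prime}\subseteq (L^{\flat\prime})^\vee\\ t(L^{\flat\prime})>1}}\vol(L^{\flat\prime})\,\vol((V_{L^\flat}^\perp)^\integ)
\sum_{\substack{L\subseteq L^\vee\\ L\cap V_{L^\flat}=L^{\flat\prime}\\ z\in L^\vee}} q^{-\delta_L}\mu(t(L)),
\]
where the inner sum exists by the convergence part of Lemma \ref{le:analytic8}(3) and equals zero whenever $z\notin\bbV^\integ$ by the vanishing part of the same lemma. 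This immediately gives $\supp(\widehat{\partial\Den_{L^\flat}^\rv})\subseteq\bbV^\integ$.

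The main obstacle is clearly the vanishing identities in Lemma \ref{le:analytic8}(3) and (4); everything else is bookkeeping. Those identities are proved by converting the discrete sums over $(L^{\flat\prime},\delta,\varepsilon)$-triples into integrals over the open compact regions $\Omega_z$ and $\Omega_z^\circ$ of $\bbV$, and then exhibiting an auxiliary self-dual/anisotropic orthogonal splitting of $L^{\flat\prime}$ together with a carefully chosen rank-filled lattice $L_2$ in an orthogonal complement of $\langle z\rangle$. The key volume identity $\vol(\Omega_z)=q^{t(L^{\flat\prime})-1}\vol(\Omega_z^\circ)$ then reduces to a counting statement about isotropic vectors in an integral hermitian $O_E$-lattice of odd rank with maximal type, which is the content of Lemma \ref{le:analytic6} and is proved by induction on $\val(L)$ using a quadratic-form argument on the residue. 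This is where the extra rigidity from the ramified case (parity constraint on valuations, and the non-uniqueness of lattices with prescribed fundamental invariants) makes the analysis genuinely more delicate than in the unramified setup of \cite{LZ}.
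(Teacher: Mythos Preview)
Your proposal is correct and follows essentially the same route as the paper's proof: the extension statement is obtained by fixing $y\in V_{L^\flat}/L^\flat$, grouping the sum by $L^{\flat\prime}=L\cap V_{L^\flat}$ and the depth parameter $\delta_L$, matching the $\delta_L<0$ terms, killing the $\delta_L=0$ term via Lemma~\ref{le:analytic8}(4), and pairing off the rest by the shift bijection on $\fE$; the Fourier statement is obtained by term-wise Fourier transform and the convergence/vanishing in Lemma~\ref{le:analytic8}(3). In fact you establish the stronger conclusion $\supp(\widehat{\partial\Den_{L^\flat}^\rv})\subseteq\bbV^\integ$ (not merely vanishing on $V_{L^\flat}^\perp\setminus\bbV^\integ$), which is exactly what the paper proves.
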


\begin{proof}
To shorten formulae, in this proof, we put
\[
\mu(t)\coloneqq\prod_{i=1}^{\frac{t}{2}-1}(1-q^{2i})
\]
for every positive even integer $t$.

We fix an element $L^\flat\in\flat(\bbV)$. If $L^\flat$ is not integral, then $\partial\Den_{L^\flat}^\rv\equiv0$ hence the proposition is trivial. Thus, we now assume $L^\flat$ integral.

We fix a basis $\{f\}$ of $V_{L^\flat}^\perp$ such that $(f,f)_{\bbV}\in O_F^\times$. For every compact subset $X$ of $\bbV$ not contained in $V_{L^\flat}$, we denote by $\delta_X$ the maximal integer such that the image of $X$ under the projection map $\bbV\to V_{L^\flat}^\perp$ induced by the orthogonal decomposition $\bbV=V_{L^\flat}\oplus V_{L^\flat}^\perp$ is contained in $\langle u^{\delta_X}f\rangle$. We denote by $\fL$ the set of $O_E$-lattices of $\bbV$ containing $L^\flat$, and by $\fE$ the set of triples $(L^{\flat\prime},\delta,\varepsilon)$ in which $L^{\flat\prime}$ is an $O_E$-lattice of $V_{L^\flat}$ containing $L^\flat$, $\delta\in\dZ$, and $\varepsilon\in L^{\flat\prime}\otimes_{O_F}F/O_F$. We will show the following four statements.
\begin{enumerate}
  \item The map $\fL\to\fE$ sending $L$ to the triple $(L\cap V_{L^\flat},\delta_L,\varepsilon_L)$ is a bijection, where $\varepsilon_L$ is the image of $u^{\delta_L}f$ under the extension map $\langle u^{\delta_L}f\rangle\to(L\cap V_{L^\flat})\otimes_{O_F}F/O_F$ induced by $L$. Moreover,
      \begin{itemize}
        \item if $L$ is integral, then $L\cap V_{L^\flat}$ is integral and $\varepsilon_L\in(L\cap V_{L^\flat})^\vee/(L\cap V_{L^\flat})$;

        \item when $\delta_L\geq 0$, $L$ is integral if and only if $L\cap V_{L^\flat}$ is integral and $\varepsilon_L\in((L\cap V_{L^\flat})^\vee)^\integ/(L\cap V_{L^\flat})$.
      \end{itemize}

  \item For every fixed $\delta\geq 0$ and an integral $O_E$-lattice $L^{\flat\prime}$ of $V_{L^\flat}$ containing $L^\flat$ with $t(L^{\flat\prime})>1$, we have
      \[
      \sum_{\substack{L\subseteq L^\vee \\ L\cap V_{L^\flat}=L^{\flat\prime}\\ \delta_L=\delta}}\mu(t(L))=0.
      \]

  \item The function $\partial\Den_{L^\flat}^\rv$ extends to a compactly supported locally constant function on $\bbV$.

  \item The Fourier transform $\widehat{\partial\Den_{L^\flat}^\rv}$ vanishes on $V_{L^\flat}^\perp\setminus\bbV^\integ$.
\end{enumerate}

Part (1) is straightforward. In what follows, we will denote by $L^{\flat\prime}_{\delta,\epsilon}\in\fL$ the preimage of $(L^{\flat\prime},\delta,\varepsilon)$ under this bijection.

For (2), we have
\[
\sum_{\substack{L\subseteq L^\vee \\ L\cap V_{L^\flat}=L^{\flat\prime}\\ \delta_L=\delta}}\mu(t(L))=
\sum_{\varepsilon\in((L^{\flat\prime})^\vee)^\integ/L^{\flat\prime}}\mu(t(L^{\flat\prime}_{\delta,\varepsilon}))
\]
by (1). We fix an orthogonal decomposition $L^{\flat\prime}=L_0\oplus L_1$ in which $L_0$ is a self-dual hermitian $O_E$-lattice of rank $n-1-t(L^\flat)$. Then we have
\[
\sum_{\varepsilon\in((L^{\flat\prime})^\vee)^\integ/L^{\flat\prime}}\mu(t(L^{\flat\prime}_{\delta,\varepsilon}))
=\sum_{\varepsilon\in(L_1^\vee)^\integ/L_1}\mu(t(L^{\flat\prime}_{\delta,\varepsilon}))
=\sum_{\varepsilon\in(L_1^\vee)^\integ/L_1}\(\prod_{i=1}^{\frac{t(L^{\flat\prime}_{\delta,\varepsilon})}{2}-1}(1-q^{2i})\).
\]
By Remark \ref{re:analytic3}, we know that
\begin{align}\label{eq:analytic9}
t(L^{\flat\prime}_{\delta,\varepsilon})=
\begin{dcases}
t(L^\flat)+1, &\text{if $\varepsilon\in(uL_1^\vee)^\integ/L_1$,}\\
t(L^\flat)-1, &\text{if $\varepsilon\in((L_1^\vee)^\integ\setminus (uL_1^\vee)^\integ)/L_1$}.
\end{dcases}
\end{align}
Thus, (2) follows from Lemma \ref{le:analytic6} below and the assumption that $t(L^{\flat\prime})>1$.

For (3), it suffices to show that for every $y\in V_{L^\flat}/L^\flat$, there exists an integer $\delta(y)>0$ such that $\partial\Den_{L^\flat}^\rv(y+x)$ is constant for $x\in\langle u^{\delta(y)}f\rangle\setminus\{0\}$. If $L^\flat+\langle y\rangle$ is not integral, then there exists $\delta(y)>0$ such that $L^\flat+\langle y+x\rangle$ is not integral for $x\in\langle u^{\delta(y)}f\rangle\setminus\{0\}$, which implies $\partial\Den_{L^\flat}^\rv(y+x)=0$.

Now we fix an element $y\in V_{L^\flat}/L^\flat$ such that $L^\flat+\langle y\rangle$ is integral. For a pair of integers $\delta,\delta'$, we define
\[
\fL_{\delta,\delta'}\coloneqq\{L\in\fL\res L\subseteq L^\vee,\delta_L=\delta',y+u^\delta f\in L\}.
\]
By Remark \ref{re:analytic}(4), we have for $\delta\geq 0$,
\begin{align*}
\partial\Den_{L^\flat}^\rv(y+u^\delta f)=\sum_{\delta'\leq\delta}\sum_{\substack{L\in\fL_{\delta,\delta'}\\ t(L\cap V_{L^\flat})>1}}\mu(t(L))
=\sum_{\delta'\leq\delta}\sum_{\substack{L^\flat\subseteq L^{\flat\prime}\subseteq (L^{\flat\prime})^\vee\\t(L^{\flat\prime})>1}}
\sum_{\substack{L\in\fL_{\delta,\delta'}\\ L\cap V_{L^\flat}=L^{\flat\prime}}}\mu(t(L)).
\end{align*}
Let $a_{n-1}$ be the maximal element in the fundamental invariants of $L^\flat$. Then when $\delta'<\delta-a_{n-1}$, we have
\begin{align}\label{eq:analytic6}
\fL_{\delta,\delta'}=\fL_{\delta-1,\delta'}=\{L\in\fL\res L\subseteq L^\vee,\delta_L=\delta',y\in L\}.
\end{align}
Now we take $\delta(y)=a_{n-1}$. Then for $\delta>\delta(y)$, we have
\begin{align*}
&\quad\partial\Den_{L^\flat}^\rv(y+u^\delta f)-\partial\Den_{L^\flat}^\rv(y+u^{\delta-1} f) \\
&=\sum_{\delta'=0}^\delta\sum_{\substack{L^\flat\subseteq L^{\flat\prime}\subseteq (L^{\flat\prime})^\vee\\t(L^{\flat\prime})>1}}
\sum_{\substack{L\in\fL_{\delta,\delta'}\\ L\cap V_{L^\flat}=L^{\flat\prime}}}\mu(t(L))
-\sum_{\delta'=0}^{\delta-1}\sum_{\substack{L^\flat\subseteq L^{\flat\prime}\subseteq (L^{\flat\prime})^\vee\\t(L^{\flat\prime})>1}}
\sum_{\substack{L\in\fL_{\delta-1,\delta'}\\ L\cap V_{L^\flat}=L^{\flat\prime}}}\mu(t(L)).
\end{align*}
Again by \eqref{eq:analytic6}, we have
\[
\sum_{\substack{L^\flat\subseteq L^{\flat\prime}\subseteq (L^{\flat\prime})^\vee\\t(L^{\flat\prime})>1}}
\sum_{\substack{L\in\fL_{\delta,0}\\ L\cap V_{L^\flat}=L^{\flat\prime}}}\mu(t(L))
=
\sum_{\substack{L^\flat+\langle y\rangle\subseteq L^{\flat\prime}\subseteq (L^{\flat\prime})^\vee\\t(L^{\flat\prime})>1}}
\sum_{\substack{L\subseteq L^\vee \\ L\cap V_{L^\flat}=L^{\flat\prime}\\ \delta_L=0}}\mu(t(L))
\]
which vanishes by (2). Thus, we have
\begin{align*}
&\qquad\partial\Den_{L^\flat}^\rv(y+u^\delta f)-\partial\Den_{L^\flat}^\rv(y+u^{\delta-1} f) \\
&=\sum_{\delta'=1}^\delta\sum_{\substack{L^\flat\subseteq L^{\flat\prime}\subseteq (L^{\flat\prime})^\vee\\t(L^{\flat\prime})>1}}
\(\sum_{\substack{L\in\fL_{\delta,\delta'}\\ L\cap V_{L^\flat}=L^{\flat\prime}}}\mu(t(L))
-\sum_{\substack{L\in\fL_{\delta-1,\delta'-1}\\ L\cap V_{L^\flat}=L^{\flat\prime}}}\mu(t(L))\).
\end{align*}
By (1) and \eqref{eq:analytic9}, when $\delta'\geq 1$, the automorphism of $\fE$ sending $(L^{\flat\prime},\delta',\varepsilon)$ to $(L^{\flat\prime},\delta'-1,\varepsilon)$ induces a bijection from $\fL_{\delta,\delta'}$ to $\fL_{\delta-1,\delta'-1}$ preserving both $L\cap V_{L^\flat}$ and $t(L)$. Therefore, we obtain
\[
\partial\Den_{L^\flat}^\rv(y+u^\delta f)=\partial\Den_{L^\flat}^\rv(y+u^{\delta-1} f)
\]
for $\delta>\delta(y)$. Finally, by Remark \ref{re:analytic}(5), we know that $\partial\Den_{L^\flat}^\rv(y+x)$ is constant for $x\in\langle u^{\delta(y)}f\rangle\setminus\{0\}$. Part (3) is proved.

For (4), we define a function $\Delta_{L^\flat}$ on $V_{L^\flat}^\perp$ by the formula
\[
\Delta_{L^\flat}(x)\coloneqq\int_{V_{L^\flat}}\partial\Den_{L^\flat}^\rv(y+x)\rd y,
\]
which is a compactly supported locally constant function. It suffices to show that $\Delta_{L^\flat}$ is invariant under the translation by the $O_E$-lattice $u^{-1}(V_{L^\flat}^\perp)^\integ$. For $x\in V_{L^\flat}^\perp\setminus u^{-1}(V_{L^\flat}^\perp)^\integ$, $x+u^{-1}(V_{L^\flat}^\perp)^\integ$ is contained in $\{[\alpha]_{L^\flat}(x)\res \alpha\in O_E^\times\}$. Thus, the invariance around $x$ follows from Remark \ref{re:analytic}(5). Again by Remark \ref{re:analytic}(5), it remains to show that for every $\delta\geq 0$, we have $\Delta_{L^\flat}(u^\delta f)=\Delta_{L^\flat}(u^{\delta-1}f)$, which is equivalent to
\begin{align}\label{eq:analytic4}
\sum_{y\in V_{L^\flat}/L^\flat}\sum_{\substack{L^\flat+\langle y+u^\delta f\rangle\subseteq L\subseteq L^\vee \\ t(L\cap V_{L^\flat})>1}}
\mu(t(L))=
\sum_{y\in V_{L^\flat}/L^\flat}\sum_{\substack{L^\flat+\langle y+u^{\delta-1}f\rangle\subseteq L\subseteq L^\vee \\ t(L\cap V_{L^\flat})>1}}
\mu(t(L))
\end{align}
by Remark \ref{re:analytic}(4). Note that the left-hand side is a partial sum of the (finite) sum on the right-side side.

By (1), we know that the lattices $L$ that appear in the right-hand side but not the left-hand side of \eqref{eq:analytic4} are exactly those with $\delta_L=\delta$. Moreover, for such $L$, the number of $y\in V_{L^\flat}/L^\flat$ such that $y+u^\delta f\in L$ equals $|(L\cap V_{L^\flat})/L^\flat|$. Thus, \eqref{eq:analytic4} is equivalent to
\begin{align*}
\sum_{\substack{L^\flat\subseteq L^{\flat\prime}\subseteq (L^{\flat\prime})^\vee\\t(L^{\flat\prime})>1}}
|L^{\flat\prime}/L^\flat|
\sum_{\substack{L\subseteq L^\vee \\ L\cap V_{L^\flat}=L^{\flat\prime}\\ \delta_L=\delta}}\mu(t(L))=0,
\end{align*}
which holds by (2). Thus, (4) follows.

Proposition \ref{pr:analytic} follows from (3) and (4).
\end{proof}

In the remaining part of this subsection, we will compute $\partial\Den(\bbL)$ explicitly when $r=1$. However, we will make the discussion for general rank to our best.

Let $L^\flat$ be an element of $\flat(\bbV)$ (Definition \ref{de:analytic}). Define the \emph{(normalized) local Siegel series} of $L^\flat$ to be the polynomial $\Den(X,L^\flat)\in\dQ[X]$ such that for every integer $s\geq 1$,
\[
\Den(q^{-s},L^\flat)=\frac{\Den(H_{r-1+s},L^\flat)}{\prod_{i=s}^{r-1+s}(1-q^{-2i})},
\]
where $\Den$ is defined in Definition \ref{de:density}. The following lemma is parallel to Lemma \ref{le:analytic2}.

\begin{lem}\label{le:analytic3}
We have
\[
\Den(X,L^\flat)=\sum_{L^\flat\subseteq L\subseteq L^\vee}|L/L^\flat|\cdot X^{2\length_{O_E}(L/L^\flat)}\prod_{i=1}^{\frac{t(L)-1}{2}}(1-q^{2i}X^2),
\]
where the sum is taken over integral $O_E$-lattices of $V_{L^\flat}$ containing $L^\flat$.
\end{lem}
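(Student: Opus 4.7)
My plan is to prove Lemma \ref{le:analytic3} in direct parallel to the proof of Lemma \ref{le:analytic2}, by invoking the Cho--Yamauchi-type counting formula (Lemma \ref{le:analytic1}) and then doing a careful algebraic rearrangement, paying attention to the parity of the rank.

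Concretely, since $L^\flat$ has rank $m=n-1=2r-1$, I apply Lemma \ref{le:analytic1} with $s$ replaced by $r-1+s$ to obtain, for every integer $s\geq 1$,
\[
\Den(H_{r-1+s},L^\flat)=\sum_{L^\flat\subseteq L\subseteq L^\vee}|L/L^\flat|^{1-2s}\prod_{r-1+s-\frac{2r-1+t(L)}{2}<i\leq r-1+s}(1-q^{-2i}).
\]
Here I use that by Remark \ref{re:analytic2}(2), $t(L)$ has the same parity as the rank $2r-1$, hence is odd; writing $t(L)=2\tau+1$ makes $\frac{2r-1+t(L)}{2}=r+\tau$ an integer, and the product becomes $\prod_{s-\tau\leq i\leq r-1+s}(1-q^{-2i})$. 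I also use that the residue field of $O_E$ has cardinality $q$ (since $E/F$ is ramified), so $|L/L^\flat|=q^{\length_{O_E}(L/L^\flat)}$.

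Next, I divide both sides by the normalizing factor $\prod_{i=s}^{r-1+s}(1-q^{-2i})$ from the definition of $\Den(X,L^\flat)$. The remaining product is $\prod_{i=s-\tau}^{s-1}(1-q^{-2i})$; substituting $j=s-i$ and setting $X=q^{-s}$ turns this into $\prod_{j=1}^{\tau}(1-q^{2j}X^{2})$. Meanwhile $|L/L^\flat|^{1-2s}=|L/L^\flat|\cdot X^{2\length_{O_E}(L/L^\flat)}$. Substituting back yields
\[
\Den(q^{-s},L^\flat)=\sum_{L^\flat\subseteq L\subseteq L^\vee}|L/L^\flat|\cdot X^{2\length_{O_E}(L/L^\flat)}\prod_{i=1}^{\frac{t(L)-1}{2}}(1-q^{2i}X^{2}),
\]
with the empty product interpreted as $1$ when $t(L)=1$.

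Finally, since the right-hand side is a polynomial in $X$ and the identity holds at $X=q^{-s}$ for infinitely many integers $s\geq 1$, it is a polynomial identity; this simultaneously proves existence of $\Den(X,L^\flat)\in\dQ[X]$ and gives the claimed closed form. There is no real obstacle here beyond careful bookkeeping; the only point of care is the parity of $t(L)$ (odd in our case, as opposed to even in Lemma \ref{le:analytic2}), which is what produces $\frac{t(L)-1}{2}$ rather than $\frac{t(L)}{2}-1$ as the upper limit of the product and introduces the extra factor $|L/L^\flat|$ (coming from the exponent $1-2s$ instead of $-2s$).
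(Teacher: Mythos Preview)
Your proof is correct and follows exactly the approach indicated in the paper, which simply states that the lemma is ``a direct consequence of Lemma~\ref{le:analytic1} and the definition above''; you have accurately filled in the algebraic bookkeeping. One tiny quibble: Lemma~\ref{le:analytic1} requires the parameter there to be at least the rank, so your displayed application is valid for $s\geq r$ rather than all $s\geq 1$, but this is harmless since you only use that the identity holds for infinitely many $s$.
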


\begin{proof}
This is a direct consequence of Lemma \ref{le:analytic1} and the definition above.
\end{proof}

The following lemma on the functional equations for local Siegel series is due to Ikeda.

\begin{lem}\label{le:analytic5}
We have
\begin{align*}
\Den(X,\bbL)&=-X^{\val(L)}\cdotp\Den(X^{-1},\bbL),\\
\Den(X,L^\flat)&=(q^{1/2}X)^{\val(L^\flat)-1}\cdot\Den((qX)^{-1},L^\flat),
\end{align*}
when $\bbL$ and $L^\flat$ are integral, respectively.
\end{lem}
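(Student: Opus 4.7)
The plan is to derive both functional equations from Ikeda's functional equation for local Siegel series of hermitian forms (which in the ramified case is worked out in Ikeda--Katsurada); the lemma is attributed to Ikeda, so I view this as an exercise in matching conventions between our normalization and the one in the literature. The strategy has two steps: first, identify our normalized polynomials $\Den(X,\bbL)$ and $\Den(X,L^\flat)$ with standard objects whose functional equation is known; second, translate that functional equation into our variable $X = q^{-s}$ and verify the exponent and sign.

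For the even-rank case, I would invoke Remark \ref{re:whittaker}, which realizes $\Den(H_{r+s},\bbL)$ as the $T$-th Whittaker coefficient $W_T(s,1_{4r},\CF_{H_r^{2r}})$ of the standard Siegel--Weil section on $\UG(2r,2r)$. The local functional equation of the associated Siegel Eisenstein series, coming from the intertwining operator and (equivalently) the local gamma factor of the doubling zeta integral, gives an identity $W_T(s) = \gamma(s)\,W_T(-s)$ for an explicit $\gamma(s)$. After dividing by the normalizing factor $\prod_{i=s+1}^{r+s}(1-q^{-2i})$ (which itself has a clean $s\leftrightarrow -s$ behavior in even rank), the gamma factor specializes in the ramified hermitian case to $-q^{s\cdot\val(\bbL)}$, where the sign $-1$ reflects the nonsplit Hasse invariant $\epsilon(\bbV) = -1$. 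Rewriting in $X$, this yields the first equation $\Den(X,\bbL) = -X^{\val(\bbL)}\Den(X^{-1},\bbL)$.

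For the odd-rank case, the analogous identification places $\Den(H_{r-1+s},L^\flat)$ in the setting of a rank $2r-1$ hermitian form, for which the corresponding doubling setup lives on a group like $\UG(2r-1,2r-1)$. The same intertwining/gamma calculation applies, but the normalizing factor $\prod_{i=s}^{r-1+s}(1-q^{-2i})$ is now \emph{asymmetric} under $s\leftrightarrow -s$, contributing a relative shift by $q^{-1}$ in the polynomial variable; this is precisely why the functional equation takes the form $X \leftrightarrow (qX)^{-1}$ rather than $X \leftrightarrow X^{-1}$. The exponent $\val(L^\flat)-1$ is even by Remark \ref{re:analytic2}(2), so the $q^{1/2}$ appearing in $(q^{1/2}X)^{\val(L^\flat)-1}$ is present only to an integer power, consistent with $\Den(X,L^\flat)\in\dQ[X]$.

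The main obstacle is the bookkeeping: we have two different normalizations (and two different parities), so matching to the gamma factor in Ikeda's functional equation requires care, especially since ramified local $L$-factors and epsilon factors at $E/F$ contribute non-trivially. One also needs to confirm that the gamma factor in the ramified hermitian setting takes the stated specialization; once this is verified (which can be done either from the explicit formula in Lemma \ref{le:analytic2} by a symmetry argument on the set of integral overlattices, or directly from the intertwining integral), both equations follow in parallel. Neither step involves substantial new ideas beyond Ikeda's machinery.
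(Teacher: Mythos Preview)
Your proposal is correct and takes essentially the same approach as the paper: both identities are deduced from Ikeda's functional equation for local Siegel series, with the remaining work being a conversion of conventions. The paper's proof is more direct---it simply cites \cite{Ike08}*{Corollary~3.2(i),(ii)} and invokes Lemma~\ref{le:analytic7}(1) to identify the exponent (Ikeda's formula is in terms of $\val_E(\det T)$, which that lemma relates to $\val(L)$)---whereas you propose to re-derive the functional equation via the Whittaker/intertwining picture, which is how Ikeda obtains it in the first place.
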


\begin{proof}
The first identity follows from \cite{Ike08}*{Corollary~3.2(i)} and Lemma \ref{le:analytic7}(1). The second identity follows from \cite{Ike08}*{Corollary~3.2(ii)} and Lemma \ref{le:analytic7}(1).
\end{proof}

The following lemma provides some induction formulae for local Siegel series and their derivatives.

\begin{lem}\label{le:analytic4}
Suppose that $L^\flat$ is integral and has fundamental invariants $(a_1,\dots,a_{n-1})$ (with $a_{n-1}>0$). Let $x$ be an element of $(V_{L^\flat}^\perp)^\integ$.
\begin{enumerate}
  \item If $\val(x)>a_{n-1}$, then for every integer $s\geq 2r$, we have
     \[
     \Den(H_s,L^\flat+\langle x\rangle)=q^{2r-2s}\cdot\Den(H_s,L^\flat+\langle u^{-1}x\rangle)+(1-q^{-2s})\cdot\Den(H_{s-1},L^\flat).
     \]

  \item If $\val(x)>a_{n-1}$, then we have
     \[
     \Den(X,L^\flat+\langle x\rangle)=X^2\cdot\Den(X,L^\flat+\langle u^{-1}x\rangle)+
     (1-X^2)\cdot\Den(X,L^\flat).
     \]

  \item If $\val(x)\geq a_{n-1}-1$, then we have
     \[
     \Den(X,L^\flat+\langle x\rangle)=\Den(X,L^\flat)-q^{\frac{\val(L^\flat)-1}{2}}X^{\val(x)+1}\cdot\Den(q^{-1}X,L^\flat).
     \]
\end{enumerate}
\end{lem}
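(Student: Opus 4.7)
The plan is to establish (1) as the core combinatorial identity via a local-density counting argument, then derive (2) as a polynomial consequence of (1), and finally deduce (3) by iterating (2) to reduce to a small base case and invoking the functional equations of Lemma \ref{le:analytic5}.

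For (1), I would exploit the orthogonal decomposition $L^\flat+\langle x\rangle = L^\flat\perp\langle x\rangle$, valid since $x\in V_{L^\flat}^\perp$. A hermitian homomorphism $\phi\colon L^\flat+\langle x\rangle\to H_s$ is then determined by a pair $(\phi_1,v)$, where $\phi_1\coloneqq\phi|_{L^\flat}$ and $v\coloneqq\phi(x)$ satisfies $v\in\phi_1(L^\flat)^\perp\subseteq H_s$ together with $(v,v)_{H_s}=(x,x)_{\bbV}$. I would then stratify the scheme $\Herm_{L^\flat+\langle x\rangle,H_s}$ by whether or not $v\in uH_s$ (using $H_s^\vee=H_s$). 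When $v\in uH_s$, writing $v=uw$, the pair $(\phi_1,w)$ corresponds to a hermitian homomorphism out of $L^\flat+\langle u^{-1}x\rangle$; this lattice remains integral thanks to the hypothesis $\val(x)>a_{n-1}\geq 1$, and a change of variables in the volume defining the density produces the factor $q^{2r-2s}$. When $v\not\in uH_s$, the splitness of $H_s$ allows one to split off a hyperbolic plane $H\subseteq H_s$ containing $v$, reducing $\phi_1$ to a hermitian homomorphism from $L^\flat$ into a copy of $H_{s-1}$; the measure of the resulting stratum of primitive $v$ contributes the factor $(1-q^{-2s})\cdot\Den(H_{s-1},L^\flat)$.

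Part (2) will then follow from (1) by direct algebraic manipulation: apply (1) with $s$ replaced by $r+s'$ and divide by $\prod_{i=s'+1}^{r+s'}(1-q^{-2i})$, converting each local density to the normalised Siegel series via Definition \ref{de:density2} and its analogue for $L^\flat$ of rank $n-1=2r-1$. The prefactor appearing in front of the $\Den(H_{r+s'-1},L^\flat)$ term telescopes to $1-q^{-2s'}$, yielding the identity of (2) at $X=q^{-s'}$ for every integer $s'\geq r$. Since both sides of (2) are polynomials in $X$, they must agree as polynomials.

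For (3), I would first iterate (2) as long as $\val(u^{-k}x)>a_{n-1}$, obtaining $\Den(X,L^\flat+\langle x\rangle)=X^{2k}\Den(X,L^\flat+\langle u^{-k}x\rangle)+(1-X^{2k})\Den(X,L^\flat)$, thereby reducing (3) to the base case $\val(x)\in\{a_{n-1}-1,a_{n-1}\}$; the parity constraint in Remark \ref{re:analytic2}(2) forces $\val(L^\flat)+\val(x)$ to be even, which singles out one of the two base values. For the base case I would combine the functional equations of Lemma \ref{le:analytic5} for both $L^\flat+\langle x\rangle$ (rank $n$) and $L^\flat$ (rank $n-1$); the latter can be rewritten as $\Den(X^{-1},L^\flat)=q^{(\val(L^\flat)-1)/2}X^{-(\val(L^\flat)-1)}\Den(q^{-1}X,L^\flat)$, producing precisely the factor $q^{(\val(L^\flat)-1)/2}\Den(q^{-1}X,L^\flat)$ predicted by (3). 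The main obstacle will be this base-case matching: the overlattices $L\supseteq L^\flat+\langle x\rangle$ have to be parametrised carefully via Lemma \ref{le:analytic8}(1), and the exponent $X^{\val(x)+1}$ must be tracked through the parity-sensitive structure of integral $O_E$-lattices in the ramified setting, something that has no analogue in the unramified treatment of \cite{LZ} and which relies essentially on the Ikeda functional equation of Lemma \ref{le:analytic5}.
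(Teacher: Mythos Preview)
For (1) you take a different route from the paper: rather than stratifying $\Herm_{L^\flat+\langle x\rangle,H_s}$ by whether $\phi(x)\in uH_s$, the paper invokes the overlattice expansion of Lemma~\ref{le:analytic1} and splits the sum over integral $L'\supseteq L^\flat+\langle x\rangle$ according to whether $u^{-1}x\in L'$. The branch $u^{-1}x\notin L'$ is handled by writing $L'=M^\flat+\langle x\rangle$ and proving (via Lemma~\ref{le:analytic7}) that $L'$ is integral iff $M^\flat$ is, with $t(M^\flat)=t(L')-1$; the Cho--Yamauchi counting then identifies this branch with $(1-q^{-2s})\Den(H_{s-1},L^\flat)$. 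Your approach is the dual picture and should also work, though splitting off a self-dual rank-two summand of $H_s$ around a given primitive vector needs some care in the ramified setting. Part (2) is handled identically in both treatments.

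For (3) your plan is more complicated than necessary, and the ``main obstacle'' you flag is in fact avoidable. The paper uses neither iteration nor a base case. Even when $\val(x)\in\{a_{n-1}-1,a_{n-1}\}$, part (2) still applies with $ux$ in place of $x$ (since $\val(ux)=\val(x)+2>a_{n-1}$), giving
\[
\Den(X,L^\flat+\langle ux\rangle)=X^2\,\Den(X,L^\flat+\langle x\rangle)+(1-X^2)\,\Den(X,L^\flat).
\]
Substituting the three functional equations of Lemma~\ref{le:analytic5} into this identity and then replacing $X$ by $X^{-1}$ produces a second linear relation among $\Den(X,L^\flat+\langle ux\rangle)$, $\Den(X,L^\flat+\langle x\rangle)$, and $\Den(q^{-1}X,L^\flat)$. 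Subtracting the two relations eliminates $\Den(X,L^\flat+\langle ux\rangle)$ and yields (3) immediately, for every $x$ with $\val(x)\geq a_{n-1}-1$; no overlattice parametrisation via Lemma~\ref{le:analytic8} is needed.
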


\begin{proof}
For (1), by Lemma \ref{le:analytic1}, we have
\begin{align*}
\Den(H_s,L^\flat+\langle x\rangle)=\Den(H_s,L^\flat+\langle x\rangle)_1+\Den(H_s,L^\flat+\langle x\rangle)_2,
\end{align*}
where
\begin{align*}
\Den(H_s,L^\flat+\langle x\rangle)_1&\coloneqq\sum_{L^\flat+\langle u^{-1}x\rangle\subseteq L'\subseteq L'^\vee}
|L'/(L^\flat+\langle x\rangle)|^{2r-2s}\prod_{s-r-\frac{t(L')}{2}<i\leq s}(1-q^{-2i}),\\
\Den(H_s,L^\flat+\langle x\rangle)_2&\coloneqq\sum_{\substack{L^\flat+\langle x\rangle\subseteq L'\subseteq L'^\vee \\ u^{-1}x\not\in L'}}|L'/(L^\flat+\langle x\rangle)|^{2r-2s}\prod_{s-r-\frac{t(L')}{2}<i\leq s}(1-q^{-2i}).
\end{align*}
For the first term, we have
\begin{align*}
\Den(H_s,L^\flat+\langle x\rangle)_1&=\sum_{L^\flat+\langle u^{-1}x\rangle\subseteq L'\subseteq L'^\vee}
(q\cdot|L'/(L^\flat+\langle u^{-1}x\rangle)|)^{2r-2s}\prod_{s-r-\frac{t(L')}{2}<i\leq s}(1-q^{-2i}) \\
&=q^{2r-2s}\cdot\Den(H_s,L^\flat+\langle u^{-1}x\rangle).
\end{align*}
Now we evaluate the second term. For an $O_E$-lattice $L'$ of $\bbV$ containing $L^\flat\oplus\langle x\rangle$ but not $x$, we may write $L'=M^\flat+\langle x\rangle$ for some $M^\flat\in\flat(\bbV)$. We make the following claim.
\begin{itemize}
  \item[($*$)] The hermitian $O_E$-module $L'$ is integral if and only if the hermitian $O_E$-module $M^\flat$ is integral; and when they are integral, $t(M^\flat)=t(L')-1$.
\end{itemize}
Assuming ($*$), by the same argument (toward the counting of the second sum in the statement) of \cite{CY20}*{Proposition~4.8} and \cite{CY20}*{Theorem~4.9} (with $d=1$ and $e_n=x$ in our case),\footnote{Here, our claim ($*$) replaces \cite{CY20}*{Conjecture~4.4}.} we obtain
\begin{align*}
\Den(H_s,L^\flat+\langle x\rangle)_2&=\sum_{L^\flat\subseteq L^{\flat\prime}\subseteq (L^{\flat\prime})^\vee}
|L^{\flat\prime}/L^\flat|\cdot|L^{\flat\prime}/L^\flat|^{2r-2s}\prod_{s-r-\frac{t(L^{\prime\flat})+1}{2}<i\leq s}(1-q^{-2i}) \\
&=\sum_{L^\flat\subseteq L^{\flat\prime}\subseteq (L^{\flat\prime})^\vee}
|L^{\flat\prime}/L^\flat|^{(2r-1)-2(s-1)}\prod_{(s-1)-\frac{(2r-1)+t(L^{\prime\flat})}{2}<i\leq s}(1-q^{-2i}) \\
&=(1-q^{-2s})\cdot\Den(H_{s-1},L^\flat).
\end{align*}
Thus, (1) follows as long as we confirm ($*$).

For ($*$), put $M^{\flat\prime}\coloneqq M^\flat\cap(L^\flat+\langle x\rangle)$. Then $L^\flat+\langle x\rangle=M^{\flat\prime}+\langle x\rangle$ and $M^{\flat\prime}$ is an integral hermitian $O_E$-module with the same fundamental invariants as $L^\flat$. Now if $L'$ is integral, then $M^\flat$ is clearly integral. Conversely, if $M^\flat$ is integral, then since $M^{\flat\prime}\subseteq M^\flat\subseteq(M^\flat)^\vee\subseteq(M^{\flat\prime})^\vee$, we have for every element $y\in M^\flat$, $u^{a_{n-1}}y\in M^{\flat\prime}\subseteq L^\flat+\langle x\rangle$, which implies $(y,x)_{\bbV}\in O_E$, hence $L'$ is integral. The second assertion amounts to that $M^\flat$ and $L'$ have the same number of zeros in their fundamental invariants, which follows from Lemma \ref{le:analytic7}(1) and the fact that $(y,x)_{\bbV}\in O_E$ for every $y\in L'$.

Part (2) is a direct consequence of (1) and the definitions of local Siegel series.

For (3), plugging the functional equations in Lemma \ref{le:analytic5} for $\bbL=L^\flat+\langle ux\rangle$, $\bbL=L^\flat+\langle x\rangle$, and $L^\flat$ to (2), we obtain
\begin{align*}
&-X^{\val(L^\flat)+\val(x)+2}\cdot\Den(X^{-1},L^\flat+\langle ux\rangle)=
-X^{\val(L^\flat)+\val(x)+2}\cdot\Den(X^{-1},L^\flat+\langle x\rangle) \\
&\qquad\qquad+(1-X^2)(q^{1/2}X)^{\val(L^\flat)-1}\cdot\Den((qX)^{-1},L^\flat).
\end{align*}
Replacing $X$ by $X^{-1}$, we obtain
\[
\Den(X,L^\flat+\langle ux\rangle)=\Den(X,L^\flat+\langle x\rangle)+(1-X^2)q^{\frac{\val(L^\flat)-1}{2}}X^{\val(x)+1}\cdot\Den(q^{-1}X,L^\flat).
\]
Together with (2), we obtain (3).
\end{proof}

\begin{lem}
Suppose that $L^\flat$ is integral and has fundamental invariants $(a_1,\dots,a_{n-1})$ (with $a_{n-1}>0$). Let $x$ be an element of $V_{L^\flat}^\perp$ such that $\val(x)>a_{n-1}$. Then we have
\begin{align*}
\partial\Den_{L^\flat}(x)-\partial\Den_{L^\flat}(u^{-1}x)&=2\Den(1,L^\flat), \\
\partial\Den_{L^\flat}^\rh(x)-\partial\Den_{L^\flat}^\rh(u^{-1}x)&=2\Den(1,L^\flat).
\end{align*}
\end{lem}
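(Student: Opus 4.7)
The plan is to prove the two identities separately. The first follows by differentiating the recursion of Lemma~\ref{le:analytic4}(2) at $X=1$; the second requires a combinatorial count using the parameterization of Lemma~\ref{le:analytic8}(1).

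For the first identity, Lemma~\ref{le:analytic4}(2), whose hypothesis $\val(x)>a_{n-1}$ matches ours, gives
\[
\Den(X,L^\flat+\langle x\rangle)=X^2\,\Den(X,L^\flat+\langle u^{-1}x\rangle)+(1-X^2)\,\Den(X,L^\flat).
\]
Writing $f,g,h$ for the three polynomials displayed above and using $f'(1)=2g(1)+g'(1)-2h(1)$, together with the sign convention of Definition~\ref{de:density2}, I obtain
\[
\partial\Den_{L^\flat}(x)=-2\,\Den(1,L^\flat+\langle u^{-1}x\rangle)+\partial\Den_{L^\flat}(u^{-1}x)+2\,\Den(1,L^\flat).
\]
Since $\bbV$ is nonsplit, every rank-$n$ integral $O_E$-lattice $\bbL$ satisfies $\Den(1,\bbL)=0$ by the remark right after Definition~\ref{de:density2}; applied with $\bbL=L^\flat+\langle u^{-1}x\rangle$, this yields the first identity.

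For the second identity, write $x=u^k f$ with $f$ a generator of the rank-one $O_E$-module $(V_{L^\flat}^\perp)^\integ$. Then $\tfrac12(\partial\Den^\rh_{L^\flat}(x)-\partial\Den^\rh_{L^\flat}(u^{-1}x))$ is the cardinality of the set of integral lattices $L$ satisfying $L^\flat+\langle x\rangle\subseteq L\subseteq L^\vee$, $t(L\cap V_{L^\flat})=1$, and $u^{-1}x\notin L$. Via Lemma~\ref{le:analytic8}(1), each such $L$ corresponds to a triple $(L^{\flat\prime},\delta,\varepsilon)$ where $L^{\flat\prime}=L\cap V_{L^\flat}$ is integral of type one containing $L^\flat$, $\delta\leq k$, and $\varepsilon(u^\delta f)$ has exact $O_E$-order $u^{k-\delta}$ in $(L^{\flat\prime})^\vee/L^{\flat\prime}\simeq O_E/(u^{t(L^{\flat\prime})})$ (the exactness coming from the conditions $x\in L$ and $u^{-1}x\notin L$). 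For each such $L^{\flat\prime}$, I would enumerate the admissible $(\delta,\varepsilon)$ for $\delta\in\{k-(t(L^{\flat\prime})-1),\dots,k\}$, and then assemble the global total and compare with the expression for $\Den(1,L^\flat)$ at $X=1$ supplied by Lemma~\ref{le:analytic3}.

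The main obstacle is the final combinatorial matching: the per-$L^{\flat\prime}$ counts do not individually equal the weight $|L^{\flat\prime}/L^\flat|\prod_{i=1}^{(t(L^{\flat\prime})-1)/2}(1-q^{2i})$ appearing in Lemma~\ref{le:analytic3}, so the equality with $\Den(1,L^\flat)$ emerges only globally after a rearrangement summing over all type-one $L^{\flat\prime}$. The hypothesis $\val(x)>a_{n-1}$ is precisely what ensures that the integrality condition $\varepsilon(y)+y\in\bbV^\integ$ from Lemma~\ref{le:analytic8}(1) is satisfied throughout the full admissible range of $(\delta,\varepsilon)$, so no triples are lost; the contributions from odd-type $L^{\flat\prime}$ with $t(L^{\flat\prime})\geq 3$ appearing in $\Den(1,L^\flat)$ are then delivered through the $\delta<k$ terms of the enumeration for type-one $L^{\flat\prime}$ with correspondingly larger fundamental invariant.
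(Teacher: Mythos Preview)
Your argument for the first identity is correct and is exactly how the paper (implicitly) obtains it: differentiate Lemma~\ref{le:analytic4}(2) at $X=1$ and use that $\Den(1,\bbL)=0$ for any integral $O_E$-lattice $\bbL$ of the nonsplit space $\bbV$.

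For the second identity your proof is incomplete, and the obstacle you flag is real. There is also a slip: when $t(L^{\flat\prime})=1$ one has $(L^{\flat\prime})^\vee/L^{\flat\prime}\simeq O_E/(u^{\val(L^{\flat\prime})})$, not $O_E/(u^{t(L^{\flat\prime})})$. More importantly, the termwise mismatch you observe between your per-$L^{\flat\prime}$ counts and the weights $|L^{\flat\prime}/L^\flat|\prod_{i=1}^{(t(L^{\flat\prime})-1)/2}(1-q^{2i})$ in $\Den(1,L^\flat)$ is not resolved by the vague ``rearrangement'' you describe; carrying that out directly amounts to proving a nontrivial identity you have not established.

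The paper avoids this entirely by two changes of viewpoint. First, it parameterizes in the \emph{other} direction: the conditions $x\in L$ and $u^{-1}x\notin L$ are equivalent to $L\cap V_{L^\flat}^\perp=\langle x\rangle$, so one records instead the projection $L_\flat$ of $L$ to $V_{L^\flat}$ together with an extension map $L_\flat\to\langle x\rangle\otimes_{O_E}E/O_E$ whose kernel is $L^{\flat\prime}$. With $t(L^{\flat\prime})=1$ and $\val(x)>a_{n-1}$, integrality of $L$ reduces to integrality of $L_\flat$, and the count for each $L^{\flat\prime}$ comes out cleanly as $q^{(\val(L^{\flat\prime})-1)/2}$. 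Second, summing this over all type-one $L^{\flat\prime}\supseteq L^\flat$ gives, by Lemma~\ref{le:analytic3} evaluated at $X=q^{-1}$ (where only the $t=1$ terms survive), exactly $q^{(\val(L^\flat)-1)/2}\Den(q^{-1},L^\flat)$; the functional equation of Lemma~\ref{le:analytic5} then converts this to $\Den(1,L^\flat)$. It is this use of the functional equation that replaces the global combinatorial rearrangement you were unable to carry out.
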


\begin{proof}
We have
\[
\partial\Den_{L^\flat}^\rh(x)-\partial\Den_{L^\flat}^\rh(u^{-1}x)=2
\sum_{\substack{L^\flat+\langle x\rangle\subseteq L\subseteq L^\vee \\ u^{-1}x\not\in L \\ t(L\cap V_{L^\flat})=1}}1
\]
by Definition \ref{de:density1}. We may rewrite the sum as
\[
\sum_{\substack{L^\flat+\langle x\rangle\subseteq L\subseteq L^\vee \\ u^{-1}x\not\in L \\ t(L\cap V_{L^\flat})=1}}1=
\sum_{\substack{L^\flat\subseteq L^{\flat\prime}\subseteq(L^{\flat\prime})^\vee \\ t(L^{\flat\prime})=1}}
\left|\left\{L\subseteq L^\vee\res L\cap V_{L^\flat}=L^{\flat\prime}, L\cap V_{L^\flat}^\perp=\langle x\rangle\right\}\right|.
\]
Now we compute the cardinality in the above summation for each $L^{\flat\prime}$. For an $O_E$-lattice $L$ of $\bbV$ satisfying $L\cap V_{L^\flat}^\perp=\langle x\rangle$, we let $L_\flat$ be the image of $L$ under the projection map $\bbV\to V_{L^\flat}$ induced by the orthogonal decomposition $\bbV=V_{L^\flat}\oplus V_{L^\flat}^\perp$. Then $L$ is determined by $L_\flat$ and the extension map $\varepsilon_L\colon L_\flat\to\langle x\rangle\otimes_{O_E}E/O_E$. It is clear that $L\cap V_{L^\flat}=L^{\flat\prime}$ if and only if $L^{\flat\prime}$ is contained in $L_\flat$ and is the kernel of $\varepsilon_L$. On the other hand, the facts $t(L^{\flat\prime})=1$ and $\val(x)>a_{n-1}$ imply that $L$ is further integral if and only if $L_\flat$ is further integral. It then follows easily that
\[
\left|\left\{L\subseteq L^\vee\res L\cap V_{L^\flat}=L^{\flat\prime}, L\cap V_{L^\flat}^\perp=\langle x\rangle\right\}\right|
=q^{\frac{\val(L^{\flat\prime})-1}{2}}.
\]
Together, we obtain
\begin{align*}
\partial\Den_{L^\flat}^\rh(x)-\partial\Den_{L^\flat}^\rh(u^{-1}x)&=2\sum_{\substack{L^\flat\subseteq L^{\flat\prime}\subseteq(L^{\flat\prime})^\vee \\ t(L^{\flat\prime})=1}}q^{\frac{\val(L^{\flat\prime})-1}{2}} \\
&=2q^{\frac{\val(L^\flat)-1}{2}}\cdot\Den(q^{-1},L^\flat)
=2\Den(1,L^\flat),
\end{align*}
in which the last equality follows from Lemma \ref{le:analytic5}.
\end{proof}

\fi

\subsection{Bruhat--Tits stratification}
\label{ss:bt}

Let the setup be as in Subsection \ref{ss:kr}. We assume Hypothesis \ref{hy:rz}.

We first generalize Definition \ref{de:rz_special} to a more general context. For every subset $X$ of $\bbV$ such that $\langle X\rangle$ is finitely generated, we put
\[
\cN(X)\coloneqq\bigcap_{x\in X}\cN(x),
\]
which is always a finite intersection, and depends only on $\langle X\rangle$. Clearly, we have $\cN(X')\subseteq\cN(X)$ if $\langle X\rangle \subseteq\langle X'\rangle$. When $X=\{x,\dots\}$ is explicitly presented, we simply write $\cN(x,\dots)$ instead of $\cN(\{x,\dots\})$.

\begin{remark}\label{re:special}
When $\langle X\rangle$ is an $O_E$-lattice of $\bbV$, the formal subscheme $\cN(X)$ is a proper closed subscheme of $\cN$. This can be proved by the same argument for \cite{LZ}*{Lemma~2.10.1}.
\end{remark}

\begin{definition}\label{de:bt}
Let $\Lambda$ be a vertex $O_E$-lattice of $\bbL$ (Definition \ref{de:analytic1}).
\begin{enumerate}
  \item We equip the $k$-vector space $\Lambda^\vee/\Lambda$ with a $k$-valued pairing $(\;,\;)_{\Lambda^\vee/\Lambda}$ by the formula
     \[
     (x,y)_{\Lambda^\vee/\Lambda}\coloneqq u^2\Tr_{E/F}(x^\sharp,y^\sharp)_{\bbV}\mod(u^2)
     \]
     where $x^\sharp$ and $y^\sharp$ are arbitrary lifts of $x$ and $y$, respectively. Then $\Lambda^\vee/\Lambda$ becomes a nonsplit (nondegenerate) quadratic space over $k$ of (even positive) dimension $t(\Lambda)$.

  \item Let $\cV_\Lambda$ be the reduced subscheme of $\cN(\Lambda)$, and put
      \[
      \cV_\Lambda^\circ\coloneqq\cV_\Lambda-\bigcup_{\Lambda\subsetneqq\Lambda'}\cV_{\Lambda'}.
      \]
\end{enumerate}
\end{definition}

\begin{proposition}[Bruhat--Tits stratification, \cite{Wu}]\label{pr:bt}
The reduced subscheme $\cN_\red$ is a disjoint union of $ \cV_\Lambda^\circ$ for all vertex $O_E$-lattices $\Lambda$ of $\bbV$ in the sense of stratification, such that $\cV_{\Lambda}\cap\cV_{\Lambda'}$ coincides with $\cV_{\Lambda+\Lambda'}$ (resp.\ is empty) if $\Lambda+\Lambda'$ is (resp.\ is not) a vertex $O_E$-lattice.

Moreover, for every vertex $O_E$-lattice $\Lambda$,
\begin{enumerate}
  \item $\cV_\Lambda$ is canonically isomorphic to the generalized Deligne--Lusztig variety of $\OG(\Lambda^\vee/\Lambda)$ over $\ol{k}$ classifying maximal isotropic subspaces $U$ of $(\Lambda^\vee/\Lambda)\otimes_k\ol{k}$ satisfying
      \[
      \dim(U\cap\delta(U))=\tfrac{t(\Lambda)}{2}-1,
      \]
      where $\delta\in\Gal(\ol{k}/k)$ denotes the Frobenius element;

  \item the intersection of $\cV_\Lambda$ with each connected component of $\cN_\red$ is connected, nonempty, and smooth projective over $\ol{k}$ of dimension $\tfrac{t(\Lambda)}{2}-1$.
\end{enumerate}

\end{proposition}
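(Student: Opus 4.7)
The plan is to adapt the analysis of \cite{Wu}, which establishes all these statements when $F=\dQ_p$; by the footnote after Lemma \ref{le:rz}, Wu's arguments go through once representability is guaranteed by Hypothesis \ref{hy:rz}. First, I would translate the moduli problem into linear algebra via (relative) Dieudonn\'e theory for $O_F$-divisible groups: the $\ol k$-points of $\cN$ biject with $O_E\otimes_{\dZ_p}O_{\breve F}$-stable lattices $M$ in the rational Dieudonn\'e module $N$ of $\bbX$ that are self-dual with respect to $\lambda_{\bbX}$ up to the action of $\iota_{\bbX}(u)$ and satisfy the Kottwitz, wedge, and spin conditions of Definition \ref{de:exotic}. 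Under this identification, an element $x\in\bbV$ lifts to a point of $\cN(x)(\ol k)$ precisely when the corresponding quasi-homomorphism sends the Dieudonn\'e module of $X_0$ into $M$; equivalently, $x$ belongs to a certain $O_E$-submodule $\Lambda(M)\subseteq\bbV$ canonically attached to $M$.

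Second, I would verify that the submodules $\Lambda(M)$ are precisely the vertex $O_E$-lattices of $\bbV$. The self-duality of $M$ up to $\iota_{\bbX}(u)$ forces $\Lambda(M)\subseteq\Lambda(M)^\vee\subseteq u^{-1}\Lambda(M)$, so $\Lambda(M)$ is vertex; conversely, every vertex $\Lambda$ arises, by constructing $M$ from an orthogonal decomposition afforded by a normal basis (Remark \ref{re:analytic3}) of $\Lambda$. This simultaneously yields the stratification $\cN_\red=\bigsqcup_\Lambda\cV_\Lambda^\circ$ and the intersection formula: $\cV_\Lambda\cap\cV_{\Lambda'}$ consists of those $M$ for which $\Lambda+\Lambda'\subseteq\Lambda(M)$, which is empty unless $\Lambda+\Lambda'$ is itself vertex, in which case it coincides with $\cV_{\Lambda+\Lambda'}$.

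Third, I would identify $\cV_\Lambda$ with the stated Deligne--Lusztig variety. Fixing $\Lambda$, the local model of \cite{RSZ17}*{Proposition~3.10} governs the tangent sheaf of $\cN(\Lambda)$ by the Hodge filtration of a point on the quotient $\Lambda^\vee/\Lambda$; the Kottwitz and spin conditions force this filtration to land in the orthogonal Grassmannian of maximal isotropic subspaces of $(\Lambda^\vee/\Lambda)\otimes_k\ol k$, and the locus where $M$ varies is cut out by the Deligne--Lusztig condition $\dim(U\cap\delta(U))=\tfrac{t(\Lambda)}{2}-1$. The smoothness, projectivity, and dimension assertion in (2) then follow from the standard theory of such Deligne--Lusztig varieties for $\OG$ (Howard--Pappas, Vollaard--Wedhorn). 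Nonemptiness and connectedness of $\cV_\Lambda$ on each component of $\cN_\red$ -- the most delicate point, and the main obstacle -- are obtained by an explicit analysis of neighboring vertex lattices along apartments of the Bruhat--Tits building of $\rU(\bbV)$, exactly as in \cite{Wu}*{Theorem~5.18}. The only place where the passage from $F=\dQ_p$ to general $F$ could fail is the windows/Dieudonn\'e formalism underlying the very first step, but this is precisely what is absorbed into Hypothesis \ref{hy:rz} together with the local model already invoked in Lemma \ref{le:rz}.
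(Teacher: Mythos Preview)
The paper's proof is essentially a one-line citation: it invokes \cite{Wu}*{Proposition~5.13 \& Theorem~5.18} and then records the only nontrivial content, namely the translation between parametrizations. Wu indexes strata by lattices in a hermitian space $C$ arising from the Frobenius-linear structure of the Dieudonn\'e module, not by lattices in $\bbV$; the paper identifies $\bbV\simeq C$ via the analogue of \cite{KR11}*{Lemma~3.9} and observes that under this identification Wu's stratum $\cS_\Lambda$ matches $\cV_{u\Lambda^\vee}$, not $\cV_\Lambda$. You do not mention this shift at all, and it is precisely the point one must check to transport Wu's statements to the present formulation.

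Your proposal instead sketches an ab initio argument in the style of Vollaard--Wedhorn/Wu. That is a legitimate route, but two steps are imprecise. First, your third paragraph attributes the Deligne--Lusztig identification to the local model and the Hodge filtration; this is not how it goes. The local model of \cite{RSZ17}*{Proposition~3.10} gives formal smoothness (as used in Lemma~\ref{le:rz}), but the identification of $\cV_\Lambda$ with a DL variety comes from analyzing, for each $\ol k$-point, the image of $M$ in $\Lambda^\vee/\Lambda$ under a Frobenius-twisted quotient map on the Dieudonn\'e lattice---a purely lattice-theoretic computation (cf.\ \cite{Wu}*{Proposition~4.29}, which the paper later uses in Lemma~\ref{le:linear6}). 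Second, your claim that $\Lambda(M)\subseteq\Lambda(M)^\vee\subseteq u^{-1}\Lambda(M)$ follows directly from the polarization condition on $M$ hides exactly the comparison between $\bbV$ and Wu's $C$; making this honest is tantamount to proving the KR-type lemma the paper cites. So while your outline is in the right spirit, the paper's approach---cite Wu and record the lattice dictionary---is both shorter and more accurate about where the actual content lies.
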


\begin{proof}
This follows from \cite{Wu}*{Proposition~5.13 \& Theorem~5.18}. Note that we use lattices in $\bbV$, which is different from the hermitian space $C$ used in \cite{Wu}, to parameterize strata. By the obvious analogue of \cite{KR11}*{Lemma~3.9}, we may naturally identify $\bbV$ with $C$, after which the stratum $\cS_\Lambda$ in \cite{Wu} coincides with our stratum $\cV_{u\Lambda^\vee}$.
\end{proof}

\begin{remark}
In the above proposition, when $t(\Lambda)=4$, $\cV_\Lambda$ is isomorphic to two copies of $\dP^1_{\ol{k}}$, though we do not need this explicit description in the following.
\end{remark}

\begin{corollary}\label{co:bt1}
For every nonzero element $x\in\bbV$, we have
\[
\cN(x)_\red=\bigcup_{x\in\Lambda}\cV^\circ_\Lambda
\]
where the union is taken over all vertex $O_E$-lattices of $\bbV$ containing $x$.
\end{corollary}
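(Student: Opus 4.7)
The plan is to combine the Bruhat--Tits stratification of Proposition \ref{pr:bt} with a direct analysis of the lattice attached to each closed point of $\cN_\red$. Writing $\cN_\red=\bigsqcup_\Lambda \cV^\circ_\Lambda$ with $\Lambda$ ranging over vertex $O_E$-lattices of $\bbV$ and intersecting with the closed subset $\cN(x)_\red$ yields
\[
\cN(x)_\red = \bigsqcup_\Lambda \bigl(\cV^\circ_\Lambda \cap \cN(x)_\red\bigr).
\]
It therefore suffices to prove that $\cV^\circ_\Lambda \subseteq \cN(x)_\red$ whenever $x\in\Lambda$ and $\cV^\circ_\Lambda \cap \cN(x)_\red=\emptyset$ otherwise. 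The first assertion is immediate, since $\cN(\Lambda)=\bigcap_{y\in\Lambda}\cN(y)\subseteq\cN(x)$ gives $\cV_\Lambda=\cN(\Lambda)_\red\subseteq\cN(x)_\red$.

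For the second assertion, to each $\ol{k}$-point $p$ of $\cN_\red$ I would attach the subset $L_p\coloneqq\{y\in\bbV:p\in\cN(y)_\red\}$ and establish three items. First, $L_p$ is an $O_E$-lattice of full rank $n$ in $\bbV$: closure under addition and the $O_E$-action follows because lifts $X_0\to X_p$ of quasi-homomorphisms can be added and scaled, and full rank holds since $L_p$ is the image of $\Hom_{O_E}(X_0,X_p)$ in $\bbV$, which has $O_E$-rank $n$ as $X_p$ is supersingular of height $2n$. Second, $L_p$ is a vertex lattice: for $y_1,y_2\in L_p$ with lifts $\tilde y_1,\tilde y_2\colon X_0\to X_p$ (and using $\rho_p^\vee\circ\lambda_\bbX\circ\rho_p=\lambda_{X_p}$), the pairing of \eqref{eq:rz_special} becomes
\[
(y_1,y_2)_\bbV = u^{-2}\cdot\iota_{X_0}^{-1}\bigl(\lambda_{X_0}^{-1}\circ\tilde y_2^\vee\circ\lambda_{X_p}\circ\tilde y_1\bigr),
\]
and writing $\lambda_{X_p}=\sigma_{X_p}\circ\iota_{X_p}(u)$ via Remark \ref{re:polarization} together with $O_E$-linearity of $\tilde y_1$ (so $\iota_{X_p}(u)\circ\tilde y_1=\tilde y_1\circ\iota_{X_0}(u)$) produces one extra factor of $u$, whence $(y_1,y_2)_\bbV\in u^{-1}O_E$; thus $L_p\subseteq L_p^\vee$ with $L_p^\vee/L_p$ killed by $u$. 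Third, if $p\in\cV^\circ_\Lambda$, then $L_p=\Lambda$: the inclusion $\Lambda\subseteq L_p$ is tautological, and if $\Lambda\subsetneq L_p$, then by the previous item $p$ also lies in the vertex stratum $\cV_{L_p}=\cN(L_p)_\red$, so Proposition \ref{pr:bt} forces
\[
p\in\cV_\Lambda\cap\cV_{L_p}=\cV_{\Lambda+L_p}=\cV_{L_p},
\]
contradicting $p\in\cV^\circ_\Lambda$. Granted the third item, any $p\in\cV^\circ_\Lambda\cap\cN(x)_\red$ automatically satisfies $x\in L_p=\Lambda$, finishing the proof.

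The main obstacle I expect is the vertex property of $L_p$: the naive bound is only $(y_1,y_2)_\bbV\in u^{-2}O_E$, which would give integrality but not the vertex condition needed to apply the intersection rule of Proposition \ref{pr:bt}. One must carefully exploit the exotic polarization structure---specifically the factorization of $\lambda_{X_p}$ through the symmetrization $\sigma_{X_p}$ furnished by Remark \ref{re:polarization}---to gain the decisive extra factor of $u$ that upgrades integrality to the vertex condition; the rest of the argument is then a formal consequence of the stratification.
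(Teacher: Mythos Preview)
Your overall strategy matches the paper's---reduce to $\ol{k}$-points, handle the easy inclusion directly, and for the reverse inclusion derive a contradiction from a point $p\in\cV^\circ_\Lambda\cap\cN(x)$ with $x\notin\Lambda$. Your explicit computation of the pairing via the symmetrization $\sigma_{X_p}$ is a genuine addition: it makes explicit what the paper leaves implicit.

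There is, however, a gap in your second item. The bound $(y_1,y_2)_\bbV\in u^{-1}O_E$ for $y_1,y_2\in L_p$ gives exactly $L_p\subseteq L_p^\vee$, i.e.\ integrality, and nothing more; it does \emph{not} give the vertex condition $uL_p^\vee\subseteq L_p$. Your final paragraph also conflates the two notions: $(y_1,y_2)\in u^{-2}O_E$ would not even yield integrality (only $uL_p\subseteq L_p^\vee$), while $(y_1,y_2)\in u^{-1}O_E$ is precisely integrality. Proving that $L_p$ itself is vertex requires the Dieudonn\'e-theoretic input behind Proposition~\ref{pr:bt} and is not a consequence of your pairing estimate.

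Fortunately you do not need $L_p$ to be vertex; integrality suffices, and this is exactly how the paper argues. Given $p\in\cV^\circ_\Lambda\cap\cN(x)$ with $x\notin\Lambda$, set $\Lambda'\coloneqq\Lambda+\langle x\rangle\subseteq L_p$. By your (correct) computation $L_p$ is integral, hence so is $\Lambda'$; and since $\Lambda$ is vertex with $\Lambda\subseteq\Lambda'$, one has
\[
u(\Lambda')^\vee\subseteq u\Lambda^\vee\subseteq\Lambda\subseteq\Lambda',
\]
so $\Lambda'$ is vertex. Then $p\in\cN(\Lambda')_\red=\cV_{\Lambda'}$ with $\Lambda\subsetneq\Lambda'$, contradicting $p\in\cV^\circ_\Lambda$. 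Replacing $L_p$ by $\Lambda'$ in your third item closes the gap and recovers the paper's proof.
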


\begin{proof}
Since $\cN(x)_\red$ is a reduced closed subscheme of $\cN_\red$, it suffices to check that
\[
\cN(x)(\ol{k})=\bigcup_{x\in\Lambda}\cV^\circ_\Lambda(\ol{k}).
\]
By Definition \ref{de:bt}(2), we have
\[
\cN(x)(\ol{k})\supseteq\bigcup_{x\in\Lambda}\cV^\circ_\Lambda(\ol{k}).
\]
For the other direction, by Proposition \ref{pr:bt}, we have to show that if $\Lambda$ does not contain $x$, then $\cN(x)(\ol{k})\cap\cV^\circ_\Lambda(\ol{k})=\emptyset$. Suppose that we have $s\in\cN(x)(\ol{k})\cap\cV^\circ_\Lambda(\ol{k})$, then $s$ should belong to $\cV_{\Lambda'}(\ol{k})$ where $\Lambda'$ is the $O_E$-lattice generated by $\Lambda$ and $x$. In particular, $\Lambda'$ is vertex and strictly contains $\Lambda$. But this contradicts with the definition of $\cV^\circ_\Lambda$. The corollary follows.
\end{proof}

\begin{corollary}\label{co:bt2}
Suppose that $r\geq 2$. For every $x\in\bbV$, the intersection of $\cN(x)$ with each connected component of $\cN_\red$ is strictly a closed subscheme of the latter.
\end{corollary}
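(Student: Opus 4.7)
The plan is, for nonzero $x\in\bbV$ (the only case in which $\cN(x)$ is defined per Definition \ref{de:rz_special}), to produce a closed point on each connected component $C$ of $\cN_\red$ lying outside $\cN(x)$. The strategy is to find a vertex $O_E$-lattice $\Lambda$ of $\bbV$ with $x\notin\Lambda$ and show that the open Bruhat--Tits stratum $\cV^\circ_\Lambda$ meets every connected component.

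To construct such $\Lambda$, fix any vertex lattice $\Lambda_0$ of $\bbV$, which exists because $\bbV$ is a nondegenerate hermitian space of dimension $n=2r\geq 4$ (the hypothesis $r\geq 2$ ensures the Witt index $r-1\geq 1$, which in particular guarantees the existence of isotropic vectors and of type-$2$ vertex lattices). By Witt's extension theorem, the $\rU(\bbV)(F)$-orbit of $x$ equals $\{y\in\bbV\res (y,y)_\bbV=(x,x)_\bbV\}$, and this orbit is unbounded: one can translate a fixed orbit representative along an isotropic vector in its orthogonal complement (or, when $x$ is itself isotropic, rescale by elements of $F^\times$). Consequently there exists $g\in\rU(\bbV)(F)$ with $gx\notin\Lambda_0$, so $\Lambda:=g^{-1}\Lambda_0$ is a vertex lattice with $x\notin\Lambda$.

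By Proposition \ref{pr:bt}(2), $\cV_\Lambda\cap C$ is a nonempty smooth connected projective variety of dimension $t(\Lambda)/2-1$. For every vertex $\Lambda'$ with $\Lambda\subsetneq\Lambda'$, the chain $\Lambda\subsetneq\Lambda'\subseteq\Lambda'^\vee\subseteq\Lambda^\vee$ combined with the equality of indices $[\Lambda^\vee:\Lambda'^\vee]=[\Lambda':\Lambda]$ (a standard consequence of the duality induced by $(\;,\;)_\bbV$) gives $t(\Lambda)=t(\Lambda')+2\dim_k(\Lambda'/\Lambda)>t(\Lambda')$, so $\cV_{\Lambda'}\cap C$ has strictly smaller dimension than $\cV_\Lambda\cap C$. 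Since only finitely many such $\Lambda'$ exist (all sitting in $\Lambda^\vee$), the set $\cV^\circ_\Lambda\cap C=(\cV_\Lambda\cap C)\setminus\bigcup_{\Lambda\subsetneq\Lambda'}(\cV_{\Lambda'}\cap C)$ is a nonempty dense open subset of the irreducible variety $\cV_\Lambda\cap C$. By Corollary \ref{co:bt1} we have $\cN(x)_\red=\bigcup_{x\in\Lambda'}\cV^\circ_{\Lambda'}$, and since the stratification in Proposition \ref{pr:bt} is disjoint and $x\notin\Lambda$, the stratum $\cV^\circ_\Lambda$ is disjoint from $\cN(x)_\red$. Any point of the nonempty $\cV^\circ_\Lambda\cap C$ therefore lies in $C$ but not in $\cN(x)$, proving that $\cN(x)\cap C$ is a proper closed subscheme of $C$. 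The main obstacle is the construction of $\Lambda$; once it is available, the conclusion follows cleanly from the Bruhat--Tits stratification and Corollary \ref{co:bt1}.
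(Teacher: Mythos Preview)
Your proof is correct and follows the same overall strategy as the paper: reduce to producing, for each nonzero $x$, a vertex lattice $\Lambda$ with $x\notin\Lambda$, then use Corollary~\ref{co:bt1} and Proposition~\ref{pr:bt}(2) to exhibit points of each connected component outside $\cN(x)$. The paper phrases the reduction as showing that the intersection of all vertex $O_E$-lattices of $\bbV$ is $\{0\}$, which is logically equivalent to your formulation.

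The difference lies in how that vertex lattice is produced. The paper constructs an explicit family: it fixes a decomposition $\bbV=V_2\perp V_2^\perp$ with $V_2$ the anisotropic kernel, a self-dual lattice $L_1$ of $V_2^\perp$ with a hyperbolic normal basis, and then varies the exponents on that basis to obtain vertex lattices $\Lambda_a$ whose intersection is contained in $L_2$; since $r\geq2$ one can then vary $V_2$ to cut the intersection down to $\{0\}$. Your argument instead fixes a single vertex lattice and uses transitivity of $\rU(\bbV)(F)$ on norm-level sets together with unboundedness of orbits (via isotropic translates in $x^\perp$, which has dimension $2r-1\geq3$ and hence is isotropic) to move $x$ out of it. Both routes are short; the paper's is fully explicit and self-contained, while yours is a clean soft argument that avoids coordinates. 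You are also more explicit than the paper about why $\cV^\circ_\Lambda\cap C\neq\emptyset$, supplying the dimension inequality $t(\Lambda')<t(\Lambda)$ for $\Lambda'\supsetneq\Lambda$ rather than leaving it implicit in Proposition~\ref{pr:bt}(2).

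One minor quibble: your parenthetical justification for the existence of a vertex lattice (``the hypothesis $r\geq2$ ensures the Witt index $r-1\geq1$, which in particular guarantees the existence of \dots\ type-$2$ vertex lattices'') is misplaced---vertex lattices exist for all $r\geq1$ and the Witt index plays no role there. The Witt index bound is what you actually need later, to guarantee an isotropic vector in $x^\perp$.
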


\begin{proof}
By Corollary \ref{co:bt1} and Proposition \ref{pr:bt}(2), it suffices to show that the intersection of all vertex $O_E$-lattices of $\bbV$ is $\{0\}$.

Take a nonsplit hermitian subspace $V_2$ of $\bbV$ of dimension $2$ and an $O_E$-lattice $L_2$ of $V_2$ of fundamental invariants $(1,1)$. Then the orthogonal complement $V_2^\perp$ of $V_2$ in $\bbV$ admits a self-dual $O_E$-lattice $L_1$. Choose a normal basis (Remark \ref{re:analytic3}) $\{e_1,\dots,e_{2r-2}\}$ of $L_1$ under which the moment matrix is given by $\(\begin{smallmatrix} 0 & u^{-1}\\ -u^{-1} & 0 \end{smallmatrix}\)^{\oplus r-1}$. For every tuple $a=(a_1,\dots,a_{2r-2})\in\dZ^{2r-2}$ satisfying $a_{2i-1}+a_{2i}=0$ for $1\leq i\leq r-1$, the $O_E$-lattice
\[
\Lambda_a\coloneqq L_2\oplus\langle u^{a_1}e_1,\dots,u^{a_{2r-2}}e_{2r-2}\rangle
\]
is integral with fundamental invariants $(0,\dots,0,1,1)$, hence vertex. It is clear that the intersection of all such $\Lambda_a$ is $L_2$. Since $r\geq 2$, the intersection of all $2$-dimensional nonsplit hermitian subspaces of $\bbV$ is $\{0\}$. Thus, the intersection of all vertex $O_E$-lattices of $\bbV$ is $\{0\}$.
\end{proof}

\begin{lem}\label{le:tate}
Let $\Lambda$ be a vertex $O_E$-lattice of $\bbV$. For each connected component $\cV_\Lambda^+$ of $\cV_\Lambda$ and integer $d\geq 0$, the group of $d$-cycles of $\cV_\Lambda^+$, up to $\ell$-adic homological equivalence for every rational prime $\ell\neq p$, is generated by $\cV_{\Lambda'}\cap\cV_\Lambda^+$ for all vertex $O_E$-lattices $\Lambda'$ containing $\Lambda$ with $t(\Lambda')=2d+2$.
\end{lem}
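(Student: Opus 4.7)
The plan is to transport the statement to the Deligne--Lusztig model of $\cV_\Lambda$ given by Proposition~\ref{pr:bt}(1), and then reduce it to the existence of an affine paving.

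Set $\Omega \coloneqq \Lambda^\vee/\Lambda$, a non-split non-degenerate quadratic $k$-space of dimension $2e$ with $e = t(\Lambda)/2$. By Proposition~\ref{pr:bt}(1), the component $\cV_\Lambda^+$ is identified with a connected component $Y^+$ of the generalized DL variety $Y$ of maximal isotropic subspaces $U \subset \Omega_{\bar k}$ with $\dim(U\cap\delta U)=e-1$; by Proposition~\ref{pr:bt}(2), $Y^+$ is smooth projective irreducible of dimension $e-1$. For a vertex $O_E$-lattice $\Lambda'\supsetneq\Lambda$, the chain $\Lambda \subset \Lambda' \subset (\Lambda')^\vee \subset \Lambda^\vee$ shows that $I_{\Lambda'} \coloneqq \Lambda'/\Lambda$ is an isotropic $k$-subspace of $\Omega$ of dimension $(t(\Lambda)-t(\Lambda'))/2 = e-t(\Lambda')/2$, and the canonical isometry $(\Lambda')^\vee/\Lambda' \cong I_{\Lambda'}^\perp/I_{\Lambda'}$ turns the closed immersion $\cV_{\Lambda'}\hookrightarrow\cV_\Lambda$ into the natural inclusion $Y_{I_{\Lambda'}} \hookrightarrow Y$, where for any $k$-rational isotropic subspace $I\subseteq\Omega$, $Y_I$ denotes the closed sub-DL variety of those $U$ that contain $I_{\bar k}$. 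In particular, $t(\Lambda') = 2d+2$ translates into $\dim_k I_{\Lambda'} = e-d-1$, and $Y_{I_{\Lambda'}}\cap Y^+$ is smooth projective irreducible of dimension $d$.

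The key input is that $Y^+$ admits a paving by affine spaces whose closed strata are precisely the subvarieties $Y_I\cap Y^+$ for $k$-rational isotropic $I\subseteq\Omega$. This is the orthogonal analogue of the Vollaard--Wedhorn-type analysis used in \cite{LZ}, and fits into the framework of \cite{Wu}: the open Bruhat--Tits strata $\cV_{\Lambda''}^\circ \cap Y^+$ (for vertex $\Lambda''\supseteq\Lambda$) correspond, under the DL isomorphism, to open DL varieties attached to a Coxeter-type element of $\OG((\Lambda'')^\vee/\Lambda'')$, hence are affine; by the first assertion of Proposition~\ref{pr:bt}, their Zariski closures in $Y^+$ are exactly the $Y_{I_{\Lambda''}}\cap Y^+$. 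Granted such an affine paving, $H^*(Y^+,\dQ_\ell)$ is concentrated in even degrees and freely generated over $\dQ_\ell$ by the cycle classes of the closures of the cells, so the cycle class map
\[
\CH_d(Y^+)\otimes\dQ_\ell \twoheadrightarrow H^{2(e-1-d)}(Y^+,\dQ_\ell(e-1-d))
\]
is surjective, and the group of $d$-cycles modulo $\ell$-adic homological equivalence is generated by the closures of the $d$-dimensional cells, i.e., by the $Y_I\cap Y^+$ with $\dim_k I = e-d-1$. Translating back via the first step yields the stated generation by $\cV_{\Lambda'}\cap\cV_\Lambda^+$ with $t(\Lambda')=2d+2$.

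The main obstacle is the construction of the affine paving: verifying the affineness of the open BT strata and matching their closures with the $Y_I\cap Y^+$. In the low-rank cases ($t(\Lambda)\leq 4$) this is immediate from the explicit description (points, or the two $\dP^1$'s noted in the remark following Proposition~\ref{pr:bt}); for general $e$ one must either carry out an inductive analysis along the lines of \cite{Wu} or pull back a $k$-rational Schubert decomposition from the orthogonal Grassmannian via the embedding $Y^+\hookrightarrow$ orthogonal Grassmannian of $\Omega_{\bar k}$, and check that the resulting cells are affine with the expected closures.
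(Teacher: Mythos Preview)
Your translation of the problem to the Deligne--Lusztig model and the identification of $\cV_{\Lambda'}\cap\cV_\Lambda^+$ with the subvarieties $Y_{I_{\Lambda'}}\cap Y^+$ is correct and is exactly the setup one needs. The gap lies in the claimed affine paving.

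You assert that $Y^+$ is paved by \emph{affine spaces}, with the open Bruhat--Tits strata $\cV_{\Lambda''}^\circ\cap Y^+$ as cells, and justify this by saying these strata ``correspond to open DL varieties attached to a Coxeter-type element, hence are affine.'' But \emph{affine} is not \emph{affine space}. The open stratum $\cV_\Lambda^{\circ+}$ is the Coxeter-type Deligne--Lusztig variety for the non-split even orthogonal group $^2D_e$ (where $2e=t(\Lambda)$), and for $e\geq 3$ its compactly supported $\ell$-adic cohomology is spread over the degrees $e-1,e,\dots,2e-2$: by Lusztig's computation \cite{Lus76}*{Section~7.3}, the Frobenius eigenvalue $q^{2j}$ occurs in $\rH^{j+e-1}_c$ for each $0\leq j\leq e-1$. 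In particular $\rH^j_c(\cV_\Lambda^{\circ+},\ol\dQ_\ell)\neq 0$ for several $j$, so $\cV_\Lambda^{\circ+}$ is not isomorphic to $\dA^{e-1}$. The same applies to the lower-dimensional open strata. Consequently the spectral-sequence argument you invoke (``$\rH^*(Y^+,\dQ_\ell)$ is concentrated in even degrees and freely generated by cycle classes of cell closures'') does not apply, and the proof breaks here.

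The paper circumvents this by working with Frobenius weights rather than a cellular decomposition. One observes that a $d$-cycle class lies in the subspace $\rH_{2d}(\cV_\Lambda^+,\ol\dQ_\ell(-d))^\dag$ on which $\delta^2$ acts through roots of unity, and reduces (by the inductive scheme of \cite{LZ}*{Theorem~5.3.2}, peeling off one stratum at a time) to the claim that on $\bigoplus_{j\geq 0}\rH^{2j}(\cV_\Lambda^{\circ+},\ol\dQ_\ell(j))$ the action of $\delta^2$ is semisimple with $\rH^0$ as the only ``Tate'' piece. The Lusztig eigenvalue list above, together with Poincar\'e duality and \cite{Lus76}*{Theorem~6.1}, yields exactly this. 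So the extra cohomology of the open strata is harmless \emph{because it carries the wrong Frobenius weight to contribute to algebraic cycles}; your affine-paving approach, by contrast, needs that extra cohomology to be absent altogether.
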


\begin{proof}
Let $k'$ be the quadratic extension of $k$ in $\ol{k}$. Note that $\cV_\Lambda^+$ has a canonical structure over $k'$, so that $\cV_\Lambda^{\circ+}\coloneqq\cV_\Lambda^\circ\cap\cV_\Lambda^+$ (over $k'$) is the classical Deligne--Lusztig variety of $\SO(\Lambda^\vee/\Lambda)$ of Coxeter type.

Recall that $\delta$ is the Frobenius element of $\Gal(\ol{k}/k)$. Fix a rational prime $\ell$ different from $p$. For every finite dimensional $\ol\dQ_\ell$-vector space $V$ with an action by $\delta^2$, we denote by $V^\dag$ the subspace consisting of elements on which $\delta^2$ acts by roots of unity. Then for the lemma, it suffices to show that for every $d\geq 0$, $\rH_{2d}(\cV_\Lambda^+,\ol\dQ_\ell(-d))^\dag$ is generated by (the cycle class of) $\cV_{\Lambda'}\cap\cV_\Lambda^+$ for all vertex $O_E$-lattices $\Lambda'$ containing $\Lambda$ with $t(\Lambda')=2d+2$. By the same argument for \cite{LZ}*{Theorem~5.3.2}, it reduces to the following claim:
\begin{itemize}
  \item[($*$)] The action of $\delta^2$ on $V\coloneqq\bigoplus_{j\geq 0}\rH^{2j}(\cV_\Lambda^{\circ+},\ol\dQ_\ell(j))$ is semisimple, and $V^\dag=\rH^0(\cV_\Lambda^{\circ+},\ol\dQ_\ell)$.
\end{itemize}
There are three cases.

When $t(\Lambda)=2$, $\cV_\Lambda^{\circ+}$ is isomorphic to $\Spec\ol{k}$ hence ($*$) is trivial.

When $t(\Lambda)=4$, $\cV_\Lambda^{\circ+}$ is an affine curve hence ($*$) is again trivial.

When $t(\Lambda)\geq 6$, by Case $\pres{2}{D}_n$ (with $n=\tfrac{t(\Lambda)}{2}\geq 3$) in \cite{Lus76}*{Section~7.3}, the action of $\delta^2$ on $\bigoplus_{j\geq 0}\rH^j_c(\cV_\Lambda^{\circ+},\ol\dQ_\ell)$ has eigenvalues $\{1,q^2,q^4,\dots,q^{t(\Lambda)-2}\}$ and that the eigenvalue $q^{2j}$ appears in $\rH^{j+\frac{t(\Lambda)}{2}-1}_c(\cV_\Lambda^{\circ+},\ol\dQ_\ell)$. Moreover by \cite{Lus76}*{Theorem~6.1}, the action of $\delta^2$ is semisimple. Thus, ($*$) follows from the Poincar\'{e} duality.

The lemma is proved.
\end{proof}

\subsection{Linear invariance of intersection numbers}
\label{ss:linear}

Let the setup be as in Subsection \ref{ss:kr}. We assume Hypothesis \ref{hy:rz}.

For every nonzero element $x\in\bbV$, we define a chain complex of locally free $\sO_\cN$-modules
\[
C(x)\coloneqq\(\cdots\to 0 \to \sI_{\cN(x)} \to \sO_\cN \to 0\)
\]
supported in degrees $1$ and $0$ with the map $\sI_{\cN(x)} \to \sO_\cN$ being the natural inclusion. We extend the definition to $x=0$ by setting
\begin{align}\label{eq:linear}
C(0)\coloneqq\(\cdots\to 0 \to \omega \xrightarrow{0} \sO_\cN \to 0\)
\end{align}
supported in degrees $1$ and $0$, where $\omega$ is the line bundle from Definition \ref{de:linear1}.

The following is our main result of this subsection.

\begin{proposition}\label{pr:linear}
Let $0\leq m\leq n$ be an integer. Suppose that $x_1,\dots,x_m\in\bbV$ and $y_1,\dots,y_m\in\bbV$ generate the same $O_E$-submodule. Then we have an isomorphism
\[
\rH_i(C(x_1)\otimes_{\sO_\cN}\cdots\otimes_{\sO_\cN} C(x_m))\simeq\rH_i(C(y_1)\otimes_{\sO_\cN}\cdots\otimes_{\sO_\cN} C(y_m))
\]
of $\sO_\cN$-modules for every $i$.
\end{proposition}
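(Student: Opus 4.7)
The plan is to follow Howard's K-theoretic strategy in \cite{How19}: reinterpret $C(x_1)\otimes_{\sO_\cN}\cdots\otimes_{\sO_\cN}C(x_m)$ as a Koszul complex whose input depends $O_E$-linearly on the tuple $(x_1,\dots,x_m)$, and then invoke the general invariance of Koszul complexes under $\GL_m$-change of basis on the source. The awkward-looking convention \eqref{eq:linear} for $C(0)$ is designed precisely so that the Koszul interpretation extends uniformly across zero entries.

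The main steps are as follows. First, I would construct an $O_E$-linear obstruction map $\mathrm{obs}\colon\bbV\to\Gamma(\cN,\omega)$ to the line bundle $\omega$ on $\cN$ coming from the $-u$-eigenspace of $\iota_X(u)$ acting on $\Lie X$ (well-defined as a line bundle by the Kottwitz, wedge, and spin conditions in Definition \ref{de:exotic}). Grothendieck--Messing theory for $O_F$-divisible groups identifies $\mathrm{obs}(x)$ with the obstruction for the quasi-homomorphism $\rho_X^{-1}\circ x$ to lift to an integral homomorphism, so that for $x\neq 0$ the section $\mathrm{obs}(x)$ cuts out $\cN(x)$ as its zero locus and induces an isomorphism $\omega^{-1}\xra{\sim}\sI_{\cN(x)}$. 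This gives a canonical quasi-isomorphism $C(x)\simeq[\omega^{-1}\xra{\mathrm{obs}(x)}\sO_\cN]$ for every $x\in\bbV$, in perfect agreement with the convention $C(0)=[\omega\xra{0}\sO_\cN]$ after identifying $\omega$ with $\omega^{-1}$ (up to sign). Second, since the derived tensor product of Koszul complexes is the Koszul complex of the direct sum, I obtain a quasi-isomorphism
\[
C(x_1)\otimes_{\sO_\cN}\cdots\otimes_{\sO_\cN}C(x_m)\simeq\mathrm{Kos}\bigl((\omega^{-1})^{\oplus m}\xra{(\mathrm{obs}(x_1),\dots,\mathrm{obs}(x_m))}\sO_\cN\bigr).
\]
Third, suppose $(y_1,\dots,y_m)$ generates the same $O_E$-submodule of $\bbV$ as $(x_1,\dots,x_m)$. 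Since $O_E$ is a DVR and any $O_E$-submodule of $\bbV$ is free, a Smith normal form argument shows that $\GL_m(O_E)$ acts transitively on $m$-element generating tuples, so there exists $A\in\GL_m(O_E)$ with $y_i=\sum_j A_{ij}x_j$. By the $O_E$-linearity of $\mathrm{obs}$, the same $A$ relates the two input tuples of sections, and functoriality of the Koszul complex under change of basis on the source (which is a canonical quasi-isomorphism) gives the required isomorphism on homology.

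\textbf{Main obstacle.} The delicate step is the construction of the obstruction map in the first step and the identification $\sI_{\cN(x)}=\mathrm{obs}(x)\cdot\omega^{-1}$; one needs to verify that the quasi-homomorphism $\rho_X^{-1}\circ x$ deforms to an integral homomorphism exactly along the vanishing of a single section of a \emph{globally defined} line bundle on $\cN$, which hinges on the decomposition $\Lie X=(\Lie X)^+\oplus\omega$ being globally defined — precisely what the wedge and spin conditions in Definition \ref{de:exotic} guarantee. In the unramified case this is Howard's main construction, and the exotic smooth ramified setting here is parallel in that the local model is formally smooth (used in the proof of Lemma \ref{le:rz} via \cite{RSZ17}), so the Grothendieck--Messing computation goes through; one just has to be careful with the role of the uniformizer $u$ in the Kottwitz condition $(T-u)^{n-1}(T+u)$. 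Once $\mathrm{obs}$ is in hand, the remaining Koszul and linear-algebra inputs are formal.
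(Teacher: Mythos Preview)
Your overall strategy is the right one and matches the paper's: the paper simply says that the proof of \cite{How19}*{Theorem~5.1} goes through verbatim once one has Lemma~\ref{le:linear3} and Lemma~\ref{le:linear4}. However, your implementation of Howard's argument contains a genuine gap.

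The problem is your first step: there is no $O_E$-linear map $\mathrm{obs}\colon\bbV\to\Gamma(\cN,\omega)$ of \emph{global} sections with the property that $\cN(x)$ is the zero locus of $\mathrm{obs}(x)$. Grothendieck--Messing theory only applies over divided-power thickenings of a closed subscheme on which the homomorphism already exists; over a general point of $\cN$ not lying in $\cN(x)$ there is no integral map $X_0\to X$ to deform, hence no obstruction section. This is exactly why Lemma~\ref{le:linear3} is stated over the square-zero thickening $\widetilde{\cN(x)}$ and not over $\cN$: the section $\tilde{x}(\gamma)$ lives in $\Gamma(\widetilde{\cN(x)},\omega^{-1})$, and what one actually obtains is the identification of the \emph{conormal} bundle $\sI_{\cN(x)}/\sI_{\cN(x)}^2\cong\omega\res_{\cN(x)}$, not a global isomorphism $\sI_{\cN(x)}\cong\omega^{-1}$. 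Your asserted global isomorphism would force all special divisors $\cN(x)$ to be linearly equivalent, which is false already for $r=1$ (compare the decomposition \eqref{eq:two2}: varying $\val(x)$ changes the divisor class). Relatedly, your remark about ``identifying $\omega$ with $\omega^{-1}$'' is not a well-defined operation; the convention \eqref{eq:linear} is chosen because $C(x)\otimes^{\rL}_{\sO_\cN}\sO_{\cN(x)}$ has $\rH_1$ equal to the conormal bundle $\omega\res_{\cN(x)}$, so $C(0)=[\omega\xrightarrow{0}\sO_\cN]$ is the correct degeneration.

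What Howard actually does---and what the paper invokes---is more local. One reduces via $\GL_m(O_E)$ (your third step is fine) to an elementary operation $y_m=x_m+\alpha x_j$ with $j<m$, and then compares the two tensor products after restricting to the thickening $\widetilde{\cN(x_j)}$, where both obstruction sections $\tilde{x}_m(\gamma)$ and $\tilde{y}_m(\gamma)$ are defined and agree (since $\tilde{x}_j(\gamma)$ vanishes there). The Cartier-divisor property from Lemma~\ref{le:linear4} is used to ensure the complexes are locally free, so that this infinitesimal comparison propagates to an isomorphism on homology. The Koszul picture you sketch is morally present, but it lives over these thickenings, not over $\cN$.
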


Proposition \ref{pr:linear} has the following two immediate corollaries.

\begin{corollary}\label{co:linear1}
Let $0\leq m\leq n$ be an integer. Suppose that $x_1,\dots,x_m\in\bbV$ and $y_1,\dots,y_m\in\bbV$ generate the same $O_E$-submodule. Then we have
\[
[C(x_1)\otimes_{\sO_\cN}\cdots\otimes_{\sO_\cN} C(x_m)]=[C(y_1)\otimes_{\sO_\cN}\cdots\otimes_{\sO_\cN} C(y_m)]
\]
in $\rK_0(\cN)$, where $\rK_0(\cN)$ denotes the K-group of $\cN$ \cite{LL}*{Section~B}.
\end{corollary}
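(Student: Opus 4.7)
The approach is to derive this directly from Proposition \ref{pr:linear} via the standard identity in the Grothendieck group expressing the class of a bounded complex of locally free sheaves as the alternating sum of the classes of its homology sheaves.

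First I would observe that for every $x\in\bbV$ (nonzero or zero), $C(x)$ is a bounded complex of finite locally free $\sO_\cN$-modules: in degree $0$ we have $\sO_\cN$, and in degree $1$ either the ideal sheaf $\sI_{\cN(x)}$ of a special divisor (which is invertible since $\cN$ is formally smooth and $\cN(x)$ is a Cartier divisor on it) or the line bundle $\omega$ in \eqref{eq:linear}. Consequently the tensor product
\[
C_\bx\coloneqq C(x_1)\otimes_{\sO_\cN}\cdots\otimes_{\sO_\cN}C(x_m)
\]
is again a bounded complex of finite locally free $\sO_\cN$-modules, hence a perfect complex defining a well-defined class $[C_\bx]\in\rK_0(\cN)$; likewise for $C_\by$.

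Next, I would invoke the standard relation in $\rK_0(\cN)$ (as set up in \cite{LL}*{Section~B}): for any bounded complex of finite locally free $\sO_\cN$-modules $C^\bullet$ with coherent homology, one has
\[
[C^\bullet]=\sum_i(-1)^i\bigl[\rH_i(C^\bullet)\bigr]
\]
in $\rK_0(\cN)$. Applying this to both $C_\bx$ and $C_\by$ and using Proposition \ref{pr:linear} to identify $\rH_i(C_\bx)\simeq\rH_i(C_\by)$ as $\sO_\cN$-modules for every $i$, the two alternating sums coincide term by term, yielding $[C_\bx]=[C_\by]$ in $\rK_0(\cN)$.

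The only point that needs any care is the validity of the alternating-sum identity in the formalism of $\rK_0(\cN)$ from \cite{LL}*{Section~B}; once that is granted, the corollary is essentially formal, with all the substantive content having already been absorbed into Proposition \ref{pr:linear}.
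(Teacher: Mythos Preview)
Your proposal is correct and matches the paper's approach: the paper states Corollary~\ref{co:linear1} as an ``immediate'' consequence of Proposition~\ref{pr:linear}, and the natural way to see this immediacy is precisely via the alternating-sum identity $[C^\bullet]=\sum_i(-1)^i[\rH_i(C^\bullet)]$ in $\rK_0(\cN)$ that you invoke. Your observation that each $C(x)$ is a bounded complex of locally free sheaves (using Lemma~\ref{le:linear4} for nonzero $x$) is exactly what is needed to make the argument go through.
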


\begin{corollary}\label{co:linear2}
Suppose that $x_1,\dots,x_n\in\bbV$ generate an $O_E$-lattice of $\bbV$. The Serre intersection multiplicity
\begin{multline*}
\chi\(\sO_{\cN(x_1)}\overset{\dL}\otimes_{\sO_\cN}\cdots\overset{\dL}\otimes_{\sO_\cN}\sO_{\cN(x_n)}\) \\
\coloneqq\sum_{i,j\geq 0}(-1)^{i+j}\length_{O_{\breve{E}}}
\rH^j\(\cN,\rH_i\(\sO_{\cN(x_1)}\overset{\dL}\otimes_{\sO_\cN}\cdots\overset{\dL}\otimes_{\sO_\cN}\sO_{\cN(x_n)}\)\)
\end{multline*}
depends only on the $O_E$-lattice of $\bbV$ generated by $x_1,\dots,x_n$. Note that by Remark \ref{re:special}, the above number is finite.
\end{corollary}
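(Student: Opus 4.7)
The plan is to reduce the claim directly to Proposition \ref{pr:linear} by observing that the Serre intersection multiplicity is completely determined by the homology sheaves $\rH_i$ of the derived tensor product together with their coherent cohomology.

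First I would note that since $x_1,\dots,x_n$ generate an $O_E$-lattice of $\bbV$, every $x_i$ must be nonzero. In this case each complex $C(x_i)$ from Subsection \ref{ss:linear} is a locally free resolution (of length one) of $\sO_{\cN(x_i)}$, because $\cN(x_i)$ is an effective Cartier divisor cut out by $\sI_{\cN(x_i)}$ on the formally smooth formal scheme $\cN$ (Lemma \ref{le:rz}). Consequently, the derived tensor product $\sO_{\cN(x_1)}\overset{\dL}\otimes_{\sO_\cN}\cdots\overset{\dL}\otimes_{\sO_\cN}\sO_{\cN(x_n)}$ is canonically represented by $C(x_1)\otimes_{\sO_\cN}\cdots\otimes_{\sO_\cN}C(x_n)$, so its $i$-th homology sheaf equals $\rH_i(C(x_1)\otimes\cdots\otimes C(x_n))$. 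Choosing any other generating tuple $y_1,\dots,y_n$ of the same $O_E$-lattice and applying Proposition \ref{pr:linear} with $m=n$, we obtain an isomorphism of $\sO_\cN$-modules
\[
\rH_i(C(x_1)\otimes_{\sO_\cN}\cdots\otimes_{\sO_\cN}C(x_n))\simeq
\rH_i(C(y_1)\otimes_{\sO_\cN}\cdots\otimes_{\sO_\cN}C(y_n))
\]
for every $i$. Taking coherent cohomology $\rH^j(\cN,-)$ preserves this isomorphism, so each length $\length_{O_{\breve E}}\rH^j(\cN,\rH_i(\cdots))$ is unchanged, and summing with the signs $(-1)^{i+j}$ yields the independence asserted by the corollary.

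For the finiteness claim invoked in the statement (which ensures the sum makes sense), I would appeal to Remark \ref{re:special}: since the $x_i$ generate an $O_E$-lattice of $\bbV$, the formal subscheme $\cN(\{x_1,\dots,x_n\})=\cN(x_1)\cap\cdots\cap\cN(x_n)$ is a proper closed formal subscheme of $\cN$. Each homology sheaf $\rH_i(C(x_1)\otimes\cdots\otimes C(x_n))$ is coherent and set-theoretically supported on this proper closed subscheme, hence its coherent cohomology groups on $\cN$ are finitely generated $O_{\breve E}$-modules of finite length, and the double sum is a finite sum of finite lengths.

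The main obstacle for this subsection is not the corollary itself, which is essentially formal once we allow ourselves Proposition \ref{pr:linear}, but the proposition behind it: establishing linear invariance of the K-theoretic class of $C(x_1)\otimes\cdots\otimes C(x_m)$ in the derived category requires the extension \eqref{eq:linear} to $x=0$ and a careful analysis of how the complexes $C(x)$ behave under $O_E$-linear combinations, following Howard's argument in \cite{How19}. Once Proposition \ref{pr:linear} is in hand, Corollary \ref{co:linear2} is a one-step consequence.
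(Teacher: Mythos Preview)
Your argument is correct and matches the paper's intent: the paper states Corollary \ref{co:linear2} as an immediate consequence of Proposition \ref{pr:linear} without further proof, and your unpacking of why is exactly right. One minor point: the fact that $\sI_{\cN(x_i)}$ is locally free (so that $C(x_i)$ is a free resolution of $\sO_{\cN(x_i)}$) is Lemma \ref{le:linear4}, not Lemma \ref{le:rz}.
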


Now we start to prove Proposition \ref{pr:linear}, following \cite{How19}. Let $(X,\iota_X,\lambda_X)$ be the universal object over $\cN$. We have a short exact sequence
\begin{align*}
0 \to \Fil(X) \to \rD(X) \to \Lie(X) \to 0
\end{align*}
of locally free $\sO_\cN$-modules, where $\rD(X)$ denotes the covariant crystal of $X$ restricted to the Zariski site of $\cN$. Then $\iota_X$ induces actions of $O_E$ on all terms so that the short exact sequence is $O_E$-linear.

We define an $\sO_\cN$-submodule $F_X$ of $\Lie(X)$ as the kernel of $\iota_X(u)-u$ on $\Lie(X)$, which is stable under the $O_E$-action.

\begin{lem}\label{le:linear1}
The $\sO_\cN$-submodule $F_X$ is locally free of rank $n-1$ and is locally a direct summand of $\Lie(X)$.
\end{lem}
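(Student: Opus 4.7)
The plan is to construct, locally on $\cN$, a decomposition $\Lie(X)=F_X\oplus L$ with $L$ a line subbundle, from which both assertions (local freeness of rank $n-1$ and being locally a direct summand) follow simultaneously. Set $A\coloneqq\iota_X(u)-u\in\End_{\sO_\cN}(\Lie(X))$, so $F_X=\ker(A)$. I will deduce the claim by showing that $A$ has pointwise rank exactly $1$ and that its image sheaf is a line subbundle of $\Lie(X)$.

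First I would verify that $A$ has rank exactly $1$ at every point. The wedge condition in Definition \ref{de:exotic} is precisely the vanishing $\bigwedge^2 A=0$, i.e.\ every $2\times 2$ minor of a local matrix of $A$ vanishes. For pointwise nonvanishing, observe that at every geometric point $s$ of $\cN$ the scalar $u$ lies in the maximal ideal of $\sO_{\cN,s}$, since $u$ is a uniformizer of $O_{\breve E}$ and $\cN$ lives over $\Spf O_{\breve E}$; therefore $A$ and $\iota_X(u)$ induce the same endomorphism of $\Lie(X_s)$, which is nonzero by the spin condition.

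Next, I would exploit these two properties to show that $\IM(A)\subseteq\Lie(X)$ is locally free of rank $1$ and locally a direct summand. Trivialize $\Lie(X)\simeq\sO_\cN^n$ on a small enough open and write $A=(a_{ij})$. Pointwise nonvanishing forces some $a_{ij}$ to be a unit, so after relabeling the basis we may assume $a_{11}$ is a unit on a neighborhood. The identities $a_{11}a_{ij}=a_{1j}a_{i1}$, obtained from the vanishing of $2\times 2$ minors, then give $Ae_j=(a_{1j}/a_{11})Ae_1$, so $\IM(A)=\sO_\cN\cdot Ae_1$; since the first coordinate of $Ae_1$ is the unit $a_{11}$, the submodule $\sO_\cN\cdot Ae_1$ is a direct summand of $\Lie(X)$ of rank $1$.

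Finally, the short exact sequence
\[
0\to F_X\to\Lie(X)\to\IM(A)\to 0
\]
splits locally since $\IM(A)$ is locally free, producing $F_X$ as a local direct summand of $\Lie(X)$ of rank $n-1$ and hence itself locally free of that rank. I anticipate no serious obstacle; the only mild subtlety is the reduction of the spin condition to pointwise nonvanishing of $A$, which relies on $u$ reducing to zero at every geometric point of $\cN$.
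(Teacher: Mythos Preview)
Your argument is correct and follows essentially the same approach as the paper: both proofs use the wedge condition to bound the rank of $A=\iota_X(u)-u$ from above and the spin condition (together with $u$ vanishing at closed points of the formal scheme) to bound it from below, and then deduce that $F_X$ is a local direct summand. Your version is slightly more explicit in that you extract a unit entry and write down the splitting by hand, whereas the paper simply asserts that rank $1$ on generic and special fibers yields the direct summand; in particular your argument makes clear that checking the closed points already suffices, so the generic-fiber check in the paper is not strictly needed.
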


\begin{proof}
Let $s\in\cN(\ol{k})$ be a closed point. By the Wedge condition and the Spin condition in Definition \ref{de:exotic}, we know that the map
\[
\iota_X(u)-u\colon\Lie(X)\otimes_{\sO_\cN}\sO_{\cN,s}\to\Lie(X)\otimes_{\sO_\cN}\sO_{\cN,s}
\]
has rank $1$ on both generic and special fibers. Thus, $F_X\otimes_{\sO_\cN}\sO_{\cN,s}$ is a direct summand of $\Lie(X)\otimes_{\sO_\cN}\sO_{\cN,s}$ of rank $n-1$. The lemma follows.
\end{proof}

The symmetrization $\sigma_X$ of the polarization $\lambda_X$ (Remark \ref{re:polarization}) induces a perfect symmetric $\sO_\cN$-bilinear pairing
\[
(\;,\;)\colon\rD(X)\times\rD(X)\to \sO_\cN
\]
satisfying $(\iota_X(\alpha)x,y)=(x,\iota_X(\alpha^\tc)y)$ for every $\alpha\in O_E$ and $x,y\in\rD(X)$. As $\Fil(X)$ is a maximal isotropic $\sO_\cN$-submodule of $\rD(X)$ with respect to $(\;,\;)$, we have an induced perfect $\sO_\cN$-bilinear pairing
\[
(\;,\;)\colon\Fil(X)\times\Lie(X)\to\sO_\cN,
\]
under which we denote by $F_X^\perp\subseteq\Fil(X)$ the annihilator of $F_X$. Then the $\sO_\cN$-submodule $F_X^\perp$ is locally free of rank $1$ and is locally a direct summand of $\Fil(X)$.

Following \cite{How19}*{Section~3}, we put
\begin{align*}
\epsilon &\coloneqq u\otimes 1 + 1\otimes u \in O_E\otimes_{O_F}\sO_\cN,\\
\epsilon^\tc &\coloneqq u\otimes 1 - 1\otimes u \in O_E\otimes_{O_F}\sO_\cN.
\end{align*}

\begin{lem}\label{le:linear2}
There are inclusions of $\sO_\cU$-modules $F_X^\perp\subseteq\epsilon\rD(X)\subseteq\rD(X)$ which are locally direct summands. The map $\epsilon\colon\rD(X)\to\epsilon\rD(X)$ descends to a surjective map
\[
\Lie(X)\xrightarrow{\epsilon}\epsilon\rD(X)/F_X^\perp,
\]
whose kernel $L_X$ is locally a direct summand $\sO_\cU$-submodule of $\Lie(X)$ of rank $1$. Moreover, the $O_E$-action stabilizes $L_X$, and acts on $\Lie(X)/L_X$ and $L_X$ via $\varphi_0$ and $\varphi_0^\tc$, respectively.
\end{lem}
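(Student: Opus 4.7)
The plan is to follow the strategy of Howard~\cite{How19}*{Section~3}, with the modifications needed for the present ramified setting. The algebraic foundation is the identity
\[
\epsilon \cdot \epsilon^\tc = \iota_X(u)^2 - u^2 = 0
\]
on $\rD(X)$, valid because $u^2 \in O_F$ and $X$ is an $O_F$-divisible group, so the scalar action of $u^2$ through $\sO_\cN$ coincides with $\iota_X(u^2) = \iota_X(u)^2$. The $\iota_X$-compatibility of $\sigma_X$ yields the adjointness $(\epsilon x, y) = -(x, \epsilon^\tc y)$ under the perfect pairing on $\rD(X)$, which gives $\ker\epsilon = (\epsilon\rD(X))^\perp$ and hence that $\epsilon\rD(X)$ is an isotropic submodule of $\rD(X)$ of rank at most $n$.

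The first step is to show that $\epsilon\rD(X) \subseteq \rD(X)$ is locally free of rank exactly $n$ and locally a direct summand. Since $\Fil(X)$ is $\iota_X$-stable, $\epsilon$ acts in block-triangular form with respect to the filtration $\Fil(X) \subseteq \rD(X)$. On the generic fiber of $\cN$, where $u$ is invertible, $\iota_X(u)$ decomposes $\rD(X)$ into $\pm u$-eigenspaces each of rank $n$ by the skew-adjointness of $\iota_X(u)$ under the pairing, so generically $\epsilon\rD(X)$ equals the $(+u)$-eigenspace and has rank $n$. To propagate this to all of $\cN$ as a locally direct summand statement, one combines the isotropy upper bound with rank-nullity analysis of $\epsilon$ on the subquotients $\Fil(X)$ and $\Lie(X)$, using the Kottwitz, Wedge, and Spin conditions to control the rank of $\epsilon^\tc|_{\Lie(X)}$ and hence of $\epsilon$ on $\rD(X)$.

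With $\epsilon\rD(X)$ a locally free rank-$n$ submodule in hand, I would next establish the identification $\epsilon\Fil(X) = F_X^\perp$, from which the inclusion $F_X^\perp \subseteq \epsilon\rD(X)$ as a local direct summand follows. The inclusion $\epsilon\Fil(X) \subseteq F_X^\perp$ is immediate from $\epsilon^\tc \cdot \epsilon = 0$, since the image then pairs trivially with $F_X = \ker(\epsilon^\tc|_{\Lie(X)})$. A rank count closes the argument: $F_X^\perp$ has rank $1$ as the annihilator of the rank-$(n-1)$ submodule $F_X$ in the induced perfect pairing between the rank-$n$ modules $\Fil(X)$ and $\Lie(X)$, while $\epsilon\Fil(X)$ has the same rank by the rank analysis of the first step. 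Consequently $\epsilon$ descends to a surjection $\Lie(X) \twoheadrightarrow \epsilon\rD(X)/F_X^\perp$, and its kernel is $L_X = (\ker\epsilon + \Fil(X))/\Fil(X)$, which is a rank-$1$ local direct summand of $\Lie(X)$ by rank count using that $\ker\epsilon = (\epsilon\rD(X))^\perp$ has rank $n$.

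The $O_E$-stability of $L_X$ is automatic since $\epsilon$ is $\iota_X$-equivariant. On the quotient $\epsilon\rD(X)/F_X^\perp$ the operator $\epsilon^\tc$ vanishes (by $\epsilon^\tc\epsilon = 0$), forcing $\iota_X(u)$ to act there as $+u$; via the injection $\Lie(X)/L_X \hookrightarrow \epsilon\rD(X)/F_X^\perp$, the $O_E$-action on $\Lie(X)/L_X$ is therefore through $\varphi_0$. Conversely, any class in $L_X$ lifts, modulo $\Fil(X)$, to an element of $\ker\epsilon$ on which $\iota_X(u)$ acts as $-u$, giving the $O_E$-action on $L_X$ through $\varphi_0^\tc$. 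The main obstacle I anticipate is the uniform rank-$n$ statement for $\epsilon\rD(X)$ in the first step, which requires a careful deployment of the Kottwitz, Wedge, and Spin conditions together with the isotropy constraint to pin down the rank at every point of $\cN$; the remaining steps are then essentially formal rank counts and consequences of the adjointness.
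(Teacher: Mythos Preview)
Your approach is essentially the same as the paper's, which simply cites \cite{How19}*{Proposition~3.3} and states that the same proof works; you have correctly sketched Howard's argument adapted to the ramified setting. One minor slip: from the adjointness $(\epsilon x,y)=-(x,\epsilon^\tc y)$ one gets $(\epsilon\rD(X))^\perp=\ker\epsilon^\tc$, not $\ker\epsilon$; this does not affect the argument, since $\ker\epsilon$ has rank $n$ anyway by the rank--nullity count once $\epsilon\rD(X)$ is known to have rank $n$.
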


\begin{proof}
This follows from the same proof for \cite{How19}*{Proposition~3.3}.
\end{proof}

\begin{definition}\label{de:linear1}
We define the \emph{line bundle of modular forms} $\omega$ to be $L_X^{-1}$, where $L_X$ is the line bundle on $\cN$ from Lemma \ref{le:linear2}.
\end{definition}

For every closed formal subscheme $Z$ of $\cN$, we denote by $\widetilde{Z}$ the closed formal subscheme defined by the sheaf $\sI_Z^2$. Take a nonzero element $x\in\bbV$. By the definition of $\cN(x)$, we have a distinguished morphism
\[
X_0\res_{\cN(x)}\xrightarrow{x} X\res_{\cN(x)}
\]
of $O_F$-divisible groups, which induces an $O_E$-linear map
\[
\rD(X_0)\res_{\cN(x)}\xrightarrow{x}\rD(X)\res_{\cN(x)}
\]
of vector bundles. By the Grothendieck--Messing theory, the above map admits a canonical extension
\[
\rD(X_0)\res_{\widetilde{\cN(x)}}\xrightarrow{\tilde{x}}\rD(X)\res_{\widetilde{\cN(x)}},
\]
which further restricts to a map
\begin{align}\label{eq:linear1}
\Fil(X_0)\res_{\widetilde{\cN(x)}}\xrightarrow{\tilde{x}}\Lie(X)\res_{\widetilde{\cN(x)}}.
\end{align}

From now on, we fix a generator $\gamma$ of the rank $1$ free $O_{\breve{E}}$-module $\Fil(X_0)$.

\begin{lem}\label{le:linear3}
The image $\tilde{x}(\gamma)$ is a section of $L_X$ over $\widetilde{\cN(x)}$, whose vanishing locus coincides with $\cN(x)$, where $\tilde{x}$ is the map \eqref{eq:linear1}.
\end{lem}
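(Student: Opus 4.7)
The plan is to verify both assertions by combining Grothendieck--Messing deformation theory applied to the square-zero thickening $\cN(x)\hookrightarrow\widetilde{\cN(x)}$ with the characterization of $L_X$ from Lemma \ref{le:linear2}. The key auxiliary input is that the $O_E$-action on $\Fil(X_0)$ goes through the conjugate embedding $\varphi_0^\tc$; this follows from the $\iota_{X_0}$-compatibility of $\lambda_{X_0}$, which forces the action on $\Lie(X_0^\vee)$ to be via $\varphi_0^\tc$, combined with the identification of $\Fil(X_0)\subseteq\rD(X_0)$ as the Hodge filtration for $X_0$, whose $O_E$-action is necessarily through $\varphi_0^\tc$ (the action on the complementary quotient $\Lie(X_0)$ being through $\varphi_0$ by assumption).

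For the first assertion, since $\iota_{X_0}(u)\gamma=-u\gamma$ in $\rD(X_0)$, we obtain $\epsilon\cdot\gamma=(u\otimes 1+1\otimes u)\gamma=0$. The crystalline extension $\tilde x\colon\rD(X_0)\to\rD(X)$ is $O_E$-linear and $\sO$-linear, so $\epsilon\cdot\tilde x(\gamma)=\tilde x(\epsilon\cdot\gamma)=0$ in $\rD(X)$. Now view $\tilde x(\gamma)\in\rD(X)$ as a lift, through the surjection $\rD(X)\twoheadrightarrow\Lie(X)$, of the section of $\Lie(X)$ produced by \eqref{eq:linear1}. The description of $L_X$ in Lemma \ref{le:linear2} as the kernel of the map $\Lie(X)\xrightarrow{\epsilon}\epsilon\rD(X)/F_X^\perp$ then yields $\tilde x(\gamma)\in L_X$ immediately.

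For the vanishing locus, Grothendieck--Messing applied to the square-zero thickening $\cN(x)\hookrightarrow\widetilde{\cN(x)}$ (with its canonical divided power structure) identifies lifts of the morphism $x\colon X_0\to X$ of $O_F$-divisible groups from $\cN(x)$ to $\widetilde{\cN(x)}$ with extensions of $\rD(X_0)\to\rD(X)$ preserving Hodge filtrations; equivalently, with the vanishing of the map \eqref{eq:linear1}. Since $\Fil(X_0)$ is the free $O_{\breve{E}}$-module of rank one generated by $\gamma$ and \eqref{eq:linear1} is $O_{\breve{E}}$-linear, this vanishing is equivalent to the vanishing of $\tilde x(\gamma)$. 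Letting $Z\subseteq\widetilde{\cN(x)}$ denote the vanishing locus of $\tilde x(\gamma)$, the above shows that $x$ extends to a morphism of $O_F$-divisible groups over $Z$, so the universal (maximality) property of $\cN(x)$ in Definition \ref{de:rz_special} forces $Z\subseteq\cN(x)$. Conversely, $x$ lifts over $\cN(x)$ by definition, so $\tilde x$ preserves filtrations there and $\tilde x(\gamma)$ vanishes, giving $\cN(x)\subseteq Z$; hence $Z=\cN(x)$. The main subtlety is the careful bookkeeping of the $O_E$-actions on the covariant Dieudonn\'e modules and their Hodge filtrations needed to conclude $\epsilon\cdot\gamma=0$; with that in hand, everything else is a direct application of Grothendieck--Messing.
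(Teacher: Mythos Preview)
Your proof is correct and follows the same approach as the one the paper cites, namely the argument for \cite{How19}*{Proposition~4.1}. You have correctly identified the ramified-case-specific point: that $\iota_{X_0}(u)$ acts on $\Fil(X_0)$ as multiplication by $-u$ (equivalently $\epsilon\cdot\gamma=0$), which you deduce via the perfect pairing $\Fil(X_0)\times\Lie(X_0)\to\sO$ coming from the principal polarization together with the known action on $\Lie(X_0)$; the rest of the argument (the Grothendieck--Messing deformation step for the square-zero thickening $\cN(x)\hookrightarrow\widetilde{\cN(x)}$) is exactly Howard's.
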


\begin{proof}
This follows from the same proof for \cite{How19}*{Proposition~4.1}.
\end{proof}

The following lemma is parallel to \cite{KR11}*{Proposition~3.5}.

\begin{lem}\label{le:linear4}
For every nonzero element $x\in\bbV$, the closed formal subscheme $\cN(x)$ of $\cN$ is either empty or a relative Cartier divisor.
\end{lem}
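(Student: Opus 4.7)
My plan is to show that, locally on $\cN$ around each point $s\in\cN(x)$, the ideal sheaf $\sI_{\cN(x)}$ is principally generated by a non-zero-divisor; combined with the formal smoothness of $\cN$ over $\Spf O_{\breve{E}}$ from Lemma \ref{le:rz}, this yields the relative Cartier divisor conclusion (the flatness over $\Spf O_{\breve{E}}$ condition is automatic since $\cN$ itself is flat over the base and the local defining equation will be a non-zero-divisor).

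The first step is to upgrade the section $\tilde{x}(\gamma)$ of Lemma \ref{le:linear3}, which is a priori only constructed on the square thickening $\widetilde{\cN(x)}$, into a canonical global section $s_x\in\Gamma(\cN, L_X)$ whose scheme-theoretic vanishing locus equals $\cN(x)$. The content is inductive: given infinitesimal thickenings $\cN(x)\subseteq Z\hookrightarrow\tilde{Z}$ inside $\cN$, Grothendieck--Messing deformation theory identifies the obstruction to lifting $x$ from $Z$ to $\tilde{Z}$ with the image of $\gamma$ under the crystalline map $\tilde{x}\colon\rD(X_0)\res_{\tilde{Z}}\to\rD(X)\res_{\tilde{Z}}$, taken modulo $\Fil(X)$. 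The $\iota_X$-compatibility of $\lambda_X$ and the wedge/spin conditions in Definition \ref{de:exotic} force, via Lemma \ref{le:linear2}, this obstruction to lie in the line subbundle $L_X\subseteq\Lie(X)$. Iterating across a cofinal system of thickenings and invoking the maximality property in Definition \ref{de:rz_special} produces the desired section $s_x$ on a formal neighborhood of $\cN(x)$ in $\cN$, whose zero locus is exactly $\cN(x)$; outside such a neighborhood the ideal is the unit ideal, so the local description suffices.

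The second step is to check that $s_x$ is a non-zero-divisor in $\sO_{\cN,s}$ for each $s\in\cN(x)$. Since $\cN$ is formally smooth over $O_{\breve{E}}$, each local ring is regular hence an integral domain, so $s_x$ fails to be a non-zero-divisor only if it vanishes identically on the connected component of $\cN$ containing $s$. By Lemma \ref{le:rz} there are exactly two such components, each of relative dimension $n-1$. If $s_x\equiv 0$ on a component $\cN^0$, then $x$ would lift to an $O_E$-linear homomorphism $X_0\otimes_{O_{\breve{E}}}\sO_{\cN^0}\to X\res_{\cN^0}$ of $O_F$-divisible groups. Passing to the rigid generic fiber and invoking the framing $\rho_X$, this universal lift would force the rational Dieudonn\'e module of the generic deformation of $\bbX$ to contain $x$ as a horizontal $E$-linear embedding of $X_0$ compatibly with the Hodge filtration on every deformation, contradicting the fact that such a rigid embedding does not persist through generic deformation in the universal formal $(n-1)$-parameter family.

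The main obstacle is the rigorous execution of the global Grothendieck--Messing construction of $s_x$, particularly pinning down the schematic equality (not just a set-theoretic one) of its vanishing locus with $\cN(x)$ in the exotic-smooth setting governed by the wedge and spin conditions. Once $s_x$ is in hand, ruling out the degenerate case where it vanishes on an entire component, and deducing the Cartier property from regularity of $\cN$, are essentially formal.
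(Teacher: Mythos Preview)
Your first step---promoting Lemma~\ref{le:linear3} to a section $s_x$ of $L_X$ on a formal neighborhood of $\cN(x)$ whose vanishing locus is exactly $\cN(x)$---is the same as the paper's approach (which cites \cite{How19}*{Proposition~4.3} for this), and your sketch of the iterated Grothendieck--Messing argument is on the right track. The genuine gap is in your opening parenthetical: flatness of $\cN(x)$ over $\Spf O_{\breve{E}}$ is \emph{not} automatic from $\cN$ being flat together with $s_x$ being a non-zero-divisor. In a regular local ring of $\cN$, the uniformizer $u$ is itself a non-zero-divisor, yet $V(u)$ is the special fiber, which is certainly not flat over $O_{\breve{E}}$. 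Concretely, $\cN(x)=V(s_x)$ is a \emph{relative} Cartier divisor if and only if $(s_x,u)$ is a regular sequence, i.e.\ $s_x$ is not divisible by $u$ in any local ring. Your Step~2 argues on the rigid generic fiber that $s_x\not\equiv 0$, which only rules out $s_x=0$; it says nothing about whether $s_x$ might lie in $(u)$.

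What is actually required is a statement about the special fiber: $\cN(x)$ must not contain any irreducible component of $\cN_{\r{red}}$. The paper supplies exactly this via Corollary~\ref{co:bt2}, which uses the Bruhat--Tits stratification to show (for $r\geq 2$) that the intersection of all vertex lattices of $\bbV$ is $\{0\}$, so $\cN(x)_{\r{red}}$ is strictly smaller than each connected component of $\cN_{\r{red}}$; one then concludes via \cite{KR11}*{Lemma~3.6}. The case $r=1$ is handled separately by citing \cite{RSZ17}*{Proposition~6.6}, and indeed Corollary~\ref{co:bt2} fails when $r=1$. Your deformation-theoretic argument on the generic fiber does not substitute for this special-fiber input; if you want to avoid the Bruhat--Tits stratification you would need to argue directly that $x$ does not lift to the universal object over an entire component of $\cN\otimes\ol{k}$, which is a different (and not obviously easier) statement than the one you wrote.
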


\begin{proof}
The case $r=1$ has been proved in \cite{RSZ17}*{Proposition~6.6}. Thus, we now assume $r\geq 2$.

We may assume that $\cN(x)$ is nonempty. By the same argument in the proof of \cite{How19}*{Proposition~4.3}, $\cN(x)$ is locally defined by one equation. It remains to show that such equation is not divisible by $u$. Since $r\geq 2$, this follows from \cite{KR11}*{Lemma~3.6}, Lemma \ref{le:rz}, and Corollary \ref{co:bt2}.
\end{proof}

\begin{proof}[Proof of Proposition \ref{pr:linear}]
The proof of \cite{How19}*{Theorem~5.1} can be applied in the same way to Proposition \ref{pr:linear}, using Lemma \ref{le:linear3} and Lemma \ref{le:linear4}.
\end{proof}

To end this subsection, we prove some results that will be used later.

\begin{lem}\label{le:linear5}
The $\sO_\cN$-submodule $L_X$ from Lemma \ref{le:linear2} coincides with the image of the map $\iota_X(u)-u\colon\Lie(X)\to\Lie(X)$.
\end{lem}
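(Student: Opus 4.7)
The plan is to identify $\iota_X(u)-u$ as the action of the element $\epsilon^\tc = u\otimes 1 - 1\otimes u \in O_E\otimes_{O_F}\sO_\cN$ on $\Lie(X)$, and then to exploit the identity $\epsilon\cdot\epsilon^\tc = 0$ in $O_E\otimes_{O_F}\sO_\cN$ (which holds because $u^2\in O_F$ forces $u^2\otimes 1 = 1\otimes u^2$) to push the image into $L_X$. Concretely, for a local section $x$ of $\Lie(X)$ with any lift $\tilde{x}\in\rD(X)$, the element $\epsilon^\tc\tilde{x}\in\rD(X)$ is a lift of $(\iota_X(u)-u)x$ and is annihilated by $\epsilon$; in particular its image in $\epsilon\rD(X)/F_X^\perp$ vanishes, so by the very definition of $L_X$ in Lemma \ref{le:linear2} we conclude that $(\iota_X(u)-u)x\in L_X$. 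This yields the inclusion $\IM(\iota_X(u)-u)\subseteq L_X$.

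To upgrade this inclusion to an equality, I would invoke Lemma \ref{le:linear1}: the kernel $F_X$ of $\iota_X(u)-u$ is locally a direct summand of $\Lie(X)$ of rank $n-1$, so $\IM(\iota_X(u)-u)\simeq\Lie(X)/F_X$ is itself a line bundle sitting inside the line bundle $L_X$. It therefore suffices to check surjectivity on fibers. At any closed point $s$ of $\cN$, which lies in the special fiber, $u$ reduces to $0$, so the induced map on $\Lie(X_s)$ coincides with $\iota_X(u)|_{\Lie(X_s)}$, which is nonzero by the Spin condition in Definition \ref{de:exotic}. Hence the image has one-dimensional fiber at $s$, equal to the one-dimensional fiber of $L_X$; a standard Nakayama argument then promotes this fiberwise equality to the equality of line bundles on $\cN$.

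I do not foresee any substantial difficulty here; the argument is essentially a combination of the algebraic identity $\epsilon\epsilon^\tc=0$ with a rank and Spin-condition count. The only minor point requiring care is to ensure that the descent of $\epsilon$ from $\rD(X)$ to the map $\Lie(X)\to\epsilon\rD(X)/F_X^\perp$ is compatible with taking the lift $\epsilon^\tc\tilde{x}$ of $(\iota_X(u)-u)x$, but this is built into the construction recalled in Lemma \ref{le:linear2}.
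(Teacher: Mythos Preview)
Your proof is correct and follows essentially the same approach as the paper: establish the inclusion $\IM(\iota_X(u)-u)\subseteq L_X$ and then use the Spin condition together with a rank count to get equality. Your use of the identity $\epsilon\epsilon^\tc=0$ in $O_E\otimes_{O_F}\sO_\cN$ is a clean shortcut for the inclusion; the paper instead verifies in two steps that $\epsilon(\iota_X(u)-u)x$ lies in $\Fil(X)$ and pairs to zero with $F_X$, both of which follow immediately once one notes your identity.
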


\begin{proof}
Denote by $L'_X$ the image of the map $\iota_X(u)-u\colon\Lie(X)\to\Lie(X)$. As we have $L'_X\simeq\Lie(X)/F_X$, $L'_X$ is a locally free $\sO_\cN$-submodule of $\Lie(X)$ of rank $1$ by Lemma \ref{le:linear1}. By the Spin condition in Definition \ref{de:exotic}, for every closed point $s\in\cN(\ol{k})$, the induced map $L'_X\otimes_{\sO_\cN}\ol{k}\to\Lie(X)\otimes_{\sO_\cN}\ol{k}$ over the residue field at $s$ is injective. Thus, the quotient $\sO_\cN$-module $\Lie(X)/L'_X$ is locally free. It remains to show that $L'_X\subseteq L_X$.

By definition, every section of $L'_X$ can be locally written as the image of $(\iota_X(u)-u)x$ for some section $x$ of $\rD(X)$. We need to show that
\begin{enumerate}
  \item $\epsilon(\iota_X(u)-u)x$ is a section of $\Fil(X)$;

  \item $(\epsilon(\iota_X(u)-u)x,y)=0$ for every section $y$ of $F_X$.
\end{enumerate}
For (1), we have $\epsilon(\iota_X(u)-u)x=(\iota_X(u)+u)(\iota_X(u)-u)x=(\iota_X(u^2)-u^2)x$. Since $\iota_X(u^2)-u^2$ acts by zero on $\Lie(X)$, (1) follows. For (2), we have
\[
(\epsilon(\iota_X(u)-u)x,y)=
((\iota_X(u)-u)x,(-\iota_X(u)+u)y)=0
\]
as $y$ is a section of $\Ker(\iota_X(u)-u)$. Thus, (2) follows.

The lemma is proved.
\end{proof}

\begin{lem}\label{le:linear6}
Let $\Lambda$ be a vertex $O_E$-lattice of $\bbV$ with $t(\Lambda)=4$. Then $\omega$ has degree $q-1$ on each connected component of (the smooth projective curve) $\cV_\Lambda$ (Definition \ref{de:bt}).
\end{lem}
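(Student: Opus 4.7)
Each connected component $\cV_\Lambda^+$ of $\cV_\Lambda$ is isomorphic to $\dP^1_{\ol k}$ by the remark after Proposition \ref{pr:bt} together with Proposition \ref{pr:bt}(2). The plan is to identify $\omega|_{\cV_\Lambda^+}$ explicitly on $\dP^1$ via two ingredients: a K-theoretic restriction formula for $\omega$ on special divisors, and the Deligne--Lusztig structure of $\cV_\Lambda^+$.

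For the first ingredient, I will apply the linear invariance of Corollary \ref{co:linear1} to the pair of length-$2$ tuples $(x,x)$ and $(x,0)$, both of which generate the $O_E$-submodule $\langle x\rangle\subseteq\bbV$, obtaining $[C(x)\otimes C(x)]=[C(x)\otimes C(0)]$ in $\rK_0(\cN)$. Expanding using $[C(x)]=[\sO_{\cN(x)}]$ and $[C(0)]=[\sO_\cN]-[\omega]$, and invoking the Koszul identity $[\sO_{\cN(x)}]^2=[\sO_{\cN(x)}]-[\sN^\vee_{\cN(x)/\cN}]$ (valid because $\cN(x)$ is a Cartier divisor by Lemma \ref{le:linear4}), one deduces the canonical isomorphism
\[
\omega|_{\cN(x)}\;\simeq\;\sN^\vee_{\cN(x)/\cN}
\]
of line bundles on every nonempty special divisor.

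For the second ingredient, I will invoke the Deligne--Lusztig description of $\cV_\Lambda^\circ$ from Proposition \ref{pr:bt}(1): it parametrizes $2$-dimensional isotropic $U\subseteq(\Lambda^\vee/\Lambda)_{\ol k}$ satisfying $\dim(U\cap\delta U)=1$, where $\delta$ denotes the $q$-power Frobenius. Combining this with the crystalline description of the universal $O_F$-divisible group on this stratum from \cite{Wu} and with Lemma \ref{le:linear5} (which identifies $L_X$ as the image of $\iota_X(u)-u$ on $\Lie(X)$), one identifies $L_X|_{\cV_\Lambda^+}$ with a specific line bundle constructed from the tautological rank-$2$ bundle $\sU$ on $\cV_\Lambda^+\cong\dP^1$ and its $q$-power Frobenius twist $\delta^*\sU$. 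The key calculation is to check that $L_X|_{\cV_\Lambda^+}\simeq\sO_{\dP^1}(-(q-1))$, where the factor $q$ arises from $\det\delta^*\sU\simeq(\det\sU)^{\otimes q}$ and the normalization from $\det\sU=\sO_{\dP^1}(-1)$; consequently $\omega|_{\cV_\Lambda^+}=L_X^{-1}|_{\cV_\Lambda^+}\simeq\sO_{\dP^1}(q-1)$ has degree $q-1$.

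The main obstacle is the explicit computation of $L_X|_{\cV_\Lambda^+}$ in the second step. This requires carefully tracking the $u$-action on the Hodge filtration together with the Wedge and Spin conditions of Definition \ref{de:exotic}, and extending smoothly across the finitely many boundary points of $\cV_\Lambda^+\setminus\cV_\Lambda^\circ$ where the condition $\dim(U\cap\delta U)=1$ degenerates. The analogous calculation in the unramified inert case appears in \cite{LZ}*{Section~5}; when $F/\dQ_p$ is unramified one may also shortcut via the absolute Rapoport--Zink comparison (Corollary \ref{co:arz}), but in the general ramified exotic setting the computation must be carried out directly using \cite{Wu}'s local description.
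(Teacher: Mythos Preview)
Your second ingredient is the right framework and is essentially what the paper does, but your first ingredient is a dead end, and the specific calculation you sketch in the second ingredient is not correct.

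\textbf{On the first ingredient.} Even granting $\omega|_{\cN(x)}\simeq\sN^\vee_{\cN(x)/\cN}$ (which, by the way, follows more directly from Lemma~\ref{le:linear3} than from the K-theory statement Corollary~\ref{co:linear1}; equality in $\rK_0(\cN)$ does not by itself give an isomorphism of line bundles), this does not help. The curve $\cV_\Lambda^+$ is not a special divisor $\cN(x)$. You could try restricting further: for $x\in\Lambda$ one has $\cV_\Lambda^+\subseteq\cN(x)$ and then $\deg(\omega|_{\cV_\Lambda^+})=\deg(\sO_\cN(-\cN(x))|_{\cV_\Lambda^+})=-\Int_{\cV_\Lambda^+}(x)$; but computing this intersection number is exactly case~(3) of Lemma~\ref{le:geometric3}, which \emph{uses} Lemma~\ref{le:linear6}. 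So this route is circular.

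\textbf{On the second ingredient.} The paper follows precisely this strategy, but the identification you assert is not the right one. Using Lemma~\ref{le:linear5} and the Dieudonn\'e description from \cite{Wu}, one finds $\omega^{-1}|_{\cV_\Lambda}\simeq(\delta(U)+U)/U$, where $U$ is the tautological rank~$2$ isotropic subbundle; this is \emph{not} expressible as a ratio of $\det\sU$ and $\det\delta^*\sU$. Moreover, your normalization $\det\sU=\sO_{\dP^1}(-1)$ is off: on each component of $\r{OGr}(2,4)$ one has $\det\sU\simeq\sO_{\dP^1}(-2)$, so your heuristic would give the wrong degree. The paper's actual computation proceeds by embedding $\cV_\Lambda^\pm$ into the space of lines $\cL_\Lambda\simeq\dP^3$ via $U\mapsto\delta(U)\cap U$, identifying $(\delta(U)+U)/U\simeq\delta(U)/(\delta(U)\cap U)$, and using the short exact sequence
\[
0\to\delta(\delta(U)\cap U)\to\delta(U)/(\delta(U)\cap U)\to\sO_{\cI_\Lambda}\to 0,
\]
where $\cI_\Lambda$ is the locus of $q^2+1$ rational isotropic lines. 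This gives $\deg(\omega^{-1}|_{\cV_\Lambda^\pm})=-q\deg(\cV_\Lambda^\pm)+(q^2+1)$, and a hyperplane section shows $\deg(\cV_\Lambda^\pm)=q+1$, yielding $\deg(\omega^{-1}|_{\cV_\Lambda^\pm})=1-q$. This intermediate step, the degree $q+1$ of the embedding, is the piece your sketch is missing entirely; there is no way to read off the answer from $\det\sU$ alone.
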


\begin{proof}
Let $\delta$ be the Frobenius element of $\Gal(\ol{k}/k)$.

Let $s\in\cN(\ol{k})$ be a closed point represented by the quadruple $(X,\iota_X,\lambda_X;\rho_X)$. Let $\sfM$ be the covariant $O_F$-Dieudonn\'{e} module of $X$ equipped with the $O_E$-action $\iota_X$, which becomes a free $O_{\breve{E}}$-module. We have $\Lie(X)=\sfM/\sfV\sfM$. By Definition \ref{de:linear1} and Lemma \ref{le:linear5}, the fiber $\omega^{-1}\res_s$ is canonically identified with $((u\otimes 1)\sfM+\sfV\sfM)/\sfV\sfM$, which is further canonically isomorphic to $((u\otimes 1)\sfV^{-1}\sfM+\sfM)/\sfM$. By the identification between $\cV_\Lambda$ and the generalized Deligne--Lusztig variety of $\OG(\Lambda^\vee/\Lambda)$ in Proposition \ref{pr:bt} given in \cite{Wu}*{Proposition~4.29}, we know that $\omega^{-1}\res_{\cV_\Lambda}$ coincides with $(\delta(U)+U)/U$ where $U$ is the tautological subbundle of $(\Lambda^\vee/\Lambda)\otimes_k\sO_{\cV_\Lambda}$.

To compute the degree of $(\delta(U)+U)/U$, let $\cV_\Lambda^+$ and $\cV_\Lambda^-$ be the two connected components of $\cV_\Lambda$. Let $\cL_\Lambda$ be the scheme over $\ol{k}$ classifying lines in $\Lambda^\vee/\Lambda$ with the tautological bundle $L$. We may identify $\cV_\Lambda^+$ and $\cV_\Lambda^-$ as two closed subschemes of $\cL_\Lambda$ via the assignment $U\mapsto\delta(U)\cap U$ (see \cite{HP14}*{Section~3.2} for more details). Then, $\cV_\Lambda^+$ and $\cV_\Lambda^-$ are the two irreducible components of the locus where $L$ and $\delta(L)$ generate an isotropic subspace, and the assignment $L\mapsto\delta(L)$ switches $\cV_\Lambda^+$ and $\cV_\Lambda^-$. Let $\cI_\Lambda$ be the locus where $L$ is isotropic and $L=\delta(L)$. Then $\cI_\Lambda$ is a disjoint union of $q^2+1$ copies of $\Spec\ol{k}$ since there are exactly $q^2+1$ isotropic lines in $\Lambda^\vee/\Lambda$, and is contained in $\cV_\Lambda^+\cap\cV_\Lambda^-$. Note that the map $\delta(U)/(\delta(U)\cap U)\to(\delta(U)+U)/U$ is an isomorphism, and there is a short exact sequence
\[
0 \to \delta(\delta(U)\cap U) \to \delta(U)/(\delta(U)\cap U) \to \sO_{\cI_\Lambda} \to 0
\]
of $\sO_{\cV_\Lambda^\pm}$-modules. Since $\delta(U)\cap U$ is the restriction of the tautological bundle $L$ on $\cL_\Lambda$, we have
\begin{align*}
\deg\(\omega^{-1}\res_{\cV_\Lambda^\pm}\)&=\deg\((\delta(U)+U)/U\res_{\cV_\Lambda^\pm}\)=\deg\(\delta(\delta(U)\cap U)\res_{\cV_\Lambda^\pm}\)+(q^2+1) \\
&=\deg\(L^{\otimes q}\res_{\cV_\Lambda^\pm}\)+(q^2+1)=-q\deg(\cV_\Lambda^\pm)+(q^2+1),
\end{align*}
where $\deg(\cV_\Lambda^\pm)$ denotes the degree of the curve $\cV_\Lambda^\pm$ in the projective space $\cL_\Lambda$. Thus, it remains to show that $\deg(\cV_\Lambda^\pm)=q+1$.

To compute the degree, take a $3$-dimensional quadratic subspace $H$ of $\Lambda^\vee/\Lambda$. Let $\cL_\Lambda^H$ be the hyperplane of $\cL_\Lambda$ that consists of lines contained in $H$. Then $\cL_\Lambda^H\cap\cV_\Lambda$ is the subscheme of lines $L\subseteq H$ that is isotropic and fixed by $\delta$, which is a disjoint union of $q+1$ copies of $\Spec\ol{k}$ since there are exactly $q+1$ isotropic lines in $H$. As $\cL_\Lambda^H\cap\cV_\Lambda$ is contained in $\cI_\Lambda$, it is contained in $\cV_\Lambda^+\cap\cV_\Lambda^-$. Therefore, we have $\deg(\cV_\Lambda^\pm)=q+1$.

\if false

To compute the degree, we choose a basis $\{e_1,e_2,f_1,f_2\}$ of $(\Lambda^\vee/\Lambda)\otimes_k\ol{k}$ satisfying $(e_i,e_j)_{\Lambda^\vee/\Lambda}=(f_i,f_j)_{\Lambda^\vee/\Lambda}=0$ and $(e_i,f_j)_{\Lambda^\vee/\Lambda}=\delta_{ij}$ for $1\leq i,j\leq 2$, and such that $\delta$ fixes $e_1$, $f_1$, but switches $e_2$ and $f_2$, under which we identify $(\Lambda^\vee/\Lambda)\otimes_k\ol{k}$ with $\ol{k}^4$ with coordinates $(\mu_1,\mu_2,\nu_1,\nu_2)$. Then $\cV_\Lambda^+$ and $\cV_\Lambda^-$ are the two irreducible components of the subscheme of $\dP^3_{\ol{k}}$ defined by the two equations
\begin{align*}
\mu_1\nu_1+\mu_2\mu_2&=0, \\
\mu_1\nu_1^q+\mu_1^q\nu_1+\mu_2^{q+1}+\nu_2^{q+1}&=0.
\end{align*}
Note that the image of the Segre embedding $\dP^1_{\ol{k}}\times\dP^1_{\ol{k}}\to\dP^3_{\ol{k}}$ sending $([\alpha_1:\alpha_2],[\beta_1:\beta_2])$ to $[\alpha_1\beta_1:\alpha_2\beta_2:\alpha_2\beta_2:-\alpha_1\beta_1]$ gives the locus of the first equation. Then the locus in $\dP^1_{\ol{k}}\times\dP^1_{\ol{k}}$ cut out by the second equation is the union of
\[
([\alpha_1:\alpha_2],[\alpha_1^q:-\alpha_2^q]),\qquad
([\beta_1^q:-\beta_2^q],[\beta_1:\beta_2]),
\]
both being curves of degree $q+1$ in $\dP^3_{\ol{k}}$.

\fi

The lemma is proved.
\end{proof}

\subsection{Proof of Theorem \ref{th:kr} when $r=1$}
\label{ss:kr_proof1}

Let the setup be as in Subsection \ref{ss:kr}. We assume Hypothesis \ref{hy:rz}. In this subsection, we assume $r=1$. Note that since $\bbV$ is nonsplit, the fundamental invariants of an integral $O_E$-lattice of $\bbV$ must consist of two positive odd integers.

\begin{lem}\label{le:two}
Let $\bbL$ be an integral $O_E$-lattice of $\bbV$ with fundamental invariants $(2b_1+1,2b_2+1)$. Then
\[
\partial\Den(\bbL)=2\sum_{j=0}^{b_1}\(1+q+\cdots+q^j+(b_2-j)q^j\).
\]
\end{lem}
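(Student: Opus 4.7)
The plan is to reduce the identity to a counting problem via Lemma \ref{le:analytic2}. For $r=1$ and $\bbV$ nonsplit of even dimension $n=2$, the note preceding the lemma tells us that every integral $O_E$-lattice of $\bbV$ has both fundamental invariants positive, so $t(L)=2$ for every integral $L$ containing $\bbL$. Hence the product in \eqref{eq:analytic3} is empty and equals $1$, yielding
\[
\partial\Den(\bbL) = 2\cdot N(\bbL),\qquad N(\bbL)\coloneqq|\{L : \bbL\subseteq L\subseteq L^\vee\}|,
\]
so it suffices to show $N(\bbL)=\sum_{j=0}^{b_1}(1+q+\cdots+q^j+(b_2-j)q^j)$.

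Fix a normal basis $\{e_1,e_2\}$ of $\bbL$ (Remark \ref{re:analytic3}) with Gram matrix $\diag(\beta_1 u^{2b_1},\beta_2 u^{2b_2})$, $\beta_i\in O_F^\times$, assuming WLOG $b_1\leq b_2$. Every overlattice with $\bbL\subseteq L\subseteq\bbL^\vee$ admits a unique Hermite normal form basis $\{v_1,v_2\}$ with $v_1 = u^{-\delta_1}e_1+\eta e_2$ and $v_2 = u^{-\delta_2}e_2$, where $\delta_1,\delta_2\in\dZ_{\geq 0}$ and $\eta$ ranges over a quotient of $E$ modulo $u^{-\delta_2}O_E$. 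A direct computation gives
\begin{align*}
(v_2,v_2) &= (-1)^{\delta_2}\beta_2 u^{2(b_2-\delta_2)},\\
(v_1,v_2) &= (-1)^{\delta_2}\eta\beta_2 u^{2b_2-\delta_2},\\
(v_1,v_1) &= (-1)^{\delta_1}\beta_1 u^{2(b_1-\delta_1)} + \Nm(\eta)\beta_2 u^{2b_2},
\end{align*}
and $L\subseteq L^\vee$ is equivalent to all three values lying in $u^{-1}O_E$; from $(v_2,v_2)$ we read off $\delta_2\leq b_2$.

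The central claim is that integrality moreover forces $\delta_1\leq b_1$. If instead $\delta_1>b_1$, then the first summand of $(v_1,v_1)$ has $F$-valuation $b_1-\delta_1<0$, so the second summand must cancel its leading contribution in $O_F^\times\cdot\varpi^{b_1-\delta_1}$ (with $\varpi=u^2$). Writing the leading term of $\eta$ as $\bar a u^{-m}$ with $m=\delta_1-b_1+b_2$ and $\bar a\in k^\times$, a short calculation reduces this leading cancellation to the equation
\[
\bar a^2 = (-1)^{b_1+b_2+1}\bar\beta_1/\bar\beta_2\qquad\text{in }k^\times.
\]
On the other hand, the nonsplit hypothesis on $\bbV$, translated via its discriminant class $\beta_1\beta_2 u^{2(b_1+b_2)}\in F^\times/\Nm(E^\times)$ (using $\Nm(u)=-\varpi$), is equivalent to $(-1)^{b_1+b_2+1}\bar\beta_1\bar\beta_2\notin(k^\times)^2$; hence the cancellation equation has no solution, so no integral $L$ with $\delta_1>b_1$ exists. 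This parity-sensitive matching between the cancellation equation and the nonsplit discriminant condition is the main obstacle, as the sign $(-1)^{b_1+b_2+1}$ must appear identically in both sides of the comparison for the argument to close.

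For $\delta_1\leq b_1$ and $\delta_2\leq b_2$, the first summand of $(v_1,v_1)$ lies in $O_F$, so integrality reduces to $\Nm(\eta)\beta_2 u^{2b_2}\in O_F$, i.e.\ $\val_E(\eta)\geq -b_2$; the condition $(v_1,v_2)\in u^{-1}O_E$ then becomes automatic. Combined with the Hermite constraint $\eta\in u^{-(\delta_1+\delta_2)}O_E/u^{-\delta_2}O_E$, the set of valid $\eta$'s has cardinality $q^{\min(\delta_1,b_2-\delta_2)}$. Summing yields
\[
N(\bbL) = \sum_{\delta_1=0}^{b_1}\sum_{\delta_2=0}^{b_2} q^{\min(\delta_1,\,b_2-\delta_2)},
\]
and the substitution $k=b_2-\delta_2$ followed by splitting the inner sum according to $k\leq\delta_1$ versus $k>\delta_1$ recovers
\[
N(\bbL)=\sum_{j=0}^{b_1}\bigl((1+q+\cdots+q^j)+(b_2-j)q^j\bigr),
\]
completing the proof.
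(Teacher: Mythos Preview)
Your proof is correct and follows essentially the same approach as the paper: reduce via Lemma~\ref{le:analytic2} (noting $t(L)=2$ for all integral overlattices) to counting integral $L\supseteq\bbL$, then parametrize such $L$ by an extension/Hermite-normal-form description relative to an orthogonal basis and use the nonsplit hypothesis to bound the parameters. The paper's parametrization is mildly different---it fixes an orthogonal basis of $\bbV$ with unit norms, tracks $j(L)$ via $L\cap Ee_1$ and $k(L)$ via the image in $Ee_2$, and records the extension map $\varepsilon_L$---but this is just the ``dual'' of your choice (you project to $Ee_1$ and intersect with $Ee_2$). The counts then agree term by term after the obvious change of variables.

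The one place where you add genuine content beyond the paper is the step ``nonsplit forces $\delta_1\le b_1$'': the paper simply writes ``since $\bbV$ is nonsplit'' and moves on, whereas you spell out the leading-term cancellation and match the resulting square-class obstruction in $k^\times$ against the discriminant criterion for nonsplitness. Your computation there is correct (including the sign bookkeeping $\Nm(u)=-u^2$ and $\overline{\Nm(a)}=\bar a^2$), so this is a welcome elaboration rather than a deviation. One cosmetic remark: the clause ``assuming WLOG $b_1\le b_2$'' is not really a WLOG---by Definition~\ref{de:analytic1}(3) the fundamental invariants are already ordered, so $b_1\le b_2$ holds automatically.
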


\begin{proof}
We denote by $\fL$ the set of integral $O_E$-lattices of $\bbV$ containing $\bbL$. We now count $\fL$.

Fix an orthogonal basis $\{e_1,e_2\}$ of $\bbV$ with $(e_1,e_1)_{\bbV}\in O_F^\times$ and $(e_2,e_2)_{\bbV}\in O_F^\times$ and such that $\bbL=\langle u^{b_1}e_1\rangle+\langle u^{b_2}e_2\rangle$. For every $L\in\fL$, we let $j(L)$ be the unique integer such that $L\cap \langle e_1\rangle\otimes_{O_F}F=\langle u^{j(L)}e_1\rangle$ and let $k(L)$ be the unique integer such that image of $L$ under the natural projection map $\bbV\to\langle e_2\rangle\otimes_{O_F}F$ is $\langle u^{k(L)}e_2\rangle$. Then by Lemma \ref{le:analytic7}(1), $L$ is uniquely determined by $j(L)$, $k(L)$, and the extension map $\varepsilon_L\colon\langle u^{k(L)}e_2\rangle\to \langle u^{j(L)}e_1\rangle\otimes_{O_F}F/O_F$. The condition that $L$ contains $\bbL$ is equivalent to that $j(L)\leq b_1$, $k(L)\leq b_2$, and that $\varepsilon_L$ vanishes on $\langle u^{b_2}e_2\rangle$. Since $\bbL$ is nonsplit, the condition that $L$ is integral is equivalent to that $j(L)\geq 0$, $k(L)\geq 0$, and that the image of $\varepsilon_L$ is contained $\langle e_1\rangle/\langle u^{j(L)}e_1\rangle$. Thus, the number of $L\in\fL$ with $j(L)=j$ for some fixed $0\leq j\leq b_1$ equals $1+q+\cdots+q^j+(b_2-j)q^j$. Summing over all $0\leq j\leq b_1$, we obtain
\[
|\fL|=\sum_{j=0}^{b_1}\(1+q+\cdots+q^j+(b_2-j)q^j\).
\]
The lemma then follows from \eqref{eq:analytic3} as $t(\bbL)=2$.
\end{proof}

\begin{proposition}\label{pr:two}
Theorem \ref{th:kr} holds when $r=1$. More explicitly, for an integral $O_E$-lattice $\bbL$ of $\bbV$ with fundamental invariants $(2b_1+1,2b_2+1)$, we have
\[
\Int(\bbL)=\partial\Den(\bbL)=2\sum_{j=0}^{b_1}\(1+q+\cdots+q^j+(b_2-j)q^j\).
\]
\end{proposition}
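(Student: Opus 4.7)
\textbf{Proof plan for Proposition \ref{pr:two}.} The analytic identity $\partial\Den(\bbL)=2\sum_{j=0}^{b_1}(1+q+\cdots+q^j+(b_2-j)q^j)$ is already supplied by Lemma \ref{le:two}, so the task is to compute $\Int(\bbL)$ and match. By Corollary \ref{co:linear2} we are free to choose generators: fix an orthogonal basis $\{e_1,e_2\}$ of $\bbV$ with $(e_i,e_i)_{\bbV}\in O_F^\times$ and set $x_i\coloneqq u^{b_i}e_i$, so that $\bbL=\langle x_1,x_2\rangle$ and $\Int(\bbL)=\chi(\sO_{\cN(x_1)}\overset{\dL}{\otimes}_{\sO_{\cN}}\sO_{\cN(x_2)})$. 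By Lemma \ref{le:linear4} each $\cN(x_i)$ is a relative Cartier divisor on the formally smooth $2$-dimensional formal scheme $\cN$.

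The geometric side will be organized by the Bruhat--Tits stratification. Since $r=1$, every vertex $O_E$-lattice $\Lambda\subseteq\bbV$ has $t(\Lambda)=2$ (the parity constraint forbids $t(\Lambda)=0$ as $\bbV$ is nonsplit), and by Proposition \ref{pr:bt} the stratum $\cV_\Lambda$ is a single reduced point on each of the two connected components of $\cN$ (Lemma \ref{le:rz}). By Corollary \ref{co:bt1} the reduced support of $\cN(x_1)\cap\cN(x_2)$ is precisely the set of (pairs of) points $\cV_\Lambda$ with $\Lambda$ a vertex lattice containing $\bbL$. Using the symmetry between the two components (via the isometry of $\bbV$ negating a single anisotropic basis vector, which preserves each $\cN(x_i)$ but switches the components, cf.\ the proof of Proposition \ref{pr:bt}), we get
\[
\Int(\bbL)=2\sum_{\Lambda\supseteq\bbL,\ \Lambda\text{ vertex}}\mu_\Lambda,
\]
where $\mu_\Lambda$ is the local intersection multiplicity on one component.

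For each vertex $\Lambda\supseteq\bbL$, pass to the formal neighborhood of the point $\cV_\Lambda$, which is isomorphic to $\Spf O_{\breve{E}}[[T]]$ by formal smoothness. Using the Grothendieck--Messing interpretation of the special divisors via the map \eqref{eq:linear1}, one writes down Cartier equations $f_{x_i,\Lambda}(T)\in O_{\breve{E}}[[T]]$ cutting out $\cN(x_i)$ near $\cV_\Lambda$, and reads off $\mu_\Lambda=\length O_{\breve{E}}[[T]]/(f_{x_1,\Lambda},f_{x_2,\Lambda})$. The plan is to show that this local length admits an interpretation as counting certain integral overlattices of $\bbL$ concentrated around $\Lambda$; combining this over all vertex $\Lambda\supseteq\bbL$ should rearrange the sum into the count $|\fL|=\sum_{j=0}^{b_1}(1+q+\cdots+q^j+(b_2-j)q^j)$ of integral overlattices of $\bbL$ from the proof of Lemma \ref{le:two} (equivalently, by matching the contribution indexed by $L\supseteq\bbL$ with $L\cap\langle e_1\rangle_F=\langle u^je_1\rangle$ to the appropriate vertex lattice $\Lambda\supseteq L$).

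The base case $b_1=b_2=0$, where $\bbL$ is itself the unique vertex lattice containing $\bbL$ and the unique integral overlattice of itself, gives $\mu_\bbL=1$ and $\Int(\bbL)=2$, matching $\partial\Den(\bbL)=2$. The main obstacle is the explicit local computation of $\mu_\Lambda$; this is the place where one either invokes explicit Dieudonn\'e-theoretic deformation theory for the exotic smooth integral model of \cite{RSZ17}, or alternatively proves a suitable recursion $\Int(\bbL+\langle u^{-1}x_i\rangle)-\Int(\bbL)=\cdots$ parallel to the recursion for $\partial\Den$ that is implicit in Lemma \ref{le:two}, and closes the induction against the base case.
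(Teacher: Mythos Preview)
Your Bruhat--Tits decomposition is vacuous here. For $r=1$ the space $\bbV$ is anisotropic, so $\rU(\bbV)(F)$ is compact and there is a \emph{unique} vertex lattice $\Lambda_0$ (equivalently, the building is a point); indeed each connected component of $\cN$ is then a single formal disk $\Spf O_{\breve{E}}[[T]]$ with reduced locus one point. Hence your sum $\Int(\bbL)=2\sum_\Lambda \mu_\Lambda$ collapses to the single term $2\mu_{\Lambda_0}$, and the ``rearrangement into the count $|\fL|$'' you propose is exactly the statement $\mu_{\Lambda_0}=|\fL|$, i.e.\ the full identity $\Int(\bbL)=\partial\Den(\bbL)$ restated. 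No actual decomposition of the problem has occurred, and you acknowledge that computing $\mu_{\Lambda_0}$ is ``the main obstacle'' without carrying it out.

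The paper's proof uses a genuinely different decomposition, of $\cN(x_1)$ rather than of the intersection locus. Via the Serre construction one identifies $\bbV$ with the quaternion algebra $\bbD=\End_{O_F}(X_0)\otimes\dQ$, after which \cite{RSZ17}*{Proposition~7.1} gives
\[
\cN(x_1)=\sum_{j=0}^{b_1}\cW_{\pres{\overline{x_1}}E,\,j}
\]
as a sum of quasi-canonical lifting divisors indexed by level $j$. Each term $\cW_{\pres{\overline{x_1}}E,\,j}\cap\cN(x_2)$ is then computed by Keating's formula \cite{Vol07}*{Theorem~2.1}, exactly as in \cite{KR11}*{Proposition~8.4}, yielding $2(1+q+\cdots+q^j+(b_2-j)q^j)$; summing over $j$ gives the result. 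This is the missing idea: the nontrivial splitting is by quasi-canonical level, not by vertex lattice.
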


\begin{proof}
If $\bbL$ is not integral, then $\Int(\bbL)=\partial\Den(\bbL)=0$. If $\bbL$ is integral with fundamental invariants $(2b_1+1,2b_2+1)$. We may take an orthogonal basis $\{x_1,x_2\}$ of $\bbL$ such that $\val(x_1)=2b_1+1$ and $\val(x_2)=2b_2+1$.

Put $\bbD\coloneqq\End_{O_F}(X_0)\otimes\dQ$, which is a division quaternion algebra over $F$ with the $F$-linear embedding $\iota_{X_0}\colon E\to\bbD$. By the Serre construction, we may naturally identify $\bbD$ with $\bbV$, and we have an identity
\begin{align}\label{eq:two2}
\cN(x_1)=\sum_{j=0}^{b_1}\cW_{\pres{\overline{x_1}}E,j}
\end{align}
of divisors, decomposing the special divisor as a sum of quasi-canonical lifting divisors (see \cite{RSZ17}*{Section~6 \& Proposition~7.1}).

We claim that for every $0\leq j\leq b_1$, the identity
\begin{align}\label{eq:two3}
\length_{O_{\breve{E}}}\cW_{\pres{\overline{x_1}}E,j}\cap\cN(x_2)=2\(1+q+\cdots+q^j+(b_2-l)q^j\)
\end{align}
holds. In fact, this can be proved in the same way as for \cite{KR11}*{Proposition~8.4} using Keating's formula \cite{Vol07}*{Theorem~2.1}. Notice that in \cite{KR11}*{Proposition~8.4} we replace $e_s$ by $2q^j$ since $E/F$ is ramified, and that the factor $2$ comes from the fact that $\cZ_l$ has two connected components. By \eqref{eq:two2} and \eqref{eq:two3}, we have
\[
\Int(\bbL)=\length_{O_{\breve{E}}}\cN(x_1)\cap\cN(x_2)=\sum_{j=0}^{b_1}2\(1+q+\cdots+q^j+(b_2-l)q^j\).
\]
The proposition follows by Lemma \ref{le:two}.
\end{proof}

\begin{definition}\label{de:two}
For $L^\flat\in\flat(\bbV)$, we put
\[
\cN(L^\flat)^\circ\coloneqq\cN(L^\flat)-\cN(u^{-1}L^\flat)
\]
as an effective divisor by (the $r=1$ case of) Lemma \ref{le:linear4}.
\end{definition}

\begin{corollary}\label{co:two}
Take an element $L^\flat\in\flat(\bbV)$. For every $x\in\bbV\setminus V_{L^\flat}$, we have
\[
\length_{O_{\breve{E}}}\cN(L^\flat)^\circ\cap\cN(x)=2\sum_{\substack{L\subseteq L^\vee\\ L\cap V_{L^\flat}=L^\flat}}\CF_L(x).
\]
\end{corollary}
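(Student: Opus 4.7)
\smallskip

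\noindent\textbf{Proof plan.} Let $y$ be a generator of the rank-one $O_E$-module $L^\flat$, so that $\cN(L^\flat)=\cN(y)$ and $\cN(u^{-1}L^\flat)=\cN(u^{-1}y)$; both are effective relative Cartier divisors on $\cN$ by Lemma~\ref{le:linear4}, the second contained in the first. The strategy is to expand $\cN(L^\flat)^\circ=\cN(L^\flat)-\cN(u^{-1}L^\flat)$ by bilinearity of Serre intersection, apply the $r=1$ case of Theorem~\ref{th:kr} (which is Proposition~\ref{pr:two}) to each piece, and then compute the resulting difference of analytic expressions using the shape of \eqref{eq:analytic3}.

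\smallskip

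\noindent For the first step, the short exact sequence
\[
0\to\sO_{\cN(L^\flat)^\circ}\otimes\sO_\cN(-\cN(u^{-1}L^\flat))\to\sO_{\cN(L^\flat)}\to\sO_{\cN(u^{-1}L^\flat)}\to 0
\]
on $\cN$, together with Tor-independence of $\cN(x)$ with each of the three sheaves (which is automatic for a Cartier divisor on the regular two-dimensional formal scheme $\cN$), gives
\[
\length_{O_{\breve{E}}}\cN(L^\flat)^\circ\cap\cN(x)
=\length_{O_{\breve{E}}}\cN(L^\flat)\cap\cN(x)-\length_{O_{\breve{E}}}\cN(u^{-1}L^\flat)\cap\cN(x),
\]
where I use that twisting by an invertible sheaf does not alter the length of a zero-dimensional sheaf, and that all three intersections are indeed zero-dimensional when $x\notin V_{L^\flat}$ (by Remark~\ref{re:special} applied to the rank-two lattices $L^\flat+\langle x\rangle$ and $u^{-1}L^\flat+\langle x\rangle$). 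By Proposition~\ref{pr:two}, each summand on the right equals $\partial\Den$ of the corresponding rank-two $O_E$-lattice.

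\smallskip

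\noindent For the second step, because $\bbV$ is a nonsplit two-dimensional hermitian space it admits no self-dual $O_E$-lattice, so by Remark~\ref{re:analytic2} every integral $O_E$-lattice $L\subseteq\bbV$ has $t(L)=2$, and the product in \eqref{eq:analytic3} is the empty product $1$. Hence $\partial\Den(\bbL)=2\cdot\#\{L:\bbL\subseteq L\subseteq L^\vee\}$ for any integral $\bbL$, and subtracting yields
\[
\length_{O_{\breve{E}}}\cN(L^\flat)^\circ\cap\cN(x)
=2\cdot\#\bigl\{L\subseteq L^\vee:L^\flat\subseteq L,\ x\in L,\ u^{-1}L^\flat\not\subseteq L\bigr\}.
\]
Finally, $L\cap V_{L^\flat}$ is an $O_E$-lattice of $V_{L^\flat}$ containing $L^\flat$, hence equals $u^{-k}L^\flat$ for a unique $k\geq 0$; the condition $u^{-1}L^\flat\not\subseteq L$ is equivalent to $k=0$, i.e.\ to $L\cap V_{L^\flat}=L^\flat$. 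Writing the resulting cardinality as $\sum_L \CF_L(x)$ gives the claimed formula.

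\smallskip

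\noindent The only delicate step is the additivity used in Step~1; it reduces to the proper-intersection assertion that $\cN(L^\flat)^\circ\cap\cN(x)$ is zero-dimensional, and to standard Tor-vanishing for Cartier divisors on the regular formal scheme $\cN$, both of which are already packaged by Lemma~\ref{le:linear4} and Remark~\ref{re:special}.
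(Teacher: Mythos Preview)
Your proof is correct and follows essentially the same approach as the paper: apply Proposition~\ref{pr:two} to both $\cN(L^\flat)\cap\cN(x)$ and $\cN(u^{-1}L^\flat)\cap\cN(x)$, rewrite each $\partial\Den$ via \eqref{eq:analytic3} (where the product is empty since $t(L)=2$ for every integral lattice of the nonsplit two-dimensional $\bbV$), and subtract. Your extra justification of the additivity step via the short exact sequence and the identification of the condition $u^{-1}L^\flat\not\subseteq L$ with $L\cap V_{L^\flat}=L^\flat$ are exactly the details the paper leaves implicit in the phrase ``taking the difference.''
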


\begin{proof}
By Proposition \ref{pr:two}, we have
\[
\length_{O_{\breve{E}}}\cN(L^\flat)\cap\cN(x)=\Int(L^\flat+\langle x\rangle)=\partial\Den(L^\flat+\langle x\rangle)
=2\sum_{\substack{L\subseteq L^\vee\\ L^\flat\subseteq L\cap V_{L^\flat}}}\CF_L(x),
\]
in which the last identity is due to \eqref{eq:analytic3}. Similarly, we have
\[
\length_{O_{\breve{E}}}\cN(u^{-1}L^\flat)\cap\cN(x)
=2\sum_{\substack{L\subseteq L^\vee\\ u^{-1}L^\flat\subseteq L\cap V_{L^\flat}}}\CF_L(x).
\]
Taking the difference, we obtain the corollary.
\end{proof}

\subsection{Fourier transform of geometric side}
\label{ss:geometric}

Let the setup be as in Subsection \ref{ss:kr}. We assume Hypothesis \ref{hy:rz}. We will freely use notation concerning K-groups of formal schemes from \cite{LL}*{Section~B} and \cite{Zha}*{Appendix~B}, based on the work \cite{GS87}.

\begin{definition}
Let $\cX$ be a formal scheme over $\Spf O_{\breve{E}}$.
\begin{enumerate}
  \item We denote by $\cX^\rh$ the closed formal subscheme of $\cX$ defined by the ideal sheaf $\sO_\cX[p^\infty]$.

  \item We say that an element in $\rK_0(\cX)$ has \emph{proper support} if it belongs to the subgroup $\rK_0^Z(\cX)$ for a proper closed subscheme $Z$ of $\cX$.
\end{enumerate}
\end{definition}

\begin{definition}
Let $X$ be a subset of $\bbV$ such that $\langle X\rangle$ is finitely generated of rank $m$.
\begin{enumerate}
  \item We denote by $\pres{\rK}\cN(X)\in\rK_0(\cN)$ the element $[C(x_1)\otimes_{\sO_\cN}\cdots\otimes_{\sO_\cN} C(x_m)]$ from Subsection \ref{ss:linear} for a basis $\{x_1,\dots,x_m\}$ of the $O_E$-module generated by $X$, which is independent of the choice of the basis by Corollary \ref{co:linear1}.

  \item We denote by $\pres{\rK}\cN(X)^\rh\in\rK_0(\cN)$ the class of $\cN(X)^\rh$.

  \item We put $\pres{\rK}\cN(X)^\rv\coloneqq\pres{\rK}\cN(X)-\pres{\rK}\cN(X)^\rh\in\rK_0(\cN)$.
\end{enumerate}
\end{definition}

\begin{lem}\label{le:geometric4}
Let $L^\flat$ be an element of $\flat(\bbV)$ (Definition \ref{de:analytic}). We have
\begin{enumerate}
  \item $\cN(L^\flat)^\rh$ is either empty or finite flat over $\Spf O_{\breve{E}}$;

  \item all of $\pres{\rK}\cN(L^\flat)$, $\pres{\rK}\cN(L^\flat)^\rh$, and $\pres{\rK}\cN(L^\flat)^\rv$ belong to $\rF^{n-1}\rK_0(\cN)$;

  \item $\pres{\rK}\cN(L^\flat)^\rv$ has proper support.
\end{enumerate}
\end{lem}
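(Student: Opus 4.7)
The plan is to treat the three parts in sequence, reducing each one to geometric inputs already available: Lemma \ref{le:linear4} (each $\cN(x)$ is a relative Cartier divisor or empty), Corollary \ref{co:linear1} (linear invariance of the K-theoretic classes), and Corollary \ref{co:bt1} / Proposition \ref{pr:bt} (Bruhat--Tits description of the reduced special fiber). This closely mirrors the unramified treatment in \cite{LZ}, but with the parity/exotic-smooth input taken from Subsection \ref{ss:bt}.

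For (1), I would argue that $\cN(L^\flat)^\rh$ is quasi-finite over $\Spf O_{\breve{E}}$ and flat (being by construction the flat closure of its generic fiber). Flatness over $O_{\breve{E}}$ is automatic from the definition via the ideal $\sO_{\cN(L^\flat)}[p^\infty]$. Quasi-finiteness is established on the rigid generic fiber: choose any basis $\{x_1,\dots,x_{n-1}\}$ of $L^\flat$; by Lemma \ref{le:linear4} each $\cN(x_i)$ is a Cartier divisor, and an $n-1$-fold intersection on the $(n-1)$-dimensional rigid generic fiber of $\cN$ is at most zero-dimensional. Combining quasi-finiteness with flatness and the fact that $\cN(L^\flat)^\rh$ is closed in $\cN$, one concludes it is finite over $\Spf O_{\breve{E}}$ (for instance, along the lines of \cite{RSZ17} for the $r=1$ case and by the same reasoning in higher rank).

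For (2), the assertion for $\pres{\rK}\cN(L^\flat)$ is immediate: by Lemma \ref{le:linear4} one has $[C(x_i)] \in \rF^{1}\rK_0(\cN)$ for each basis element (it is the class of the structure sheaf of a Cartier divisor, hence of codimension $\geq 1$), and the derived tensor product of $n-1$ such classes lies in $\rF^{n-1}\rK_0(\cN)$ by the compatibility of the coniveau filtration with products. For $\pres{\rK}\cN(L^\flat)^\rh$, part (1) implies the support has Krull dimension $\leq 1$, hence codimension $\geq n-1$ in the $n$-dimensional $\cN$, so its class lies in $\rF^{n-1}\rK_0(\cN)$. The vertical class $\pres{\rK}\cN(L^\flat)^\rv$ then belongs to $\rF^{n-1}\rK_0(\cN)$ by subtraction.

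For (3), I would use Corollary \ref{co:bt1} iterated over a generating set of $L^\flat$: the underlying reduced set of the vertical part of $\cN(L^\flat)$ is contained in the union of those strata $\cV_\Lambda^\circ$ such that $\Lambda$ is a vertex $O_E$-lattice containing $L^\flat$. If $L^\flat$ is not integral then there is no such $\Lambda$ and the vertical part is empty. Otherwise, any such $\Lambda$ satisfies $L^\flat \subseteq \Lambda \subseteq \Lambda^\vee \subseteq (L^\flat)^\vee$, so there are only finitely many choices; each $\cV_\Lambda$ is projective over $\ol{k}$ by Proposition \ref{pr:bt}(2), so the support of $\pres{\rK}\cN(L^\flat)^\rv$ is contained in a finite union of projective closed subschemes of $\cN$ and is therefore proper.

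The main obstacle is part (1). While the outline is standard, in the ramified exotic-smooth setting one cannot directly cite the Rapoport--Zink comparison unless $F/\dQ_p$ is unramified (Subsection \ref{ss:comparison}); in general, the dimension bound on the rigid generic fiber has to be justified intrinsically using the Grothendieck--Messing description of $F_X$ and $L_X$ from Subsection \ref{ss:linear} (each $\cN(x_i)^\rh$ being cut out by the vanishing of the universal section $\tilde{x}_i(\gamma)$ of $\omega$ in Lemma \ref{le:linear3}, so $n-1$ independent such sections cut out a zero-dimensional locus generically).
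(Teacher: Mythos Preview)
Your treatment of (2) matches the paper's. For (1) and (3), however, there are genuine gaps.

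For (1), the assertion that an $(n-1)$-fold intersection of Cartier divisors on the $(n-1)$-dimensional rigid generic fiber is automatically zero-dimensional is false without a transversality input; and your attempted fix via Lemma \ref{le:linear3} does not help, since the sections $\tilde{x}_i(\gamma)$ are all sections of the \emph{same} line bundle $L_X$, so linear independence of the $x_i$ in $\bbV$ gives no control on the dimension of their common vanishing locus. The paper's argument is entirely different: it invokes Lemma \ref{le:geometric2}, which decomposes $\cN(L^\flat)^\rh$ as a finite union of pieces $\cN(L^{\flat\prime})^\circ$ indexed by integral $L^{\flat\prime}\supseteq L^\flat$ with $t(L^{\flat\prime})=1$, and then Lemma \ref{le:geometric1}(1), which reduces each such piece (via the cancellation law, Remark \ref{re:cancellation}) to the case $r=1$ where finite flatness is already known. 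This structural decomposition of the horizontal locus is the substantive content of (1) and cannot be bypassed by a naive dimension count.

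For (3), you correctly establish (via Corollary \ref{co:bt1} and Proposition \ref{pr:bt}) that there is a proper closed subscheme $Z$ with $\cN(L^\flat)\subseteq\cN(L^\flat)^\rh\cup Z$; this is also the paper's first step (cited there as the argument of \cite{LZ}*{Lemma~5.1.1}). But you then conclude that the support of $\pres{\rK}\cN(L^\flat)^\rv$ lies in $Z$ without justification. The class $\pres{\rK}\cN(L^\flat)$ is the \emph{derived} product $[C(x_1)\otimes\cdots\otimes C(x_{n-1})]$, not $[\sO_{\cN(L^\flat)}]$, so the difference $\pres{\rK}\cN(L^\flat)-[\sO_{\cN(L^\flat)^\rh}]$ is not a priori supported away from the horizontal locus. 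The paper closes this gap by expanding $\prod_i(\pres{\rK}\cN(x_i)^\rh+\pres{\rK}\cN(x_i)^\rv)$: every cross term containing at least one vertical factor $\pres{\rK}\cN(x_i)^\rv$ lies in $\rK_0^Z(\cN)$, and the remaining purely horizontal product $\prod_i\pres{\rK}\cN(x_i)^\rh$ differs from $\pres{\rK}\cN(L^\flat)^\rh$ by a class which, using (1), also lies in $\rK_0^Z(\cN)$.
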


\begin{proof}
Part (1) follows from Lemma \ref{le:geometric2} and Lemma \ref{le:geometric1}.

Take a basis $\{x_1,\dots,x_{n-1}\}$ of the $O_E$-module $L^\flat$.

For (2), it suffices to show $\pres{\rK}\cN(L^\flat)\in\rF^{n-1}\rK_0(\cN)$ by (1). By definition, $\pres{\rK}\cN(L^\flat)$ is the cup product of the classes in $\rK_0(\cN)$ of $\cN(x_1),\dots,\cN(x_{n-1})$, each being a divisor by Lemma \ref{le:linear4}. Thus, $\pres{\rK}\cN(L^\flat)$ belongs to $\rF^{n-1}\rK_0(\cN)$ by (the analogue for formal schemes of) \cite{GS87}*{Proposition~5.5}.

For (3), by the same argument for \cite{LZ}*{Lemma~5.1.1}, we know that there exists a proper closed subscheme $Z$ of $\cN$ such that $\cN(L^\flat)$ is contained in $\cN(L^\flat)^\rh\bigcup Z$. By (1), the difference
\[
\pres{\rK}\cN(x_1)^\rh.\cdots.\pres{\rK}\cN(x_{n-1})^\rh-\pres{\rK}\cN(L^\flat)^\rh
\]
belongs to $\rK^Z_0(\cN)$. By definition,
\begin{align*}
\pres{\rK}\cN(L^\flat)^\rv&=\pres{\rK}\cN(x_1).\cdots.\pres{\rK}\cN(x_{n-1})-\pres{\rK}\cN(L^\flat)^\rh \\
&=(\pres{\rK}\cN(x_1)^\rv+\pres{\rK}\cN(x_1)^\rh).\cdots.(\pres{\rK}\cN(x_{n-1})^\rv+\pres{\rK}\cN(x_{n-1})^\rh)-\pres{\rK}\cN(L^\flat)^\rh \\
&=\cC+\(\pres{\rK}\cN(x_1)^\rh.\cdots.\pres{\rK}\cN(x_{n-1})^\rh-\pres{\rK}\cN(L^\flat)^\rh\),
\end{align*}
where
\[
\cC\coloneqq\sum_{i=1}^{n-1}\pres{\rK}\cN(x_i)^\rv.\pres{\rK}\cN(x_1).\cdots.\widehat{\pres{\rK}\cN(x_i)}.\cdots.\pres{\rK}\cN(x_{n-1})
\]
belongs to $\rK^Z_0(\cN)$. Thus, $\pres{\rK}\cN(L^\flat)^\rv$ belongs to $\rK^Z_0(\cN)$ hence has proper support.
\end{proof}

\begin{definition}\label{de:geometric}
Let $L^\flat$ be an element of $\flat(\bbV)$ (Definition \ref{de:analytic}). For $x\in\bbV\setminus V_{L^\flat}$, we put
\begin{align*}
\Int_{L^\flat}(x)&\coloneqq\pres{\rK}\cN(L^\flat).\pres{\rK}\cN(x), \\
\Int_{L^\flat}^\rh(x)&\coloneqq\pres{\rK}\cN(L^\flat)^\rh.\pres{\rK}\cN(x), \\
\Int_{L^\flat}^\rv(x)&\coloneqq\pres{\rK}\cN(L^\flat)^\rv.\pres{\rK}\cN(x).
\end{align*}
Here, the intersection numbers are well-defined since $\cN(L^\flat)\cap\cN(x)$ is a proper closed subscheme of $\cN$ by Remark \ref{re:special}. Note that $\Int_{L^\flat}(x)=\Int(L^\flat+\langle x\rangle)$ (Definition \ref{de:int}).
\end{definition}

The following is our main result of this subsection.

\begin{proposition}\label{pr:geometric}
Let $L^\flat$ be an element of $\flat(\bbV)$ (Definition \ref{de:analytic}).
\begin{enumerate}
  \item We have $\Int_{L^\flat}^\rh(x)=\partial\Den_{L^\flat}^\rh(x)$ for $x\in\bbV\setminus V_{L^\flat}$, where $\partial\Den_{L^\flat}^\rh$ is from Definition \ref{de:density1}.

  \item The function $\Int_{L^\flat}^\rv$ extends (uniquely) to a (compactly supported) locally constant function on $\bbV$, which we still denote by $\Int_{L^\flat}^\rv$. Moreover, we have
      \[
      \widehat{\Int_{L^\flat}^\rv}=-\Int_{L^\flat}^\rv.
      \]
      In particular, the support of $\widehat{\Int_{L^\flat}^\rv}$ is contained in $\bbV^\integ$ (Definition \ref{de:analytic1}).
\end{enumerate}
\end{proposition}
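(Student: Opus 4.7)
The proposition splits into the horizontal piece (1) and the vertical piece (2), with the latter being the main content.

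For part (1), since $\cN(L^\flat)^\rh$ is finite flat over $\Spf O_{\breve{E}}$ by Lemma \ref{le:geometric4}(1), the intersection $\pres{\rK}\cN(L^\flat)^\rh.\pres{\rK}\cN(x)$ reduces to a length computation on a zero-dimensional characteristic-zero scheme. The plan is to decompose $\cN(L^\flat)^\rh$ as a sum of primitive horizontal components indexed by integral overlattices $L^{\flat\prime}$ of $L^\flat$ inside $V_{L^\flat}$ with $t(L^{\flat\prime})=1$, in analogy with the quasi-canonical lifting decomposition used in the proof of Proposition \ref{pr:two}. Each primitive component's intersection with $\cN(x)$ should then reduce to the rank-$1$ case of Corollary \ref{co:two}, contributing $2\sum_{L\cap V_{L^\flat}=L^{\flat\prime}}\CF_L(x)$. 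Summing over all such $L^{\flat\prime}$ and using that $t(L\cap V_{L^\flat})=1$ forces $t(L)=2$ by Remark \ref{re:analytic}(3), the result matches the defining formula for $\partial\Den_{L^\flat}^\rh(x)$ exactly.

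For part (2), the local constancy and compact support of $\Int_{L^\flat}^\rv$ follow from the proper support of $\pres{\rK}\cN(L^\flat)^\rv$ (Lemma \ref{le:geometric4}(3)): the intersection with $\pres{\rK}\cN(x)$ only sees the finitely many vertex strata $\cV_\Lambda$ meeting this support, and for $x$ varying in a sufficiently small neighborhood of any fixed point, the incidence pattern $\{x\in\Lambda\}$ is locally constant. The support of $\Int_{L^\flat}^\rv$ lies in $\bbV^\integ$ because $\cN(x)$ is empty (so $\pres{\rK}\cN(x)=0$) for $x\notin\bbV^\integ$; hence the ``in particular'' clause is automatic once the Fourier identity $\widehat{\Int_{L^\flat}^\rv}=-\Int_{L^\flat}^\rv$ is established. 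To prove this identity, I would apply Lemma \ref{le:tate} on Tate-style generation of cycles on $\cV_\Lambda$ to expand $\pres{\rK}\cN(L^\flat)^\rv$ in terms of classes of vertex strata, then evaluate each intersection $\pres{\rK}\cN(x).[\cV_\Lambda]$ as a degree computation for $\omega$ (Definition \ref{de:linear1}) restricted to $\cV_\Lambda$, with the critical curve case $t(\Lambda)=4$ handled by Lemma \ref{le:linear6}. The sign reversal should come out of the natural duality $\Lambda\mapsto u\Lambda^\vee$ on the indexing set of vertex lattices, whose combinatorics intertwines with Fourier duality on $\bbV$ through the hermitian pairing.

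The main obstacle is the Fourier identity in part (2). Unlike the unramified setting of \cite{LZ}, the parity constraint that every vertex lattice of $\bbV$ has even type (Remark \ref{re:analytic2}(2)) blocks any reduction to a rank-$3$ base case, so the entire weight of the explicit K-theoretic computation lands in the $t(\Lambda)=4$ curve case. The interaction between Lemma \ref{le:linear6} (giving $\deg\omega=q-1$ on each component of $\cV_\Lambda$) and the linear-invariance machinery of Subsection \ref{ss:linear} must be leveraged to match the resulting combinatorics with the analytic side (Proposition \ref{pr:analytic}) after taking Fourier transforms, and verifying this match directly is where the real work lies.
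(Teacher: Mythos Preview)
Your outline for part (1) is correct and matches the paper's approach: Lemma~\ref{le:geometric2} gives the decomposition of $\cN(L^\flat)^\rh$ into the primitive pieces $\cN(L^{\flat\prime})^\circ$, and Lemma~\ref{le:geometric1}(2) (which encapsulates the reduction to rank~$1$ via the cancellation law) computes each piece.

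For part (2), your high-level plan---proper support, expansion via Lemma~\ref{le:tate} in terms of vertex strata, reduction to the $t(\Lambda)=4$ curve case---is the paper's structure, but there are two genuine gaps.

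First, the core computation you describe is incomplete. The paper isolates this as Lemma~\ref{le:geometric3}, which computes $\Int_{\cV_\Lambda^+}$ \emph{explicitly} and then verifies $\widehat{\Int_{\cV_\Lambda^+}}=-\Int_{\cV_\Lambda^+}$ by direct calculation. After reducing to $n=4$ by the cancellation law (Remark~\ref{re:cancellation}), the value $\Int_{\cV_\Lambda^+}(x)$ is determined in three cases: it vanishes if $\Lambda+\langle x\rangle$ is not integral; it equals $1-q$ if $x\in\Lambda$ (this is the self-intersection case where Lemma~\ref{le:linear6} enters, as you anticipated); and it equals $1$ if $\Lambda+\langle x\rangle$ is integral but $x\notin\Lambda$. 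This last case is the one you do not account for, and it is handled by invoking the already-proved $r=1$ case of the main theorem (Proposition~\ref{pr:two}) together with the symmetry between the two components (Lemma~\ref{le:geometric5}). The resulting explicit formula $\Int_{\cV_\Lambda^+}=-q(1+q)\CF_\Lambda+\sum_{\Lambda\subsetneq\Lambda'}\CF_{\Lambda'}$ is then Fourier-transformed by hand; the identity is checked pointwise in four cases by counting isotropic lines in $\Lambda^\vee/\Lambda$. Your proposed mechanism via the duality $\Lambda\mapsto u\Lambda^\vee$ does not appear, and it is not clear it would close the argument without essentially redoing this explicit count.

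Second, your suggestion that the Fourier identity is established by matching with the analytic side (Proposition~\ref{pr:analytic}) is misdirected. The proof of Proposition~\ref{pr:geometric}(2) is purely geometric and never references $\partial\Den^\rv_{L^\flat}$; the comparison of the two sides only happens later, in Subsection~\ref{ss:kr_proof}, where both Propositions~\ref{pr:analytic} and~\ref{pr:geometric} are used as independent inputs. Relatedly, your argument for the extension to all of $\bbV$ (not just $\bbV\setminus V_{L^\flat}$) via local constancy of incidence patterns does not by itself cross $V_{L^\flat}$; in the paper the extension follows because $\Int^\rv_{L^\flat}$ is a finite linear combination of the functions $\Int_{\cV_\Lambda^+}$, each of which is already defined and locally constant on all of $\bbV$ by the explicit formula above.
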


The rest of this subsection is devoted to the proof of this proposition.

\begin{remark}[Cancellation law for special cycles]\label{re:cancellation}
Let $\bbV'$ be a hermitian subspace of $\bbV$ that is nonsplit and of positive even dimension $n'$. Let $L$ be an integral hermitian $O_E$-module contained in $\bbV$ such that $L\cap\bbV^{\prime\perp}$ is a self-dual $O_E$-lattice of $\bbV^{\prime\perp}$. We may choose
\begin{itemize}
  \item an object $(\bbX',\iota_{\bbX'},\lambda_{\bbX'})\in\Exo_{(n'-1,1)}^\rb(\ol{k})$ (Definition \ref{de:exotic}),

  \item an object $(Y,\iota_Y,\lambda_Y)\in\Exo_{(n-n',0)}(O_{\breve{E}})$ (Remark \ref{re:exotic}),\footnote{When $n'=n$, we simply ignore $(Y,\iota_Y,\lambda_Y)$.}

  \item a quasi-morphism $\varrho$ from $(Y,\iota_Y,\lambda_Y)\otimes_{O_{\breve{E}}}\ol{k}\oplus(\bbX',\iota_{\bbX'},\lambda_{\bbX'})$ to $(\bbX,\iota_{\bbX},\lambda_{\bbX})$ in the category $\Exo_{(n-1,1)}^\rb(S\otimes_{O_{\breve{E}}}\ol{k})$ satisfying
      \begin{itemize}
        \item $\varrho$ identifies $\Hom_{O_E}(X_0\otimes_{O_{\breve{E}}}\ol{k},\bbX')\otimes\dQ$ with $\bbV'$ as hermitian spaces;

        \item $\varrho$ identifies $\Hom_{O_E}(X_0\otimes_{O_{\breve{E}}}\ol{k},Y\otimes_{O_{\breve{E}}}\ol{k})$ with $L\cap\bbV^{\prime\perp}$ as hermitian $O_E$-modules.
      \end{itemize}
\end{itemize}
Let $\cN'\coloneqq\cN_{(\bbX',\iota_{\bbX'},\lambda_{\bbX'})}$ be the relative Rapoport--Zink space for the triple $(\bbX',\iota_{\bbX'},\lambda_{\bbX'})$ (Definition \ref{de:rz}). We have a morphism $\cN'\to\cN$ such that for every object $S$ of $\Sch_{/O_{\breve{E}}}^\rv$, $\cN(S)$ it sends an object $(X',\iota_{X'},\lambda_{X'};\rho_{X'})\in\cN'(S)$ to the object
\[
(Y\otimes_{O_{\breve{E}}}S\oplus X',\iota_Y\otimes_{O_{\breve{E}}}S\oplus\iota_{X'},
\lambda_Y\otimes_{O_{\breve{E}}}S\oplus\lambda_{X'};\varrho\circ(\id_Y\otimes_{O_{\breve{E}}}S\oplus\rho_{X'}))\in\cN(S).
\]
We have
\begin{enumerate}
  \item The morphism $\cN'\to\cN$ above identifies $\cN'$ with the closed formal subscheme $\cN(L\cap\bbV^{\prime\perp})$ of $\cN$.

  \item Suppose that $L\cap\bbV'\neq\{0\}$, then $\cN(L)$ coincides with the image of $\cN'(L\cap\bbV')$ under the morphism $\cN'\to\cN$ above.

  \item For a nonzero element $x\in\bbV$ written as $x=y+x'$ with respect to the orthogonal decomposition $\bbV=\bbV^{\prime\perp}\oplus\bbV'$, we have
      \[
      \cN'\times_{\cN}\cN(x)=
      \begin{dcases}
      \emptyset,&\text{if $y\not\in L\cap\bbV^{\prime\perp}$,} \\
      \cN',&\text{if $y\in L\cap\bbV^{\prime\perp}$ and $x'=0$,} \\
      \cN'(x'),&\text{if $y\in L\cap\bbV^{\prime\perp}$ and $x'\neq 0$.}
      \end{dcases}
      \]

  \item If $L$ is an $O_E$-lattice of $\bbV$, then we have $\Int(L)=\Int(L\cap\bbV')$.
\end{enumerate}
\end{remark}

\begin{lem}\label{le:geometric1}
Let $L^{\flat\prime}\in\flat(\bbV)$ be an element that is integral and satisfies $t(L^{\flat\prime})=1$.
\begin{enumerate}
  \item The formal subscheme $\cN(L^{\flat\prime})$ is finite flat over $\Spf O_{\breve{E}}$.

  \item If we put $\cN(L^{\flat\prime})^\circ\coloneqq\cN(L^{\flat\prime})-\cN(L^{\flat\prime}_-)$ as an element in $\rF^{n-1}\rK_0(\cN)$, then for every $x\in\bbV\setminus V_{L^\flat}$,
      \[
      \cN(L^{\flat\prime})^\circ.\pres{\rK}\cN(x)=
      2\sum_{\substack{L\subseteq L^\vee\\ L\cap V_{L^\flat}=L^{\flat\prime}}}\CF_L(x).
      \]
      Here, $L^{\flat\prime}_-$ is the unique element in $\flat(\bbV)$ satisfying $L^{\flat\prime}\subseteq L^{\flat\prime}_-\subseteq(L^{\flat\prime})^\vee$ with $|L^{\flat\prime}_-/L^{\flat\prime}|=q$ (so that $L^{\flat\prime}_-$ is either not integral, or is integral with $t(L^{\flat\prime}_-)=1$).
\end{enumerate}
\end{lem}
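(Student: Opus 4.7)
Both parts reduce, via the cancellation principle of Remark \ref{re:cancellation}, to the case $r=1$ treated in Subsection \ref{ss:kr_proof1}. Since $L^{\flat\prime}$ is integral with $t(L^{\flat\prime})=1$, a normal-basis argument (Remark \ref{re:analytic3}) produces an orthogonal decomposition $L^{\flat\prime}=L_0\oplus L_1$ in which $L_0$ is self-dual of rank $n-2$ and $L_1=\langle x_1\rangle$ has rank $1$ with $\val(x_1)$ equal to some positive odd integer $a$. Set $\bbV'\coloneqq(V_{L_0})^\perp\subseteq\bbV$, a hermitian subspace of dimension $2$. Because $V_{L_0}$ admits a self-dual lattice, it is split, and so the nonsplitness of $\bbV$ forces $\bbV'$ to be nonsplit. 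Applying Remark \ref{re:cancellation} to the decomposition $\bbV=V_{L_0}\oplus\bbV'$ identifies $\cN(L_0)$ with the $1$-dimensional relative Rapoport--Zink space $\cN'$ attached to $\bbV'$; under this identification $\cN(L^{\flat\prime})$ corresponds to $\cN'(L_1)$ and $\cN(L^{\flat\prime}_-)$ corresponds to $\cN'(u^{-1}L_1)$, hence $\cN(L^{\flat\prime})^\circ$ corresponds to $\cN'(L_1)^\circ$ in the sense of Definition \ref{de:two}.

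For (1), on $\cN'$ the divisor $\cN'(L_1)=\cN'(x_1)$ is a special divisor with $\val(x_1)=a\geq 1$, which by the decomposition \eqref{eq:two2} into quasi-canonical lifting divisors from Subsection \ref{ss:kr_proof1} is finite flat over $\Spf O_{\breve{E}}$. Transporting back through the cancellation gives the finite flatness of $\cN(L^{\flat\prime})$.

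For (2), given $x\in\bbV\setminus V_{L^{\flat\prime}}$, write $x=y+x'$ with $y\in V_{L_0}$ and $x'\in\bbV'$. Since $V_{L^{\flat\prime}}=V_{L_0}\oplus V_{L_1}$, the hypothesis on $x$ forces $x'\notin V_{L_1}$; in particular $x'\neq 0$. By Remark \ref{re:cancellation}(3), the fiber product $\cN'\times_\cN\cN(x)$ equals $\cN'(x')$ when $y\in L_0$ and is empty when $y\notin L_0$; in either case $\cN'\not\subseteq\cN(x)$, so Lemma \ref{le:linear4} together with the projection formula for the regular closed immersion $i\colon\cN'\hookrightarrow\cN$ yields
\[
\cN(L^{\flat\prime})^\circ.\pres{\rK}\cN(x)=\CF_{L_0}(y)\cdot\bigl(\cN'(L_1)^\circ.\pres{\rK}\cN'(x')\bigr).
\]
Corollary \ref{co:two} applied on $\cN'$ then evaluates the right-hand side as $2\CF_{L_0}(y)\sum_{L'}\CF_{L'}(x')$, with $L'$ running over integral $O_E$-lattices of $\bbV'$ satisfying $L'\cap V_{L_1}=L_1$.

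To conclude, the parametrization of Lemma \ref{le:analytic8}(1) applied simultaneously to $L^{\flat\prime}\subseteq\bbV$ and $L_1\subseteq\bbV'$, combined with the canonical identifications $V_{L^{\flat\prime}}^\perp=V_{L_1}^\perp\cap\bbV'$ and $(L^{\flat\prime})^\vee/L^{\flat\prime}=L_1^\vee/L_1$, produces a bijection $L\mapsto L\cap\bbV'$ (with inverse $L'\mapsto L_0+L'$) between integral $O_E$-lattices $L\subseteq\bbV$ with $L\cap V_{L^{\flat\prime}}=L^{\flat\prime}$ and integral $O_E$-lattices $L'\subseteq\bbV'$ with $L'\cap V_{L_1}=L_1$; under this bijection $\CF_L(x)=\CF_{L_0}(y)\cdot\CF_{L'}(x')$, matching the two expressions. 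The main technical subtlety is the projection-formula step above, which requires the cleanness of the Cartier pullback of $\pres{\rK}\cN(x)$ along the codimension-$(n-2)$ regular immersion $\cN'\hookrightarrow\cN$; this cleanness is ensured by Lemma \ref{le:linear4} and the nonemptiness/emptiness dichotomy supplied by Remark \ref{re:cancellation}(3).
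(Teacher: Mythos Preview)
Your proof is correct and follows essentially the same route as the paper's: reduce via the cancellation law (Remark~\ref{re:cancellation}) to the $r=1$ situation on $\cN'$, then invoke the explicit results of Subsection~\ref{ss:kr_proof1} (Corollary~\ref{co:two}) and match lattices through the orthogonal splitting $L^{\flat\prime}=L_0\oplus L_1$. The only cosmetic differences are that the paper cites the $r=1$ case of Lemma~\ref{le:linear4} for finite flatness rather than \eqref{eq:two2}, and writes the lattice bijection $L\leftrightarrow L'=L\cap\bbV'$ directly rather than appealing to Lemma~\ref{le:analytic8}(1); your more explicit justification of the K-theoretic projection step is fine but not strictly needed beyond what Remark~\ref{re:cancellation}(3) already provides.
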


\begin{proof}
Since $t(L^{\flat\prime})=1$, we may choose a $2$-dimensional (nonsplit) hermitian subspace $\bbV'$ of $\bbV$ such that $L^{\flat\prime}\cap\bbV^{\prime\perp}$ is a self-dual $O_E$-lattice of $\bbV^{\prime\perp}$. We adopt the construction in Remark \ref{re:cancellation}.

For (1), we have $\cN(L^{\flat\prime})=\cN'(L^{\flat\prime}\cap\bbV')$, which is finite flat over $\Spf O_{\breve{E}}$ by (the $r=1$ case of) Lemma \ref{le:linear4}.

For (2), we write $x=y+x'$ with respect to the orthogonal decomposition $\bbV=\bbV^{\prime\perp}\oplus\bbV'$. Since $x\not\in V_{L^\flat}$, we have $x'\neq 0$. By Remark \ref{re:cancellation}(2), $\cN(L^{\flat\prime})^\circ$ coincides with (the class of) $\cN'(L^{\flat\prime}\cap\bbV')^\circ$ in $\rF^1\rK_0(\cN')$ under the map $\rF^1\rK_0(\cN')\to\rF^{n-1}\rK_0(\cN)$. There are two cases.

If $y\not\in L^{\flat\prime}\cap\bbV^{\prime\perp}$, then $\cN(L^{\flat\prime})^\circ.\pres{\rK}\cN(x)=0$ by Remark \ref{re:cancellation}(3), and there is no integral $O_E$-lattice of $\bbV$ containing $L^{\flat\prime}+\langle x\rangle$. Thus, (2) follows.

If $y\in L^{\flat\prime}\cap\bbV^{\prime\perp}$, then by Remark \ref{re:cancellation}(3), we have
\[
\cN(L^{\flat\prime})^\circ.\pres{\rK}\cN(x)=\cN'(L^{\flat\prime}\cap\bbV')^\circ.\pres{\rK}\cN'(x')
=\length_{O_{\breve{E}}}\cN'(L^{\flat\prime}\cap\bbV')^\circ\cap\cN'(x').
\]
By Corollary \ref{co:two}, we have
\[
\length_{O_{\breve{E}}}\cN'(L^{\flat\prime}\cap\bbV')^\circ\cap\cN'(x')=
2\sum_{\substack{L'\subseteq L'^\vee(\subseteq\bbV')\\ L'\cap(V_{L^\flat}\cap\bbV')=L^{\flat\prime}\cap\bbV'}}\CF_{L'}(x')
=2\sum_{\substack{L\subseteq L^\vee\\ L\cap V_{L^\flat}=L^{\flat\prime}}}\CF_L(x).
\]
Thus, (2) follows.
\end{proof}

\begin{lem}\label{le:geometric2}
Let $L^\flat$ be an element of $\flat(\bbV)$ (Definition \ref{de:analytic}). We have
\[
\cN(L^\flat)^\rh=\bigcup_{\substack{L^\flat\subseteq L^{\flat\prime}\subseteq(L^{\flat\prime})^\vee \\ t(L^{\flat\prime})=1}}
\cN(L^{\flat\prime})^\circ
\]
as closed formal subschemes of $\cN$, and the identity
\[
\pres{\rK}\cN(L^\flat)^\rh=\sum_{\substack{L^\flat\subseteq L^{\flat\prime}\subseteq(L^{\flat\prime})^\vee \\ t(L^{\flat\prime})=1}}
\cN(L^{\flat\prime})^\circ
\]
in $\rF^{n-1}\rK_0(\cN)/\rF^n\rK_0(\cN)$, where $\cN(L^{\flat\prime})^\circ$ is introduced in Lemma \ref{le:geometric1}(2).
\end{lem}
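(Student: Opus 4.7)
The plan is to establish the scheme-theoretic identity first and then to upgrade it to the identity in $\rF^{n-1}\rK_0(\cN)/\rF^n\rK_0(\cN)$.

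For the scheme-theoretic identity, the inclusion $\supseteq$ is immediate: each $\cN(L^{\flat\prime})^\circ$ is a closed subscheme of $\cN(L^{\flat\prime})$, which by Lemma \ref{le:geometric1}(1) is finite flat over $\Spf O_{\breve{E}}$, hence horizontal; moreover $\cN(L^{\flat\prime}) \subseteq \cN(L^\flat)$ since $L^\flat \subseteq L^{\flat\prime}$. For the reverse inclusion, I would take a horizontal point $s$ of $\cN(L^\flat)$ and analyze it through the $O_E$-lattice $M_s \subseteq \bbV$ consisting of those quasi-homomorphisms $X_0 \to \bbX$ which, composed with $\rho_X^{-1}$, lift to honest homomorphisms at $s$. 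At a characteristic zero point, $M_s$ is forced to be an integral hermitian $O_E$-lattice containing $L^\flat$ (the polarization $\lambda_X$ survives to characteristic zero and pairs homomorphisms into $O_E$). Setting $N_s \coloneqq M_s \cap V_{L^\flat}$, which is integral of rank $n-1$ and therefore of odd type $\geq 1$, the task is to locate an integral sublattice $L^{\flat\prime}$ with $L^\flat \subseteq L^{\flat\prime} \subseteq N_s$, $t(L^{\flat\prime}) = 1$, and $L^{\flat\prime}_- \not\subseteq N_s$, which places $s$ in $\cN(L^{\flat\prime})^\circ$.

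For the K-theoretic identity, both sides lie in $\rF^{n-1}\rK_0(\cN)$ with proper support (Lemma \ref{le:geometric4}) and are supported on the horizontal locus, which is zero-dimensional over $\Spf O_{\breve{E}}$. Their classes modulo $\rF^n$ are thus determined by local multiplicities at each horizontal point $s$. At such an $s$, I plan to compute the multiplicity of $\cN(L^\flat)^\rh$ via the Koszul complex built from a basis $x_1,\dots,x_{n-1}$ of $L^\flat$ together with the description of the local ring $\sO_{\cN(x_i),s}$ coming from Lemma \ref{le:linear4}. On the other side, the contribution of $\cN(L^{\flat\prime})^\circ$ at $s$ is the local length difference $\length_{O_{\breve{E}}}\sO_{\cN(L^{\flat\prime}),s} - \length_{O_{\breve{E}}}\sO_{\cN(L^{\flat\prime}_-),s}$, which vanishes as soon as the corresponding lattice fails to embed in $M_s$. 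Matching the two sides amounts to a combinatorial identity relating the length of the horizontal part of $\sO_{\cN(L^\flat),s}$ to a signed sum over the integral overlattices $L^{\flat\prime}$ of type $1$ sitting between $L^\flat$ and $N_s$.

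The hard part will be this local multiplicity calculation when $N_s$ has type strictly larger than $1$, so that $s$ lies in many $\cN(L^{\flat\prime})$ simultaneously and the inclusion-exclusion encoded by the $\cN(L^{\flat\prime}_-)$ correction must be carefully unpacked. My plan to cope with this is to invoke the cancellation law of Remark \ref{re:cancellation} to split off orthogonally a self-dual direct summand of $M_s$, thereby reducing to an $\cN$ of strictly smaller rank, and ultimately, after iteration, to the case $r = 1$ of Subsection \ref{ss:kr_proof1} where Corollary \ref{co:two} furnishes the required multiplicity formula.
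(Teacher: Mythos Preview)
Your structural plan (establish the scheme-theoretic identity first, then upgrade to the K-theoretic one via local multiplicities) is sound and close in spirit to the argument of \cite{LZ}*{Theorem~4.2.1}, which the paper simply invokes.  The genuine gap is that you never identify the key structural fact that drives the whole argument: at a characteristic-zero point $s$ of $\cN(L^\flat)^\rh$, the lattice $M_s$ of special homomorphisms that lift is \emph{self-dual}.  Granting this, $M_s/N_s$ is a free $O_E$-module of rank $1$, and Lemma~\ref{le:analytic7}(2) (applied with $L=M_s$, $L'=N_s$, noting $t(M_s)=0$) forces $t(N_s)=1$ exactly, not merely ``odd type $\geq 1$'' as you write.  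One then takes $L^{\flat\prime}=N_s$ directly, and the ``hard part'' you anticipate --- the case $t(N_s)>1$ with its inclusion-exclusion over many overlattices $L^{\flat\prime}$ --- simply does not occur.  This is precisely the claim the paper singles out as the ramified-case replacement for \cite{LZ}*{Lemma~4.5.1}; without it, both your search for $L^{\flat\prime}$ and your multiplicity matching remain underdetermined.

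A smaller point on the K-theoretic half: $\pres{\rK}\cN(L^\flat)^\rh$ is by definition the class of the \emph{scheme} $\cN(L^\flat)^\rh$, not of a derived intersection, so the relevant local invariant is the length of the structure sheaf at the generic point of each horizontal component rather than a Koszul Euler characteristic.  Once one knows that the horizontal components are indexed bijectively by the type-$1$ integral overlattices $L^{\flat\prime}$ of $L^\flat$ (each arising, via the cancellation law of Remark~\ref{re:cancellation}, from a quasi-canonical lifting cycle on a rank-$2$ space $\cN'$ as in Lemma~\ref{le:geometric1}), the multiplicity-$1$ statement follows from the corresponding fact for quasi-canonical lifts, and no further combinatorics is needed.
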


\begin{proof}
This lemma can be proved by the same way as for \cite{LZ}*{Theorem~4.2.1}, as long as we establish the following claim replacing \cite{LZ}*{Lemma~4.5.1} in the case where $E/F$ is ramified.
\begin{itemize}
  \item Let $L$ be a self-dual hermitian $O_E$-module of rank $n$ and $L^\flat$ a hermitian $O_E$-module contained in $L$. If $L/L^\flat$ is free, then $L^\flat$ is integral with $t(L^\flat)=1$.
\end{itemize}
However, this is just a special case of Lemma \ref{le:analytic7}(2).
\end{proof}

\begin{lem}\label{le:geometric3}
Let $\Lambda$ be a vertex $O_E$-lattice of $\bbV$ with $t(\Lambda)=4$. Take an arbitrary connected component $\cV_\Lambda^+$ of the smooth projective curve $\cV_\Lambda$ from Proposition \ref{pr:bt}, regarded as an element in $\rF^{n-1}\rK_0(\cN)$. For every $x\in\bbV\setminus\{0\}$, put $\Int_{\cV_\Lambda^+}(x)\coloneqq\cV_\Lambda^+.\pres{\rK}\cN(x)$. Then $\Int_{\cV_\Lambda^+}$ extends (uniquely) to a compactly supported locally constant function on $\bbV$, which we still denote by $\Int_{\cV_\Lambda^+}$. Moreover, we have
\[
\widehat{\Int_{\cV_\Lambda^+}}=-\Int_{\cV_\Lambda^+}.
\]
\end{lem}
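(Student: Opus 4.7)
The plan is to identify the function $\Int_{\cV_\Lambda^+}$ on $\bbV$ explicitly as an integer combination of lattice characteristic functions, and then verify the Fourier self-duality by a direct computation on the four-dimensional nonsplit quadratic $k$-space $V_0\coloneqq\Lambda^\vee/\Lambda$.

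First I would determine the support. By Corollary \ref{co:bt1} together with the observation that every vertex lattice $\Lambda'\supseteq\Lambda$ is automatically contained in $\Lambda^\vee$, the reduced intersection $\cN(x)_\red\cap\cV_\Lambda^+$ is empty unless $x$ lies in such a vertex lattice, equivalently $x\in\Lambda^\vee$ and $\bar x\in V_0$ is isotropic (i.e.\ $x\in\Lambda^\vee\cap\bbV^\integ$). In that situation either $x\in\Lambda$ and $\cV_\Lambda^+\subseteq\cN(x)$, or else $x\in\Lambda^\vee\cap\bbV^\integ\setminus\Lambda$ and the reduced intersection is the single point $\cV_{\Lambda'}$ attached to the unique vertex lattice $\Lambda'=\Lambda+O_Ex$ with $t(\Lambda')=2$. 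For $x\in\Lambda$, Lemma \ref{le:linear3} gives an isomorphism $\sO_\cN(\cN(x))|_{\widetilde{\cN(x)}}\simeq L_X|_{\widetilde{\cN(x)}}=\omega^{-1}|_{\widetilde{\cN(x)}}$, which restricts to $\sO(\cN(x))|_{\cV_\Lambda^+}\simeq\omega^{-1}|_{\cV_\Lambda^+}$ since $\cV_\Lambda^+\subseteq\cN(x)\subseteq\widetilde{\cN(x)}$; combined with Lemma \ref{le:linear6} this yields $\cV_\Lambda^+\cdot\pres{\rK}\cN(x)=\deg(\omega^{-1}|_{\cV_\Lambda^+})=1-q$.

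The hard part will be to show that for $x\in\Lambda^\vee\cap\bbV^\integ\setminus\Lambda$ the proper intersection multiplicity at the single point $\cV_{\Lambda'}$ equals $1$, so that $\cN(x)$ meets $\cV_\Lambda^+$ transversally there. I would attack this by choosing local coordinates on $\cN$ near $\cV_{\Lambda'}$ via the local model of \cite{RSZ17}, possibly using the cancellation law (Remark \ref{re:cancellation}) applied to a suitable two-dimensional hermitian subspace to reduce to the one-dimensional situation of Proposition \ref{pr:two}. Granting this, the identification
\[
\Int_{\cV_\Lambda^+}(x)=\CF_{\Lambda^\vee\cap\bbV^\integ}(x)-q\cdot\CF_\Lambda(x)\qquad(x\in\bbV\setminus\{0\})
\]
follows and extends uniquely to a compactly supported locally constant function on $\bbV$, since $\CF_{\bbV^\integ}$ is locally constant (the form $(\cdot,\cdot)_\bbV$ is continuous and $O_F$ is clopen in $F$). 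The value $1-q$ predicted at $0$ matches the $K$-theoretic computation $\cV_\Lambda^+\cdot[C(0)]=\chi(\sO_{\cV_\Lambda^+})-\chi(\omega|_{\cV_\Lambda^+})=1-q$ from \eqref{eq:linear}, providing a consistency check.

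For the Fourier self-duality, use the decomposition
\[
\CF_{\Lambda^\vee\cap\bbV^\integ}=\sum_{\Lambda'}\CF_{\Lambda'}-q^2\CF_\Lambda,
\]
the sum running over the $q^2+1$ vertex lattices $\Lambda'\supsetneq\Lambda$ with $t(\Lambda')=2$, one for each isotropic line in $V_0$. With the self-dual Haar measure on $\bbV$ one has $\widehat{\CF_L}=\vol(L)\CF_{L^\vee}$, together with $\vol(\Lambda)=q^{-2}$ and $\vol(\Lambda')=q^{-1}$. For $y\in\Lambda^\vee$, the number of $\Lambda'$ with $y\in(\Lambda')^\vee$ equals the number of isotropic lines $\ell\subseteq V_0$ with $\bar y\in\ell^\perp$, and Witt's theorem applied to the three-dimensional subspace $\bar y^\perp$ shows this count is $q^2+1$, $1$, or $q+1$ according as $\bar y=0$, $\bar y$ is nonzero isotropic, or $\bar y$ is anisotropic. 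A case-by-case evaluation then gives $\widehat{\Int_{\cV_\Lambda^+}}(y)=-\Int_{\cV_\Lambda^+}(y)$ on each region, concluding the proof.
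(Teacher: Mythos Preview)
Your overall strategy matches the paper's: identify $\Int_{\cV_\Lambda^+}$ explicitly as a combination of lattice indicators, then check $\widehat{\Int_{\cV_\Lambda^+}}=-\Int_{\cV_\Lambda^+}$ by a case count on isotropic lines in the four-dimensional nonsplit $k$-space $\Lambda^\vee/\Lambda$. Your treatment of the case $x\in\Lambda$ via Lemma~\ref{le:linear3} and Lemma~\ref{le:linear6} is essentially the paper's argument (the paper routes through Corollary~\ref{co:linear1} and the complex $C(0)$, but the content is the same). The Fourier computation is identical.

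The gap is in the case $x\in\Lambda^\vee\cap\bbV^\integ\setminus\Lambda$, where you want the intersection multiplicity at the point $\cV_{\Lambda'}$ to be exactly~$1$. Neither of your suggested routes works as stated. A direct transversality proof via local-model coordinates would require controlling the local equation of $\cN(x)$ and the tangent direction of $\cV_\Lambda^+$ simultaneously, which the paper does not attempt. Your alternative, reducing via the cancellation law to a two-dimensional $\cN'$, fails because the curve $\cV_\Lambda^+$ is not contained in $\cN(\Lambda'_0)\simeq\cN'$ for any self-dual rank-$(n-2)$ sublattice $\Lambda'_0$ (only the single point $\cV_{\Lambda'}$ lies there), so you cannot transport the intersection problem into $\cN'$.

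The paper's actual maneuver is different and uses linear invariance in an essential way. First reduce to $n=4$ by the cancellation law (Remark~\ref{re:cancellation}) applied to a four-dimensional nonsplit subspace $\bbV'$ with $\Lambda\cap\bbV'^\perp$ self-dual. Now $\Lambda$ has a normal basis $\{x_1,x_2,x_3,x_4\}$; writing $x=\sum\lambda_ix_i$ with (say) $\lambda_4\notin O_E$, one has $\Lambda+\langle x\rangle=\langle x_1,x_2,x_3,x\rangle$. Since $\cV_\Lambda\subseteq\cN(x_1)\cap\cN(x_2)\cap\cN(x_3)$, the sandwich
\[
1\leq\Int_{\cV_\Lambda^+}(x)\leq\length_{O_{\breve E}}\bigl(\cN(x_1)\cap\cN(x_2)\cap\cN(x_3)\cap\cN(x)\bigr)^+=\Int^+(\Lambda+\langle x\rangle)
\]
reduces the claim to $\Int^+(\Lambda+\langle x\rangle)=1$. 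The lattice $\Lambda+\langle x\rangle$ has fundamental invariants $(0,0,1,1)$, so a further cancellation to a two-dimensional subspace together with Proposition~\ref{pr:two} gives $\Int(\Lambda+\langle x\rangle)=2$; finally Lemma~\ref{le:geometric5} yields $\Int^+=1$. This upper-bound-by-a-full-intersection trick, rather than a direct transversality computation, is the missing idea in your proposal.
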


\begin{proof}
Since $t(\Lambda)=4$, we may choose a $4$-dimensional (nonsplit) hermitian subspace $\bbV'$ of $\bbV$ such that $\Lambda\cap\bbV^{\prime\perp}$ is a self-dual $O_E$-lattice of $\bbV^{\prime\perp}$. We adopt the construction in Remark \ref{re:cancellation}. Write $x=y+x'$ with respect to the orthogonal decomposition $\bbV=\bbV^{\prime\perp}\oplus\bbV'$. Put $\Lambda'\coloneqq\Lambda\cap\bbV'$. By Remark \ref{re:cancellation}(2) and Definition \ref{de:bt}(2), $\cV_\Lambda$ coincides with $\cV_{\Lambda'}$ under the natural morphism $\cN'\to\cN$. Denote by $\cV_{\Lambda'}^+$ the connected component of $\cV_{\Lambda'}$ that corresponds to $\cV_\Lambda^+$. By Remark \ref{re:cancellation}(3), we have
\[
\cV_\Lambda^+.\pres{\rK}\cN(x)=
\begin{dcases}
0,&\text{if $y\not\in\Lambda\cap\bbV^{\prime\perp}$,} \\
\cV_{\Lambda'}^+.\pres{\rK}\cN'(x'),&\text{if $y\in\Lambda\cap\bbV^{\prime\perp}$.}
\end{dcases}
\]
In other words, we have $\Int_{\cV_\Lambda^+}=\CF_{\Lambda\cap\bbV^{\prime\perp}}\otimes\Int_{\cV_{\Lambda'}^+}$. Therefore, it suffices to consider the case where $n=4$.

We now give an explicit formula for $\Int_{\cV_\Lambda^+}(x)$ when $n=4$. Let $\cN^+$ be the connected component of $\cN$ that contains $\cV_\Lambda^+$, and put $Z^+\coloneqq Z\cap\cN^+$ for every formal subscheme $Z$ of $\cN$. Put $\Lambda(x)\coloneqq\Lambda+\langle x\rangle$. There are three cases.

\begin{enumerate}
  \item Suppose that $\Lambda(x)$ is not integral. By Corollary \ref{co:bt1}, $\cV_\Lambda$ has empty intersection with $\cN(x)$. Thus, we have $\Int_{\cV_\Lambda^+}(x)=0$.

  \item Suppose that $\Lambda(x)$ is integral but $x\not\in\Lambda$. Then $\Lambda(x)$ has fundamental invariants $(0,0,1,1)$. By Corollary \ref{co:bt1}, $\cV_\Lambda^+\cap\cN(x)_\red=\cV_{\Lambda(x)}^+$ which is a $\ol{k}$-point. Thus, we have $\Int_{\cV_\Lambda^+}(x)\geq 1$. Choose a normal basis (Remark \ref{re:analytic3}) $\{x_1,x_2,x_3,x_4\}$ of $\Lambda$ and write $x=\lambda_1x_1+\lambda_2x_2+\lambda_3x_3+\lambda_4x_4$ with $\lambda_i\in E$. Without lost of generality, we may assume $\lambda_4\not\in O_E$. Since $ux\in\Lambda$, we have $\Lambda(x)=\langle x_1,x_2,x_3,x\rangle$. By Corollary \ref{co:bt1}, $\cN(x_1)\cap\cN(x_2)\cap\cN(x_3)$ contains $\cV_\Lambda$ as a closed subscheme. By Remark \ref{re:cancellation} and Proposition \ref{pr:two} applied to $\bbV'$ spanned by $x_3$ and $x_4$, $\cN(\Lambda(x))$ is a $0$-dimensional scheme and $\Int(\Lambda(x))=2$. It follows that
      \[
      \Int_{\cV_\Lambda^+}(x)\leq\length_{O_{\breve{E}}}\(\cN(x_1)\cap\cN(x_2)\cap\cN(x_3)\)\cap\cN(x)^+
      =\Int^+(\Lambda(x))=1
      \]
      by Lemma \ref{le:geometric5} below. Thus, we obtain $\Int^+(\Lambda(x))=1$ hence $\Int_{\cV_\Lambda^+}(x)=1$.

  \item Suppose that $x\in\Lambda$. Then $\cV_\Lambda^+$ is a closed subscheme of $\cN(x)$, which implies
      \[
      \sO_{\cV_{\Lambda^+}}\overset{\dL}\otimes_{\sO_\cN}\sO_{\cN(x)}=
      \(\sO_{\cV_{\Lambda^+}}\overset{\dL}\otimes_{\sO_{\cN(x)}}\sO_{\cN(x)}\)\overset{\dL}\otimes_{\sO_\cN}\sO_{\cN(x)}=
      \sO_{\cV_{\Lambda^+}}\overset{\dL}\otimes_{\sO_{\cN(x)}}\(\sO_{\cN(x)}\overset{\dL}\otimes_{\sO_\cN}\sO_{\cN(x)}\).
      \]
      However, by Corollary \ref{co:linear1}, we have $\sO_{\cN(x)}\overset{\dL}\otimes_{\sO_\cN}\sO_{\cN(x)}=\sO_{\cN(x)}\otimes_{\sO_\cN}C(0)$ in $\rK_0(\cN)$, where $C(0)$ is the complex \eqref{eq:linear}. Thus, we obtain
      \[
      \Int_{\cV_\Lambda^+}(x)=\chi\(C(0)\res_{\cV_\Lambda^+}\)=\deg\(\sO_{\cV_\Lambda^+}\)-\deg\(\omega\res_{\cV_\Lambda^+}\)
      =-\deg\(\omega\res_{\cV_\Lambda^+}\)=1-q
      \]
      by Lemma \ref{le:linear6}.
\end{enumerate}

Since there are exactly $q^2+1$ vertex $O_E$-lattices of $\bbV$ properly containing $\Lambda$, combining (1--3), we obtain
\[
\Int_{\cV_\Lambda^+}=-q(1+q)\CF_\Lambda+\sum_{\Lambda\varsubsetneq\Lambda'\subseteq\Lambda'^\vee}\CF_{\Lambda'}.
\]
It follows that
\begin{align}\label{eq:geometric}
\widehat{\Int_{\cV_\Lambda^+}}=-\frac{1+q}{q}\CF_{\Lambda^\vee}+
\frac{1}{q}\sum_{\Lambda\varsubsetneq\Lambda'\subseteq\Lambda'^\vee}\CF_{\Lambda'^\vee}.
\end{align}
\begin{itemize}
  \item If $x\in\Lambda$, then $\widehat{\Int_{\cV_\Lambda^+}}(x)=-\frac{1+q}{q}+\frac{q^2+1}{q}=q-1$.

  \item If $\Lambda(x)$ is integral but $x\not\in\Lambda$, then the number of $\Lambda'$ in the summation of \eqref{eq:geometric} such that $x\in\Lambda'^\vee$ is exactly $1$ (namely, $\Lambda(x)$ itself). Thus, we have $\widehat{\Int_{\cV_\Lambda^+}}(x)=-\frac{1+q}{q}+\frac{1}{q}=-1$.

  \item If $\Lambda(x)$ is not integral but $x\in\Lambda^\vee$, then the set of $\Lambda'$ in the summation of \eqref{eq:geometric} satisfying $x\in\Lambda'^\vee$ is bijective to the set of isotropic lines in $\Lambda^\vee/\Lambda$ perpendicular to $x$. Now since $\Lambda(x)$ is not integral, $x$ is anisotropic in $\Lambda^\vee/\Lambda$, which implies that the previous set has cardinality $q+1$. Thus, we have $\widehat{\Int_{\cV_\Lambda^+}}(x)=-\frac{1+q}{q}+\frac{q+1}{q}=0$.

  \item If $x\not\in\Lambda^\vee$, then $\widehat{\Int_{\cV_\Lambda^+}}(x)=0$.
\end{itemize}
Therefore, we have $\widehat{\Int_{\cV_\Lambda^+}}=-\Int_{\cV_\Lambda^+}$. The lemma is proved.
\end{proof}

\begin{lem}\label{le:geometric5}
Denote the two connected components of $\cN$ by $\cN^+$ and $\cN^-$, and $\Int^\pm(\bbL)$ the intersection multiplicity in Definition \ref{de:int} on $\cN^\pm$. Then
\[
\Int^+(\bbL)=\Int^-(\bbL)=\tfrac{1}{2}\Int(\bbL).
\]
\end{lem}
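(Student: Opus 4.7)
The plan is to construct an automorphism of $\cN$ that swaps its two connected components while preserving each of the special divisors $\cN(x_1),\dots,\cN(x_n)$ for a carefully chosen basis of $\bbL$; the resulting symmetry of the Serre intersection multiplicity forces the two components to contribute equally.

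First, I would make $\rU(\bbV)(F)$ act on $\cN$ in the usual way. Via the identification $\bbV=\Hom_{O_E}(X_0\otimes_{O_{\breve{E}}}\ol{k},\bbX)\otimes\dQ$ from Subsection \ref{ss:kr}, an element $g\in\rU(\bbV)(F)$ corresponds to a self-quasi-isogeny of $(\bbX,\iota_\bbX,\lambda_\bbX)$ in $\Exo_{(n-1,1)}^\rb(\ol{k})$, and hence defines an automorphism of $\cN$ by the rule $(X,\iota_X,\lambda_X;\rho_X)\mapsto (X,\iota_X,\lambda_X;g\circ\rho_X)$. Under this action, the special divisor $\cN(x)$ is carried to $\cN(gx)$ for every nonzero $x\in\bbV$, directly from Definition \ref{de:rz_special}.

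Second, by Remark \ref{re:analytic3} I would choose a normal basis $\{x_1,\dots,x_n\}$ of $\bbL$. Because $\bbV$ is nonsplit of even dimension, the number $s$ of anisotropic blocks in any normal basis must be positive (otherwise $\bbV$ would be a sum of hyperbolic planes, hence split). After reindexing, assume $x_n$ is anisotropic and orthogonal to $x_1,\dots,x_{n-1}$. Let $\theta\in\rU(\bbV)(F)$ be the reflection defined by $\theta(x_i)=x_i$ for $1\leq i\leq n-1$ and $\theta(x_n)=-x_n$; it is straightforwardly unitary and has determinant $-1$. Since $\cN(x_n)=\cN(-x_n)$ by Definition \ref{de:rz_special}, the induced automorphism of $\cN$ preserves each $\cN(x_i)$ and in particular the (proper) closed formal subscheme $\cN(x_1)\cap\cdots\cap\cN(x_n)$, on which the Serre intersection multiplicity computes $\Int(\bbL)$.

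The main obstacle, which is what remains, is to verify that this $\theta$ exchanges the two connected components $\cN^+$ and $\cN^-$ of $\cN$. In the proof of Lemma \ref{le:rz} we have the decomposition $\cN=\cN_{(0,0)}\sqcup\cN_{(0,1)}$ from \cite{Wu}*{Section~3.4}, distinguished by a $\dZ/2$-valued discrete invariant. I would show that the action of $g\in\rU(\bbV)(F)$ on $\pi_0(\cN)=\dZ/2$ factors through the composition $\rU(\bbV)(F)\xrightarrow{\det}E^{(1)}\to\dZ/2$, where the last map is the unique nontrivial character of $E^{(1)}/\Nm_{E/F}E^\times$ (which is nontrivial precisely because $E/F$ is ramified). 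Granting this, $\theta$ has nontrivial image and hence swaps $\cN^+$ and $\cN^-$. The needed compatibility can be extracted by tracing through the definition of the invariant in \cite{Wu}*{Section~3.4}, which is built from the Hodge filtration and the action of $\iota_{\bbX}(u)$; equivalently, one can argue directly by exhibiting a single pair of $\ol{k}$-points lying in $\cN^+$ and $\cN^-$ and checking that $\theta$ interchanges them, for instance by using the Bruhat--Tits stratification of Proposition \ref{pr:bt} and picking a vertex lattice $\Lambda$ together with a strictly finer vertex lattice $\Lambda'$ on which the reflection permutes the two components of $\cV_{\Lambda}$. Once the swap is established, the equality $\Int^+(\bbL)=\Int^-(\bbL)$ follows from the $\theta$-equivariance of the derived tensor product defining the intersection multiplicity, and summing the two yields $\Int(\bbL)$.
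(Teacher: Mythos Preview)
Your proposal is correct and follows essentially the same approach as the paper: choose a normal basis of $\bbL$ with an anisotropic vector $x_n$, use the reflection $\theta\in\rU(\bbV)(F)$ fixing $x_1,\dots,x_{n-1}$ and negating $x_n$, observe that $\theta$ preserves each $\cN(x_i)$, and conclude by showing $\theta$ swaps $\cN^+$ and $\cN^-$. The only difference is emphasis: the paper dispatches the swap in one clause, asserting it follows from $\det\theta=-1$ (via the connected-component description in \cite{Wu}), whereas you correctly flag this as the step needing the most care and sketch two ways to verify it.
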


\begin{proof}
Choose a normal basis (Remark \ref{re:analytic3}) $\{x_1,\dots,x_n\}$ of $\bbL$. Since $\bbV$ is nonsplit, there exists an anisotropic element in the basis, say $x_n$. Let $\theta$ the unique element in $\rU(\bbV)(F)$ satisfying $\theta(x_i)=1$ for $1\leq i\leq n-1$ and $\theta(x_n)=-x_n$. Then $\theta$ induces an automorphism of $\cN$, preserving $\cN(x_i)$ for $1\leq i\leq n$, but switching $\cN^+$ and $\cN^-$ as $\det\theta=-1$. Thus, we have $\Int^+(\bbL)=\Int^-(\bbL)$. Since $\Int(\bbL)=\Int^+(\bbL)+\Int^-(\bbL)$, the lemma follows.
\end{proof}

\begin{proof}[Proof of Proposition \ref{pr:geometric}]
We first consider (1). By Lemma \ref{le:geometric2}, we have for $x\in\bbV\setminus V_{L^\flat}$,
\[
\Int_{L^\flat}^\rh(x)=\sum_{\substack{L^\flat\subseteq L^{\flat\prime}\subseteq(L^{\flat\prime})^\vee \\ t(L^{\flat\prime})=1}}
\cN(L^{\flat\prime})^\circ.\pres{\rK}\cN(x),
\]
which, by Lemma \ref{le:geometric1}, equals
\[
2\sum_{\substack{L^\flat\subseteq L^{\flat\prime}\subseteq(L^{\flat\prime})^\vee \\ t(L^{\flat\prime})=1}}
\sum_{\substack{L\subseteq L^\vee\\ L\cap V_{L^\flat}=L^{\flat\prime}}}\CF_L(x)
=2\sum_{\substack{L^\flat\subseteq L\subseteq L^\vee \\ t(L\cap V_{L^\flat})=1}}\CF_L(x).
\]
Thus, Proposition \ref{pr:geometric}(1) follows from Definition \ref{de:density1}.

We first consider (2). We may assume $r\geq 2$ since otherwise $\Int_{L^\flat}^\rv\equiv 0$ hence (2) is trivial. We write $\cN=\cN^+\cup\cN^-$ for the two connected components. For every vertex $O_E$-lattice $\Lambda$ of $\bbV$, we put $\cV_\Lambda^\pm\coloneqq\cV_\Lambda\cap\cN^\pm$. By Lemma \ref{le:geometric4} and Proposition \ref{pr:bt}, there exists finitely finitely many vertex $O_E$-lattices $\Lambda_1,\dots,\Lambda_m$ of $\bbV$ of type $n$ such that
\[
\pres{\rK}\cN(L^\flat)^\rv \in \sum_{i=1}^m\rF^{n-1}\rK_0^{\cV_{\Lambda_i}}(\cN)\subseteq\rF^{n-1}\rK_0(\cN).
\]
Since the natural map $\rF^{\frac{t(\Lambda_i)}{2}-2}\rK_0(\cV_{\Lambda_i})\to\rF^{n-1}\rK_0^{\cV_{\Lambda_i}}(\cN)$ is an isomorphism for $1\leq i\leq m$, by Lemma \ref{le:tate}, there exist rational numbers $c_\Lambda^\pm$ for vertex $O_E$-lattices $\Lambda$ of $\bbV$ with $t(\Lambda)=4$, of which all but finitely many are zero, such that
\[
\pres{\rK}\cN(L^\flat)^\rv - \(\sum_{\Lambda}c_\Lambda^+\cdot\cV_\Lambda^++c_\Lambda^-\cdot\cV_\Lambda^-\)
\]
has zero intersection with $\rF^1\rK_0(\cN)$. Thus, Proposition \ref{pr:geometric}(2) follows from Lemma \ref{le:geometric3}.
\end{proof}

\subsection{Proof of Theorem \ref{th:kr}}
\label{ss:kr_proof}

Let the setup be as in Subsection \ref{ss:kr}. In this subsection, for an element $L^\flat\in\flat(\bbV)$ (Definition \ref{de:analytic}), we set $\val(L^\flat)=-1$ if $L^\flat$ is not integral.

\begin{lem}\label{le:kr1}
Suppose that $r\geq 2$ and take an integral element $L^\flat\in\flat(\bbV)$ whose fundamental invariants $(a_1,\dots,a_{n-2},a_{n-1})$ satisfy $a_{n-2}<a_{n-1}$ (in particular, $a_{n-1}$ is odd). Then the number of integral $O_E$-lattices of $\bbV$ containing $L^\flat$ with fundamental invariants $(a_1,\dots,a_{n-2},a_{n-1}-1,a_{n-1}-1)$ is either $0$ or $2$. When the number is $2$ and those lattices are denoted by $L^{\flat+}$ and $L^{\flat-}$, we have
\begin{enumerate}
  \item $L^{\flat\pm}\cap V_{L^\flat}=L^\flat$;

  \item $a_{n-1}\geq 3$;

  \item there are orthogonal decompositions $L^\flat=L^\flat_\leftarrow\oplus L^\flat_\rightarrow$ and $L^{\flat\pm}=L^\flat_\leftarrow\oplus L^{\flat\pm}_\rightarrow$, in which $L^\flat_\leftarrow$, $L^\flat_\rightarrow$, and $L^{\flat\pm}_\rightarrow$ are integral hermitian $O_E$-modules with fundamental invariants $(a_1,\dots,a_{n-2})$, $(a_{n-1})$, and $(a_{n-1}-1,a_{n-1}-1)$, respectively.
\end{enumerate}
\end{lem}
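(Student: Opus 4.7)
\emph{Stage 1 (parity and initial splitting).} The strict inequality $a_{n-2} < a_{n-1}$ prevents $a_{n-1}$ from arising as one member of a hyperbolic pair in a normal basis of $L^\flat$ (which would force $a_{n-2} = a_{n-1}$); by Remark~\ref{re:analytic3}, $a_{n-1}$ instead arises from an anisotropic basis vector, so $a_{n-1} = 2b+1$ is odd and one gets an orthogonal decomposition $L^\flat = L^\flat_\leftarrow \oplus L^\flat_\rightarrow$ with $L^\flat_\rightarrow = O_E\cdot f$ and $(f,f) = \beta u^{2b}$, $\beta \in O_F^\times$. The same parity reasoning applied to the pair $(a_{n-1}-1,a_{n-1}-1)$ shows these equal invariants are even and must arise from a hyperbolic pair in a normal basis of $L^{\flat\pm}$, so any rank-two orthogonal summand $L^{\flat\pm}_\rightarrow$ of invariants $(a_{n-1}-1,a_{n-1}-1)$ is hyperbolic and its ambient $2$-dimensional hermitian subspace must be split.

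\emph{Stage 2 (Claim~(1) via minor interlacing).} For any integral $L^{\flat\pm}$ with the prescribed invariants, set $N := L^{\flat\pm}\cap V_{L^\flat}$. Taking a basis of $N$ and extending it to a basis of $L^{\flat\pm}$, every $(n{-}1)\times(n{-}1)$ minor of the Gram matrix of $N$ occurs as an $(n{-}1)\times(n{-}1)$ minor of that of $L^{\flat\pm}$, and Lemma~\ref{le:analytic7}(1) then yields
\[
\val(N) \;\geq\; a_1 + \cdots + a_{n-2} + (a_{n-1} - 1) \;=\; \val(L^\flat) - 1.
\]
Combined with the standard duality identity $\val(L^\flat)-\val(N) = 2\length_{O_E}(N/L^\flat) \in 2\dZ_{\geq 0}$, a parity argument forces $\val(N) = \val(L^\flat)$ and hence $N = L^\flat$.

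\emph{Stage 3 (Claim~(3), counting, and Claim~(2)).} By~(1), the quotient $L^{\flat\pm}/L^\flat$ is $O_E$-free of rank one; pick a generator $w$. Using Gram-matrix minors together with the hypothesis that the first $n{-}2$ invariants of $L^{\flat\pm}$ equal those of $L^\flat_\leftarrow$, one shows that, for a suitably chosen orthogonal splitting from Stage~1, the component of $w$ in $V_{L^\flat_\leftarrow}$ can be killed modulo $L^\flat_\leftarrow$, so $w$ may be chosen in $W := V_{L^\flat_\leftarrow}^\perp$. This gives the desired orthogonal decomposition $L^{\flat\pm} = L^\flat_\leftarrow \oplus L^{\flat\pm}_\rightarrow$ with $L^{\flat\pm}_\rightarrow := L^{\flat\pm}\cap W$ of rank two and invariants $(2b,2b)$. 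Counting $L^{\flat\pm}$ then reduces to counting rank-two integral $O_E$-lattices of invariants $(2b,2b)$ in the split hermitian space $W$ containing $L^\flat_\rightarrow$: parametrising by a second generator $h = \alpha f + \delta g$ (with $g$ an anisotropic generator of $V_{L^\flat}^\perp$), the integrality and invariant conditions collapse to a single quadratic equation on $\alpha$ modulo $O_E$, with exactly $0$ or $2$ solutions. Finally, when $b=0$ (i.e.\ $a_{n-1}=1$), $L^\flat_\leftarrow$ is forced to be self-dual, so $V_{L^\flat_\leftarrow}$ has norm discriminant; since $\disc\bbV$ is non-norm, $W$ is then non-split, which rules out hyperbolic rank-two lattices of invariants $(0,0)$ and gives count~$0$. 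Hence a count of~$2$ forces $b\geq 1$, giving Claim~(2).

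\emph{Main obstacle.} The most delicate step is Stage~3: verifying that the invariant constraints genuinely force $w$ into $W$ modulo $L^\flat_\leftarrow$ for a well-chosen orthogonal splitting of $L^\flat$, and that the required splitness of $W$ is perfectly synchronised with the existence of $L^{\flat\pm}$. This rests on a careful discriminant accounting in both $V_{L^\flat_\leftarrow}$ and its orthogonal complement $W$ inside the non-split ambient $\bbV$, and is precisely what accounts for the dichotomy ``count is $0$ or $2$''.
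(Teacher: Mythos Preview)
Your outline is correct and follows essentially the same route as the paper's proof. A few comparative remarks:

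\emph{Stage 2.} Your valuation--parity argument for (1) is exactly the paper's: the minor bound from Lemma~\ref{le:analytic7}(1) gives $\val(N)\geq\val(L^\flat)-1$, and the parity constraint (Remark~\ref{re:analytic2}(2), odd rank forces odd valuation) upgrades this to equality.

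\emph{Stage 3, vanishing of coefficients.} This is where your sketch is thinnest and where the paper does the real work. Writing the extra generator as $x=\sum_{i}\lambda_ie_i+x_n$ with $\lambda_i\in(E\setminus O_E)\cup\{0\}$, the paper proves $\lambda_1=\cdots=\lambda_{n-2}=0$ by an \emph{induction on $i$}, treating separately the three cases of whether $e_i$ sits in an anisotropic block or is the first/second member of a hyperbolic pair, and in each case applying Lemma~\ref{le:analytic7}(1) to a carefully chosen $i\times i$ minor. No clever choice of splitting is needed --- any normal basis works --- but the case distinction is essential, so ``Gram-matrix minors'' should be unpacked along those lines.

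\emph{The $0$-or-$2$ count.} The paper normalises $\lambda_{n-1}=u^{-1}$ and reduces to a congruence on $(x_n,x_n)_{\bbV}$ modulo $u^{a_{n-1}-1}O_F$, then counts $(1+uO_E)$-orbits of $x_n\in V_{L^\flat}^\perp$; the dichotomy comes from the norm map $E^\times/(1+uO_E)\to F^\times/(1+\fp_F)$ having kernel $\{\pm1\}$. Your phrasing via splitness of $W$ and a ``quadratic equation on $\alpha$'' is equivalent (a lattice of even invariants $(2b,2b)$ can only live in a split plane, by Remark~\ref{re:analytic3}), but note your variable should really be the $V_{L^\flat}^\perp$-component, not $\alpha$, once you normalise the $f$-coefficient.

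\emph{Claim (2).} Your discriminant argument through $W$ is correct, but the paper's observation is shorter: if $a_{n-1}=1$ then $L^{\flat\pm}$ itself would be self-dual of rank $n$ in the nonsplit space $\bbV$, which is impossible.
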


\begin{proof}
Let $L$ be an integral $O_E$-lattice $L$ of $\bbV$ containing $L^\flat$ with fundamental invariants $(a_1,\dots,a_{n-2},a_{n-1}-1,a_{n-1}-1)$.

We first claim that (1) must hold. We have $\val(L\cap V_{L^\flat})\geq a_1+\cdots+a_{n-2}+a_{n-1}-1$ by Lemma \ref{le:analytic7}(1). Since $L\cap V_{L^\flat}$ contains $L^\flat$ and $\val(L\cap V_{L^\flat})$ is odd, we must have $L\cap V_{L^\flat}=L^\flat$.

Choose a normal basis $(e_1,\dots,e_{n-1})$ of $L^\flat$ (Remark \ref{re:analytic3}), and rearrange them such that for every $1\leq i\leq n-1$, exactly one of the following three happens:
\begin{enumerate}[label=(\alph*)]
  \item $(e_i,e_i)_{\bbV}=\beta_i u^{a_i-1}$ for some $\beta_i\in O_F^\times$;

  \item $(e_i,e_{i+1})_{\bbV}=u^{a_i-1}$;

  \item $(e_i,e_{i-1})_{\bbV}=-u^{a_i-1}$.
\end{enumerate}
By the claim on (1), we may write $L=L^\flat+\langle x\rangle$ in which
\[
x=\lambda_1 e_1+\cdots+\lambda_{n-1}e_{n-1}+x_n
\]
for some $\lambda_i\in(E\setminus O_E)\cup\{0\}$ and $0\neq x_n\in V_{L^\flat}^\perp$. Let $T$ be the moment matrix with respect to the basis $\{e_1,\dots,e_{n-1},x\}$ of $L$.

We show by induction that for $1\leq i\leq n-2$, $\lambda_i=0$. Suppose we know $\lambda_1=\cdots\lambda_{i-1}=0$. For $\lambda_i$ (with $1\leq i\leq n-2$), there are three cases.
\begin{itemize}
  \item If $e_i$ is in the situation (a) above, then applying Lemma \ref{le:analytic7}(1) to the $i$-by-$i$ minor of $T$ consisting of rows $\{1,\dots,i\}$ and columns $\{1,\dots,i-1,n\}$, we obtain $\val_E(\lambda_i\beta_i u^{a_i-1})\geq a_i-1$, which implies $\lambda_i=0$.

  \item If $e_i$ is in the situation (b) above, then applying Lemma \ref{le:analytic7}(1) to the $i$-by-$i$ minor of $T$ consisting of rows $\{1,\dots,i-1,i+1\}$ and columns $\{1,\dots,i-1,n\}$, we obtain $\val_E(-\lambda_i u^{a_i-1})\geq a_i-1$, which implies $\lambda_i=0$.

  \item If $e_i$ is in the situation (c) above, then applying Lemma \ref{le:analytic7}(1) to the $i$-by-$i$ minor of $T$ consisting of rows $\{1,\dots,i\}$ and columns $\{1,\dots,i-1,n\}$, we obtain $\val_E(\lambda_i u^{a_i-1})\geq a_i-1$, which implies $\lambda_i=0$.
\end{itemize}

Note that $e_{n-1}$ is in the situation (a). Applying Lemma \ref{le:analytic7}(1) to the $(n-1)$-by-$(n-1)$ minor of $T$ consisting of rows $\{1,\dots,n-1\}$ and columns $\{1,\dots,n-2,n\}$, we obtain $\val_E(\lambda_{n-1}\beta_{n-1}u^{a_{n-1}-1})\geq a_{n-1}-2$, which implies $\lambda_{n-1}\in u^{-1}O_E$. On the other hand, $\lambda_{n-1}\neq 0$ since otherwise $a_{n-1}$ will appear in the fundamental invariants of $L$, which is a contradiction. Thus, we have $\lambda_{n-1}\in u^{-1}O_E\setminus O_E$. After rescaling by an element in $O_E^\times$, we may assume $\lambda_{n-1}=u^{-1}$. Applying Lemma \ref{le:analytic7}(1) to the $(n-1)$-by-$(n-1)$ minor of $T$ consisting of rows $\{1,\dots,n-2,n\}$ and columns $\{1,\dots,n-2,n\}$, we obtain
\begin{align}\label{eq:kr1}
\val_E\((x_n,x_n)_{\bbV}-u^{-2}\beta_{n-1}u^{a_{n-1}-1}\)\geq a_{n-1}-2.
\end{align}
We note the following facts.
\begin{itemize}
  \item The set of $x_n\in V_{L^\flat}^\perp$ satisfying \eqref{eq:kr1} is stable under the multiplication by $1+uO_E$.

  \item The set of orbits of such $x_n$ under the multiplication by $1+uO_E$ is bijective to the set of $L$.

  \item The number of orbits is either $0$ or $2$.

  \item If the number is $2$, then $a_{n-1}\geq 3$, since $\bbV$ is nonsplit.
\end{itemize}
Thus, the main part of the lemma is proved, with the properties (1) and (2) included. For (3), we simply take $L^\flat_\leftarrow=\langle e_1,\dots,e_{n-2}\rangle$ with $L^\flat_\rightarrow$ and $L^{\flat\pm}_\rightarrow$ uniquely determined.

The lemma is proved.
\end{proof}

In the rest of subsection, we say that $L^\flat$ is \emph{special} if $L^\flat$ is like in Lemma \ref{le:kr1} for which the number is $2$.
We now define an open compact subset $S_{L^\flat}$ of $\bbV$ for an integral element $L^\flat\in\flat(\bbV)$ in the following way:
\[
S_{L^\flat}\coloneqq
\begin{dcases}
L^{\flat+}\cup L^{\flat-}, &\text{if $L^\flat$ is special,}\\
L^\flat+(V_{L^\flat}^\perp)^\integ, &\text{if $L^\flat$ is not special.}
\end{dcases}
\]

\begin{lem}\label{le:kr2}
Take an integral element $L^\flat\in\flat(\bbV)$. Then for every $x\in\bbV\setminus(V_{L^\flat}\cup S_{L^\flat})$, we may write
\[
L^\flat+\langle x\rangle = L^{\flat\prime}+\langle x'\rangle
\]
for some $L^{\flat\prime}\in\flat(\bbV)$ satisfying $\val(L^{\flat\prime})<\val(L^\flat)$.
\end{lem}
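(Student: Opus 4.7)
The plan is to extract flexibility from the presentation $L^\flat+\langle x\rangle$ by simultaneously adjusting the basis of $L^\flat$ and the representative $x$, and then to exploit the precise failure of $x$ to lie in $S_{L^\flat}$ to produce a rank-$(n-1)$ sublattice of smaller valuation.

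First I would fix a normal basis $\{e_1,\dots,e_{n-1}\}$ of $L^\flat$ adapted to the fundamental invariants $(a_1,\dots,a_{n-1})$ of $L^\flat$ (Remark \ref{re:analytic3}), and decompose
\[
x=\lambda_1 e_1+\cdots+\lambda_{n-1}e_{n-1}+x_n,\qquad\lambda_i\in E,\quad 0\neq x_n\in V_{L^\flat}^\perp,
\]
using that $x\notin V_{L^\flat}$. Modulo $L^\flat$ we may take each $\lambda_i$ in a fixed set of coset representatives of $E/O_E$; this leaves $L^\flat+\langle x\rangle$ unchanged. The second ingredient is the following basic swap: for any $1\leq j\leq n-1$ and any $\alpha\in O_E$, if we put $e_j':=e_j+\alpha x$ and $x':=x$, then $\{e_1,\dots,\widehat{e_j},\dots,e_{n-1},e_j'\}$ still generates a rank-$(n-1)$ hermitian $O_E$-module $L^{\flat\prime}$, and $L^{\flat\prime}+\langle x'\rangle=L^\flat+\langle x\rangle$; more generally one is allowed to rescale $x$ by multiplication by any unit and to replace $x$ by $x-\sum\mu_i e_i$ with $\mu_i\in O_E$. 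Our task is therefore to exhibit such a slicing with $\val(L^{\flat\prime})<\val(L^\flat)$, where by convention $L^{\flat\prime}$ non-integral counts as $\val(L^{\flat\prime})=-1$.

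Next I would split into the two cases in the definition of $S_{L^\flat}$. When $L^\flat$ is \emph{not} special, the hypothesis $x\notin L^\flat+(V_{L^\flat}^\perp)^\integ$ forces either (a) some coefficient $\lambda_{i_0}$ to have negative $E$-valuation after the above normalization, or (b) $x_n\notin(V_{L^\flat}^\perp)^\integ$, i.e.\ $(x_n,x_n)_{\bbV}\notin O_F$. In case (a), after absorbing the other $\lambda_j$ into $L^\flat$, the element $x$ itself can play the role of $e_{i_0}'$: the new $L^{\flat\prime}$ obtained by replacing $e_{i_0}$ with a suitable $O_E$-multiple of $x$ has its $i_0$-th invariant strictly smaller than $a_{i_0}$, while the remaining invariants are unchanged by a Smith-normal-form argument based on Lemma \ref{le:analytic7}(1). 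In case (b), the slicing $L^{\flat\prime}:=\langle e_1,\dots,e_{n-2},x\rangle$, $x':=e_{n-1}$ recovers $L^\flat+\langle x\rangle$, and the moment matrix of $L^{\flat\prime}$ has last diagonal entry $(x,x)_{\bbV}=(y,y)_{\bbV}+(x_n,x_n)_{\bbV}$; by first subtracting from $x$ the $\langle e_1,\dots,e_{n-2}\rangle$-part of $y$ we can force $L^{\flat\prime}$ not to be integral, so $\val(L^{\flat\prime})=-1$.

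The special case, where $L^{\flat\pm}$ are the two integral overlattices of $L^\flat$ produced by Lemma \ref{le:kr1}, is the hard part. Using the orthogonal decompositions $L^\flat=L^\flat_\leftarrow\oplus L^\flat_\rightarrow$ and $L^{\flat\pm}=L^\flat_\leftarrow\oplus L^{\flat\pm}_\rightarrow$ from Lemma \ref{le:kr1}(3), I would decompose $x=y_\leftarrow+y_\rightarrow+x_n$ accordingly. If $y_\leftarrow\notin L^\flat_\leftarrow$ then after normalization some $\lambda_i$ with $i\leq n-2$ has negative valuation, and the non-special reduction argument produces a rank-$(n-1)$ lattice with one of $a_1,\dots,a_{n-2}$ decreased. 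If $y_\leftarrow\in L^\flat_\leftarrow$, the problem reduces to the rank $2$ hermitian subspace $V_{L^\flat_\rightarrow}\oplus V_{L^\flat}^\perp$, where the condition $x\notin L^{\flat+}\cup L^{\flat-}$ is exactly the negation of the moment-matrix inequality \eqref{eq:kr1} from the proof of Lemma \ref{le:kr1}; this says that the $(e_{n-1},x_n)$-submodule of $L^\flat+\langle x\rangle$ must have a rank-$1$ direction on which the hermitian form takes values of valuation strictly less than $a_{n-1}-1$, and one extracts the required $L^{\flat\prime}$ from it.

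The main obstacle will be this last special case: because the parity constraint (Remark \ref{re:analytic2}(2)) prevents us from decreasing a single fundamental invariant by $1$ in isolation, any reduction must either swap the rank-$1$ block $L^\flat_\rightarrow$ for a rank-$2$ block of smaller total valuation, or else push the new lattice out of integrality entirely. Verifying that one of these two outcomes actually occurs whenever $x\notin L^{\flat+}\cup L^{\flat-}$ is what makes the argument delicate, and it is here that the precise description of $L^{\flat\pm}$ via \eqref{eq:kr1} is indispensable.
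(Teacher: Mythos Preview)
Your overall setup (normal basis of $L^\flat$, decompose $x$, reduce modulo $L^\flat$) matches the paper, but the explicit ``swap'' construction of $L^{\flat\prime}$ has real gaps.

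The most concrete problem is your case (b). Once you have reduced to the situation where $L=L^\flat+\langle x\rangle$ is integral (which you should treat first, as the paper does), \emph{every} rank-$(n-1)$ sublattice of $L$ spanning a nondegenerate hyperplane is automatically integral; you cannot ``force $L^{\flat\prime}$ not to be integral''. Your proposed fix, subtracting from $x$ its $\langle e_1,\dots,e_{n-2}\rangle$-component, does not preserve $L^\flat+\langle x\rangle$ unless that component already lies in $L^\flat$, which it need not when several $\lambda_j$ have negative valuation. Similarly, in case (a) the assertion that after replacing $e_{i_0}$ by a unit multiple of $x$ ``the remaining invariants are unchanged'' is not justified and is generally false: the new moment matrix acquires off-diagonal entries from all the nonzero $\lambda_j$ and from $x_n$, and a Smith-normal-form argument does not isolate the $i_0$-th invariant.

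The paper avoids these difficulties by \emph{not} constructing $L^{\flat\prime}$ explicitly. Instead it first disposes of the case where $L$ is not integral, and in the integral case it writes the fundamental invariants of $L$ as $(a'_1,\dots,a'_n)$ and observes (via Remark~\ref{re:analytic3}) that it suffices to prove $a'_1+\cdots+a'_{n-1}\leq a_1+\cdots+a_{n-1}-2$: a normal basis of $L$ then supplies the desired $L^{\flat\prime}$, with the parity constraint of Remark~\ref{re:analytic2}(2) absorbing the possible $+1$ when the top block is $2\times2$. The bound on $\sum_{i<n}a'_i$ is obtained by applying Lemma~\ref{le:analytic7}(1) to well-chosen $(n-1)\times(n-1)$ minors of the moment matrix of $\{e_1,\dots,e_{n-1},x\}$; the case split is according to whether the basis vector $e_i$ with $\lambda_i\neq 0$ sits in a $1\times1$ block or a $2\times2$ block of the normal form, which dictates which row and column to delete. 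The special case then falls out of the same minor analysis rather than a separate rank-$2$ argument. Your plan to use \eqref{eq:kr1} directly in the special case also misreads that inequality: it characterises $x_n$ for $x$ of a very particular shape ($\lambda_i=0$ for $i<n-1$, $\lambda_{n-1}\in u^{-1}O_E^\times$), not membership of an arbitrary $x$ in $L^{\flat\pm}$.
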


\begin{proof}
Take an element $x\in\bbV\setminus(V_{L^\flat}\cup S_{L^\flat})$. Put $L\coloneqq L^\flat+\langle x\rangle$. If $L$ is not integral, then by Remark \ref{re:analytic3}, we may write $L=L^{\flat\prime}+\langle x'\rangle$ with $L^{\flat\prime}\in\flat(\bbV)$ that is not integral; hence the lemma follows.

In what follows, we assume $L$ integral and write its fundamental invariants as $(a'_1,\dots,a'_n)$. By Remark \ref{re:analytic3}, it suffices to show that $a'_1+\cdots+a'_{n-1}\leq a_1+\cdots+a_{n-1}-2$.

Choose a normal basis $(e_1,\dots,e_{n-1})$ of $L^\flat$ (Remark \ref{re:analytic3}), and rearrange them such that for every $1\leq i\leq n-1$, exactly one of the following three happens:
\begin{enumerate}[label=(\alph*)]
  \item $(e_i,e_i)_{\bbV}=\beta_i u^{a_i-1}$ for some $\beta_i\in O_F^\times$;

  \item $(e_i,e_{i+1})_{\bbV}=u^{a_i-1}$;

  \item $(e_i,e_{i-1})_{\bbV}=-u^{a_i-1}$.
\end{enumerate}
Write $x=\lambda_1 e_1+\cdots+\lambda_{n-1}e_{n-1}+x_n$ for some $\lambda_i\in(E\setminus O_E)\cup\{0\}$ and $0\neq x_n\in V_{L^\flat}^\perp$. Let $T$ be the moment matrix with respect to the basis $\{e_1,\dots,e_{n-1},x\}$ of $L$.

If $\lambda_1=\cdots=\lambda_{n-1}=0$, then since $x\not\in S_{L^\flat}$, we have either $\langle x\rangle$ is not integral, or $\val(x)\leq a_{n-1}-2$ (only possible when $L^\flat$ is special) which implies $a'_1+\cdots+a'_{n-1}\leq a_1+\cdots+a_{n-1}-2$.

If $\lambda_i\neq 0$ for some $1\leq i\leq n-1$ such that $e_i$ is in the situation (b) or (c), then applying Lemma \ref{le:analytic7}(1) to the $(n-1)$-by-$(n-1)$ minor of $T$ deleting the $i$-th row and the $i$-th column, we obtain $a'_1+\cdots+a'_{n-1}\leq a_1+\cdots+a_{n-1}-2$.

If $\lambda_i\not\in u^{-1}O_E$ for some $1\leq i\leq n-1$ such that $e_i$ is in the situation (a), then applying Lemma \ref{le:analytic7}(1) to the $(n-1)$-by-$(n-1)$ minor of $T$ deleting the $i$-th row and the $n$-th column, we obtain $a'_1+\cdots+a'_{n-1}\leq a_1+\cdots+a_{n-1}-2$.

If $\lambda_i\neq 0$ and $\lambda_j\neq 0$ for $1\leq i<j\leq n-1$ such that both $e_i$ and $e_j$ are in the situation (a), then applying Lemma \ref{le:analytic7}(1) to the $(n-1)$-by-$(n-1)$ minor of $T$ deleting the $i$-th row and the $j$-th column, we obtain $a'_1+\cdots+a'_{n-1}\leq a_1+\cdots+a_{n-1}-2$.

The remaining case is that $\lambda_i\in u^{-1}O_E\setminus O_E$ for a unique element $1\leq i\leq n-1$ such that $e_i$ is in the situation (a). Then $L^\flat+\langle x\rangle$ is the orthogonal sum of $\langle e_1,\dots,\widehat{e_i},\dots,e_{n-1}\rangle$ and $\langle e_i,x\rangle$. In particular, if we write the fundamental invariants of $\langle e_i,x\rangle$ as $(b_1,b_2)$, then the fundamental invariants of $L^\flat+\langle x\rangle$ is the nondecreasing rearrangement of $(a_1,\dots,\widehat{a_i},\dots,a_{n-1},b_1,b_2)$. We have two cases:
\begin{itemize}
  \item If $(x,x)_{\bbV}\in u^{e_i-1}O_F$, then $(b_1,b_2)=(a_i-1,a_i-1)$. Thus, we have either $a'_1+\cdots+a'_{n-1}\leq a_1+\cdots+a_{n-1}-2$, or $i=n-1$, $a_{n-2}<a_{n-1}$, and $L^\flat+\langle x\rangle$ has fundamental invariants $(a_1,\dots,a_{n-2},a_{n-1}-1,a_{n-1}-1)$ (hence $L^\flat$ is special). The latter case is not possible as $x\not\in S_{L^\flat}$.

  \item If $(x,x)_{\bbV}\not\in u^{e_i-1}O_F$, then $b_1\leq a_i-2$. Thus we have $a'_1+\cdots+a'_{n-1}\leq a_1+\cdots+a_{n-1}-2$ or $i=n-1$.
\end{itemize}
The lemma is proved.
\end{proof}

\begin{proof}[Proof of Theorem \ref{th:kr}]
For every element $L^\flat\in\flat(\bbV)$, we define a function
\[
\Phi_{L^\flat}\coloneqq\partial\Den_{L^\flat}^\rv-\Int_{L^\flat}^\rv,
\]
which is a compactly supported locally constant function on $\bbV$ by Proposition \ref{pr:analytic} and Proposition \ref{pr:geometric}(2).
It enjoys the following properties:
\begin{enumerate}
  \item For $x\in\bbV\setminus V_{L^\flat}$, we have $\Phi_{L^\flat}(x)=\partial\Den_{L^\flat}(x)-\Int_{L^\flat}(x)$ by Proposition \ref{pr:geometric}(1).

  \item $\Phi_{L^\flat}$ is invariant under the translation by $L^\flat$, which follows from (1) and the similar properties for $\partial\Den_{L^\flat}$ and $\Int_{L^\flat}$.

  \item The support of $\widehat{\Phi_{L^\flat}}$ is contained in $\bbV^\integ$, by Proposition \ref{pr:analytic} and Proposition \ref{pr:geometric}(2).
\end{enumerate}

We prove by induction on $\val(L^\flat)$ that $\Phi_{L^\flat}\equiv0$.

The initial case is that $\val(L^\flat)=-1$, that is, $L^\flat$ is not integral. Then we have $\partial\Den_{L^\flat}=\Int_{L^\flat}=0$ hence $\Phi_{L^\flat}\equiv0$ by (1).

Now consider $L^\flat$ that is integral, and assume $\Phi_{L^{\flat\prime}}\equiv0$ for every $L^{\flat\prime}\in\flat(\bbV)$ satisfying $\val(L^{\flat\prime})<\val(L^\flat)$. For every $x\in\bbV\setminus(V_{L^\flat}\cup S_{L^\flat})$, by Lemma \ref{le:kr2}, we may write $L^\flat+\langle x\rangle = L^{\flat\prime}+\langle x'\rangle$ with some $L^{\flat\prime}\in\flat(\bbV)$ satisfying $\val(L^{\flat\prime})<\val(L^\flat)$; and we have
\begin{align*}
\Phi_{L^\flat}(x)&=\partial\Den_{L^\flat}(x)-\Int_{L^\flat}(x)  \\
&=\partial\Den(L^\flat+\langle x\rangle)-\Int(L^\flat+\langle x\rangle) \\
&=\partial\Den(L^{\flat\prime}+\langle x'\rangle)-\Int(L^{\flat\prime}+\langle x'\rangle) \\
&=\Phi_{L^{\flat\prime}}(x')=0
\end{align*}
by the induction hypothesis. Thus, the support of $\Phi_{L^\flat}$ is contained in $S_{L^\flat}$. There are two cases.

Suppose that $L^\flat$ is not special. By (2), we may write $\Phi_{L^\flat}=\CF_{L^\flat}\otimes\phi$ for a locally constant function $\phi$ on $V_{L^\flat}^\perp$ supported on $(V_{L^\flat}^\perp)^\integ$. Then $\widehat{\Phi_{L^\flat}}=C\cdot\CF_{(L^\flat)^\vee}\otimes\widehat\phi$ for some $C\in\dQ^\times$. Now since $\widehat\phi$ is invariant under the translation by $u^{-1}(V_{L^\flat}^\perp)^\integ$, we must have $\widehat\phi=0$ by (3), that is, $\Phi_{L^\flat}\equiv 0$.

Suppose that $L^\flat$ is special. We fix the orthogonal decompositions $L^\flat=L^\flat_\leftarrow\oplus L^\flat_\rightarrow$ and $L^{\flat\pm}=L^\flat_\leftarrow\oplus L^{\flat\pm}_\rightarrow$ from Lemma \ref{le:kr1}. Put $V_\leftarrow\coloneqq L^\flat_\leftarrow\otimes_{O_F}F$ and denote by $V_\rightarrow$ the orthogonal complement of $V_\leftarrow$ in $\bbV$. Then both $L^{\flat+}_\rightarrow$ and $L^{\flat-}_\rightarrow$ are integral $O_E$-lattices of $V_\rightarrow$ with fundamental invariants $(a_{n-1}-1,a_{n-1}-1)$. Moreover, we have $S_{L^\flat}=L^\flat_\leftarrow\times(L^{\flat+}_\rightarrow\cup L^{\flat-}_\rightarrow)$. Thus, by (2), we may write $\Phi_{L^\flat}=\CF_{L^\flat_\leftarrow}\otimes\phi$ for a locally constant function $\phi$ on $V_\rightarrow$ supported on $L^{\flat+}_\rightarrow\cup L^{\flat-}_\rightarrow$. Since $a_{n-1}\geq 3$ by Lemma \ref{le:kr1}, we have $L^{\flat+}_\rightarrow\cup L^{\flat-}_\rightarrow\subseteq uV_\rightarrow^\integ$, which implies that the support of $\phi$ is contained in $uV_\rightarrow^\integ$. On the other hand, by (3), the support of $\widehat\phi$ is contained in $V_\rightarrow^\integ$. Together, we must have $\phi=0$ by the Uncertainty Principle \cite{LZ}*{Proposition~8.1.6}, that is, $\Phi_{L^\flat}\equiv 0$.

By (1), we have $\partial\Den_{L^\flat}(x)=\Int_{L^\flat}(x)$ for every $x\in\bbV\setminus V_{L^\flat}$. In particular, Theorem \ref{th:kr} follows as every $O_E$-lattice $\bbL$ of $\bbV$ is of the form $L^\flat+\langle x\rangle$ for some $L^\flat\in\flat(\bbV)$.
\end{proof}

\subsection{Comparison with absolute Rapoport--Zink spaces}
\label{ss:comparison}

Let the setup be as in Subsection \ref{ss:kr}. In this subsection, we compare $\cN$ to certain (absolute) Rapoport--Zink space under the assumption that $F$ is \emph{unramified} over $\dQ_p$. Put $f\coloneqq[F:\dQ_p]$ hence $q=p^f$. This subsection is redundant if $f=1$.

To begin with, we fix a subset $\Phi$ of $\Hom(E,\dC_p)=\Hom(E,\breve{E})$ containing $\varphi_0$ and satisfying $\Hom(E,\breve{E})=\Phi\coprod\Phi^\tc$. Recall that we have regarded $E$ as a subfield of $\breve{E}$ via $\varphi_0$. We introduce more notation.
\begin{itemize}
  \item For every ring $R$, we denote by $\sfW(R)$ the $p$-typical Witt ring of $R$, with $\sfF$, $\sfV$, $[\;]$, and $\sfI(R)$ its ($p$-typical) Frobenius, the Verschiebung, the Teichm\"{u}ller lift, and the augmentation ideal, respectively. For an $\sfF^i$-linear map $\sff\colon\sfP\to\sfQ$ between $\sfW(R)$-modules with $i\geq 1$, we denote by $\sff^\natural\colon\sfW(R)\otimes_{\pres{\sfF^i}{,\sfW(R)}}\sfP\to\sfQ$ its induced $\sfW(R)$-linear map.

  \item For $i\in\dZ/f\dZ$, put $\psi_i\coloneqq\sfF^i\colon O_F\to O_F$, define $\hat\psi_i\colon O_F\to\sfW(O_F)$ to be the composition of $\psi_i$ with the Cartier homomorphism $O_F\to\sfW(O_F)$, and denote by $\varphi_i$ the unique element in $\Phi$ above $\psi_i$.

  \item For $i\in\dZ/f\dZ$, let $\epsilon_i$ be the unique unit in $\sfW(O_F)$ satisfying $\pres{\sfV}\epsilon_i=[\psi_i(u^2)]-\hat\psi_i(u^2)$, which exists by \cite{ACZ}*{Lemma~2.24}. We then fix a unit $\mu_u$ in $\sfW(O_{\breve{F}})$, where $\breve{F}$ denotes the complete maximal unramified extension of $F$ in $\breve{E}$, such that
      \begin{align}\label{eq:arz0}
      \frac{\pres{\sfF^f}\mu_u}{\mu_u}=\prod_{i=1}^{f-1}\pres{\sfF^{f-1-i}}{\epsilon_i},
      \end{align}
      which is possible since the right-hand side is a unit in $\sfW(O_F)$.

  \item For a $p$-divisible group $X$ over an object $S$ of $\Sch_{/O_{\breve{E}}}^\rv$ with an action by $O_F$, we have a decomposition
      \[
      \Lie(X)=\bigoplus_{i=0}^{f-1}\Lie_{\psi_i}(X)
      \]
      of $\sO_S$-modules according to the action of $O_F$ on $\Lie(X)$.
\end{itemize}

\begin{definition}\label{de:aexotic}
Let $S$ be an object of $\Sch_{/O_{\breve{E}}}$. We define a category $\Exo_{(n-1,1)}^\Phi(S)$ whose objects are triples $(X,\iota_X,\lambda_X)$ in which
\begin{itemize}
  \item $X$ is a $p$-divisible group over $S$ of dimension $nf$ and height $2nf$;

  \item $\iota_X\colon O_E\to\End(X)$ is an action of $O_E$ on $X$ satisfying:
    \begin{itemize}
      \item (Kottwitz condition): the characteristic polynomial of $\iota_X(u)$ on the $\sO_S$-module $\Lie_{\psi_0}(X)$ is $(T-u)^{n-1}(T+u)\in\sO_S[T]$,

      \item (Wedge condition): we have
         \begin{align*}
         \bigwedge^2\(\iota_X(u)-u\res\Lie_{\psi_0}(X)\)&=0,
         \end{align*}

      \item (Spin condition): for every geometric point $s$ of $S$, the action of $\iota_X(u)$ on $\Lie_{\psi_0}(X_s)$ is nonzero;

      \item (Banal condition): for $1\leq i\leq f-1$, $O_E$ acts on $\Lie_{\psi_i}(X)$ via $\varphi_i$;
    \end{itemize}

  \item $\lambda_X\colon X\to X^\vee$ is a $\iota_X$-compatible polarization such that $\Ker(\lambda_X)=X[\iota_X(u)]$.
\end{itemize}
A morphism (resp.\ quasi-morphism) from $(X,\iota_X,\lambda_X)$ to $(Y,\iota_Y,\lambda_Y)$ is an $O_E$-linear isomorphism (resp.\ quasi-isogeny) $\rho\colon X\to Y$ of height zero such that $\rho^*\lambda_Y=\lambda_X$.

When $S$ belongs to $\Sch_{/O_{\breve{E}}}^\rv$, we denote by $\Exo_{(n-1,1)}^{\Phi,\rb}(S)$ the subcategory of $\Exo_{(n-1,1)}^\Phi(S)$ consisting of $(X,\iota_X,\lambda_X)$ in which $X$ is supersingular.
\end{definition}

Note that both $\Exo_{(n-1,1)}^\rb$ and $\Exo_{(n-1,1)}^{\Phi,\rb}$ are prestacks (that is, presheaves valued in groupoids) on $\Sch_{/O_{\breve{E}}}^\rv$. Now we construct a morphism
\begin{align}\label{eq:arz}
-^\rel\colon\Exo_{(n-1,1)}^{\Phi,\rb}\to\Exo_{(n-1,1)}^\rb
\end{align}
of prestacks on $\Sch_{/O_{\breve{E}}}^\rv$. We will use the theory of displays \cites{Zin02,Lau08} and $O_F$-displays \cite{ACZ}.

Let $S=\Spec R$ be an affine scheme in $\Sch_{/O_{\breve{E}}}^\rv$. Take an object $(X,\iota_X,\lambda_X)$ of $\Exo_{(n-1,1)}^{\Phi,\rb}(S)$. Write $(\sfP,\sfQ,\sfF,\dot\sfF)$ for the display of $X$ (as a formal $p$-divisible group). The action of $O_F$ on $\sfP$ induces decompositions
\[
\sfP=\bigoplus_{i=0}^{f-1}\sfP_i,\quad
\sfQ=\bigoplus_{i=0}^{f-1}\sfQ_i,\quad
\sfF=\sum_{i=0}^{f-1}\sfF_i,\quad
\dot\sfF=\sum_{i=0}^{f-1}\dot\sfF_i,
\]
where $\sfP_i$ is the $\sfW(R)$-submodule on which $O_F$ acts via $\hat\psi_i$, and $\sfQ_i=\sfQ\cap\sfP_i$. It is clear that the above decomposition is $O_E$-linear, and $\sfP_i$ is a projective $O_E\otimes_{O_F,\hat\psi_i}\sfW(R)$-module of rank $n$.

\begin{lem}\label{le:arz2}
For $1\leq i\leq f-1$, we have
\begin{align*}
\sfQ_i=(u\otimes 1 - 1\otimes[\varphi_i(u)])\sfP_i+\sfI(R)\sfP_i,
\end{align*}
and that the map
\[
\sfF'_i\coloneqq\dot\sfF_i\circ (u\otimes 1 - 1\otimes[\varphi_i(u)])\cdot\colon\sfP_i\to\sfP_{i+1}
\]
is a Frobenius linear epimorphism hence isomorphism.
\end{lem}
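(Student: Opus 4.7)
The plan centers on an algebraic identity in $O_E \otimes_{O_F, \hat\psi_i} \sfW(R)$, namely
\[
(u\otimes 1 + 1\otimes[\varphi_i(u)])\cdot(u\otimes 1 - 1\otimes[\varphi_i(u)]) = 1\otimes\hat\psi_i(u^2) - 1\otimes[\psi_i(u^2)] = -1\otimes\pres{\sfV}\epsilon_i,
\]
which follows from $(u\otimes 1)^2 = 1\otimes\hat\psi_i(u^2)$, from $[\varphi_i(u)]^2 = [\varphi_i(u)^2] = [\psi_i(u^2)]$ (using $\varphi_i|_F = \psi_i$), and from the defining relation $\pres{\sfV}\epsilon_i = [\psi_i(u^2)] - \hat\psi_i(u^2)$. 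Regarding $\sfW(R)$ as a subring of $O_E \otimes_{O_F, \hat\psi_i} \sfW(R)$ via the right tensor factor, this identity translates into an equality of $\sfW(R)$-linear endomorphisms of $\sfP_i$.

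For the first claim, set $\sfQ'_i \coloneqq (u\otimes 1 - 1\otimes[\varphi_i(u)])\sfP_i + \sfI(R)\sfP_i$. The inclusion $\sfQ'_i \subseteq \sfQ_i$ is immediate from the Banal condition: modulo $\sfI(R)\sfP_i$ the Teichm\"uller lift $[\varphi_i(u)]$ becomes $\varphi_i(u)\in R$, and on $\Lie_{\psi_i}(X) = \sfP_i/\sfQ_i$ the element $u$ acts as scalar multiplication by $\varphi_i(u)$. For the reverse inclusion, I use that $\sfP_i\otimes_{\sfW(R)} R$ is locally free of rank $n$ over $R[u]/(u^2 - \psi_i(u^2))$, hence of $R$-rank $2n$. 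Because $\varphi_i(u)^2 = \psi_i(u^2)$ forces $R[u]/(u^2-\psi_i(u^2),\,u-\varphi_i(u)) = R$, the quotient $\sfP_i/\sfQ'_i = (\sfP_i\otimes_{\sfW(R)} R)/(u-\varphi_i(u))(\sfP_i\otimes_{\sfW(R)} R)$ is locally free of $R$-rank $n$. The tautological surjection $\sfP_i/\sfQ'_i \twoheadrightarrow \sfP_i/\sfQ_i$ between locally free $R$-modules of the same rank $n$ must therefore be an isomorphism, giving $\sfQ'_i = \sfQ_i$.

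For the second claim, the Frobenius-linearity of $\sfF'_i$ is built into its definition. Composing with $u\otimes 1 + 1\otimes[\varphi_i(u)]$ on the right and applying the display axiom $\dot\sfF_i(\pres{\sfV}w\cdot x) = w\cdot\sfF_i(x)$ with $w = -\epsilon_i$ yields
\[
\sfF'_i\circ(u\otimes 1 + 1\otimes[\varphi_i(u)]) = -\epsilon_i\cdot\sfF_i
\]
as maps $\sfP_i\to\sfP_{i+1}$. Since $\epsilon_i$ is a unit, this forces the inclusion $\sfF_i(\sfP_i) \subseteq \sfF'_i(\sfP_i)$. Combining the first claim with the general identity $\dot\sfF_i(\sfI(R)\sfP_i) = \sfW(R)\cdot\sfF_i(\sfP_i)$ gives
\[
\dot\sfF_i(\sfQ_i) = \sfF'_i(\sfP_i) + \sfW(R)\cdot\sfF_i(\sfP_i) \subseteq \sfW(R)\cdot\sfF'_i(\sfP_i).
\]
The display condition forces the $\sfW(R)$-span of $\dot\sfF_i(\sfQ_i)$ to fill $\sfP_{i+1}$, so the linearization $\sfF'^{\natural}_i\colon \sfW(R)\otimes_{\sfF,\sfW(R)}\sfP_i \to \sfP_{i+1}$ is surjective; as source and target are projective $\sfW(R)$-modules of the same rank $2n$, surjectivity upgrades to an isomorphism.

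The main obstacle I anticipate is the rank computation in the first claim: carefully checking that the projectivity of $\sfP_i$ over $O_E\otimes_{O_F,\hat\psi_i}\sfW(R)$ is preserved under base change to $R$ and that the quotient by $(u-\varphi_i(u))$ has $R$-rank exactly $n$, so that the dimension-counting argument succeeds. Once this is in place, the second claim reduces to essentially mechanical manipulation of the single Verschiebung identity together with the standard display axioms.
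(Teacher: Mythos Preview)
Your argument for the equality $\sfQ_i=(u\otimes 1 - 1\otimes[\varphi_i(u)])\sfP_i+\sfI(R)\sfP_i$ is essentially the paper's: both are rank counts over $R$, the paper computing the rank of the image of $u\otimes 1-1\otimes\varphi_i(u)$ on $\sfP_i\otimes_{\sfW(R)}R$ and you computing the rank of its cokernel.

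For the surjectivity of $(\sfF'_i)^\natural$ you take a genuinely different and more direct route. The paper first reduces to the case where $R$ is a perfect field of characteristic $p$ by testing the cokernel at residue fields, and then exploits that over such $R$ one has $\sfW(R)\cdot\pres{\sfV}\epsilon_i=\sfI(R)$, so the identity $(u\otimes 1-1\otimes[\varphi_i(u)])(-u\otimes 1-1\otimes[\varphi_i(u)])=\pres{\sfV}\epsilon_i$ forces multiplication by $u\otimes 1-1\otimes[\varphi_i(u)]$ to surject onto $\sfQ_i$; the surjectivity of $\dot\sfF_i$ then finishes. You instead stay over the original $R$: the same product identity, fed through $\dot\sfF_i$ via the display relation $\dot\sfF_i(\pres{\sfV}w\cdot x)=w\,\sfF_i(x)$, yields $\sfF_i(\sfP_i)\subseteq\sfW(R)\cdot\sfF'_i(\sfP_i)$, and the graded form of the display axiom $\sfW(R)\cdot\dot\sfF_i(\sfQ_i)=\sfP_{i+1}$ (obtained from $\sfW(R)\cdot\dot\sfF(\sfQ)=\sfP$ by taking $O_F$-isotypic components) gives the conclusion. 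Your argument avoids the reduction step entirely; the paper's has the virtue of making visible that $u\otimes 1-1\otimes[\varphi_i(u)]$ itself becomes an automorphism of $\sfP_i$ over a perfect field.

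One small slip: the inclusion ``$\sfF_i(\sfP_i)\subseteq\sfF'_i(\sfP_i)$'' is not literally justified, since $\sfF'_i(\sfP_i)$ is only an additive subgroup and $\epsilon_i^{-1}$ need not lie in $\pres{\sfF}\sfW(R)$. What you actually obtain, and what suffices for the displayed chain of inclusions, is $\sfF_i(\sfP_i)\subseteq\sfW(R)\cdot\sfF'_i(\sfP_i)$.
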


\begin{proof}
The Banal condition in Definition \ref{de:aexotic} implies that for $1\leq i\leq f-1$,
\begin{align*}
(u\otimes 1 - 1\otimes[\varphi_i(u)])\sfP_i+\sfI(R)\sfP_i\subseteq\sfQ_i.
\end{align*}
To show the reverse inclusion, it suffices to show that the image of $(u\otimes 1 - 1\otimes[\varphi_i(u)])\sfP_i$ in $\sfP_i/\sfI(R)\sfP_i=\sfP_i\otimes_{\sfW(R)}R$ is projective of rank $n$. But the image is $(u\otimes 1 - 1\otimes\varphi_i(u))\sfP_i\otimes_{\sfW(R)}R$, which has rank $n$ since $\sfP_i$ is projective over $O_E\otimes_{O_F,\hat\psi_i}\sfW(R)$ of rank $n$.

Now we show that $(\sfF'_i)^\natural$ is surjective. It suffices to show that $\coker(\sfF'_i)^\natural\otimes_{\sfW(R)}\kappa$ vanishes for every homomorphism $\sfW(R)\to\kappa$ with $\kappa$ a perfect field of characteristic $p$. Since $\sfW(R)\to\kappa$ necessarily vanishes on $\sfI(R)$, it lifts to a homomorphism $\sfW(R)\to\sfW(\kappa)$. Thus, we may just assume that $R$ is a perfect field of characteristic $p$. Since
\[
(u\otimes 1 - 1\otimes[\varphi_i(u)])(-u\otimes 1-1\otimes[\varphi_i(u)])=[\psi_i(u^2)]-\hat\psi_i(u^2)=\pres{\sfV}\epsilon_i
\]
in which $\epsilon_i$ is a unit in $\sfW(O_F)$, the image of the map
\begin{align}\label{eq:arz2}
(u\otimes 1 - 1\otimes[\varphi_i(u)])\cdot\colon\sfP_i\to\sfP_i
\end{align}
contains $(u\otimes 1 - 1\otimes[\varphi_i(u)])\sfP_i+\sfW(R)\pres{\sfV}\epsilon_i\cdot\sfP_i$. As $R$ is a perfect field of characteristic $p$, we have $\sfW(R)\pres{\sfV}\epsilon_i=\sfI(R)$, hence \eqref{eq:arz2} is surjective. Thus, $\sfF'_i$ is a Frobenius linear epimorphism as $\sfF_i$ is.

The lemma is proved.
\end{proof}

Now we put
\[
\sfP^\rel\coloneqq\sfP_0,\quad
\sfQ^\rel\coloneqq\sfQ_0,\quad
\sfF^\rel\coloneqq\sfF'_{f-1}\circ\cdots\circ\sfF'_1\circ\sfF_0,\quad
\dot\sfF^\rel\coloneqq\sfF'_{f-1}\circ\cdots\circ\sfF'_1\circ\dot\sfF_0.
\]
Then $(\sfP^\rel,\sfQ^\rel,\sfF^\rel,\dot\sfF^\rel)$ defines an $f$(-$\dZ_p$)-display in the sense of \cite{ACZ}*{Definition~2.1} with an $O_E$-action, for which the Kottwitz condition, the Wedge condition, and the Spin condition are obviously inherited. It remains to construct the polarization $\lambda_{X^\rel}$. By Remark \ref{re:arz} below, we have the collection of perfect symmetric $\sfW(R)$-bilinear pairings $\{(\;,\;)_i\res i\in\dZ/f\dZ\}$ coming from $\lambda_X$. For $x,y\in\sfP_0$, put $x_i\coloneqq(\sfF'_{i-1}\circ\cdots\circ\sfF'_1\circ\dot\sfF_0)(x)$ and $y_i\coloneqq(\sfF'_{i-1}\circ\cdots\circ\sfF'_1\circ\dot\sfF_0)(y)$ for $1\leq i\leq f$, and we have
\begin{align*}
(\dot\sfF^\rel x,\dot\sfF^\rel y)_0
&=(\sfF'_{f-1}x_{f-1},\sfF'_{f-1}y_{f-1})_0 \\
&=(\dot\sfF_{f-1}((u\otimes 1 - 1\otimes[\varphi_{f-1}(u)])x_{f-1}),\dot\sfF_{f-1}((u\otimes 1 - 1\otimes[\varphi_{f-1}(u)])y_{f-1}))_0 \\
&=\pres{\sfV^{-1}}((u\otimes 1 - 1\otimes[\varphi_{f-1}(u)])x_{f-1},(u\otimes 1 - 1\otimes[\varphi_{f-1}(u)])y_{f-1})_{f-1} \\
&=\pres{\sfV^{-1}}{\(([\psi_{f-1}(u^2)]-\hat\psi_{f-1}(u^2))\cdot(x_{f-1},y_{f-1})_{f-1}\)} \\
&=\pres{\sfV^{-1}}{\(\pres{\sfV}\epsilon_{f-1}\cdot(x_{f-1},y_{f-1})_{f-1}\)} \\
&=\epsilon_{f-1}\cdot\pres{\sfF}(x_{f-1},y_{f-1})_{f-1} \\
&=\cdots=\(\prod_{i=1}^{f-1}\pres{\sfF^{f-1-i}}{\epsilon_i}\)\cdot\pres{\sfF^{f-1}}(x_1,y_1)_1 \\
&=\(\prod_{i=1}^{f-1}\pres{\sfF^{f-1-i}}{\epsilon_i}\)\cdot\pres{\sfF^{f-1}\sfV^{-1}}(x,y)_0.
\end{align*}
Put $(\;,\;)^\rel\coloneqq\mu_u(\;,\;)_0$, which satisfies $(\dot\sfF^\rel x,\dot\sfF^\rel y)^\rel=\pres{\sfF^{f-1}\sfV^{-1}}(x,y)^\rel$ by \eqref{eq:arz0}. Then the $f$(-$\dZ_p$)-display $(\sfP^\rel,\sfQ^\rel,\sfF^\rel,\dot\sfF^\rel)$ with $O_E$-action together with the pairing $(\;,\;)^\rel$ define an object $(X,\iota_X,\lambda_X)^\rel$ of $\Exo_{(n-1,1)}^\rb(S)$, as explained in the proof of \cite{Mih}*{Proposition~3.4} and Remark \ref{re:arz} below. It is clear that the construction is functorial in $S$.

\begin{remark}\label{re:arz}
For an object $(X,\iota_X,\lambda_X)$ of $\Exo_{(n-1,1)}^{\Phi,\rb}(S)$ with $(\sfP,\sfQ,\sfF,\dot\sfF)$ the display of $X$, we have a similar claim as in Remark \ref{re:polarization} concerning the polarization $\lambda_X$. In particular, as discussed in \cite{Mih}*{Section~11.1}, the polarization $\lambda_X$, or rather its symmetrization, is equivalent to a collection of perfect symmetric $\sfW(R)$-bilinear pairings
\[
\{(\;,\;)_i\colon\sfP_i\times\sfP_i\to\sfW(R)\res i\in\dZ/f\dZ\},
\]
satisfying $(\iota_X(\alpha)x,y)_i=(x,\iota_X(\alpha^\tc)y)_i$ for every $\alpha\in O_E$ and $(\dot\sfF_ix,\dot\sfF_iy)_{i+1}=\pres{\sfV^{-1}}{(x,y)_i}$ for $i\in\dZ/f\dZ$.

Similarly, for an object $(X',\iota_{X'},\lambda_{X'})$ of $\Exo_{(n-1,1)}^\rb(S)$ with $(\sfP',\sfQ',\sfF',\dot\sfF')$ the $f$(-$\dZ_p$)-display of $X'$, the polarization $\lambda_{X'}$ is equivalent to a perfect symmetric $\sfW(R)$-bilinear pairing
\[
(\;,\;)'\colon\sfP'\times\sfP'\to\sfW(R),
\]
satisfying $(\iota_{X'}(\alpha)x,y)'=(x,\iota_{X'}(\alpha^\tc)y)'$ for every $\alpha\in O_E$ and $(\dot\sfF'x,\dot\sfF'y)'=\pres{\sfF^{f-1}\sfV^{-1}}{(x,y)'}$.
\end{remark}

\begin{proposition}\label{pr:arz}
The morphism \eqref{eq:arz} is an isomorphism.
\end{proposition}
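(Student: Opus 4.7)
The approach is to construct an explicit inverse to $-^\rel$ at the level of (Zink--Lau) displays and then invoke the equivalence between displays and formal $p$-divisible groups. Given an affine object $S=\Spec R$ of $\Sch_{/O_{\breve{E}}}^\rv$ and an object $(X',\iota_{X'},\lambda_{X'})\in\Exo_{(n-1,1)}^\rb(S)$ whose $f$-$\dZ_p$-display with polarization pairing is $(\sfP^\rel,\sfQ^\rel,\sfF^\rel,\dot\sfF^\rel,(\;,\;)^\rel)$, I will build an $O_E$-equivariant $p$-display satisfying the Banal condition whose image under $-^\rel$ is the given one.

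The construction is as follows. Set $\sfP_0\coloneqq\sfP^\rel$ and $\sfQ_0\coloneqq\sfQ^\rel$ with the inherited $O_E$-action. For $1\leq i\leq f-1$, set $\sfP_i\coloneqq\sfW(R)\otimes_{\sfF^i,\sfW(R)}\sfP^\rel$ with the $O_E$-action extending the $O_F$-action through $\hat\psi_i$ on the left factor (so the Banal condition holds tautologically), and put $\sfQ_i\coloneqq(u\otimes 1-1\otimes[\varphi_i(u)])\sfP_i+\sfI(R)\sfP_i$. Take $\sfF'_i\colon\sfP_i\to\sfP_{i+1}$ to be the canonical Frobenius-linear isomorphisms $x\mapsto 1\otimes x$, define $\sfF_0\colon\sfP_0\to\sfP_1$ and $\dot\sfF_0\colon\sfQ_0\to\sfP_1$ as the unique $\sfF$-linear maps whose iterated post-composition with $\sfF'_{f-1}\circ\cdots\circ\sfF'_1$ recovers $\sfF^\rel$ and $\dot\sfF^\rel$ respectively, and let $\dot\sfF_i$ for $i\geq 1$ be determined by $\dot\sfF_i\circ((u\otimes 1-1\otimes[\varphi_i(u)])\cdot)=\sfF'_i$. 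Finally, set the pairing $(\;,\;)_0\coloneqq\mu_u^{-1}(\;,\;)^\rel$, and for $i\geq 1$ define $(\;,\;)_i$ as the unique perfect pairing on $\sfP_i$ compatible with the $\sfF'_j$'s via the Frobenius compatibility $(\dot\sfF_i x,\dot\sfF_i y)_{i+1}=\pres{\sfV^{-1}}(x,y)_i$.

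I will then check in order: (i) the data above defines an object of $\Exo_{(n-1,1)}^{\Phi,\rb}(S)$, where the Banal condition is built in, the Kottwitz, Wedge, and Spin conditions concern only $\Lie_{\psi_0}=\sfP_0/\sfQ_0=\sfP^\rel/\sfQ^\rel$ and are directly inherited, and the polarization compatibility is precisely the reverse of the calculation preceding the Proposition, which works because of the defining property \eqref{eq:arz0} of $\mu_u$; (ii) both compositions with $-^\rel$ are naturally isomorphic to the identity, where the nontrivial direction $((-)^\rel)^\abs\cong\id$ is exactly Lemma \ref{le:arz2}, which forces the $\sfF'_i$ for $i\geq 1$ to be Frobenius-linear isomorphisms and thus canonically identifies each $\sfP_i$ with the $i$-th Frobenius twist of $\sfP_0$; (iii) morphisms and quasi-isogenies transfer correctly, which follows from the rigidity imposed by the Banal condition: any such map on the relative side is determined by its $\sfP_0$-component and extends uniquely to the other components via the canonical Frobenius twists.

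The hard part will be the bookkeeping for the polarization under the inverse direction: one must verify that the pairings $(\;,\;)_i$ forced by the Frobenius compatibility and the $O_E$-linearity are genuinely perfect $\sfW(R)$-bilinear and symmetric, which amounts to showing that the forward calculation in the construction of $-^\rel$ is reversible step by step, with $\mu_u$ playing the role of the corrective unit at the end. A secondary technical point is extending everything from affine $S$ to general locally Noetherian $S$ by Zariski gluing, and noting that the supersingular condition is preserved because it is intrinsic to the rational isocrystal, which is unchanged up to the reorganization into graded pieces.
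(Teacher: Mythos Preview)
Your overall strategy is the paper's: build an explicit inverse to $-^\rel$ at the level of displays. But your construction of the inverse has a genuine gap at the step where the cycle $\sfP_0\to\sfP_1\to\cdots\to\sfP_{f-1}\to\sfP_0$ has to close. You declare all the maps $\sfF'_i$ for $1\leq i\leq f-1$ to be the ``canonical Frobenius-linear isomorphisms $x\mapsto 1\otimes x$''. That works for $1\leq i\leq f-2$, but for $i=f-1$ the canonical map lands in $\sfW(R)\otimes_{\sfF^f,\sfW(R)}\sfP^\rel$, which is \emph{not} $\sfP_0=\sfP^\rel$; there is no canonical identification between them. Consequently your composite $\sfF'_{f-1}\circ\cdots\circ\sfF'_1$ does not have target $\sfP_0$, and your definition of $\dot\sfF_0$ as ``the unique $\sfF$-linear map whose post-composition with this gives $\dot\sfF^\rel$'' is ill-posed. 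The paper resolves this by fixing a normal decomposition $\sfP^\rel=\sfL'\oplus\sfT'$, using the associated $\sfF^f$-linear isomorphism $\ddot\sfF'$ to close the loop at the \emph{last} step $\ddot\sfF_{f-1}\colon\sfP_{f-1}\to\sfP_0$, and then defining $\dot\sfF_0$ explicitly in terms of that decomposition.

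This is not merely cosmetic: once you use a normal decomposition, the $O_E$-action on the higher pieces can no longer be the naive base-changed one. The paper explicitly warns (in a footnote) that the correct endomorphism $\sfu_1$ of $\sfP_1$, built via Lemma~\ref{le:arz1}(2) so as to commute with $\dot\sfF_0$, may differ from $1\otimes\sfu_0$ because $u$ need not preserve the normal decomposition; the discrepancy lies in $\sfI(R)\sfP_1$. Your sentence ``with the $O_E$-action extending the $O_F$-action through $\hat\psi_i$ on the left factor'' glosses over exactly this point. Likewise, your formula $\dot\sfF_i\circ((u\otimes 1-1\otimes[\varphi_i(u)])\cdot)=\sfF'_i$ only pins down $\dot\sfF_i$ on part of $\sfQ_i$; the paper gives a second compatible formula on $\sfI(R)\sfP_i$. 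Finally, the full-faithfulness direction (your item (iii)) is not just ``rigidity'': the paper proves surjectivity of $\End_{O_E}(\sfP,\sfQ,\sfF,\dot\sfF)\to\End_{O_E}(\sfP^\rel,\sfQ^\rel,\sfF^\rel,\dot\sfF^\rel)$ by the same Lemma~\ref{le:arz1}(2) mechanism, producing $\sff_1$ from $\sff_0$ uniquely through the linearized $\dot\sfF_0^\natural$.
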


\begin{proof}
It suffices to show that for every affine scheme $S=\Spec R$ in $\Sch_{/O_{\breve{E}}}^\rv$, the functor $-^\rel(S)$ is fully faithful and essentially surjective.

We first show that $-^\rel(S)$ is fully faithful. Take an object $(X,\iota_X,\lambda_X)$ of $\Exo_{(n-1,1)}^{\Phi,\rb}(S)$. It suffices to show that the natural map $\Aut((X,\iota_X,\lambda_X))\to \Aut((X,\iota_X,\lambda_X)^\rel)$ is an isomorphism, which follows from a stronger statement that the natural map $\End_{O_E}(X)\to\End_{O_E}(X^\rel)$ is an isomorphism, where $X^\rel$ denotes the first entry of $(X,\iota_X,\lambda_X)^\rel$ which is an $O_F$-divisible group. For the latter, it amounts to showing that the natural map
\begin{align}\label{eq:arz1}
\End_{O_E}((\sfP,\sfQ,\sfF,\dot\sfF))\to\End_{O_E}((\sfP^\rel,\sfQ^\rel,\sfF^\rel,\dot\sfF^\rel))
\end{align}
is an isomorphism. For the injectivity, let $\sff$ be an element in the source, which decomposes as $\sff=\sum_{i=0}^{f-1}\sff_i$ for endomorphisms $\sff_i\colon\sfP_i\to\sfP_i$ which preserve $\sfQ_i$ and commute with $\sfF$ and $\dot\sfF$. Since for every $i\in\dZ/f\dZ$, $\dot\sfF_i$ is a Frobenius linear surjective map from $\sfQ_i$ to $\sfP_{i+1}$, the map $\sff$ is determined by $\sff_0$. Thus, \eqref{eq:arz1} is injective. For the surjectivity, let $\sff^\rel$ be an element in the target. Put $\sff_0\coloneqq\sff^\rel\colon\sfP_0\to\sfP_0$. By Lemma \ref{le:arz1}(2) below, there is a unique endomorphism $\sff_1$ of $\sfP_1$ rendering the following diagram
\[
\xymatrix{
\sfW(R)\otimes_{\pres{\sfF}{,\sfW(R)}}\sfQ_0 \ar[r]^-{\dot\sfF_0^\natural}\ar[d]_-{1\otimes(\sff_0\res_{\sfQ_0})} & \sfP_1 \ar[d]^-{\sff_1} \\
\sfW(R)\otimes_{\pres{\sfF}{,\sfW(R)}}\sfQ_0 \ar[r]^-{\dot\sfF_0^\natural} & \sfP_1
}
\]
commute. For $2\leq i\leq f-1$, we define $\sff_i$ to be the unique endomorphism of $\sfP_i$ satisfying that
\[
\sff_i\circ(\sfF'_{i-1}\circ\cdots\circ\sfF'_1)^\natural=(\sfF'_{i-1}\circ\cdots\circ\sfF'_1)^\natural\circ(1\otimes\sff_i).
\]
Then $\sff\coloneqq\sum_{i=0}^{f-1}\sff_i$ is an $O_E$-linear endomorphism of $\sfP$ which commutes with $\dot\sfF$ hence $\sfF$. It remains to check that $\sff(\sfQ)\subseteq\sfQ$, which follows from Lemma \ref{le:arz2}.

We then show that $-^\rel(S)$ is essentially surjective. Take an object $(X',\iota_{X'},\lambda_{X'})$ of $\Exo_{(n-1,1)}^\rb(S)$ in which $X'$ is given by an $f$(-$\dZ_p$)-display $(\sfP',\sfQ',\sfF',\dot\sfF')$. For $0\leq i\leq f-1$, put $\sfP_i\coloneqq\sfW(R)\otimes_{\pres{\sfF^i}{,\sfW(R)}}\sfP'$. Denote by $\sfu_0\colon\sfP_0\to\sfP_0$ the endomorphism given by the action of $u\in O_E$ on $\sfP'$. Put $\sfQ_0=\sfQ'$ and for $1\leq i\leq f-1$, put
\[
\sfQ_i\coloneqq((1\otimes\sfu_0)\otimes 1 - (1\otimes1)\otimes [\varphi_i(u)])\sfP_i+\sfI(R)\sfP_i.
\]
Fix a normal decomposition $\sfP'=\sfL'\oplus\sfT'$ for $\sfQ'$ and let $\ddot\sfF'\coloneqq\dot\sfF'\res_{\sfL'}+\sfF'\res_{\sfT'}\colon\sfP'\to\sfP'$ be the corresponding $\sfF^f$-linear isomorphism. For $0\leq i<f-1$, let $\ddot\sfF_i\colon\sfP_i\to\sfP_{i+1}$ be the Frobenius linear isomorphism induced by the identity map on $\sfP'$; and finally let $\ddot\sfF_{f-1}\colon\sfP_{f-1}\to\sfP_0$ be the Frobenius linear isomorphism induced by $\ddot\sfF'$. Let $\dot\sfF_0\colon\sfQ_0\to\sfP_1$ be the map defined by the formula $\dot\sfF_0(l+\pres{\sfV}w\cdot t)=\ddot\sfF_0(l)+w\ddot\sfF_0(t)$ for $l\in\sfL'$, $t\in\sfT'$, and $w\in\sfW(R)$, which is a Frobenius linear epimorphism. By Lemma \ref{le:arz1}(2) below, there is a unique endomorphism $\sfu_1$ of $\sfP_1$ rendering the following diagram
\[
\xymatrix{
\sfW(R)\otimes_{\pres{\sfF}{,\sfW(R)}}\sfQ_0 \ar[r]^-{\dot\sfF_0^\natural}\ar[d]_-{1\otimes(\sfu_0\res_{\sfQ_0})} & \sfP_1 \ar[d]^-{\sfu_1} \\
\sfW(R)\otimes_{\pres{\sfF}{,\sfW(R)}}\sfQ_0 \ar[r]^-{\dot\sfF_0^\natural} & \sfP_1
}
\]
commute.\footnote{We warn the readers that the endomorphism $\sfu_1$ might be different from $1\otimes\sfu_0$ as $u$ does not necessarily preserve the normal decomposition. However, the image of $\sfu_1-1\otimes\sfu_0$ is contained in $\sfI(R)\sfP_1$.} For $2\leq i\leq f-1$, we define $\sfu_i$ to be the unique endomorphism of $\sfP_i$ satisfying that
\[
\sfu_i\circ(\ddot\sfF_{i-1}\circ\cdots\circ\ddot\sfF_1)^\natural=(\ddot\sfF_{i-1}\circ\cdots\circ\ddot\sfF_1)^\natural\circ(1\otimes\sfu_1),
\]
and define a map $\dot\sfF_i\colon\sfQ_i\to\sfP_{i+1}$ by the following (compatible) formulae
\[
\begin{dcases}
\dot\sfF_i((\sfu_i\otimes 1-1\otimes[\varphi_i(u)])x)=\ddot\sfF_i(x), \\
\dot\sfF_i(\pres{\sfV}w\cdot x)=\frac{w}{\epsilon_i}\cdot(\sfu_{i+1}\otimes 1 + 1\otimes\pres{\sfF}[\varphi_i(u)])\ddot\sfF_i(x),
\end{dcases}
\]
for $x\in\sfP_i$ and $w\in\sfW(R)$, which is a Frobenius linear epimorphism. Put
\[
\sfP\coloneqq\bigoplus_{i=0}^{f-1}\sfP_i,\quad
\sfQ\coloneqq\bigoplus_{i=0}^{f-1}\sfQ_i,\quad
\dot\sfF\coloneqq\sum_{i=0}^{f-1}\dot\sfF_i,\quad
\sfu\coloneqq\sum_{i=0}^{f-1}\sfu_i.
\]
Then it is straightforward to check that $(\sfP,\sfQ,\sfF,\dot\sfF)$ is a display with an action by $O_E$ for which $u$ acts by $\sfu$, where $\sfF$ is determined by $\dot\sfF$ in the usual way. Now we construct a collection of perfect symmetric $\sfW(R)$-bilinear pairings $\{(\;,\;)_i\res i\in\dZ/f\dZ\}$ as in Remark \ref{re:arz}. Put $(\;,\;)_0\coloneqq\mu_u^{-1}(\;,\;)'$, where $(\;,\;)'$ is the pairing induced by $\lambda_{X'}$. Define inductively for $1\leq i\leq f-1$ the unique (perfect symmetric $\sfW(R)$-bilinear) pairing $(\;,\;)_i$ satisfying $(\dot\sfF_{i-1}x,\dot\sfF_{i-1}y)_i=\pres{\sfV^{-1}}{(x,y)_{i-1}}$. It is clear that we also have $(\dot\sfF_{f-1}x,\dot\sfF_{f-1}y)_0=\pres{\sfV^{-1}}{(x,y)_{f-1}}$. Then the display $(\sfP,\sfQ,\sfF,\dot\sfF)$ with the $O_E$-action together with the collection of pairings $\{(\;,\;)_i\res i\in\dZ/f\dZ\}$ define an object $(X,\iota_X,\lambda_X)\in\Exo_{(n-1,1)}^{\Phi,\rb}(S)$, which satisfies $(X,\iota_X,\lambda_X)^\rel\simeq(X',\iota_{X'},\lambda_{X'})$ by construction.

The proposition is proved.
\end{proof}

\begin{lem}\label{le:arz1}
Let $R$ be a ring on which $p$ is nilpotent. For a pair $(\sfP,\sfQ)$ in which $\sfP$ is a projective $\sfW(R)$-module of finite rank and $\sfQ$ is a submodule of $\sfP$ containing $\sfI(R)\sfP$ such that $\sfP/\sfQ$ is a projective $R$-module, we define $\sfQ^\star$ to be the image of $J(R)\sfP$ under the map $\sfW(R)\otimes_{\pres{\sfF}{,\sfW(R)}}\sfI(R)\sfP\to\sfW(R)\otimes_{\pres{\sfF}{,\sfW(R)}}\sfQ$ that is the base change of the inclusion map $\sfI(R)\sfP\to\sfQ$, where $J(R)$ denotes the kernel of $(\sfV^{-1})^\natural\colon\sfW(R)\otimes_{\pres{\sfF}{,\sfW(R)}}\sfI(R)\to\sfW(R)$. Then for every Frobenius linear epimorphism $\dot\sfF\colon\sfQ\to\sfP'$ with $\sfP'$ a projective $\sfW(R)$-module of the same rank as $\sfP$, we have
\begin{enumerate}
  \item the kernel of $\dot\sfF^\natural$ coincides with $\sfQ^\star$;

  \item for every endomorphism $\sff\colon\sfP\to\sfP$ that preserves $\sfQ$, there exists a unique endomorphism $\sff'\colon\sfP'\to\sfP'$ rendering the following diagram
      \[
      \xymatrix{
      \sfW(R)\otimes_{\pres{\sfF}{,\sfW(R)}}\sfQ \ar[r]^-{\dot\sfF^\natural}\ar[d]_-{1\otimes(\sff\res_\sfQ)} & \sfP' \ar[d]^-{\sff'} \\
      \sfW(R)\otimes_{\pres{\sfF}{,\sfW(R)}}\sfQ \ar[r]^-{\dot\sfF^\natural} & \sfP'
      }
      \]
      commute.
\end{enumerate}
\end{lem}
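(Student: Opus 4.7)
The plan is to choose a normal decomposition of $(\sfP,\sfQ)$ and reduce both claims to explicit computations there. Because $\sfP/\sfQ$ is a projective $R$-module, the short exact sequence of $R$-modules $0\to\sfQ/\sfI(R)\sfP\to\sfP/\sfI(R)\sfP\to\sfP/\sfQ\to 0$ splits, and this splitting lifts through the projective $\sfW(R)$-module $\sfP$ to produce a decomposition $\sfP=\sfL\oplus\sfT$ with $\sfQ=\sfL\oplus\sfI(R)\sfT$.

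With this in hand, I would define an auxiliary Frobenius linear map $\ddot\sfF\colon\sfP\to\sfP'$ by setting $\ddot\sfF|_\sfL\coloneqq\dot\sfF|_\sfL$ and $\ddot\sfF|_\sfT\coloneqq\sfF$, where $\sfF(t)\coloneqq\dot\sfF(\pres{\sfV}{1}\cdot t)$, and introduce the $\sfW(R)$-linear map
\[
\phi\colon\sfW(R)\otimes_\sfF\sfQ\longrightarrow\sfW(R)\otimes_\sfF\sfP
\]
which is the identity on $\sfW(R)\otimes_\sfF\sfL$ and is given by $w\otimes(\pres{\sfV}{c}\cdot t)\mapsto wc\otimes t$ on $\sfW(R)\otimes_\sfF\sfI(R)\sfT$. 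Using the defining relation $\dot\sfF(\pres{\sfV}{w}\cdot x)=w\sfF(x)$ for Frobenius linear $\dot\sfF$, one checks the factorization $\dot\sfF^\natural=\ddot\sfF^\natural\circ\phi$. Since $\phi$ is surjective and $\dot\sfF^\natural$ is surjective by assumption, $\ddot\sfF^\natural$ is a surjection between projective $\sfW(R)$-modules of the same rank, hence an isomorphism; consequently $\Ker(\dot\sfF^\natural)=\Ker(\phi)$.

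The restriction of $\phi$ to the second summand equals $(\sfV^{-1})^\natural\otimes_{\sfW(R)}\id_\sfT$, whose kernel is $J(R)\sfT$ by the very definition of $J(R)$, so $\Ker(\dot\sfF^\natural)=0\oplus J(R)\sfT$. For $\sfQ^\star$, I would decompose $\sfI(R)\sfP\hookrightarrow\sfQ$ as $(\sfI(R)\sfL\hookrightarrow\sfL)\oplus\id_{\sfI(R)\sfT}$ and analyze each factor after base-changing along $\sfF$. The contribution of $J(R)\sfT$ is literally $J(R)\sfT$, while the contribution of $J(R)\sfL$ vanishes: for a typical element represented by $(w\otimes\pres{\sfV}{c})\otimes l$ with $wc=0$, the tensor relation $w\otimes\pres{\sfV}{c}\cdot l=w\sfF(\pres{\sfV}{c})\otimes l=wpc\otimes l$ in $\sfW(R)\otimes_\sfF\sfL$ (using $\sfF\sfV=p$ in $\sfW(R)$) shows the image is $0$. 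This yields $\sfQ^\star=0\oplus J(R)\sfT=\Ker(\dot\sfF^\natural)$, proving (1).

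For (2), uniqueness of $\sff'$ follows at once from the surjectivity of $\dot\sfF^\natural$, so I need only show that the $\sfW(R)$-linear endomorphism $1\otimes(\sff|_\sfQ)$ preserves $\sfQ^\star$; then $\sff'$ is induced on the quotient $\sfW(R)\otimes_\sfF\sfQ/\sfQ^\star\cong\sfP'$. Since $\sff$ restricts to an endomorphism of $\sfI(R)\sfP$, the map $1\otimes\sff$ preserves $J(R)\sfP=J(R)\otimes_{\sfW(R)}\sfP$ inside $\sfW(R)\otimes_\sfF\sfI(R)\sfP$, and this is compatible with the base-changed inclusion into $\sfW(R)\otimes_\sfF\sfQ$. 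The hard part will be the tensor-product bookkeeping in part (1), namely matching the two descriptions of $0\oplus J(R)\sfT$ — one as $\Ker(\phi)$, the other as the image of $J(R)\sfP$ — and in particular the vanishing of the $\sfL$-contribution to $\sfQ^\star$, which ultimately turns on the single identity $\sfF\sfV=p$.
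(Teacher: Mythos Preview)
Your proof is correct and follows essentially the same route as the paper's. Both choose a normal decomposition $\sfP=\sfL\oplus\sfT$ with $\sfQ=\sfL\oplus\sfI(R)\sfT$, identify $\Ker\dot\sfF^\natural$ with $J(R)\sfT$, and then show that the $\sfL$-contribution to $\sfQ^\star$ vanishes via the identity $\sfF\sfV=p$; part~(2) is handled identically in both. The only cosmetic difference is that where the paper invokes \cite{Lau10}*{Lemma~2.5} to obtain the Frobenius-linear isomorphism $\Psi$ with $\dot\sfF(l+at)=\Psi(l)+\pres{\sfV^{-1}}a\cdot\Psi(t)$, you instead construct your $\ddot\sfF$ directly and verify it is an isomorphism by the surjection-between-equal-rank-projectives argument; this amounts to reproving that lemma in place. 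One small point of phrasing: your ``typical element $(w\otimes\pres{\sfV}c)\otimes l$ with $wc=0$'' should really be a finite sum $\sum_i(w_i\otimes\pres{\sfV}c_i)\otimes l$ with $\sum_i w_ic_i=0$, since single tensors with $wc=0$ do not span $J(R)$; the computation goes through unchanged for sums.
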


\begin{proof}
We first claim that $\sfJ(R)$ is contained in the kernel of the map
\begin{align}\label{eq:arz4}
\sfW(R)\otimes_{\pres{\sfF}{,\sfW(R)}}\sfI(R)\to\sfW(R)\otimes_{\pres{\sfF}{,\sfW(R)}}\sfW(R)=\sfW(R)
\end{align}
that is the base change of the inclusion map $\sfI(R)\to\sfW(R)$. Take an element $x=\sum a_i\otimes \pres{\sfV}b_i$ in $\sfW(R)\otimes_{\pres{\sfF}{,\sfW(R)}}\sfI(R)$. If $x\in J(R)$, then $\sum a_ib_i=0$. But the image of $x$ under \eqref{eq:arz4} is $\sum a_i\pres{\sfF\sfV}b_i$, which equals $p\sum a_ib_i$. Thus, $J(R)$ is contained in the kernel of \eqref{eq:arz4}.

For (1), choose a normal decomposition $\sfP=\sfL\oplus\sfT$ of $\sfW(R)$-modules such that $\sfQ=\sfL\oplus\sfI(R)\sfT$. By (the proof of) \cite{Lau10}*{Lemma~2.5}, there exists a Frobenius linear automorphism $\Psi$ of $P$ such that $\dot\sfF(l+at)=\Psi(l)+\pres{\sfV^{-1}}a\cdot\Psi(t)$ for $l\in\sfL$, $t\in\sfT$, and $a\in\sfI(R)$. Thus $\Ker\dot\sfF^\natural$ equals the submodule $J(R)\sfT$ of $\sfW(R)\otimes_{\pres{\sfF}{,\sfW(R)}}\sfQ$. However, by the claim above, the image of $J(R)\sfL$ under the map $\sfW(R)\otimes_{\pres{\sfF}{,\sfW(R)}}\sfI(R)\sfP\to\sfW(R)\otimes_{\pres{\sfF}{,\sfW(R)}}\sfQ$ vanishes. Thus, we have $J(R)\sfT=\sfQ^\star$.

For (2), the uniqueness follows since $\dot\sfF^\natural$ is surjective; and the existence follows since the map $1\otimes(\sff\res_\sfQ)$ preserves $\sfQ^\star$, which is a consequence of the definition of $\sfQ^\star$.
\end{proof}

To define our (absolute) Rapoport--Zink space, we fix an object $(\bbX,\iota_{\bbX},\lambda_{\bbX})\in\Exo_{(n-1,1)}^{\Phi,\rb}(\ol{k})$.

\begin{definition}\label{de:arz}
We define a functor $\cN^\Phi\coloneqq\cN^\Phi_{(\bbX,\iota_{\bbX},\lambda_{\bbX})}$ on $\Sch_{/O_{\breve{E}}}^\rv$ such that for every object $S$ of $\Sch_{/O_{\breve{E}}}^\rv$, $\cN(S)$ consists of quadruples $(X,\iota_X,\lambda_X;\rho_X)$ in which
\begin{itemize}
  \item $(X,\iota_X,\lambda_X)$ is an object of $\Exo_{(n-1,1)}^{\Phi,\rb}(S)$;

  \item $\rho_X$ is a quasi-morphism from $(X,\iota_X,\lambda_X)\times_S(S\otimes_{O_{\breve{E}}}\ol{k})$ to $(\bbX,\iota_{\bbX},\lambda_{\bbX})\otimes_{\ol{k}}(S\otimes_{O_{\breve{E}}}\ol{k})$ in the category $\Exo_{(n-1,1)}^{\Phi,\rb}(S\otimes_{O_{\breve{E}}}\ol{k})$.
\end{itemize}
\end{definition}

\begin{corollary}\label{co:arz}
The morphism
\[
\cN^\Phi=\cN^\Phi_{(\bbX,\iota_{\bbX},\lambda_{\bbX})}\to\cN\coloneqq\cN_{(\bbX,\iota_{\bbX},\lambda_{\bbX})^\rel}
\]
sending $(X,\iota_X,\lambda_X;\rho_X)$ to $((X,\iota_X,\lambda_X)^\rel;\rho_X^\rel)$ is an isomorphism.
\end{corollary}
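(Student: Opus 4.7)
The plan is to deduce the corollary directly from Proposition \ref{pr:arz}, which already establishes that the morphism of prestacks $-^\rel\colon\Exo_{(n-1,1)}^{\Phi,\rb}\to\Exo_{(n-1,1)}^\rb$ is an isomorphism on $\Sch_{/O_{\breve{E}}}^\rv$. Since both $\cN^\Phi$ and $\cN$ are functors (valued in sets), defined by decorating objects of the respective $\Exo$-categories with a quasi-isogeny to a fixed framing object, the essential content is to check that the construction $(X,\iota_X,\lambda_X)\mapsto(X,\iota_X,\lambda_X)^\rel$ is compatible with the framings in a way that descends to a bijection $\cN^\Phi(S)\xrightarrow{\sim}\cN(S)$ for every $S$ in $\Sch_{/O_{\breve{E}}}^\rv$.

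First I would verify well-definedness of the proposed morphism: given $(X,\iota_X,\lambda_X;\rho_X)\in\cN^\Phi(S)$, the framing $\rho_X$ is a quasi-morphism in $\Exo_{(n-1,1)}^{\Phi,\rb}(S\otimes_{O_{\breve{E}}}\ol{k})$, and by the functoriality of $-^\rel$ applied to $S\otimes_{O_{\breve{E}}}\ol{k}$ it yields a quasi-morphism $\rho_X^\rel$ in $\Exo_{(n-1,1)}^\rb(S\otimes_{O_{\breve{E}}}\ol{k})$ from $(X,\iota_X,\lambda_X)^\rel\times_S(S\otimes_{O_{\breve{E}}}\ol{k})$ to $(\bbX,\iota_{\bbX},\lambda_{\bbX})^\rel\otimes_{\ol{k}}(S\otimes_{O_{\breve{E}}}\ol{k})$. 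Thus the assignment $(X,\iota_X,\lambda_X;\rho_X)\mapsto((X,\iota_X,\lambda_X)^\rel;\rho_X^\rel)$ gives a natural transformation $\cN^\Phi\to\cN$.

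For injectivity on $S$-points, suppose two objects of $\cN^\Phi(S)$ have the same image. By Proposition \ref{pr:arz}, the functor $-^\rel(S)$ is fully faithful, so any isomorphism between the underlying objects in $\Exo_{(n-1,1)}^\rb(S)$ lifts uniquely to an isomorphism in $\Exo_{(n-1,1)}^{\Phi,\rb}(S)$; since the full faithfulness also holds over $S\otimes_{O_{\breve{E}}}\ol{k}$, this lifted isomorphism automatically intertwines the framings, giving the desired identification in $\cN^\Phi(S)$. For surjectivity, start with $(X',\iota_{X'},\lambda_{X'};\rho_{X'})\in\cN(S)$. By essential surjectivity of $-^\rel(S)$, choose $(X,\iota_X,\lambda_X)\in\Exo_{(n-1,1)}^{\Phi,\rb}(S)$ with a chosen isomorphism $\alpha\colon(X,\iota_X,\lambda_X)^\rel\xrightarrow{\sim}(X',\iota_{X'},\lambda_{X'})$. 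Next, over $S\otimes_{O_{\breve{E}}}\ol{k}$ use essential surjectivity (hence bijectivity on Hom-sets up to isogenies) of $-^\rel$ to transport $\rho_{X'}\circ\alpha_{S\otimes\ol{k}}$ to a quasi-morphism $\rho_X$ in $\Exo_{(n-1,1)}^{\Phi,\rb}(S\otimes_{O_{\breve{E}}}\ol{k})$. Then $(X,\iota_X,\lambda_X;\rho_X)$ is the desired preimage.

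The main (only) subtle point in this argument is to promote the essential surjectivity of $-^\rel$ on objects to the existence of $\rho_X$ with prescribed image $\rho_X^\rel$; this requires that $-^\rel$ also induce a bijection on quasi-morphisms, not merely on morphisms. Fortunately this follows from the full faithfulness part of Proposition \ref{pr:arz} applied after inverting the isogenies (which for height-zero quasi-isogenies reduces to endomorphism-ring considerations identical to those in the proof of loc.\ cit.); alternatively one can remark that the construction in the proof of Proposition \ref{pr:arz} is manifestly compatible with the category of isogenies since it only manipulates the display data via linear-algebraic operations over $\sfW(R)[1/p]$. Once this is granted, the morphism of the corollary is bijective on all $S$-points, hence an isomorphism of functors.
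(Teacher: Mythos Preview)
Your proposal is correct and follows exactly the route the paper intends: the paper's own proof is the single sentence ``This follows immediately from Proposition~\ref{pr:arz},'' and what you have written is precisely the unpacking of that sentence. Your identification and resolution of the one nontrivial point---that Proposition~\ref{pr:arz} must also yield a bijection on quasi-morphisms, which follows because its proof actually establishes $\End_{O_E}(X)\xrightarrow{\sim}\End_{O_E}(X^\rel)$ (hence the same for $\Hom$, hence after $\otimes\dQ$)---is accurate and is implicitly what the paper relies on.
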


\begin{proof}
This follows immediately from Proposition \ref{pr:arz}.
\end{proof}

Now we study special divisors on $\cN^\Phi$ and their relation with those on $\cN$. Fix a triple $(X_0,\iota_{X_0},\lambda_{X_0})$ where
\begin{itemize}
  \item $X_0$ is a supersingular $p$-divisible group over $\Spec O_{\breve{E}}$ of dimension $f$ and height $2f$;

  \item $\iota_{X_0}\colon O_E\to\End(X_0)$ is an $O_E$-action on $X_0$ such that for $0\leq i\leq f-1$, the summand $\Lie_{\psi_i}(X)$ has rank $1$ on which $O_E$ acts via $\varphi_i$;

  \item $\lambda_{X_0}\colon X_0\to X_0^\vee$ is a $\iota_{X_0}$-compatible principal polarization.
\end{itemize}
Note that $\iota_{X_0}$ induces an isomorphism $\iota_{X_0}\colon O_E\xrightarrow{\sim}\End_{O_E}(X_0)$. Put
\[
\bbV\coloneqq\Hom_{O_E}(X_0\otimes_{O_{\breve{E}}}\ol{k},\bbX)\otimes\dQ,
\]
which is a vector space over $E$ of dimension $n$, equipped with a natural hermitian form similar to \eqref{eq:rz_special}. By a construction similar to \eqref{eq:arz}, we obtain a triple $(X_0,\iota_{X_0},\lambda_{X_0})^\rel$ as in the definition of special divisors on $\cN$ (Definition \ref{de:rz_special}), and a canonical map
\[
\Hom_{O_E}(X_0\otimes_{O_{\breve{E}}}\ol{k},\bbX)\to\Hom_{O_E}(X_0^\rel\otimes_{O_{\breve{E}}}\ol{k},\bbX^\rel),
\]
which induces a map
\begin{align}\label{eq:arz3}
-^\rel\colon\bbV\to\bbV^\rel\coloneqq\Hom_{O_E}(X_0^\rel\otimes_{O_{\breve{E}}}\ol{k},\bbX^\rel)\otimes\dQ.
\end{align}
For every nonzero element $x\in\bbV$, we have similarly a closed formal subscheme $\cN^\Phi(x)$ of $\cN^\Phi$ defined similarly as in Definition \ref{de:rz_special}.

\begin{corollary}\label{co:arz1}
The map \eqref{eq:arz3} is an isomorphism of hermitian spaces. Moreover, under the isomorphism in Corollary \ref{co:arz}, we have $\cN^\Phi(x)=\cN(x^\rel)$.
\end{corollary}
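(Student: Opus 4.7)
The plan is to deduce both assertions from the equivalence of prestacks established in Proposition \ref{pr:arz}.

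For the first assertion, I would apply the fully faithful part of Proposition \ref{pr:arz}, slightly generalized (as in its proof) from $\End_{O_E}$ to $\Hom_{O_E}$ between two different supersingular objects, to obtain an $E$-linear bijection
\[
\Hom_{O_E}(X_0 \otimes_{O_{\breve{E}}}\ol{k}, \bbX) \otimes \dQ \xrightarrow{\sim} \Hom_{O_E}(X_0^\rel \otimes_{O_{\breve{E}}}\ol{k}, \bbX^\rel) \otimes \dQ,
\]
which is precisely the map \eqref{eq:arz3}. To upgrade this to an isometry, I would trace through the polarization rescaling used in the construction of $-^\rel$ preceding Proposition \ref{pr:arz}: the zeroth-component symmetrization pairing is multiplied by a common unit $\mu_u \in \sfW(O_{\breve{F}})^\times$ regardless of whether $-^\rel$ is applied to $\bbX$ or to the standard object $X_0$. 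Since the hermitian pairing $(x,y)_\bbV$ is computed as the element of $E \simeq \End_{O_E}(X_0) \otimes \dQ$ represented by $u^{-2}\lambda_{X_0}^{-1}\circ y^\vee\circ\lambda_\bbX\circ x$, the central factor $\mu_u$ introduced via $\lambda_\bbX^\rel$ cancels exactly the factor $\mu_u^{-1}$ introduced via $(\lambda_{X_0}^\rel)^{-1}$, so that the corresponding composition for $(x^\rel, y^\rel)$ represents the same element of $E$. Hence the hermitian form is preserved.

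For the second assertion, both $\cN^\Phi(x)$ and $\cN(x^\rel)$ are defined, via Definition \ref{de:rz_special} and its evident analogue for $\cN^\Phi$, as the maximal closed formal subscheme over which a prescribed quasi-homomorphism on the mod-$u$ fiber lifts to an honest $O_E$-linear homomorphism from $X_0 \otimes_{O_{\breve{E}}} S$ (resp.\ $X_0^\rel \otimes_{O_{\breve{E}}} S$) to the pertinent universal object. The display-theoretic construction carried out in the proof of Proposition \ref{pr:arz}, applied to Hom sets rather than End sets, yields a natural bijection
\[
\Hom_{O_E}(X_0 \otimes_{O_{\breve{E}}} S, X) \xrightarrow{\sim} \Hom_{O_E}(X_0^\rel \otimes_{O_{\breve{E}}} S, X^\rel)
\]
for every $S \in \Sch_{/O_{\breve{E}}}^\rv$ and every $(X, \iota_X, \lambda_X) \in \Exo_{(n-1,1)}^{\Phi,\rb}(S)$. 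Taking $X$ to be the universal object over $\cN^\Phi$ and using naturality, this bijection identifies the loci where the respective lifts exist, producing the equality of closed formal subschemes $\cN^\Phi(x) = \cN(x^\rel)$ under the isomorphism of Corollary \ref{co:arz}.

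The main point requiring care is the functoriality of the $\mu_u$-twist on polarizations under $-^\rel$: one must check that the same unit $\mu_u$ is produced by the construction applied to $\bbX$ and to $X_0$, so that no residual scalar distorts the hermitian form. Beyond this bookkeeping, both claims are direct consequences of Proposition \ref{pr:arz}, and I anticipate no further obstacles.
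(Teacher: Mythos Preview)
Your proposal is correct and follows essentially the same route as the paper: both deduce everything from the equivalence $-^\rel$ of Proposition \ref{pr:arz} and its construction. The only cosmetic difference is that the paper first observes the map is an isometry (hence injective, since the form is nondegenerate) and then invokes $\dim\bbV=\dim\bbV^\rel=n$ to get surjectivity, whereas you establish bijectivity directly by extending the $\End$-argument of Proposition \ref{pr:arz} to $\Hom$; the paper's shortcut spares you from checking that the display-theoretic argument applies verbatim to $X_0$, which lives in a different signature category than $\Exo_{(n-1,1)}^{\Phi,\rb}$.
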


\begin{proof}
By the definition of $-^\rel$, the map \eqref{eq:arz3} is clearly an isometry. Since both $\sfV$ and $\sfV^\rel$ have dimension $n$, \eqref{eq:arz3} is an isomorphism of hermitian spaces. The second assertion follows from Corollary \ref{co:arz} and construction of $-^\rel$, parallel to \cite{Mih}*{Remark~4.4}.
\end{proof}

\begin{remark}\label{re:aexotic}
Let $S$ be an object of $\Sch_{/O_{\breve{E}}}$. We have another category $\Exo_{(n,0)}^\Phi(S)$ whose objects are triples $(X,\iota_X,\lambda_X)$ in which
\begin{itemize}
  \item $X$ is a $p$-divisible group over $S$ of dimension $nf$ and height $2nf$;

  \item $\iota_X\colon O_E\to\End(X)$ is an action of $O_E$ on $X$ such that for $0\leq i\leq f-1$, $O_E$ acts on $\Lie_{\psi_i}(X)$ via $\varphi_i$;

  \item $\lambda_X\colon X\to X^\vee$ is a $\iota_X$-compatible polarization such that $\Ker(\lambda_X)=X[\iota_X(u)]$.
\end{itemize}
Morphisms are defined similarly as in Definition \ref{de:aexotic}. The category $\Exo_{(n,0)}^\Phi(S)$ is a connected groupoid. Moreover, one can show that there is a canonical isomorphism $\Exo_{(n,0)}^\Phi\to\Exo_{(n,0)}$ of prestacks after restriction to $\Sch_{/O_{\breve{E}}}^\rv$ similar to \eqref{eq:arz}.
\end{remark}

\begin{remark}
It is desirable to extend the results in this subsection to a general finite extension $F/\dQ_p$. We hope to address this problem in the future.
\end{remark}

\section{Local theta lifting at ramified places}

Throughout this section, we fix a \emph{ramified} quadratic extension $E/F$ of $p$-adic fields with $p$ odd, with $\tc\in\Gal(E/F)$ the Galois involution. We fix a uniformizer $u\in E$ satisfying $u^\tc=-u$, and denote by $q$ the cardinality of $O_E/(u)$. Let $n=2r$ be an even positive integer. We fix a nontrivial additive character $\psi_F\colon F\to\dC^\times$ of conductor $O_F$.

The goal of this section is to compute the doubling $L$-function, the doubling epsilon factor, the spherical doubling zeta integral, and the local theta lifting for a tempered admissible irreducible representation $\pi$ of $G_r(F)$ that is spherical with respect to the standard special maximal compact subgroup.

\subsection{Weil representation and spherical module}

We equip $W_r\coloneqq E^{2r}$ with the skew-hermitian form given by the matrix $\(\begin{smallmatrix}&1_r\\ -1_r &\end{smallmatrix}\)$. We denote by $\{e_1,\dots,e_{2r}\}$ the natural basis of $W_r$. Denote by $G_r$ the unitary group of $W_r$, which is a reductive group over $F$. We write elements of $W_r$ in the row form, on which $G_r$ acts from the right. Let $K_r\subseteq G_r(F)$ be the stabilizer of the lattice $O_E^{2r}\subseteq W_r$, which is a special maximal compact subgroup. We fix the Haar measure $\r{d}g$ on $G_r(F)$ that gives $K_r$ volume $1$. Let $P_r$ be the Borel subgroup of $G_r$ consisting of elements of the form
\[
\begin{pmatrix}
a & b \\
& \pres{\rt}{a}^{\tc,-1} \\
\end{pmatrix}
,
\]
in which $a$ is a lower-triangular matrix in $\Res_{E/F}\GL_r$. Let $P_r^0$ be the maximal parabolic subgroup of $G_r$ containing $P_r$ with the unipotent radical $N_r^0$, such that the standard diagonal Levi factor $M_r^0$ of $P_r^0$ is isomorphic to $\Res_{E/F}\GL_r$.

We fix a a \emph{split} hermitian space $(V,(\;,\;)_V)$ over $E$ of dimension $n=2r$, and a self-dual lattice $\Lambda_V$ of $V$, namely, $\Lambda_V=\Lambda_V^\vee\coloneqq\{x\in V\res\Tr_{E/F}(x,y)_V\in O_F\text{ for every $y\in \Lambda_V$}\}$. Put $H_V\coloneqq\rU(V)$, and let $L_V$ be the stabilizer of $\Lambda_V$ in $H_V(F)$. We fix the Haar measure $\r{d}h$ on $H_V(F)$ that gives $L_V$ volume $1$.

\begin{remark}
We have
\begin{enumerate}
  \item There exists an isomorphism $\kappa\colon W_r\to V$ of $E$-vector spaces satisfying $(\kappa(e_i),\kappa(e_j))_V=0$, $(\kappa(e_{r+i}),\kappa(e_{r+j}))_V=0$, and $(\kappa(e_i),\kappa(e_{r+j}))_V=u^{-1}\delta_{ij}$ for $1\leq i,j\leq r$, and such that $L_V$ is generated by $\{\kappa(e_i)\res 1\leq i\leq 2r\}$ as an $O_E$-submodule.

  \item The double coset $K_r\backslash G_r(F)/K_r$ has representatives
       \[
       \begin{pmatrix}
       u^{a_1} & &&&& \\
       & \ddots &&&& \\
       & & u^{a_r} &&& \\
       &&& (-u)^{-a_1} && \\
       &&&& \ddots & \\
       &&&&& (-u)^{-a_r}
       \end{pmatrix}
       \]
       where $0\leq a_1\leq \cdots\leq a_r$ are integers.
\end{enumerate}
\end{remark}

We introduce two Hecke algebras:
\[
\cH_{W_r}\coloneqq\dC[K_r\backslash G_r(F)/K_r],\qquad
\cH_V\coloneqq\dC[L_V\backslash H_V(F)/L_V].
\]
Then by the remark above, both $\cH_{W_r}$ and $\cH_V$ are commutative complex algebras, and are canonically isomorphic to $\cT_r\coloneqq\dC[T_1^{\pm 1},\dots,T_r^{\pm 1}]^{\{\pm1\}^r\rtimes\fS_r}$.

Let $(\omega_{W_r,V},\cV_{W_r,V})$ be the Weil representation of $G_r(F)\times H_V(F)$ (with respect to the additive character $\psi_F$ and the trivial splitting character). We recall the action under the Schr\"{o}dinger model $\cV_{W_r,V}\simeq C^\infty_c(V^r)$ as follows:
\begin{itemize}
  \item for $a\in\GL_r(E)$ and $\phi\in C^\infty_c(V^r)$, we have
     \[
     \omega_{W_r,V}\(\(\begin{smallmatrix}a & \\ & \pres{\rt}{a}^{\tc,-1} \end{smallmatrix}\)\)\phi(x)=|\dtm a|_E^r\cdot \phi(x a);
     \]

  \item for $b\in\Herm_r(F)$ and $\phi\in C^\infty_c(V^r)$, we have
     \[
     \omega_{W_r,V}\(\(\begin{smallmatrix} 1_r & b \\ & 1_r \end{smallmatrix}\)\)\phi(x)=\psi_F(\tr bT(x))\cdot \phi(x)
     \]
     where $T(x)\coloneqq\((x_i,x_j)_V\)_{1\leq i,j\leq r}$ is the moment matrix of $x=(x_1,\dots,x_r)$;

  \item for $\phi\in C^\infty_c(V^r)$, we have
     \[
     \omega_{W_r,V}\(\(\begin{smallmatrix} & 1_r \\ -1_r & \end{smallmatrix}\)\)\phi(x)=\widehat\phi(x);
     \]

  \item for $h\in H_V(F)$ and $\phi\in C^\infty_c(V^r)$, we have
     \[
     \omega_{W_r,V}(h)\phi(x)=\phi(h^{-1}x).
     \]
\end{itemize}
Here, we recall the Fourier transform $C^\infty_c(V^r)\to C^\infty_c(V^r)$ sending $\phi$ to $\widehat\phi$ defined by the formula
\[
\widehat\phi(x)\coloneqq\int_{V^r}\phi(y)\psi_F\(\sum_{i=1}^r\Tr_{E/F}(x_i,y_i)_V\)\rd y,
\]
where $\r{d}y$ is the self-dual Haar measure on $V^r$.

\begin{definition}
We define the \emph{spherical module} $\cS_{W_r,V}$ to be the subspace of $\cV_{W_r,V}$ consisting of elements that are fixed by $K_r\times L_V$, as a module over $\cH_{W_r}\otimes_\dC\cH_V$ via the representation $\omega_{W_r,V}$. We denote by $\Sph(V^r)$ the corresponding subspace of $C^\infty_c(V^r)$ under the Schr\"{o}dinger model.
\end{definition}

\begin{lem}\label{le:generator}
The function $\CF_{\Lambda_V^r}$ belongs to $\Sph(V^r)$.
\end{lem}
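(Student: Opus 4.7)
The $L_V$-invariance is immediate: for $h\in L_V$ we have $\omega_{W_r,V}(h)\CF_{\Lambda_V^r}(x)=\CF_{\Lambda_V^r}(h^{-1}x)=\CF_{\Lambda_V^r}(x)$ since $L_V$ stabilizes $\Lambda_V$ and hence $\Lambda_V^r$. For the $K_r$-invariance, the plan is to reduce to checking invariance on generators. By standard Bruhat--Tits theory for the special maximal parahoric stabilizing $O_E^{2r}$ in the quasi-split ramified unitary group $G_r$ (alternatively, by a direct block row-reduction on $k\in K_r\subseteq\GL_{2r}(O_E)$), the group $K_r$ is generated by the three families:
\begin{enumerate}
\item[(i)] $m(a)\coloneqq\(\begin{smallmatrix}a&\\&\pres{\rt}{a}^{\tc,-1}\end{smallmatrix}\)$ with $a\in\GL_r(O_E)$;
\item[(ii)] $n(b)\coloneqq\(\begin{smallmatrix}1_r&b\\&1_r\end{smallmatrix}\)$ with $b\in\Herm_r(F)\cap M_r(O_E)$;
\item[(iii)] $w\coloneqq\(\begin{smallmatrix}&1_r\\-1_r&\end{smallmatrix}\)$.
\end{enumerate}
These are precisely the integral points of the Levi of $P_r^0$, of the Siegel unipotent radical, and the long Weyl element.

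I then verify invariance of $\CF_{\Lambda_V^r}$ under each family using the Schr\"odinger-model formulas. Case (i) is immediate: $\omega_{W_r,V}(m(a))\CF_{\Lambda_V^r}(x)=|\det a|_E^r\,\CF_{\Lambda_V^r}(xa)=\CF_{\Lambda_V^r}(x)$, because $|\det a|_E=1$ and right multiplication by $a\in\GL_r(O_E)$ preserves $\Lambda_V^r$. Case (iii) follows from self-duality of $\Lambda_V$: the self-dual Haar measure gives $\vol(\Lambda_V)=1$, hence $\widehat{\CF_{\Lambda_V^r}}=\CF_{\Lambda_V^r}$, so $\omega_{W_r,V}(w)\CF_{\Lambda_V^r}=\CF_{\Lambda_V^r}$.

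Case (ii) is the substantive computation: the action is multiplication by the scalar $\psi_F(\tr(bT(x)))$, and one must show this scalar equals $1$ for every $x\in\Lambda_V^r$. Self-duality of $\Lambda_V$ forces $T(x)_{ij}=(x_i,x_j)_V\in u^{-1}O_E$, so $bT(x)\in M_r(u^{-1}O_E)$ whenever $b\in M_r(O_E)$. Since $b$ and $T(x)$ are both hermitian, $\tr(bT(x))\in F$, hence $\tr(bT(x))\in u^{-1}O_E\cap F$. Using that $u^\tc=-u$ (so $u\notin F$) together with $u^2\in F$, one checks directly that $u^{-1}O_E\cap F=O_F$, on which $\psi_F$ is trivial by choice of conductor.

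The only real obstacle is the clean justification of the generation statement; once that is in place, the case-by-case verification is routine from the explicit formulas for $\omega_{W_r,V}$.
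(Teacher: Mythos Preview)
Your proof is correct and takes essentially the same approach as the paper. The paper's own proof is terser, explicitly checking only the Weyl element (your case (iii)) via $\Lambda_V^\vee=\Lambda_V$ and leaving the parabolic cases (i), (ii), the $L_V$-invariance, and the generation of $K_r$ as implicit.
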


\begin{proof}
It suffices to check that
\[
\omega_{W_r,V}\(\(\begin{smallmatrix} & 1_r \\ -1_r & \end{smallmatrix}\)\)\CF_{\Lambda_V^r}=\CF_{\Lambda_V^r},
\]
which follows from the fact that $\Lambda_V^\vee=\Lambda_V$. The lemma follows.
\end{proof}

\begin{proposition}\label{pr:annihilator}
The annihilator of the $\cH_{W_r}\otimes_\dC\cH_V$-module $\cS_{W_r,V}$ is $\cI_{W_r,V}$, where $\cI_{W_r,V}$ denotes the diagonal ideal of $\cH_{W_r}\otimes_\dC\cH_V$.
\end{proposition}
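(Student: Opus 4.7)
The plan is to establish the proposition in two parts, working in the Schr\"odinger model $\cV_{W_r,V}\simeq C^\infty_c(V^r)$, with $\CF_{\Lambda_V^r}\in\Sph(V^r)$ (Lemma \ref{le:generator}) as a distinguished generator.

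First, I would show that $\cI_{W_r,V}$ annihilates $\cS_{W_r,V}$. Fixing Satake isomorphisms $\cH_{W_r}\xrightarrow{\sim}\cT_r\xleftarrow{\sim}\cH_V$ so that $\cI_{W_r,V}$ becomes the kernel of the multiplication map $\cT_r\otimes_\dC\cT_r\to\cT_r$, the assertion is that for every $f\in\cT_r$ the two operators on $\Sph(V^r)$ coming from $f_{W_r}\in\cH_{W_r}$ and $f_V\in\cH_V$ coincide. Since $\Sph(V^r)$ is generated as an $\cH_V$-module by $\CF_{\Lambda_V^r}$---cyclicity being deducible from the $L_V$-orbit structure on $V^r$ together with the $K_r$-equivariance implemented by the Fourier transform attached to the Weyl element---it suffices to check the equality on $\CF_{\Lambda_V^r}$. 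For a convenient set of algebra generators of $\cT_r$ (for instance, the elementary symmetric functions in $T_i+T_i^{-1}$), I would expand the corresponding Hecke elements in each algebra along the Cartan decomposition and compute their action on $\CF_{\Lambda_V^r}$ using the explicit formulas for $\omega_{W_r,V}$ on $P_r$ and on the Weyl element. Both computations produce finite $\dZ$-linear combinations of characteristic functions of hermitian $O_E$-lattices in $V^r$, and matching them reduces to a combinatorial identity about such lattices.

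Second, once the action of $\cH_{W_r}\otimes_\dC\cH_V$ factors through $\cT_r$, I would prove faithfulness by showing that $\cS_{W_r,V}$ is free of rank one over $\cT_r$, generated by $\CF_{\Lambda_V^r}$. Cyclicity having been established above, injectivity of the action map $\cT_r\to\End(\cS_{W_r,V})$ amounts to verifying that distinct Satake characters produce distinct eigenfunctions when applied to $\CF_{\Lambda_V^r}$. This is equivalent to the nonvanishing of the spherical theta lift of every spherical representation of $G_r(F)$ to $H_V(F)$, a property one expects for this equal-rank unitary dual pair.

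The main obstacle is the explicit Hecke matching in the first part. When $E/F$ is unramified, this is the classical spherical Howe duality for an equal-rank unitary dual pair; here, however, $O_E^{2r}\subset W_r$ is only almost self-dual (so $K_r$ is merely special, not hyperspecial) while $\Lambda_V\subset V$ is self-dual, and the asymmetry between the two sides together with the relation $u^\tc=-u$ appearing throughout the Weil representation formulas must be tracked carefully. A viable alternative, bypassing the direct lattice computation, is a see-saw reduction to the rank-one case $r=1$, where the identity can be verified by hand; this would serve as a fallback if the general matching proves too intricate.
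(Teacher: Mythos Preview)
The paper proves this by a one-line citation to \cite{Liu20}*{Proposition~4.4}, with Lemma~\ref{le:generator} as the only new local input, so your two-step outline---show $\cI_{W_r,V}$ annihilates, then show the resulting $\cT_r$-action is faithful---is almost certainly the same architecture as the cited argument.

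As a plan, though, there are real gaps. The cyclicity of $\Sph(V^r)$ over $\cH_V$ with generator $\CF_{\Lambda_V^r}$ is asserted with only a heuristic justification and needs an actual proof. The Hecke matching on this generator, which you correctly flag as the crux, is not carried out; and your proposed see-saw fallback is not a recognized device for this purpose: see-saw identities compare theta periods, not Hecke-module structures on spaces of Schwartz functions, and no reduction of the $\cH_{W_r}\otimes_\dC\cH_V$-action on $\Sph(V^r)$ to rank one is apparent. For faithfulness, note first that you only need theta nonvanishing for a Zariski-dense set of Satake parameters (for instance the tempered locus), not for \emph{every} spherical $\pi$; and second, since in the paper the theta-lifting identification (Proposition~\ref{pr:theta}) is placed \emph{after} Proposition~\ref{pr:annihilator} and in fact invokes it, you must verify that whatever nonvanishing input you use (e.g.\ the argument of \cite{Liu20}*{Theorem~6.2}) is logically independent of the proposition you are trying to prove.
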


\begin{proof}
The same proof of \cite{Liu20}*{Proposition~4.4} (with $\epsilon=+$ and $d=r$) works in this case as well, using Lemma \ref{le:generator}.
\end{proof}

In what follows, we review the construction of unramified principal series of $G_r(F)$ and $H_V(F)$.

We identify $M_r$, the standard diagonal Levi factor of $P_r$, with $(\Res_{E/F}\GL_1)^r$, under which we write an element of $M_r(F)$ as $a=(a_1,\dots,a_r)$ with $a_i\in E^\times$ its eigenvalue on $e_i$ for $1\leq i\leq r$. For every tuple $\sigma=(\sigma_1,\dots,\sigma_r)\in(\dC/\tfrac{2\pi i}{\log q}\dZ)^r$, we define a character $\chi^\sigma_r$ of $M_r(F)$ hence $P_r(F)$ by the formula
\[
\chi^\sigma_r(a)=\prod_{i=1}^r|a_i|_E^{\sigma_i+i-1/2}.
\]
We then have the normalized principal series
\[
\rI^\sigma_{W_r}\coloneqq\{\varphi\in C^\infty(G_r(F))\res\varphi(ag)=\chi^\sigma_r(a)\varphi(g)\text{ for $a\in P_r(F)$ and $g\in G_r(F)$}\},
\]
which is an admissible representation of $G_r(F)$ via the right translation. We denote by $\pi^\sigma_{W_r}$ the unique irreducible constituent of $\rI^\sigma_{W_r}$ that has nonzero $K_r$-invariants.

For $V$, we fix a basis $\{v_r,\dots,v_1,v_{-1},\dots,v_{-r}\}$ of the $O_E$-lattice $\Lambda_V$, satisfying $(v_i,v_j)_V=u^{-1}\delta_{i,-j}$ for every $1\leq i,j\leq r$. We have an increasing filtration
\begin{align}\label{eq:filtration}
\{0\}=Z_{r+1}\subseteq Z_r\subseteq \cdots \subseteq Z_1
\end{align}
of isotropic $E$-subspaces of $V$ where $Z_i$ be the $E$-subspaces of $V$ spanned by $\{v_r,\dots,v_i\}$. Let $Q_V$ be the (minimal) parabolic subgroup of $H_V$ that stabilizes \eqref{eq:filtration}. Let $M_V$ be the Levi factor of $Q_V$ stabilizing the lines spanned by $v_i$ for every $i$. Then we have the canonical isomorphism $M_V=(\Res_{E/F}\GL_1)^r$, under which we write an element of $M_V(F)$ as $b=(b_1,\dots,b_r)$ with $b_i\in E^\times$ its eigenvalue on $v_i$ for $1\leq i\leq r$. For every tuple $\sigma=(\sigma_1,\dots,\sigma_r)\in(\dC/\tfrac{2\pi i}{\log q}\dZ)^r$, we define a character $\chi^\sigma_V$ of $M_V(F)$ hence $Q_V(F)$ by the formula
\begin{align*}
\chi^\sigma_V(b)=\prod_{i=1}^r|b_i|_E^{\sigma_i+i-1/2}.
\end{align*}
We then have the normalized principal series
\[
\rI^\sigma_V\coloneqq\{\varphi\in C^\infty(H_V(F))\res\varphi(bh)=\chi^\sigma_V(b)\varphi(h)\text{ for $b\in Q_V(F)$ and $h\in H_V(F)$}\},
\]
which is an admissible representation of $H_V(F)$ via the right translation. We denote by $\pi^\sigma_V$ the unique irreducible constituent of $\rI^\sigma_V$ that has nonzero $L_V$-invariants.

\subsection{Doubling zeta integral and doubling L-factor}

In this section, we compute certain doubling zeta integrals and doubling $L$-factors for irreducible admissible representations $\pi$ of $G_r(F)$ satisfying $\pi^{K_r}\neq\{0\}$. We will freely use notation from \cite{Liu20}*{Section~5}.

We have the degenerate principal series $\rI^\Box_r(s)\coloneqq\Ind_{P_r^\Box}^{G_r^\Box}(|\;|_E^s\circ\Delta)$ of $G_r^\Box(F)$. Let $\ff^{(s)}_r$ be the unique section of $\rI^\Box_r(s)$ such that for every $g\in pK_r$ with $p\in P_r^\Box(F)$,
\[
\ff_r^{(s)}(g)=|\Delta(p)|_E^{s+r}.
\]
It is a holomorphic standard hence good section.

\begin{remark}\label{re:basis}
By definition, we have $\rI^\Box_r(s)\subseteq\rI^{\sigma^\Box_s}_{W_{2r}}$, where
\[
\sigma^\Box_s\coloneqq(s+r-\tfrac{1}{2},s+r-\tfrac{3}{2},\dots,s-r+\tfrac{3}{2},s-r+\tfrac{1}{2})\in(\dC/\tfrac{2\pi i}{\log q}\dZ)^{2r}.
\]
Moreover, if we denote by $\varphi^{\sigma^\Box_s}$ the unique section in $\rI^{\sigma^\Box_s}_{W_{2r}}$ that is fixed by $K_{2r}$ and such that $\varphi^{\sigma^\Box_s}(1_{4r})=1$, then $\ff_r^{(s)}=\varphi^{\sigma^\Box_s}$.
\end{remark}

Let $\pi$ be an irreducible admissible representation of $G_r(F)$. For every element $\xi\in\pi^\vee\boxtimes\pi$, we denote by $H_\xi\in C^\infty(G_r(F))$ its associated matrix coefficient. Then for every meromorphic section $f^{(s)}$ of $\rI^\Box_r(s)$, we have the (doubling) zeta integral:
\[
Z(\xi,f^{(s)})\coloneqq\int_{G_r(F)}H_\xi(g)f^{(s)}(\bw_r(g,1_{2r}))\rd g,
\]
which is absolutely convergent for $\RE s$ large enough and has a meromorphic continuation. We let $L(s,\pi)$ and $\varepsilon(s,\pi,\psi_F)$ be the doubling $L$-factor and the doubling epsilon factor of $\pi$, respectively, defined in \cite{Yam14}*{Theorem~5.2}.

Take an element $\sigma=(\sigma_1,\dots,\sigma_r)\in(\dC/\tfrac{2\pi i}{\log q}\dZ)^r$. We define an $L$-factor
\[
L^\sigma(s)\coloneqq\prod_{i=1}^r\frac{1}{(1-q^{\sigma_i-s})(1-q^{-\sigma_i-s})}.
\]
Since $\pi_{W_r}^\sigma$ is self-dual, the space $((\pi_{W_r}^\sigma)^\vee)^{K_r}\boxtimes(\pi_{W_r}^\sigma)^{K_r}$ is one dimensional. Let $\xi^\sigma$ be a generator of this one dimensional space; it satisfies $H_{\xi^\sigma}(1_{2r})\neq 0$. We normalize $\xi^\sigma$ so that $H_{\xi^\sigma}(1_{2r})=1$, which makes it unique.

\begin{proposition}\label{pr:zeta}
For $\sigma\in(\dC/\tfrac{2\pi i}{\log q}\dZ)^r$, we have
\[
Z(\xi^\sigma,\ff_r^{(s)})=\frac{L^\sigma(s+\tfrac{1}{2})}{b_{2r}(s)},
\]
where $b_{2r}(s)\coloneqq\prod_{i=1}^{r}\frac{1}{1-q^{-2s-2i}}$.
\end{proposition}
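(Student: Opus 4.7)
The plan is to follow the standard Piatetski--Shapiro--Rallis computation of the doubling zeta integral on the spherical vector, with modifications to handle the special (non-hyperspecial) maximal compact $K_r$ in the ramified setting. By Remark \ref{re:basis}, the section $\ff_r^{(s)}$ is the unique $K_{2r}$-fixed vector in the principal series $\rI^{\sigma_s^\Box}_{W_{2r}}$ normalized by $\ff_r^{(s)}(1_{4r})=1$; on the other hand, $\xi^\sigma$ is the normalized zonal spherical matrix coefficient of $\pi^\sigma_{W_r}$, so that $H_{\xi^\sigma}$ is the unique $(K_r,K_r)$-biinvariant matrix coefficient with value $1$ at the identity.

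First I would use the Iwasawa decomposition $G_r(F)=P_r(F)\cdot K_r$ together with the $K_r\times K_r$-biinvariance of the integrand to rewrite $Z(\xi^\sigma,\ff_r^{(s)})$ as an integral over the maximal torus $M_r(F)$, weighted by the modulus character $\delta_{P_r}$, the values $H_{\xi^\sigma}(m)$, and $\ff_r^{(s)}(\bw_r(m,1_{2r}))$ for $m\in M_r(F)$. The last factor admits a closed-form expression via the Iwasawa decomposition in $G_r^\Box(F)$ with respect to the Siegel parabolic $P_r^\Box$ and the compact $K_{2r}$; when $m=\diag(a_1,\dots,a_r,a_1^{-\tc},\dots,a_r^{-\tc})$ with $|a_i|_E\leq 1$, this evaluation reduces to $\prod_i|a_i|_E^{s+r}$ up to a uniform correction that can be made explicit.

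Next I would insert Macdonald's formula for the zonal spherical function $H_{\xi^\sigma}$ in terms of the principal-series parameter $\sigma$, expressing it as an alternating sum over the Weyl group $W=\{\pm 1\}^r\rtimes\mathfrak{S}_r$. The resulting torus integral then decouples into a sum over $W$ of independent geometric series in the variables $q^{\pm s\pm\sigma_i}$. A direct summation, combined with the standard Weyl-denominator identity of Gindikin--Karpelevich type, collapses the expression to the product $\prod_{i=1}^r(1-q^{\sigma_i-s-1/2})^{-1}(1-q^{-\sigma_i-s-1/2})^{-1}$, divided by $\prod_{i=1}^r(1-q^{-2s-2i})^{-1}$, yielding exactly $L^\sigma(s+\tfrac12)/b_{2r}(s)$.

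The main obstacle is bookkeeping the correct Weyl-denominator in the ramified setting: the non-hyperspecial maximal compact $K_r$ gives rise to an affine root datum whose affine simple roots (and hence the normalizing factors appearing in Macdonald's formula) differ from both the unramified unitary case and the symplectic analogue. Verifying that these factors combine to produce exactly $b_{2r}(s)^{-1}=\prod_{i=1}^r(1-q^{-2s-2i})$, rather than an alternative product such as $\prod_i(1-q^{-2s-2i+1})$, is the principal technical point. This reflects the fact that the relative roots of $G_r$ visible at $K_r$ contribute only even $F$-valuations, consistent with the parity constraint in Remark \ref{re:analytic2}.
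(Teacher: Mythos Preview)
Your outline and the paper's proof take genuinely different routes, and despite your opening sentence, what you describe is \emph{not} the Piatetski--Shapiro--Rallis computation. The paper's argument is precisely the PSR unfolding: one applies an intertwining operator (moving from $\bw_r$ to a partial Weyl element $\bw''_r$, at the cost of a Gindikin--Karpelevich constant $C_{\bw'_r}(s)$), after which the doubling integral unfolds to a Godement--Jacquet type integral over $\GL_r(E)$ against the spherical matrix coefficient of the unramified representation $\tau=\Ind(|\;|_E^{\sigma_1}\boxtimes\cdots\boxtimes|\;|_E^{\sigma_r})$. That $\GL_r(E)$-integral is then evaluated by the standard formula from \cite{GPSR}*{Proposition~6.1}, giving $L(s+\tfrac12,\tau)L(s+\tfrac12,\tau^\vee)/\prod_i\zeta_E(2s+i)$. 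Combining with the explicit $C_{\bw'_r}(s)$ yields the result in a few lines. The point is that this reduction to $\GL_r(E)$ is insensitive to whether $E/F$ is ramified: the delicate affine-root bookkeeping you anticipate is entirely absorbed into the intertwining constant $C_{\bw'_r}(s)$, which is computed uniformly.

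Your proposed route, a direct torus computation via Macdonald's formula for the zonal spherical function of $G_r$, is in principle workable but carries exactly the burden you flag as the ``main obstacle,'' and you have not resolved it. Two further points deserve care. First, the function $g\mapsto \ff_r^{(s)}(\bw_r(g,1_{2r}))$ is only right $K_r$-invariant, not bi-$K_r$-invariant, so the reduction to a torus integral via Iwasawa is not as immediate as you suggest; one must first average on the left and control that averaging. Second, Macdonald's formula for quasi-split ramified unitary groups with a special (non-hyperspecial) maximal compact involves unequal root-length parameters, and verifying that the resulting Weyl-group sum collapses to $L^\sigma(s+\tfrac12)/b_{2r}(s)$ rather than a shifted product is a genuine computation you have not supplied. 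The paper's approach sidesteps both issues by pushing everything to $\GL_r(E)$, where only the unramified $\GL$-theory is needed.
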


\begin{proof}
We have an isomorphism $m\colon\Res_{E/F}\GL_r\to M_r^0$ sending $a$ to $\(\begin{smallmatrix}a & \\ & \pres{\rt}{a}^{\tc,-1} \end{smallmatrix}\)$. Let $\tau$ be the unramified constituent of the normalized induction of $\boxtimes_{i=1}^r|\;|_E^{\sigma_i}$, as a representation of $\GL_r(E)$. We fix vectors $v_0\in\tau$ and $v_0^\vee\in\tau^\vee$ fixed by $M_r^0(F)\cap K_r=m(\GL_r(O_E))$ such that $\langle v_0^\vee,v_0\rangle_\tau=1$.

By a similar argument in \cite{GPSR}*{Section~6} or in the proof of \cite{Liu20}*{Proposition~5.6}, we have
\begin{align}\label{eq:zeta8}
Z(\xi^\sigma,\ff_r^{(s)})=C_{\bw'_r}(s)\int_{\GL_r(E)}\varphi^{\bw'_r\sigma^\Box_s}(\bw''_r(m(a),1_{2r}))
|\dtm a|_E^{-r/2}\langle\tau^\vee(a)v^\vee_0,v_0\rangle_\tau\rd a,
\end{align}
where
\[
C_{\bw'_r}(s)=\prod_{i=1}^r\frac{\zeta_E(2s+2i)}{\zeta_E(2s+r+i)}\prod_{i=1}^r\frac{\zeta_F(2s+2i-1)}{\zeta_F(2s+2i)}
=\prod_{i=1}^r\frac{\zeta_E(2s+2i-1)}{\zeta_E(2s+r+i)}.
\]
See the proof of \cite{Liu20}*{Proposition~5.6} for unexplained notation. By \cite{GPSR}*{Proposition~6.1}, we have
\[
\int_{\GL_r(E)}\varphi^{\bw'_r\sigma^\Box_s}(\bw''_r(m(a),1_{2r}))
|\dtm a|_E^{-r/2}\langle\tau^\vee(a)v^\vee_0,v_0\rangle_\tau\rd a=\frac{L(s+\tfrac{1}{2},\tau)L(s+\tfrac{1}{2},\tau^\vee)}{\prod_{i=1}^r\zeta_E(2s+i)}.
\]
Combining with \eqref{eq:zeta8}, we have
\begin{align*}
Z(\xi^\sigma,\ff_r^{(s)})&=\(\prod_{i=1}^r\frac{\zeta_E(2s+2i-1)}{\zeta_E(2s+r+i)}\)\cdot
\(\frac{L(s+\tfrac{1}{2},\tau)L(s+\tfrac{1}{2},\tau^\vee)}{\prod_{i=1}^r\zeta_E(2s+i)}\) \\
&=\frac{L(s+\tfrac{1}{2},\tau)L(s+\tfrac{1}{2},\tau^\vee)}{\prod_{i=1}^r\zeta_E(2s+2i)}
=\frac{L^\sigma(s+\tfrac{1}{2})}{b_{2r}(s)}.
\end{align*}
The proposition is proved.
\end{proof}

\begin{proposition}\label{pr:gcd}
For $\sigma\in(\dC/\tfrac{2\pi i}{\log q}\dZ)^r$, we have $L(s,\pi^\sigma_{W_r})=L^\sigma(s)$ and $\varepsilon(s,\pi^\sigma_{W_r},\psi_F)=1$.
\end{proposition}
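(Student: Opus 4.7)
The plan is to combine the explicit spherical zeta integral computation of Proposition \ref{pr:zeta} with Yamana's characterization of the doubling $L$-factor, and then deduce the epsilon factor from the local functional equation. Recall that by \cite{Yam14}, $L(s,\pi^\sigma_{W_r})$ is the unique (up to unit) element of $\dC[q^{\pm s}]^{-1}$ of the form $1/P(q^{-s})$ with $P(0)=1$ such that $b_{2r}(s)Z(\xi,f^{(s)})/L(s+\tfrac12,\pi^\sigma_{W_r})$ lies in $\dC[q^{\pm s}]$ for every $\xi\in(\pi^\sigma_{W_r})^\vee\boxtimes\pi^\sigma_{W_r}$ and every good section $f^{(s)}$, and equals a unit for some such choice.

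The first step is to obtain one divisibility: since $\ff_r^{(s)}$ is a holomorphic standard (hence good) section and $\xi^\sigma\in((\pi^\sigma_{W_r})^\vee)^{K_r}\boxtimes(\pi^\sigma_{W_r})^{K_r}$, Proposition \ref{pr:zeta} gives $b_{2r}(s)Z(\xi^\sigma,\ff_r^{(s)})=L^\sigma(s+\tfrac12)$; thus $L^\sigma(s+\tfrac12)/L(s+\tfrac12,\pi^\sigma_{W_r})\in\dC[q^{\pm s}]$. Writing the factors in terms of Satake parameters $\{q^{\pm\sigma_i}\}$, this shows that $L(s,\pi^\sigma_{W_r})$ is a product of Euler factors drawn from $\{(1-q^{\pm\sigma_i-s})^{-1}\}_{i=1}^r$.

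For the reverse divisibility, I would combine the following two ingredients. First, by Remark \ref{re:basis} the section $\ff_r^{(s)}$ is precisely the spherical vector $\varphi^{\sigma_s^\Box}$ in $\rI^{\sigma_s^\Box}_{W_{2r}}$, whose normalized intertwining operator $M^*(s)$ sends it to the spherical vector $\varphi^{w\sigma_s^\Box}$ multiplied by a ratio of $\zeta_E$-factors computed by the Gindikin–Karpelevich formula. Second, the local functional equation of \cite{Yam14} relates $Z(\xi,f^{(s)})$ with $Z(\xi,M^*(s)f^{(s)})$ through $L$- and $\varepsilon$-factors. Applying this to the pair $(\xi^\sigma,\ff_r^{(s)})$ and using that $\pi^\sigma_{W_r}\simeq(\pi^\sigma_{W_r})^\vee$ under the unramified parameter, the two zeta integrals are equal up to an explicit unramified factor, which forces $L(s+\tfrac12,\pi^\sigma_{W_r})$ to carry every pole already present in $L^\sigma(s+\tfrac12)$. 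Together with the first step this yields $L(s,\pi^\sigma_{W_r})=L^\sigma(s)$.

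For the epsilon factor, I would read off $\varepsilon(s,\pi^\sigma_{W_r},\psi_F)$ from the same functional equation: the left-hand side is (up to the ratio $L(\tfrac12-s,\pi^{\sigma,\vee}_{W_r})/L(s+\tfrac12,\pi^\sigma_{W_r})$) equal to $Z(\xi^\sigma,\ff_r^{(s)})$, and the right-hand side to $Z(\xi^\sigma,M^*(s)\ff_r^{(s)})$; both sides were just computed explicitly, and the factors of $b_{2r}$ together with the normalization of $\psi_F$ (conductor $O_F$) cancel perfectly, leaving $\varepsilon(s,\pi^\sigma_{W_r},\psi_F)=1$. The main obstacle is the reverse divisibility: verifying that no pole of $L^\sigma$ is cancelled in passing from $\rI^\sigma_{W_r}$ to its spherical constituent $\pi^\sigma_{W_r}$. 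This requires a careful tracking of the unramified constants in the intertwining operator (via Gindikin–Karpelevich) and ensuring they are nonzero at every candidate pole, which is where most of the technical care will be needed.
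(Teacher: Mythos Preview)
Your overall plan matches the paper's: both feed Proposition~\ref{pr:zeta} into Yamana's machinery \cite{Yam14}*{Proposition~7.1}. The first divisibility $L(s,\pi^\sigma_{W_r})\mid L^\sigma(s)$ and the deduction of $\varepsilon=1$ from the functional equation once $L=L^\sigma$ is known are both fine.

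The gap is in your mechanism for the reverse divisibility. Applying the functional equation only to the spherical pair $(\xi^\sigma,\ff_r^{(s)})$ computes the gamma factor $\gamma(s,\pi^\sigma_{W_r},\psi_F)=L^\sigma(1-s)/L^\sigma(s)$, but this does \emph{not} by itself force $L(s,\pi^\sigma_{W_r})=L^\sigma(s)$. Writing $L^\sigma(s)=L(s,\pi^\sigma_{W_r})\cdot Q(q^{-s})$ with $Q(0)=1$, the relation you obtain is $\varepsilon(s)=Q(q^{s-1})/Q(q^{-s})$, and there exist nontrivial $Q$ making the right side a monomial (e.g.\ $Q(X)=1-\alpha X$ with $\alpha^2=q$). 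So knowing $\gamma$ and one divisibility does not separate $L$ from $\varepsilon$; you are trying to split one equation into two unknowns.

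What Yamana's argument actually does (and what the paper is invoking) is to show directly that $L^\sigma(s+\tfrac12)$ generates the fractional ideal of normalized zeta integrals: at an unramified place with $\psi_F$ of conductor $O_F$, the good sections are the $\dC[q^{\pm s}]$-span of $K^\Box_{2r}$-translates of $\ff_r^{(s)}$, and every matrix coefficient of $\pi^\sigma_{W_r}$ is a finite linear combination of bi-translates of the spherical one. A change of variables then expresses every $b_{2r}(s)Z(\xi,f^{(s)})$ as a $\dC[q^{\pm s}]$-multiple of $b_{2r}(s)Z(\xi^\sigma,\ff_r^{(s)})=L^\sigma(s+\tfrac12)$. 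This gives $L^\sigma=L$ as the gcd, after which $\varepsilon=1$ drops out of the functional equation exactly as you describe. Your Gindikin--Karpelevich computation is not wasted --- it is how one normalizes $M^*(s)$ --- but it is not the step that closes the gap.
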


\begin{proof}
It follows from the same argument for \cite{Yam14}*{Proposition~7.1}, using Proposition \ref{pr:zeta}.
\end{proof}

\begin{remark}\label{re:gcd}
It is clear that the base change $\BC(\pi^\sigma_{W_r})$ is well-defined, which is an unramified irreducible admissible representation of $\GL_n(E)$, and we have $L(s,\pi^\sigma_{W_r})=L(s,\BC(\pi^\sigma_{W_r}))$ by Proposition \ref{pr:gcd}.
\end{remark}

For an irreducible admissible representation $\pi$ of $G_r(F)$, let $\Theta(\pi,V)$ be the $\pi$-isotypic quotient of $\cV_{W_r,V}$, which is an admissible representation of $H_V(F)$, and $\theta(\pi,V)$ its maximal semisimple quotient. By \cite{Wal90}, $\theta(\pi,V)$ is either zero, or an irreducible admissible representation of $H_V(F)$, known as the \emph{theta lifting} of $\pi$ to $V$ (with respect to the additive character $\psi_F$ and the trivial splitting character).

\begin{proposition}\label{pr:theta}
For an irreducible admissible representation $\pi$ of $G_r(F)$ of the form $\pi_{W_r}^\sigma$ for an element $\sigma=(\sigma_1,\dots,\sigma_r)\in(i\dR/\tfrac{2\pi i}{\log q}\dZ)^r$, we have $\theta(\pi,V)\simeq\pi_V^\sigma$.
\end{proposition}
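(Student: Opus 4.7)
The plan is to combine the Hecke-algebra compatibility encoded in Proposition \ref{pr:annihilator} with Howe duality (valid for $p$ odd) to identify the Satake parameters on both sides, after first establishing non-vanishing via the doubling method.

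First, since $\sigma\in(i\dR/\tfrac{2\pi i}{\log q}\dZ)^r$ the representations $\pi\coloneqq\pi_{W_r}^\sigma$ and $\pi_V^\sigma$ are tempered, hence unitary. By Howe duality in odd residue characteristic, if $\Theta(\pi,V)\neq\{0\}$ then $\theta(\pi,V)$ is an irreducible admissible representation of $H_V(F)$. Our task therefore reduces to showing (i) that $\Theta(\pi,V)^{L_V}\neq\{0\}$, and (ii) that the Satake parameter of the resulting spherical representation is $\sigma$.

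For (ii), consider the canonical $G_r(F)\times H_V(F)$-equivariant surjection $\omega_{W_r,V}\twoheadrightarrow\pi\boxtimes\Theta(\pi,V)$. Taking $(K_r\times L_V)$-invariants produces a natural $\cH_{W_r}\otimes_\dC\cH_V$-linear map
\[
\cS_{W_r,V}\to\pi^{K_r}\otimes\Theta(\pi,V)^{L_V}.
\]
By Proposition \ref{pr:annihilator}, the annihilator of $\cS_{W_r,V}$ is the diagonal ideal $\cI_{W_r,V}$, so under the canonical identifications $\cH_{W_r}\simeq\cH_V\simeq\cT_r$, both Hecke algebras act through the same character on any simple subquotient of the image. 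Since $\cH_{W_r}$ acts on the one-dimensional space $\pi^{K_r}$ via the Satake character attached to $\sigma$, the same character of $\cT_r$ must act on $\Theta(\pi,V)^{L_V}$, forcing $\theta(\pi,V)\simeq\pi_V^\sigma$ (the unique irreducible $L_V$-spherical representation with Satake parameter $\sigma$) once nonvanishing is known.

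For (i), the plan is to use the local Rallis inner product formula for the dual pair $(G_r,H_V)$: the hermitian form on $\Theta(\pi,V)$ induced by the Weil representation against the matrix coefficient $H_{\xi^\sigma}$ and the Schwartz function $\CF_{\Lambda_V^r}\in\Sph(V^r)$ (Lemma \ref{le:generator}) is, up to a nonzero constant depending on the measure normalizations, equal to the doubling zeta integral $Z(\xi^\sigma,\ff_r^{(s)})$ evaluated at $s=\tfrac{1}{2}$. By Proposition \ref{pr:zeta} this equals $L^\sigma(1)/b_{2r}(\tfrac12)$, which is a positive real number since $\sigma\in i\dR^r$; hence the spherical theta lift of $\cF_{\Lambda_V^r}\otimes\cF_{\Lambda_V^r}^\tc$ against $H_{\xi^\sigma}$ is nonzero. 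This simultaneously verifies that the map in (ii) is nonzero on $\cF_{\Lambda_V^r}$, so $\Theta(\pi,V)^{L_V}\neq\{0\}$, and combined with the Satake computation above completes the proof.

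The main obstacle is step (i): while Proposition \ref{pr:zeta} makes the doubling integral completely explicit, rigorously relating it to the nonvanishing of $\Theta(\pi,V)^{L_V}$ requires invoking the local version of the Rallis inner product formula (as in the integral representation theory underlying the doubling method). The Hecke-algebra step (ii) is essentially automatic from Proposition \ref{pr:annihilator} and is parallel to the argument of \cite{Liu20}*{Proposition~4.4}; the temperedness assumption is used only to ensure unitarity and the applicability of Howe duality, both of which are well-behaved in odd residue characteristic.
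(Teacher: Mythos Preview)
Your overall strategy matches the paper's: establish $\Theta(\pi,V)^{L_V}\neq\{0\}$, then use Proposition~\ref{pr:annihilator} to pin down the Satake parameter. However, there is a genuine gap in your passage from $\Theta(\pi,V)^{L_V}\neq\{0\}$ to $\theta(\pi,V)\simeq\pi_V^\sigma$. The Hecke computation you carry out determines the $\cH_V$-action on $\Theta(\pi,V)^{L_V}$, but to identify $\theta(\pi,V)$ you need to know that the spherical vector survives in the maximal semisimple quotient, i.e.\ that $\theta(\pi,V)^{L_V}\neq\{0\}$. Howe duality alone gives only irreducibility of $\theta(\pi,V)$, not sphericity; a priori the kernel of $\Theta(\pi,V)\twoheadrightarrow\theta(\pi,V)$ could swallow the $L_V$-fixed vector. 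The paper closes this gap by invoking (the argument of) \cite{GI16}*{Theorem~4.1(v)}: since $\pi$ is tempered, $\Theta(\pi,V)$ is already semisimple, hence $\Theta(\pi,V)=\theta(\pi,V)$ and the spherical vector trivially persists.

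Your Rallis-inner-product approach could in principle close the same gap, but you would have to argue that the local theta pairing factors through $\theta(\pi,V)$ (its radical being the maximal proper subrepresentation) and is nonzero on the image of $\CF_{\Lambda_V^r}$; you state the nonvanishing but not the descent to $\theta$. Two smaller points: the Siegel--Weil section attached to $\CF_{\Lambda_V^{2r}}$ lands in $\rI^\Box_r(0)$, so the relevant value is $Z(\xi^\sigma,\ff_r^{(0)})=L^\sigma(\tfrac12)/b_{2r}(0)$, not the value at $s=\tfrac12$; and for the nonvanishing step itself the paper simply cites (the argument of) \cite{Liu20}*{Theorem~6.2}, which is in the same spirit as your zeta-integral computation but already packaged.
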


\begin{proof}
By the same argument in the proof of \cite{Liu20}*{Theorem~6.2}, we have $\Theta(\pi,V)^{L_V}\neq\{0\}$. By our assumption on $\sigma$, $\pi$ is tempered. By (the same argument for) \cite{GI16}*{Theorem~4.1(v)}, $\Theta(\pi,V)$ is a semisimple representation of $H_V(F)$ hence $\Theta(\pi,V)=\theta(\pi,V)$. In particular, we have $\theta(\pi,V)^{L_V}\neq\{0\}$. By Proposition \ref{pr:annihilator}, the diagonal ideal $\cI_{W_r,V}$ annihilates $(\pi_{W_r}^\sigma)^{K_r}\boxtimes\theta(\pi,V)^{L_V}$, which implies that $\theta(\pi,V)\simeq\pi_V^\sigma$.
\end{proof}

\section{Arithmetic inner product formula}

In this section, we collect all local ingredients and deduce our main theorems, following the same line as in \cite{LL}. In Subsection \ref{ss:recollection} and \ref{ss:atl}, we recall the doubling method and the arithmetic theta lifting from \cite{LL}, respectively. In Subsection \ref{ss:split}, we prove the vanishing of local indices at split places, by proving the second main ingredient of this article, namely, Theorem \ref{th:split_tempered}. In Subsection \ref{ss:inert}, we recall the formula for local indices at inert places. In Subsection \ref{ss:ramified}, we compute local indices at ramified places, based on the Kudla--Rapoport type formula Theorem \ref{th:kr}. In Subsection \ref{ss:archimedean}, we recall the formula for local indices at archimedean places. The deduction of the main results of the article is explained in Subsection \ref{ss:proof}, which is a straightforward modification of \cite{LL}*{Section~11}.

\subsection{Recollection on doubling method}
\label{ss:recollection}

For readers' convenience, we copy three groups of notation from \cite{LL}*{Section~2} to here. The only difference is item (H5), which reflects the fact that we study certain places in $\tV_F^\ram$ in the current article.

\begin{notation}\label{st:f}
Let $E/F$ be a CM extension of number fields, so that $\tc$ is a well-defined element in $\Gal(E/F)$. We continue to fix an embedding $\biota\colon E\hookrightarrow\dC$. We denote by $\bu$ the (archimedean) place of $E$ induced by $\biota$ and regard $E$ as a subfield of $\dC$ via $\biota$.
\begin{enumerate}[label=(F\arabic*)]
  \item We denote by
      \begin{itemize}
        \item $\tV_F$ and $\tV_F^\fin$ the set of all places and non-archimedean places of $F$, respectively;

        \item $\tV_F^\spl$, $\tV_F^\inert$, and $\tV_F^\ram$ the subsets of $\tV_F^\fin$ of those that are split, inert, and ramified in $E$, respectively;

        \item $\tV_F^{(\diamond)}$ the subset of $\tV_F$ of places above $\diamond$ for every place $\diamond$ of $\dQ$; and

        \item $\tV_E^?$ the places of $E$ above $\tV_F^?$.
      \end{itemize}
      Moreover,
      \begin{itemize}
        \item for every place $u\in\tV_E$ of $E$, we denote by $\ul{u}\in\tV_F$ the underlying place of $F$;

        \item for every $v\in\tV_F^\fin$, we denote by $\fp_v$ the maximal ideal of $O_{F_v}$, and put $q_v\coloneqq|O_{F_v}/\fp_v|$;

        \item for every $v\in\tV_F$, we put $E_v\coloneqq E\otimes_FF_v$ and denote by $|\;|_{E_v}\colon E_v^\times\to\dC^\times$ the normalized norm character.
      \end{itemize}

  \item Let $m\geq 0$ be an integer.
      \begin{itemize}
        \item We denote by $\Herm_m$ the subscheme of $\Res_{E/F}\Mat_{m,m}$ of $m$-by-$m$ matrices $b$ satisfying $\pres{\rt}{b}^\tc=b$. Put $\Herm_m^\circ\coloneqq\Herm_m\cap\Res_{E/F}\GL_m$.

        \item For every ordered partition $m=m_1+\cdots+m_s$ with $m_i$ a positive integer, we denote by $\partial_{m_1,\dots,m_s}\colon\Herm_m\to\Herm_{m_1}\times\cdots\times\Herm_{m_s}$ the morphism that extracts the diagonal blocks with corresponding ranks.

        \item We denote by $\Herm_m(F)^+$ (resp.\ $\Herm^\circ_m(F)^+$) the subset of $\Herm_m(F)$ of elements that are totally semi-positive definite (resp.\ totally positive definite).
      \end{itemize}

  \item For every $u\in\tV_E^{(\infty)}$, we fix an embedding $\iota_u\colon E\hookrightarrow\dC$ inducing $u$ (with $\iota_\bu=\biota$), and identify $E_u$ with $\dC$ via $\iota_u$.

  \item Let $\eta\coloneqq\eta_{E/F}\colon \dA_F^\times\to\dC^\times$ be the quadratic character associated to $E/F$. For every $v\in\tV_F$ and every positive integer $m$, put
      \[
      b_{m,v}(s)\coloneqq\prod_{i=1}^m L(2s+i,\eta_v^{m-i}).
      \]
      Put $b_m(s)\coloneqq\prod_{v\in\tV_F}b_{m,v}(s)$.

  \item For every element $T\in\Herm_m(\dA_F)$, we have the character $\psi_T\colon\Herm_m(\dA_F)\to\dC^\times$ given by the formula $\psi_T(b)\coloneqq\psi_F(\tr bT)$.

  \item Let $R$ be a commutative $F$-algebra. A (skew-)hermitian space over $R\otimes_FE$ is a free $R\otimes_FE$-module $V$ of finite rank, equipped with a (skew-)hermitian form $(\;,\;)_V$ with respect to the involution $\tc$ that is nondegenerate.
\end{enumerate}
\end{notation}

\begin{notation}\label{st:h}
We fix an even positive integer $n=2r$. Let $(V,(\;,\;)_V)$ be a hermitian space over $\dA_E$ of rank $n$ that is totally positive definite.
\begin{enumerate}[label=(H\arabic*)]
  \item For every commutative $\dA_F$-algebra $R$ and every integer $m\geq 0$, we denote by
      \[
      T(x)\coloneqq\((x_i,x_j)_V\)_{i,j}\in\Herm_m(R)
      \]
      the moment matrix of an element $x=(x_1,\dots,x_m)\in V^m\otimes_{\dA_F}R$.

  \item For every $v\in\tV_F$, we put $V_v\coloneqq V\otimes_{\dA_F}F_v$ which is a hermitian space over $E_v$, and define the local Hasse invariant of $V_v$ to be $\epsilon(V_v)\coloneqq\eta_v((-1)^r\dtm V_v)\in\{\pm 1\}$. In what follows, we will abbreviate $\epsilon(V_v)$ as $\epsilon_v$.

  \item Let $v$ be a place of $F$ and $m\geq 0$ an integer.
      \begin{itemize}
        \item For $T\in\Herm_m(F_v)$, we put $(V^m_v)_T\coloneqq\{x\in V^m_v\res T(x)=T\}$, and
            \[
            (V^m_v)_\reg\coloneqq\bigcup_{T\in\Herm_m^\circ(F_v)}(V^m_v)_T.
            \]

        \item We denote by $\sS(V_v^m)$ the space of (complex valued) Bruhat--Schwartz functions on $V_v^m$. When $v\in\tV_F^{(\infty)}$, we have the Gaussian function $\phi^0_v\in\sS(V_v^m)$ given by the formula $\phi^0_v(x)=\re^{-2\pi\tr T(x)}$.

        \item We have a Fourier transform map $\widehat{\phantom{a}}\colon \sS(V_v^m)\to \sS(V_v^m)$ sending $\phi$ to $\widehat\phi$ defined by the formula
            \[
            \widehat\phi(x)\coloneqq\int_{V_v^m}\phi(y)\psi_{E,v}\(\sum_{i=1}^m(x_i,y_i)_V\)\rd y,
            \]
            where $\r{d}y$ is the self-dual Haar measure on $V_v^m$ with respect to $\psi_{E,v}$.

        \item In what follows, we will always use this self-dual Haar measure on $V_v^m$.
      \end{itemize}

  \item Let $m\geq 0$ be an integer. For $T\in\Herm_m(F)$, we put
      \[
      \Diff(T,V)\coloneqq\{v\in\tV_F\res(V^m_v)_T=\emptyset\},
      \]
      which is a finite subset of $\tV_F\setminus\tV_F^\spl$.

  \item Take a nonempty finite subset $\tR\subseteq\tV_F^\fin$ that contains
      \[
      \{v\in\tV_F^\ram\res\text{either $\epsilon_v=-1$, or $2\mid v$, or $v$ is ramified over $\dQ$}\}.
      \]
      Let $\tS$ be the subset of $\tV_F^\fin\setminus\tR$ consisting of $v$ such that $\epsilon_v=-1$, which is contained in $\tV_F^\inert$.

  \item We fix a $\prod_{v\in\tV_F^\fin\setminus\tR}O_{E_v}$-lattice $\Lambda^\tR$ in $V\otimes_{\dA_F}\dA_F^{\infty,\tR}$ such that for every $v\in\tV_F^\fin\setminus\tR$, $\Lambda^\tR_v$ is a subgroup of $(\Lambda^\tR_v)^\vee$ of index $q_v^{1-\epsilon_v}$, where
      \[
      (\Lambda^\tR_v)^\vee\coloneqq\{x\in V_v\res\psi_{E,v}((x,y)_V)=1\text{ for every }y\in\Lambda^\tR_v\}
      \]
      is the $\psi_{E,v}$-dual lattice of $\Lambda^\tR_v$.

  \item Put $H\coloneqq\rU(V)$, which is a reductive group over $\dA_F$.

  \item Denote by $L^\tR\subseteq H(\dA_F^{\infty,\tR})$ the stabilizer of $\Lambda^\tR$, which is a special maximal subgroup. We have the (abstract) Hecke algebra away from $\tR$
      \[
      \dT^\tR\coloneqq\dZ[L^\tR\backslash H(\dA_F^{\infty,\tR})/L^\tR],
      \]
      which is a ring with the unit $\CF_{L^\tR}$, and denote by $\dS^\tR$ the subring
      \[
      \varinjlim_{\substack{\tT\subseteq\tV_F^\spl\setminus\tR\\|\tT|<\infty}}
      \dZ[(L^\tR)_\tT\backslash H(F_\tT)/(L^\tR)_\tT]\otimes\CF_{(L^\tR)^\tT}
      \]
      of $\dT^\tR$.

  \item Suppose that $V$ is \emph{incoherent}, namely, $\prod_{v\in\tV_F}\epsilon_v=-1$. For every $u\in\tV_E\setminus\tV_E^\spl$, we fix a \emph{$u$-nearby space} $\pres{u}{V}$ of $V$, which is a hermitian space over $E$, and an isomorphism $\pres{u}{V}\otimes_F\dA_F^{\ul{u}}\simeq V\otimes_{\dA_F}\dA_F^{\ul{u}}$. More precisely,
      \begin{itemize}
        \item if $u\in\tV_E^{(\infty)}$, then $\pres{u}{V}$ is the hermitian space over $E$, unique up to isomorphism, that has signature $(n-1,1)$ at $u$ and satisfies $\pres{u}{V}\otimes_F\dA_F^{\ul{u}}\simeq V\otimes_{\dA_F}\dA_F^{\ul{u}}$;

        \item if $u\in\tV_E^\fin\setminus\tV_E^\spl$, then $\pres{u}{V}$ is the hermitian space over $E$, unique up to isomorphism, that satisfies $\pres{u}{V}\otimes_F\dA_F^{\ul{u}}\simeq V\otimes_{\dA_F}\dA_F^{\ul{u}}$.
      \end{itemize}
      Put $\pres{u}{H}\coloneqq\rU(\pres{u}{V})$, which is a reductive group over $F$. Then $\pres{u}{H}(\dA_F^{\ul{u}})$ and $H(\dA_F^{\ul{u}})$ are identified.
\end{enumerate}
\end{notation}

\begin{notation}\label{st:g}
Let $m\geq 0$ be an integer. We equip $W_m=E^{2m}$ and $\bar{W}_m=E^{2m}$ the skew-hermitian forms given by the matrices $\tw_m$ and $-\tw_m$, respectively.
\begin{enumerate}[label=(G\arabic*)]
  \item Let $G_m$ be the unitary group of both $W_m$ and $\bar{W}_m$. We write elements of $W_m$ and $\bar{W}_m$ in the row form, on which $G_m$ acts from the right.

  \item We denote by $\{e_1,\dots,e_{2m}\}$ and $\{\bar{e}_1,\dots,\bar{e}_{2m}\}$ the natural bases of $W_m$ and $\bar{W}_m$, respectively.

  \item Let $P_m\subseteq G_m$ be the parabolic subgroup stabilizing the subspace generated by $\{e_{r+1},\dots,e_{2m}\}$, and $N_m\subseteq P_m$ its unipotent radical.

  \item We have
     \begin{itemize}
       \item a homomorphism $m\colon\Res_{E/F}\GL_m\to P_m$ sending $a$ to
          \[
          m(a)\coloneqq
          \begin{pmatrix}
              a &  \\
               & \pres{\rt}{a}^{\tc,-1} \\
          \end{pmatrix}
          ,
          \]
          which identifies $\Res_{E/F}\GL_m$ as a Levi factor of $P_m$.

       \item a homomorphism $n\colon\Herm_m\to N_m$ sending $b$ to
          \[
          n(b)\coloneqq
          \begin{pmatrix}
              1_m & b \\
               & 1_m \\
          \end{pmatrix}
          ,
          \]
          which is an isomorphism.
     \end{itemize}

  \item We define a maximal compact subgroup $K_m=\prod_{v\in\tV_F}K_{m,v}$ of $G_m(\dA_F)$ in the following way:
    \begin{itemize}
      \item for $v\in\tV_F^\fin$, $K_{m,v}$ is the stabilizer of the lattice $O_{E_v}^{2m}$;

      \item for $v\in\tV_F^{(\infty)}$, $K_{m,v}$ is the subgroup of the form
         \[
         [k_1,k_2]\coloneqq\frac{1}{2}
         \begin{pmatrix}
           k_1+k_2   & -ik_1+ik_2 \\
           ik_1-ik_2   & k_1+k_2 \\
         \end{pmatrix}
         ,
         \]
         in which $k_i\in\GL_m(\dC)$ satisfying $k_i\pres{\rt}{k}_i^\tc=1_m$ for $i=1,2$. Here, we have identified $G_m(F_v)$ as a subgroup of $\GL_{2m}(\dC)$ via the embedding $\iota_v$ in Notation \ref{st:f}(F3).
    \end{itemize}

  \item For every $v\in\tV_F^{(\infty)}$, we have a character $\kappa_{m,v}\colon K_{m,v}\to\dC^\times$ that sends $[k_1,k_2]$ to $\dtm k_1/\dtm k_2$.\footnote{In fact, both $K_{m,v}$ and $\kappa_{m,v}$ do not depend on the choice of the embedding $\iota_v$ for $v\in\tV_F^{(\infty)}$.}

  \item For every $v\in\tV_F$, we define a Haar measure $\r{d}g_v$ on $G_m(F_v)$ as follows:
    \begin{itemize}
      \item for $v\in\tV_F^\fin$, $\r{d}g_v$ is the Haar measure under which $K_{m,v}$ has volume $1$;

      \item for $v\in\tV_F^{(\infty)}$, $\r{d}g_v$ is the product of the measure on $K_{m,v}$ of total volume $1$ and the standard hyperbolic measure on $G_m(F_v)/K_{m,v}$.
    \end{itemize}
    Put $\r{d}g=\prod_{v}\r{d}g_v$, which is a Haar measure on $G_m(\dA_F)$.

  \item We denote by $\cA(G_m(F)\backslash G_m(\dA_F))$ the space of both $\cZ(\fg_{m,\infty})$-finite and $K_{m,\infty}$-finite automorphic forms on $G_m(\dA_F)$, where $\cZ(\fg_{m,\infty})$ denotes the center of the complexified universal enveloping algebra of the Lie algebra $\fg_{m,\infty}$ of $G_m\otimes_FF_\infty$. We denote by
      \begin{itemize}
        \item $\cA^{[r]}(G_m(F)\backslash G_m(\dA_F))$ the maximal subspace of $\cA(G_m(F)\backslash G_m(\dA_F))$ on which for every $v\in\tV_F^{(\infty)}$, $K_{m,v}$ acts by the character $\kappa_{m,v}^r$,

        \item $\cA^{[r]\tR}(G_m(F)\backslash G_m(\dA_F))$ the maximal subspace of $\cA^{[r]}(G_m(F)\backslash G_m(\dA_F))$ on which
             \begin{itemize}
               \item for every $v\in\tV_F^\fin\setminus(\tR\cup\tS)$, $K_{m,v}$ acts trivially; and

               \item for every $v\in\tS$, the standard Iwahori subgroup $I_{m,v}$ acts trivially and $\dC[I_{m,v}\backslash K_{m,v}/I_{m,v}]$ acts by the character $\kappa_{m,v}^-$ (\cite{Liu20}*{Definition~2.1}),
             \end{itemize}

        \item $\cA_\cusp(G_m(F)\backslash G_m(\dA_F))$ the subspace of $\cA(G_m(F)\backslash G_m(\dA_F))$ of cusp forms, and by $\langle\;,\;\rangle_{G_m}$ the hermitian form on $\cA_\cusp(G_m(F)\backslash G_m(\dA_F))$ given by the Petersson inner product with respect to the Haar measure $\r{d}g$.
      \end{itemize}
      For a subspace $\cV$ of $\cA(G_m(F)\backslash G_m(\dA_F))$, we denote by
      \begin{itemize}
        \item $\cV^{[r]}$ the intersection of $\cV$ and $\cA^{[r]}(G_m(F)\backslash G_m(\dA_F))$,

        \item $\cV^{[r]\tR}$ the intersection of $\cV$ and $\cA^{[r]\tR}(G_m(F)\backslash G_m(\dA_F))$,

        \item $\cV^\tc$ the subspace $\{\varphi^\tc\res\varphi\in\cV\}$.
      \end{itemize}
\end{enumerate}
\end{notation}

\begin{assumption}\label{st:representation}
In what follows, we will consider an irreducible automorphic subrepresentation $(\pi,\cV_\pi)$ of $\cA_\cusp(G_r(F)\backslash G_r(\dA_F))$ satisfying that
\begin{enumerate}
  \item for every $v\in\tV_F^{(\infty)}$, $\pi_v$ is the (unique up to isomorphism) discrete series representation whose restriction to $K_{r,v}$ contains the character $\kappa_{r,v}^r$;

  \item for every $v\in\tV_F^\fin\setminus\tR$, $\pi_v$ is unramified (resp.\ almost unramified) with respect to $K_{r,v}$ if $\epsilon_v=1$ (resp.\ $\epsilon_v=-1$);

  \item for every $v\in\tV_F^\fin$, $\pi_v$ is tempered.
\end{enumerate}
We realize the contragredient representation $\pi^\vee$ on $\cV_\pi^\tc$ via the Petersson inner product $\langle\;,\;\rangle_{G_r}$ (Notation \ref{st:g}(G8)). By (1) and (2), we have $\cV_\pi^{[r]\tR}\neq\{0\}$, where $\cV_\pi^{[r]\tR}$ is defined in Notation \ref{st:g}(G8).
\end{assumption}

\begin{remark}\label{re:dichotomy}
By Proposition \ref{pr:uniqueness}(2) below, we know that when $\tR\subseteq\tV_F^\spl$, $V$ coincides with the hermitian space over $\dA_E$ of rank $n$ determined by $\pi$ via local theta dichotomy.
\end{remark}

\begin{definition}\label{de:lfunction}
We define the $L$-function for $\pi$ as the Euler product $L(s,\pi)\coloneqq\prod_{v}L(s,\pi_v)$ over all places of $F$, in which
\begin{enumerate}
  \item for $v\in\tV_F^\fin$, $L(s,\pi_v)$ is the doubling $L$-function defined in \cite{Yam14}*{Theorem~5.2};

  \item for $v\in\tV_F^{(\infty)}$, $L(s,\pi_v)$ is the $L$-function of the standard base change $\BC(\pi_v)$ of $\pi_v$. By Assumption \ref{st:representation}(1), $\BC(\pi_v)$ is the principal series representation of $\GL_n(\dC)$ that is the normalized induction of $\arg^{n-1}\boxtimes\arg^{n-3}\boxtimes\cdots\boxtimes\arg^{3-n}\boxtimes\arg^{1-n}$ where $\arg\colon\dC^\times\to\dC^\times$ is the argument character.
\end{enumerate}
\end{definition}

\begin{remark}\label{re:doubling}
Let $v$ be a place of $F$.
\begin{enumerate}
  \item For $v\in\tV_F^{(\infty)}$, doubling $L$-function is only well-defined up to an entire function without zeros. However, one can show that $L(s,\pi_v)$ satisfies the requirement for the doubling $L$-function in \cite{Yam14}*{Theorem~5.2}.

  \item For $v\in\tV_F^\spl$, the standard base change $\BC(\pi_v)$ is well-defined and we have $L(s,\pi_v)=L(s,\BC(\pi_v))$ by \cite{Yam14}*{Theorem~7.2}.

  \item For $v\in\tV_F^\inert\setminus\tR$, the standard base change $\BC(\pi_v)$ is well-defined and we have $L(s,\pi_v)=L(s,\BC(\pi_v))$ by \cite{Liu20}*{Remark~1.4}.

  \item For $v\in\tV_F^\ram\setminus\tR$, the standard base change $\BC(\pi_v)$ is well-defined and we have $L(s,\pi_v)=L(s,\BC(\pi_v))$ by Remark \ref{re:gcd}.
\end{enumerate}
In particular, when $\tR\subseteq\tV_F^\spl$, we have $L(s,\pi)=\prod_{v}L(s,\BC(\pi_v))$.
\end{remark}

Recall that we have the normalized doubling integral
\[
\fZ^\natural_{\pi_v,V_v}\colon\pi_v^\vee\otimes\pi_v\otimes\sS(V_v^{2r})\to\dC
\]
from \cite{LL}*{Section~3}.

\begin{proposition}\label{pr:uniqueness}
Let $(\pi,\cV_\pi)$ be as in Assumption \ref{st:representation}.
\begin{enumerate}
  \item For every $v\in\tV_F^\fin$, we have
     \begin{align*}
     \dim_\dC\Hom_{G_r(F_v)\times G_r(F_v)}(\rI^\Box_{r,v}(0),\pi_v\boxtimes\pi_v^\vee)=1.
     \end{align*}

  \item For every $v\in(\tV_F^\fin\setminus\tR)\cup\tV_F^\spl$, $V_v$ is the unique hermitian space over $E_v$ of rank $2r$, up to isomorphism, such that $\fZ^\natural_{\pi_v,V_v}\neq 0$.

  \item For every $v\in\tV_F^\fin$, $\Hom_{G_r(F_v)}(\sS(V_v^r),\pi_v)$ is irreducible as a representation of $H(F_v)$, and is nonzero if $v\in(\tV_F^\fin\setminus\tR)\cup\tV_F^\spl$.
\end{enumerate}
\end{proposition}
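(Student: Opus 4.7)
The plan is to reduce to the cases already established in the precursor \cite{LL} and to invoke the new local analysis of Section 3 only for the ramified places. For every $v \in \tV_F^{(\infty)} \cup \tV_F^\spl \cup \tV_F^\inert$, all three assertions are contained in \cite{LL}*{Proposition~2.3}, so I would concentrate entirely on $v \in \tV_F^\ram$. Note that (2) and (3) only need to be treated for $v \in \tV_F^\ram\setminus\tR$; by Notation \ref{st:h}(H5) this forces $\epsilon_v=1$, hence by (H6) $V_v$ is the unique split hermitian space over $E_v$ of rank $2r$, while Assumption \ref{st:representation}(2) makes $\pi_v$ spherical with respect to $K_{r,v}$.

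For part (1) at an arbitrary $v \in \tV_F^\ram$, I would run the usual doubling-method argument. Temperedness of $\pi_v$ gives, via \cite{Yam14}, a nonzero $(G_r(F_v)\times G_r(F_v))$-equivariant map $\rI^\Box_{r,v}(0)\to \pi_v\boxtimes\pi_v^\vee$; in the spherical case this map is explicitly realized by the doubling zeta integral $Z(\xi^\sigma,\ff_r^{(0)})$, whose nonvanishing is Proposition \ref{pr:zeta} combined with the explicit $L$-factor from Proposition \ref{pr:gcd}. Uniqueness of the Hom space follows from a standard Schur-type argument exploiting the irreducibility of $\pi_v\boxtimes\pi_v^\vee$ and the fact that $\rI^\Box_{r,v}(0)$ has, by Kudla--Rallis/Yamana, a unique irreducible quotient with a $(\pi_v,\pi_v^\vee)$-isotypic component.

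For parts (2) and (3) at $v \in \tV_F^\ram\setminus\tR$, the classification of tempered spherical representations gives $\pi_v\simeq\pi^\sigma_{W_r,v}$ for some $\sigma\in(i\dR/\tfrac{2\pi i}{\log q_v}\dZ)^r$. Proposition \ref{pr:theta} then yields $\theta(\pi_v,V_v)\simeq\pi^\sigma_{V,v}\neq 0$, which via the standard translation between doubling zeta integrals and Rallis inner products implies $\fZ^\natural_{\pi_v,V_v}\neq 0$ and simultaneously produces a nonzero element of $\Hom_{G_r(F_v)}(\sS(V_v^r),\pi_v)$; the irreducibility part of (3) is Howe duality for unitary dual pairs at odd residue characteristic. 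The uniqueness half of (2) (that $\fZ^\natural_{\pi_v,V'_v}=0$ for the nonsplit companion $V'_v$ of rank $2r$) I would deduce from the local conservation relation for the unitary theta correspondence: since $\pi_v$ already lifts nontrivially to $\rU(V_v)$ at equal rank, its lift to $\rU(V'_v)$ must vanish, and by Proposition \ref{pr:annihilator} this vanishing is equivalent to the vanishing of $\fZ^\natural_{\pi_v,V'_v}$.

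The main obstacle is the vanishing half of (2) at the ramified place: the nonvanishing half is essentially immediate from Proposition \ref{pr:theta}, whereas the vanishing on the nonsplit companion $V'_v$ hinges on the equal-rank conservation relation, which at ramified dyadic-free odd residue characteristic is the delicate input. All other ingredients are either cited from \cite{LL} or assembled from the explicit local computations of Section 3.
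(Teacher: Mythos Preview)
Your approach is essentially correct and aligned with the paper's, but you do more work than necessary. The paper's own proof is one sentence: it is the same as \cite{LL}*{Proposition~3.6} (not 2.3) except that part (2) at $v\in\tV_F^\ram$ requires Proposition~\ref{pr:theta}. The point is that the \emph{proofs} of (1) and (3) in \cite{LL} never use that $E_v/F_v$ is unramified; (1) is a general multiplicity-one statement for the degenerate principal series (valid at any non-archimedean $v$), and the irreducibility in (3) is Howe duality, while the nonvanishing in (3) follows from (2). So your separate arguments for (1) and (3) at ramified places, while not wrong, are redundant.

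Two small corrections. First, your citation to \cite{LL}*{Proposition~2.3} should be \cite{LL}*{Proposition~3.6}. Second, in the last line you invoke Proposition~\ref{pr:annihilator} to pass from $\theta(\pi_v,V'_v)=0$ to $\fZ^\natural_{\pi_v,V'_v}=0$; but Proposition~\ref{pr:annihilator} is stated only for the \emph{split} space $V$ and says nothing about the nonsplit companion $V'_v$. The equivalence you want is the general local Rallis inner product (doubling integral realizing the theta pairing), not Proposition~\ref{pr:annihilator}. With that fix your argument for (2) is fine; the ``delicate'' conservation relation you worry about is standard equal-rank theta dichotomy in odd residue characteristic and is implicit in the paper's one-line appeal to Proposition~\ref{pr:theta}.
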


\begin{proof}
This is same as \cite{LL}*{Proposition~3.6} except that in (2) we have to take care of the case where $v\in\tV_F^\ram$, which is a consequence of Proposition \ref{pr:theta}.
\end{proof}

\begin{proposition}\label{pr:eisenstein}
Let $(\pi,\cV_\pi)$ be as in Assumption \ref{st:representation} such that $L(\tfrac{1}{2},\pi)=0$. Take
\begin{itemize}
  \item $\varphi_1=\otimes_v\varphi_{1v}\in\cV_\pi^{[r]\tR}$ and $\varphi_2=\otimes_v\varphi_{2v}\in\cV_\pi^{[r]\tR}$ such that $\langle\varphi_{1v}^\tc,\varphi_{2v}\rangle_{\pi_v}=1$ for $v\in\tV_F\setminus\tR$,\footnote{Strictly speaking, what we fixed is a decomposition $\varphi_1^\tc=\otimes_v(\varphi_1^\tc)_v$ and have abused notation by writing $\varphi^\tc_{1v}$ instead of $(\varphi_1^\tc)_v$.} and

  \item $\Phi=\otimes_v\Phi_v\in\sS(V^{2r})$ such that $\Phi_v$ is the Gaussian function (Notation \ref{st:h}(H3)) for $v\in\tV_F^{(\infty)}$, and $\Phi_v=\CF_{(\Lambda^\tR_v)^{2r}}$ for $v\in\tV_F^\fin\setminus\tR$.
\end{itemize}
Then we have
\begin{align*}
&\quad\int_{G_r(F)\backslash G_r(\dA_F)}\int_{G_r(F)\backslash G_r(\dA_F)}\varphi_2(g_2)\varphi_1^\tc(g_1)E'(0,(g_1,g_2),\Phi)\rd g_1\rd g_2 \\
&=\frac{L'(\tfrac{1}{2},\pi)}{b_{2r}(0)}\cdot C_r^{[F:\dQ]}
\cdot\prod_{v\in\tV_F^\fin}\fZ^\natural_{\pi_v,V_v}(\varphi^\tc_{1v},\varphi_{2v},\Phi_v) \\
&=\frac{L'(\tfrac{1}{2},\pi)}{b_{2r}(0)}\cdot C_r^{[F:\dQ]}
\cdot\prod_{v\in\tS}\frac{(-1)^rq_v^{r-1}(q_v+1)}{(q_v^{2r-1}+1)(q_v^{2r}-1)}
\cdot\prod_{v\in\tR}\fZ^\natural_{\pi_v,V_v}(\varphi^\tc_{1v},\varphi_{2v},\Phi_v),
\end{align*}
where
\[
C_r\coloneqq(-1)^r2^{r(r-1)}\pi^{r^2}\frac{\Gamma(1)\cdots\Gamma(r)}{\Gamma(r+1)\cdots\Gamma(2r)},
\]
and the measure on $G_r(\dA_F)$ is the one defined in Notation \ref{st:g}(G7).
\end{proposition}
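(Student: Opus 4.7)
The plan is to proceed in parallel with \cite{LL}*{the proof of Theorem~3.10 and of the main theorems in Section~11}, with the new input at ramified places of $E/F$ supplied by Proposition \ref{pr:zeta}. First, I would apply the basic identity of the doubling method: after unfolding the Siegel Eisenstein series $E(s,(g_1,g_2),\Phi)$ and invoking the uniqueness of the local doubling functional (Proposition \ref{pr:uniqueness}(1)), the double integral factors globally as
\begin{align*}
\int_{G_r(F)\backslash G_r(\dA_F)}\int_{G_r(F)\backslash G_r(\dA_F)}\varphi_2(g_2)\varphi_1^\tc(g_1)E(s,(g_1,g_2),\Phi)\rd g_1\rd g_2
=\frac{L(s+\tfrac{1}{2},\pi)}{b_{2r}(s)}\prod_{v\in\tV_F}\fZ^\natural_{\pi_v,V_v}(\varphi_{1v}^\tc,\varphi_{2v},\Phi_v).
\end{align*}
Under the hypothesis $L(\tfrac{1}{2},\pi)=0$, differentiating at $s=0$ leaves only the term in which the derivative falls on the $L$-factor, yielding the first claimed equality.

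For the second equality, I would evaluate each local normalized zeta integral at $s=0$ against the specified test vectors. At archimedean places, the Gaussian $\Phi_v$ together with the lowest-weight matrix coefficient contributes $C_r$ per place, a computation already performed in \cite{LL}*{Section~3}. At split and unramified inert places outside $\tR\cup\tS$, the classical unramified calculation gives $1$. At almost unramified places $v\in\tS$, the explicit value $\frac{(-1)^rq_v^{r-1}(q_v+1)}{(q_v^{2r-1}+1)(q_v^{2r}-1)}$ is established in \cite{Liu20}. At the ramified places $v\in\tV_F^\ram\setminus\tR$, where $\pi_v$ is spherical with respect to $K_{r,v}$ and $V_v$ is split, Proposition \ref{pr:zeta} of this article evaluates the unnormalized doubling integral to $L^\sigma(s+\tfrac{1}{2})/b_{2r,v}(s)$; combined with the identification $L(s,\pi_v)=L^\sigma(s)$ from Proposition \ref{pr:gcd} and Remark \ref{re:gcd}, this forces $\fZ^\natural_{\pi_v,V_v}=1$ at $s=0$. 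Multiplying the local contributions over all $v$ gives precisely the product displayed on the right-hand side.

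The principal obstacle is the evaluation at the genuinely ramified places $v\in\tV_F^\ram\setminus\tR$, which has no counterpart in \cite{LL} where the assumption $\tV_F^\ram=\emptyset$ was in force. The argument there relies on the identification of the $(K_{r,v}\times L_{V_v})$-fixed part of the Weil representation as a cyclic $\cH_{W_r}\otimes_\dC\cH_V$-module annihilated by the diagonal ideal (Proposition \ref{pr:annihilator}), together with the reduction of the doubling integral to a $\GL_r(E_v)$ Rankin--Selberg integral in the style of \cite{GPSR}*{Section~6}. Once these local ingredients are in place, the assembly of global and local factors is formal.
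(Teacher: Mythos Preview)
Your proposal is correct and follows essentially the same approach as the paper: the paper's proof simply says it is identical to \cite{LL}*{Proposition~3.7} with the one additional input $\fZ^\natural_{\pi_v,V_v}(\varphi^\tc_{1v},\varphi_{2v},\Phi_v)=1$ for $v\in\tV_F^\ram\setminus\tR$ coming from Proposition~\ref{pr:zeta}, and you have unpacked exactly this. Two minor remarks: the relevant reference in \cite{LL} is Proposition~3.7 rather than Theorem~3.10, and your invocation of Proposition~\ref{pr:annihilator} in the last paragraph is not actually needed for the zeta integral computation (it enters in Proposition~\ref{pr:theta}, not in Proposition~\ref{pr:zeta}).
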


\begin{proof}
The proof is same as \cite{LL}*{Proposition~3.7}, with the additional input
\[
\fZ^\natural_{\pi_v,V_v}(\varphi^\tc_{1v},\varphi_{2v},\Phi_v)=1
\]
for $v\in\tV_F^\ram\setminus\tR$ by Proposition \ref{pr:zeta}.
\end{proof}

Suppose that $V$ is incoherent. By \cite{Liu12}*{Section~2B}, we have
\begin{enumerate}
  \item Take an element $u\in\tV_E\setminus\tV_E^\spl$, and $\pres{u}{\Phi}=\otimes_v\pres{u}{\Phi}_v\in\sS(\pres{u}{V}^{2r}\otimes_F\dA_F)$, where we recall from Notation \ref{st:h}(H9) that $\pres{u}{V}$ is the $u$-nearby hermitian space, such that $\supp(\pres{u}{\Phi}_v)\subseteq(\pres{u}{V}^{2r}_v)_\reg$ (Notation \ref{st:h}(H3)) for $v$ in a nonempty subset $\tR'\subseteq\tR$. Then for every $g\in P_r^\Box(F_{\tR'})G_r^\Box(\dA_F^{\tR'})$, we have
      \[
      E(0,g,\pres{u}{\Phi})=\sum_{T^\Box\in\Herm_{2r}^\circ(F)}\prod_{v\in\tV_F} W_{T^\Box}(0,g_v,\pres{u}{\Phi}_v).
      \]

  \item Take $\Phi=\otimes_v\Phi_v\in\sS(V^{2r})$ such that $\supp(\Phi_v)\subseteq(V^{2r}_v)_\reg$ for $v$ in a subset $\tR'\subseteq\tR$ of cardinality at least $2$. Then for every $g\in P_r^\Box(F_{\tR'})G_r^\Box(\dA_F^{\tR'})$, we have
      \[
      E'(0,g,\Phi)=\sum_{w\in\tV_F\setminus\tV_F^\spl}\fE(g,\Phi)_w,
      \]
      where
      \[
      \fE(g,\Phi)_w\coloneqq\sum_{\substack{T^\Box\in\Herm_{2r}^\circ(F)\\\Diff(T^\Box,V)=\{w\}}}W'_{T^\Box}(0,g_w,\Phi_w)
      \prod_{v\in\tV_F\setminus\{w\}} W_{T^\Box}(0,g_v,\Phi_v).
      \]
      Here, $\Diff(T^\Box,V)$ is defined in Notation \ref{st:h}(H4).
\end{enumerate}

\begin{definition}\label{de:analytic_kernel}
Suppose that $V$ is incoherent. Take an element $u\in\tV_E\setminus\tV_E^\spl$, and a pair $(T_1,T_2)$ of elements in $\Herm_r(F)$.
\begin{enumerate}
  \item For $\pres{u}{\Phi}=\otimes_v\pres{u}{\Phi}_v\in\sS(\pres{u}{V}^{2r}\otimes_F\dA_F)$, we put
    \[
    E_{T_1,T_2}(g,\pres{u}{\Phi})\coloneqq\sum_{\substack{T^\Box\in\Herm_{2r}^\circ(F)\\\partial_{r,r}T^\Box=(T_1,T_2)}}
    \prod_{v\in\tV_F} W_{T^\Box}(0,g_v,\pres{u}{\Phi}_v).
    \]

  \item For $\Phi=\otimes_v\Phi_v\in\sS(V^{2r})$, we put
    \[
    \fE_{T_1,T_2}(g,\Phi)_u\coloneqq\sum_{\substack{T^\Box\in\Herm_{2r}^\circ(F)\\\Diff(T^\Box,V)=\{\ul{u}\}\\ \partial_{r,r}T^\Box=(T_1,T_2)}}
    W'_{T^\Box}(0,g_{\ul{u}},\Phi_{\ul{u}})\prod_{v\in\tV_F\setminus\{\ul{u}\}}W_{T^\Box}(0,g_v,\Phi_v).
    \]
\end{enumerate}
Here, $\partial_{r,r}\colon\Herm_{2r}\to\Herm_r\times\Herm_r$ is defined in Notation \ref{st:f}(F2).
\end{definition}

\subsection{Recollection on arithmetic theta lifting}
\label{ss:atl}

From this moment, we will assume $F\neq\dQ$.

Recall that we have fixed a $\bu$-nearby space $\pres{\bu}V$ and an isomorphism $\pres{\bu}{V}\otimes_F\dA_F^{\ul{\bu}}\simeq V\otimes_{\dA_F}\dA_F^{\ul{\bu}}$ from Notation \ref{st:h}(H9). For every open compact subgroup $L\subseteq H(\dA_F^\infty)$, we have the Shimura variety $X_L$ associated to $\Res_{F/\dQ}\pres{\bu}{H}$ of the level $L$, which is a smooth quasi-projective scheme over $E$ (which is regarded as a subfield of $\dC$ via $\biota$) of dimension $n-1$. We remind the readers its complex uniformization
\begin{align}\label{eq:complex_uniformization}
(X_L\otimes_E\dC)^{\r{an}}\simeq\pres{\bu}{H}(F)\backslash\fD\times H(\dA_F)/L,
\end{align}
where $\fD$ denotes the complex manifold of negative lines in $\pres{\bu}{V}\otimes_E\dC$ and the Deligne homomorphism is the one adopted in \cite{LTXZZ}*{Section~3.2}. In what follows, for a place $u\in\tV_E$, we put $X_{L,u}\coloneqq X_L\otimes_EE_u$ as a scheme over $E_u$.

For every $\phi^\infty\in\sS(V^m\otimes_{\dA_F}\dA_F^\infty)^L$ and $T\in\Herm_m(F)$, we put
\[
Z_T(\phi^\infty)_L\coloneqq\sum_{\substack{x\in L\backslash V^m\otimes_{\dA_F}\dA_F^\infty\\ T(x)=T}}\phi^\infty(x)Z(x)_L,
\]
where $Z(x)_L$ is Kudla's special cycle recalled in \cite{LL}*{Definition~4.1}. As the above summation is finite, $Z_T(\phi^\infty)_L$ is a well-defined element in $\CH^m(X_L)_\dC$. For every $g\in G_m(\dA_F)$, Kudla's \emph{generating function} is defined to be
\[
Z_{\phi^\infty}(g)_L\coloneqq\sum_{T\in\Herm_m(F)^+}
\omega_{m,\infty}(g_\infty)\phi^0_\infty(T)\cdot
Z_T(\omega_m^\infty(g^\infty)\phi^\infty)_L
\]
as a formal sum valued in $\CH^m(X_L)_\dC$, where
\[
\omega_{m,\infty}(g_\infty)\phi^0_\infty(T)\coloneqq\prod_{v\in\tV_F^{(\infty)}}\omega_{m,v}(g_v)\phi^0_v(T).
\]
Here, we note that for $v\in\tV_F^{(\infty)}$, the function $\omega_{m,v}(g_v)\phi^0_v$ factors through the moment map $V_v^m\to\Herm_m(F_v)$ (see Notation \ref{st:h}(H1)).

\begin{hypothesis}[Modularity of generating functions of codimension $m$, \cite{LL}*{Hypothesis~4.5}]\label{hy:modularity}
For every open compact subgroup $L\subseteq H(\dA_F^\infty)$, every $\phi^\infty\in\sS(V^m\otimes_{\dA_F}\dA_F^\infty)^L$, and every complex linear map $l\colon\CH^m(X_L)_\dC\to\dC$, the assignment
\[
g\mapsto l(Z_{\phi^\infty}(g)_L)
\]
is absolutely convergent, and gives an element in $\cA^{[r]}(G_m(F)\backslash G_m(\dA_F))$. In other words, the function $Z_{\phi^\infty}(-)_L$ defines an element in $\Hom_\dC(\CH^m(X_L)_\dC^\vee,\cA^{[r]}(G_m(F)\backslash G_m(\dA_F)))$.
\end{hypothesis}

\begin{definition}\label{de:arithmetic_theta}
Let $(\pi,\cV_\pi)$ be as in Assumption \ref{st:representation}. Assume Hypothesis \ref{hy:modularity} on the modularity of generating functions of codimension $r$. For every $\varphi\in\cV_\pi^{[r]}$, every open compact subgroup $L\subseteq H(\dA_F^\infty)$, and every $\phi^\infty\in\sS(V^r\otimes_{\dA_F}\dA_F^\infty)^L$, we put
\[
\Theta_{\phi^\infty}(\varphi)_L\coloneqq\int_{G_r(F)\backslash G_r(\dA_F)}
\varphi^\tc(g)Z_{\phi^\infty}(g)_L\rd g,
\]
which is an element in $\CH^r(X_L)_\dC$ by \cite{LL}*{Proposition~4.7}. It is clear that the image of $\Theta_{\phi^\infty}(\varphi)_L$ in
\[
\CH^r(X)_\dC\coloneqq\varinjlim_L\CH^r(X_L)_\dC
\]
depends only on $\varphi$ and $\phi^\infty$, which we denote by $\Theta_{\phi^\infty}(\varphi)$. Finally, we define the \emph{arithmetic theta lifting} of $(\pi,\cV_\pi)$ to $V$ (with respect to $\biota$) to be the complex subspace $\Theta(\pi,V)$ of $\CH^r(X)_\dC$ spanned by $\Theta_{\phi^\infty}(\varphi)$ for all $\varphi\in\cV_\pi^{[r]}$ and $\phi^\infty\in\sS(V^r\otimes_{\dA_F}\dA_F^\infty)$.
\end{definition}

We recall Beilinson's height pairing for our particular use from \cite{LL}*{Section~6}. We have a map
\begin{align*}
\langle\;,\;\rangle_{X_L,E}^\ell\colon
\CH^r(X_L)^{\langle\ell\rangle}_\dC\times\CH^r(X_L)^{\langle\ell\rangle}_\dC\to\dC\otimes_\dQ\dQ_\ell
\end{align*}
that is complex linear in the first variable, and conjugate symmetric. Here, $\ell$ is a rational prime so that $X_{L,u}$ has smooth projective reduction for every $u\in\tV_E^{(\ell)}$. For a pair $(c_1,c_2)$ of elements in $\rZ^r(X_L)^{\langle\ell\rangle}_\dC\times\rZ^r(X_L)^{\langle\ell\rangle}_\dC$ with disjoint supports, we have
\begin{align*}
\langle c_1,c_2\rangle_{X_L,E}^\ell=\sum_{u\in\tV_E^{(\infty)}}2\langle c_1,c_2\rangle_{X_{L,u},E_u}+
\sum_{u\in\tV_E^\fin}\log q_u\cdot \langle c_1,c_2\rangle_{X_{L,u},E_u}^\ell,
\end{align*}
in which
\begin{itemize}
  \item $q_u$ is the residue cardinality of $E_u$ for $u\in\tV_E^\fin$;

  \item $\langle c_1,c_2\rangle_{X_{L,u},E_u}^\ell\in\dC\otimes_\dQ\dQ_\ell$ is the non-archimedean local index recalled in \cite{LL}*{Section~B} for $u\in\tV_E^\fin$ (see \cite{LL}*{Remark~B.11} when $u$ is above $\ell$), which equals zero for all but finitely many $u$;

  \item $\langle c_1,c_2\rangle_{X_{L,u},E_u}\in\dC$ is the archimedean local index for $u\in\tV_E^{(\infty)}$, recalled in \cite{LL}*{Section~10}.
\end{itemize}

\begin{definition}\label{de:good}
We say that a rational prime $\ell$ is \emph{$\tR$-good} if $\ell$ is unramified in $E$ and satisfies $\tV_F^{(\ell)}\subseteq\tV_F^\fin\setminus(\tR\cup\tS)$.
\end{definition}

\begin{definition}\label{de:tempered_generic}
For every open compact subgroup $L_\tR$ of $H(F_\tR)$ and every subfield $\dL$ of $\dC$, we define
\begin{enumerate}
  \item $(\dS^\tR_\dL)^0_{L_\tR}$ to be the subalgebra of $\dS^\tR_\dL$ (Notation \ref{st:h}(H8)) of elements that annihilate
      \[
      \bigoplus_{i\neq 2r-1}\rH^i_{\dr}(X_{L_\tR L^\tR}/E)\otimes_\dQ\dL,
      \]

  \item for every rational prime $\ell$, $(\dS^\tR_\dL)^{\langle\ell\rangle}_{L_\tR}$ to be the subalgebra of $\dS^\tR_\dL$ of elements that annihilate
      \[
      \bigoplus_{u\in\tV_E^\fin\setminus\tV_E^{(\ell)}}\rH^{2r}(X_{L_\tR L^\tR,u},\dQ_\ell(r))\otimes_\dQ\dL.
      \]
\end{enumerate}
Here, $L^\tR$ is defined in Notation \ref{st:h}(H8).
\end{definition}

\begin{definition}\label{de:kernel_geometric}
Consider a nonempty subset $\tR'\subseteq\tR$, an $\tR$-good rational prime $\ell$, and an open compact subgroup $L$ of $H(\dA_F^\infty)$ of the form $L_\tR L^\tR$ where $L^\tR$ is defined in Notation \ref{st:h}(H8). An \emph{$(\tR,\tR',\ell,L)$-admissible sextuple} is a sextuple $(\phi^\infty_1,\phi^\infty_2,\rs_1,\rs_2,g_1,g_2)$ in which
\begin{itemize}
  \item for $i=1,2$, $\phi^\infty_i=\otimes_v\phi^\infty_{iv}\in\sS(V^r\otimes_{\dA_F}\dA_F^\infty)^L$ in which $\phi^\infty_{iv}=\CF_{(\Lambda^\tR_v)^r}$ for $v\in\tV_F^\fin\setminus\tR$, satisfying that $\supp(\phi^\infty_{1v}\otimes(\phi^\infty_{2v})^\tc)\subseteq(V^{2r}_v)_\reg$ for $v\in\tR'$;

  \item for $i=1,2$, $\rs_i$ is a product of two elements in $(\dS^\tR_{\dQ^\ac})^{\langle\ell\rangle}_{L_\tR}$;

  \item for $i=1,2$, $g_i$ is an element in $G_r(\dA_F^{\tR'})$.
\end{itemize}

For an $(\tR,\tR',\ell,L)$-admissible sextuple $(\phi^\infty_1,\phi^\infty_2,\rs_1,\rs_2,g_1,g_2)$ and every pair $(T_1,T_2)$ of elements in $\Herm_r^\circ(F)^+$, we define
\begin{enumerate}
  \item the global index $I_{T_1,T_2}(\phi^\infty_1,\phi^\infty_2,\rs_1,\rs_2,g_1,g_2)^\ell_L$ to be
     \begin{align*}
     \langle \omega_{r,\infty}(g_{1\infty})\phi^0_\infty(T_1)\cdot\rs_1^*Z_{T_1}(\omega_r^\infty(g_1^\infty)\phi^\infty_1)_L,
     \omega_{r,\infty}(g_{2\infty})\phi^0_\infty(T_2)\cdot\rs_2^*Z_{T_2}(\omega_r^\infty(g_2^\infty)\phi^\infty_2)_L
     \rangle_{X_L,E}^\ell
     \end{align*}
     as an element in $\dC\otimes_\dQ\dQ_\ell$, where we note that for $i=1,2$, $\rs_i^*Z_{T_i}(\omega_r^\infty(g_i^\infty)\phi^\infty_i)_L$ belongs to $\CH^r(X_L)^{\langle\ell\rangle}_\dC$ by Definition \ref{de:tempered_generic}(2);

  \item for every $u\in\tV_E^\fin$, the local index $I_{T_1,T_2}(\phi^\infty_1,\phi^\infty_2,\rs_1,\rs_2,g_1,g_2)^\ell_{L,u}$ to be
     \begin{align*}
     \langle \omega_{r,\infty}(g_{1\infty})\phi^0_\infty(T_1)\cdot\rs_1^*Z_{T_1}(\omega_r^\infty(g_1^\infty)\phi^\infty_1)_L,
     \omega_{r,\infty}(g_{2\infty})\phi^0_\infty(T_2)\cdot\rs_2^*Z_{T_2}(\omega_r^\infty(g_2^\infty)\phi^\infty_2)_L
     \rangle_{X_{L,u},E_u}^\ell
     \end{align*}
     as an element in $\dC\otimes_\dQ\dQ_\ell$;

  \item for every $u\in\tV_E^{(\infty)}$, the local index $I_{T_1,T_2}(\phi^\infty_1,\phi^\infty_2,\rs_1,\rs_2,g_1,g_2)_{L,u}$ to be
     \begin{align*}
     \langle \omega_{r,\infty}(g_{1\infty})\phi^0_\infty(T_1)\cdot\rs_1^*Z_{T_1}(\omega_r^\infty(g_1^\infty)\phi^\infty_1)_L,
     \omega_{r,\infty}(g_{2\infty})\phi^0_\infty(T_2)\cdot\rs_2^*Z_{T_2}(\omega_r^\infty(g_2^\infty)\phi^\infty_2)_L
     \rangle_{X_{L,u},E_u}
     \end{align*}
     as an element in $\dC$.
\end{enumerate}
\end{definition}

Let $(\pi,\cV_\pi)$ be as in Assumption \ref{st:representation}, and assume Hypothesis \ref{hy:modularity} on the modularity of generating functions of codimension $r$.

\begin{remark}\label{re:arithmetic_theta}
In the situation of Definition \ref{de:arithmetic_theta} (and suppose that $F\neq\dQ$), suppose that $L$ has the form $L_\tR L^\tR$ where $L^\tR$ is defined in Notation \ref{st:h}(H8). We have, from \cite{LL}*{Proposition~6.10}, that for every elements $\varphi\in\cV_\pi^{[r]\tR}$ and $\phi^\infty\in\sS(V^r\otimes_{\dA_F}\dA_F^\infty)^L$,
\begin{enumerate}
  \item $\rs^*\Theta_{\phi^\infty}(\varphi)_L=\chi^\tR_\pi(\rs)^\tc\cdot\Theta_{\phi^\infty}(\varphi)_L$ for every $\rs\in\dS^\tR_{\dQ^\ac}$;

  \item $\Theta_{\phi^\infty}(\varphi)_L\in\CH^r(X_L)_\dC^0$;

  \item under \cite{LL}*{Hypothesis~6.6}, $\Theta_{\phi^\infty}(\varphi)_L\in\CH^r(X_L)_\dC^{\langle\ell\rangle}$ for every $\tR$-good rational prime $\ell$.
\end{enumerate}
\end{remark}

We recall the \emph{normalized height pairing} between the cycles $\Theta_{\phi^\infty}(\varphi)$ in Definition \ref{de:arithmetic_theta}, under \cite{LL}*{Hypothesis~6.6}.

\begin{definition}\label{de:natural}
Under \cite{LL}*{Hypothesis~6.6}, for every elements $\varphi_1,\varphi_2\in\cV_\pi^{[r]}$ and $\phi^\infty_1,\phi^\infty_2\in\sS(V^r\otimes_{\dA_F}\dA_F^\infty)$, we define the \emph{normalized height pairing}
\[
\langle\Theta_{\phi^\infty_1}(\varphi_1),\Theta_{\phi^\infty_2}(\varphi_2)\rangle_{X,E}^\natural
\in\dC\otimes_\dQ\dQ_\ell
\]
to be the unique element such that for every $L=L_\tR L^\tR$ as in Remark \ref{re:arithmetic_theta} (with $\tR$ possibly enlarged) satisfying $\varphi_1,\varphi_2\in\cV_\pi^{[r]\tR}$, $\phi^\infty_1,\phi^\infty_2\in\sS(V^r\otimes_{\dA_F}\dA_F^\infty)^L$, and that $\ell$ is $\tR$-good, we have
\[
\langle\Theta_{\phi^\infty_1}(\varphi_1),\Theta_{\phi^\infty_2}(\varphi_2)\rangle_{X,E}^\natural=
\vol^\natural(L)\cdot
\langle\Theta_{\phi^\infty_1}(\varphi_1)_L,\Theta_{\phi^\infty_2}(\varphi_2)_L\rangle_{X_L,E}^\ell,
\]
where $\vol^\natural(L)$ is introduced in \cite{LL}*{Definition~3.8} and $\langle\Theta_{\phi^\infty_1}(\varphi_1)_L,\Theta_{\phi^\infty_2}(\varphi_2)_L\rangle_{X_L,E}^\ell$ is well-defined by Remark \ref{re:arithmetic_theta}(3). Note that by the projection formula, the right-hand side of the above formula is independent of $L$.
\end{definition}

Finally, we review the auxiliary Shimura variety that will \emph{only} be used in the computation of local indices $I_{T_1,T_2}(\phi^\infty_1,\phi^\infty_2,\rs_1,\rs_2,g_1,g_2)_{L,u}$.

\begin{notation}\label{co:auxiliary}
We denote by $\rT_0$ the torus over $\dQ$ such that for every commutative $\dQ$-algebra $R$, we have
$\rT_0(R)=\{a\in E\otimes_\dQ R\res \Nm_{E/F}a \in R^\times\}$.

We choose a CM type $\Phi$ of $E$ containing $\biota$ and denote by $E'$ the subfield of $\dC$ generated by $E$ and the reflex field of $\Phi$. We also choose a skew hermitian space $W$ over $E$ of rank $1$, whose group of rational similitude is canonically $\rT_0$. For a (sufficiently small) open compact subgroup $L_0$ of $\rT_0(\dA^\infty)$, we have the PEL type moduli scheme $Y$ of CM abelian varieties with CM type $\Phi$ and level $L_0$, which is a smooth projective scheme over $E'$ of dimension $0$ (see, \cite{Kot92}, for example). In what follows, when we invoke this construction, the data $\Phi$, $W$, and $L_0$ will be fixed, hence will not be carried into the notation $E'$ and $Y$. For every open compact subgroup $L\subseteq H(\dA_F^\infty)$, we put
\[
X'_L\coloneqq X_L\otimes_EY
\]
as a scheme over $E'$.
\end{notation}

The following notation is parallel to \cite{LL}*{Notation~5.6}.

\begin{notation}\label{st:auxiliary}
In Subsections \ref{ss:split}, \ref{ss:inert}, and \ref{ss:ramified}, we will consider a place $u\in\tV_E^\fin\setminus\tV_F^\heartsuit$ (Definition \ref{de:heart}). Let $p$ be the underlying rational prime of $u$. We will fix an isomorphism $\dC\xrightarrow\sim\ol\dQ_p$ under which $\biota$ induces the place $u$. In particular, we may identify $\Phi$ as a subset of $\Hom(E,\ol\dQ_p)$.

We further require that $\Phi$ in Notation \ref{co:auxiliary} is \emph{admissible} in the following sense: if $\Phi_v\subseteq\Phi$ denotes the subset inducing the place $v$ for every $v\in\tV_F^{(p)}$, then it satisfies
\begin{enumerate}
  \item when $v\in\tV_F^{(p)}\cap\tV_F^\spl$, $\Phi_v$ induces the same place of $E$ above $v$;

  \item when $v\in\tV_F^{(p)}\cap\tV_F^\inert$, $\Phi_v$ is the pullback of a CM type of the maximal subfield of $E_v$ unramified over $\dQ_p$;

  \item when $v\in\tV_F^{(p)}\cap\tV_F^\ram$, the subfield of $\ol\dQ_p$ generated by $E_u$ and the reflex field of $\Phi_v$ is unramified over $E_u$.
\end{enumerate}
To release the burden of notation, we denote by $K$ the subfield of $\ol\dQ_p$ generated by $E_u$ and the reflex field of $\Phi$, by $k$ its residue field, and by $\breve{K}$ the completion of the maximal unramified extension of $K$ in $\ol\dQ_p$ with the residue field $\ol\dF_p$. It is clear that admissible CM type always exists for $u\in\tV_E^\fin\setminus\tV_F^\heartsuit$, and that $K$ is unramified over $E_u$.

We also choose a (sufficiently small) open compact subgroup $L_0$ of $\rT_0(\dA^\infty)$ such that $L_{0,p}$ is maximal compact. We denote by $\cY$ the integral model of $Y$ over $O_K$ such that for every $S\in\Sch'_{/O_K}$, $\cY(S)$ is the set of equivalence classes of quadruples $(A_0,\iota_{A_0},\lambda_{A_0},\eta_{A_0}^p)$ where
\begin{itemize}
  \item $(A_0,\iota_{A_0},\lambda_{A_0})$ is a unitary $O_E$-abelian scheme over $S$ of signature type $\Phi$ (see \cite{LTXZZ}*{Definition~3.4.2 \& Definition~3.4.3})\footnote{Here, our notation on objects is slightly different from \cite{LTXZZ} or \cite{LL} as we, in particular, retrieve the $O_E$-action $\iota_{A_0}$.} such that $\lambda_{A_0}$ is $p$-principal;

  \item $\eta_{A_0}^p$ is an $L_0^p$-level structure (see \cite{LTXZZ}*{Definition~4.1.2} for more details).
\end{itemize}
By \cite{How12}*{Proposition~3.1.2}, $\cY$ is finite and \'{e}tale over $O_K$.
\end{notation}

\subsection{Local indices at split places}
\label{ss:split}

In this subsection, we compute local indices at almost all places in $\tV_E^\spl$. Our goal is to prove the following proposition.

\begin{proposition}\label{pr:index_split}
Let $\tR$, $\tR'$, $\ell$, and $L$ be as in Definition \ref{de:kernel_geometric} such that the cardinality of $\tR'$ is at least $2$. Let $(\pi,\cV_\pi)$ be as in Assumption \ref{st:representation}. For every $u\in\tV_E^\spl$ satisfying $\ul{u}\not\in\tR\setminus\tV_F^\heartsuit$ and $\tV_F^{(p)}\cap\tR\subseteq\tV_F^\spl$ where $p$ is the underlying rational prime of $u$, there exist elements $\rs_1^u,\rs_2^u\in\dS_{\dQ^\ac}^\tR\setminus\fm_\pi^\tR$ such that
\[
I_{T_1,T_2}(\phi^\infty_1,\phi^\infty_2,\rs_1^u\rs_1,\rs_2^u\rs_2,g_1,g_2)^\ell_{L,u}=0
\]
for every $(\tR,\tR',\ell,L)$-admissible sextuple $(\phi^\infty_1,\phi^\infty_2,\rs_1,\rs_2,g_1,g_2)$ and every pair $(T_1,T_2)$ in $\Herm_r^\circ(F)^+$. Moreover, we may take $\rs_1^u=\rs_2^u=1$ if $\ul{u}\not\in\tR$.
\end{proposition}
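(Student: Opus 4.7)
The plan is to reduce the vanishing of the local index at $u$ to the cohomological vanishing of Theorem \ref{th:split_tempered} via Beilinson's description of the non-archimedean local index. Since $u \in \tV_E^\spl$, the group $H_{\ul u}$ is isomorphic to $\GL_n$ over $F_{\ul u} \simeq E_u$, the local model of $X_L$ at $u$ is of Drinfeld type, and the tower $\{\cX_m\}_{m \geq 0}$ of integral models over $O_{E_u}$ introduced before Theorem \ref{th:split_tempered} is available. For $m$ large enough that $L_{\ul u}$ contains the Drinfeld congruence subgroup of level $m$, the cycles $Z_{T_i}(\phi^\infty_i)_L$ lift to arithmetic cycles on a finite étale cover of $\cX_m$, and the local index $\langle\,\cdot\,,\,\cdot\,\rangle^\ell_{X_{L,u},E_u}$ is given by an intersection-theoretic bilinear form that factors, by the recipe recalled in \cite{LL}*{Section~B}, through the $\ell$-adic étale cohomology $\rH^{2r}(\cX_m, \ol\dQ_\ell(r))$ together with its Poincaré dual. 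When $\ul u \notin \tR$ we are at hyperspecial level, $\cX_0$ is smooth projective, and no Hecke twist is needed: Theorem \ref{th:split_tempered} at $m=0$ gives the vanishing after localization at $\fm_\pi^\tR \cap \dS^{\tR \cup \tV_F^{(p)}}_{\dQ^\ac}$, which forces the local index to vanish, and we may take $\rs_1^u = \rs_2^u = 1$.

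For the remaining case $\ul u \in \tR \cap \tV_F^\heartsuit$, the level at $u$ is no longer hyperspecial, so I would choose Hecke operators $\rs_1^u, \rs_2^u \in \dS^\tR_{\dQ^\ac} \setminus \fm_\pi^\tR$ whose action on $\rH^{2r}(\cX_m, \ol\dQ_\ell(r))$ projects onto the generalized eigenspace for the restriction $\chi_\pi^\tR|_{\dS^{\tR \cup \tV_F^{(p)}}_{\dQ^\ac}}$. Such operators exist by a Chinese remainder argument applied to the finitely many systems of Hecke eigenvalues occurring at level $L$, using that the image of $\chi_\pi^\tR$ is a character of a finite-type $\dZ$-algebra. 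Setting $\fm \coloneqq \fm_\pi^\tR \cap \dS^{\tR \cup \tV_F^{(p)}}_{\dQ^\ac}$, the twisted cycles $\rs_i^{u,*} Z_{T_i}(\cdots)$ then represent classes in the $\fm$-localization of $\rH^{2r}(\cX_m, \ol\dQ_\ell(r))$, which vanishes by Theorem \ref{th:split_tempered}, and the factorization of the Beilinson pairing through this cohomology delivers the conclusion $I_{T_1,T_2}(\ldots)_{L,u}^\ell = 0$. A mild additional step is needed to verify that the Hecke operators at places away from $\tR$ really do act on the integral cohomology of $\cX_m$ compatibly with their action on cycles on the generic fibre $X_L$; this is standard since $\cX_m$ has good reduction at every place not above $p$, and the places in $\tR$ above $p$ lie in $\tV_F^\spl$ by hypothesis.

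The main obstacle lies entirely in proving Theorem \ref{th:split_tempered}. The strategy I would pursue is to slice $\cX_m$ along its refined Newton stratification: by a key result of Mantovan \cite{Man08}, the closure of every refined Newton stratum in $\cX_m$ is smooth, so the excision sequences attached to this stratification reduce the vanishing on $\rH^{2r}(\cX_m, \ol\dQ_\ell(r))_\fm$ to a vanishing on the cohomology of each individual Newton stratum. For each stratum I would adapt the argument of \cite{TY07}*{Proposition~4.4}; the genuine difficulty is that $\pi_{\ul u}$ now carries arbitrary ramification and that our unitary groups have nontrivial endoscopy, so the direct character computation of loc.\ cit.\ no longer applies. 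Instead, one must import the refined stable trace formula machinery of \cite{CS17} to identify the Satake parameters attached to cohomology of each Newton stratum and rule out their compatibility with $\chi_\pi^\tR$ modulo $\fm$. This is the technically heaviest step of the argument and is the main new geometric input of the paper beyond what is developed in \cite{LL}.
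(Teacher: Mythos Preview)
Your proposal is correct and follows essentially the same approach as the paper: the proof of Proposition~\ref{pr:index_split} is deferred to \cite{LL}*{Proposition~7.1} with the sole substantive change being that Theorem~\ref{th:split_tempered} replaces \cite{LL}*{Lemma~7.3}, and your outline of Theorem~\ref{th:split_tempered} (Newton stratification, Mantovan's smoothness of strata closures, and the trace-formula input from \cite{CS17} in place of the direct argument of \cite{TY07}) matches the paper's strategy precisely. One small point you glossed over is the case $p=\ell$, where Theorem~\ref{th:split_tempered} does not apply as stated; there one necessarily has $\ul u\notin\tR$ and the vanishing follows instead from the smooth-reduction local index of \cite{LL}*{Remark~B.11}.
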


\begin{proof}
This is simply \cite{LL}*{Proposition~7.1} but without the assumption that $\pi_{\ul{u}}$ is a (tempered) principal series and without relying on \cite{LL}*{Hypothesis~6.6}. The proof is same, after we slightly generalize the construction of the integral model $\cX_m$ to take care of places in $\tV_F^{(p)}\cap\tV_F^\ram$, and use Theorem \ref{th:split_tempered} below which generalizes \cite{LL}*{Lemma~7.3}.
\end{proof}

From now to the end of this section, we assume $\tV_F^{(p)}\cap\tR\subseteq\tV_F^\spl$. We also assume $\ul{u}\in\tV_F^\heartsuit$ and when we need $m\geq 1$ below. We invoke Notation \ref{co:auxiliary} together with Notation \ref{st:auxiliary}. The isomorphism $\dC\xrightarrow\sim\ol\dQ_p$ in Notation \ref{st:auxiliary} identifies $\Hom(E,\dC)$ with $\Hom(E,\dC_p)$. For every $v\in\tV_F^{(p)}$, let $\Phi_v$ be the subset of $\Phi$, regarded as a subset of $\Hom(E,\dC_p)$, of elements that induce the place $v$ of $F$.

For every integer $m\geq 0$, we define a moduli functor $\cX_m$ over $O_K$ as follows: For every $S\in\Sch'_{/O_K}$, $\cX_m(S)$ is the set of equivalence classes of tuples
\[
(A_0,\iota_{A_0},\lambda_{A_0},\eta_{A_0}^p;A,\iota_A,\lambda_A,\eta_A^p,
\{\eta_{A,v}\}_{v\in\tV_F^{(p)}\cap\tV_F^\spl\setminus\{\ul{u}\}},\eta_{A,u,m})
\]
where
\begin{itemize}
  \item $(A_0,\iota_{A_0},\lambda_{A_0},\eta_{A_0}^p)$ is an element in $\cY(S)$;

  \item $(A,\iota_A,\lambda_A)$ is a unitary $O_E$-abelian scheme of signature type $n\Phi-\iota_w+\iota_w^\tc$ over $S$, such that
       \begin{itemize}
         \item for every $v\in\tV_F^{(p)}\setminus\tV_F^\ram$, $\lambda_A[v^\infty]$ is an isogeny whose kernel has order $q_v^{1-\epsilon_v}$;

         \item $\Lie(A[u^{\tc,\infty}])$ is of rank $1$ on which the action of $O_E$ is given by the embedding $\iota_w^\tc$;

         \item for every $v\in\tV_F^{(p)}\cap\tV_F^\ram$, the triple $(A_0[v^\infty],\iota_{A_0}[v^\infty],\lambda_{A_0}[v^\infty])\otimes_{O_K}O_{\breve{K}}$ is an object of $\Exo_{(n,0)}^{\Phi_v}(S\otimes_{O_K}O_{\breve{K}})$ (Remark \ref{re:aexotic}, with $E=E_v$, $F=F_v$, and $\breve{E}=\breve{K}$);\footnote{The sign condition is redundant in our case by \cite{RSZ}*{Remark~5.1(i)}.}
       \end{itemize}

  \item $\eta_A^p$ is an $L^p$-level structure;

  \item for every $v\in\tV_F^{(p)}\cap\tV_F^\spl\setminus\{\ul{u}\}$, $\eta_{A,v}$ is an $L_v$-level structure;

  \item $\eta_{A,u,m}$ is a Drinfeld level-$m$ structure.
\end{itemize}
See \cite{LL}*{Section~7} for more details for the last three items. By \cite{RSZ}*{Theorem~4.5}, for every $m\geq 0$, $\cX_m$ is a regular scheme, flat (smooth, if $m=0$) and projective over $O_K$, and admits a canonical isomorphism
\[
\cX_m\otimes_{O_K}K\simeq X'_{L_{\ul{u},m}L^{\ul{u}}}\otimes_{E'}K
\]
of schemes over $K$. Note that for every integer $m\geq0$, $\dS^{\tR\cup\tV_F^{(p)}}$ naturally gives a ring of \'{e}tale correspondences of $\cX_m$.\footnote{When $m=0$, we do not need $\ul{u}\in\tV_F^\heartsuit$ as the same holds even when $K$ is ramified over $E_u$.}

The following theorem confirms the conjecture proposed in \cite{LL}*{Remark~7.4}, and the rest of this subsection will be devoted to its proof. It is worth mentioning that even in the situation of \cite{LL}*{Lemma~7.3}, the argument below is slightly improved so that \cite{LL}*{Hypothesis~6.6} is not relied on anymore.

\begin{theorem}\label{th:split_tempered}
Let the situation be as in Proposition \ref{pr:index_split} and assume $\ul{u}\in\tV_F^\heartsuit$ and $p\neq\ell$. For every integer $m\geq 0$,
\[
(\rH^{2r}(\cX_m,\dQ_\ell(r))\otimes_\dQ\dQ^\ac)_\fm=0
\]
holds, where $\fm\coloneqq\fm_\pi^\tR\cap\dS^{\tR\cup\tV_F^{(p)}}_{\dQ^\ac}$.
\end{theorem}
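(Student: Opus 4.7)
\medskip

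The plan is to reduce the statement to a vanishing property on the cohomology of the Newton strata of the special fiber $\overline{\cX}_m := \cX_m \otimes_{O_K} \ol\dF_p$, following the overall strategy of \cite{TY07}*{Proposition~4.4} but adapted to the current setting. First, since $F \neq \dQ$ and $V$ is totally definite at all archimedean places of $F$ different from $\bu$, the Shimura variety $X_{L_{\ul u, m} L^{\ul u}}$ is projective, hence $\cX_m$ is projective over $O_K$. By proper base change, it suffices to prove that $\rH^{2r}(\overline{\cX}_m, \ol\dQ_\ell(r))_\fm = 0$. On the geometric special fiber we have the Newton stratification $\overline{\cX}_m = \coprod_b \overline{\cX}_m^b$ indexed by isocrystals, and by the \emph{refined} Newton stratification of Mantovan \cite{Man08}, the Zariski closure of every refined stratum is smooth of the expected dimension. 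This allows us to set up an excision/weight spectral sequence relating $\rH^*(\overline{\cX}_m, \ol\dQ_\ell(r))$ to $\bigoplus_b \rH^*_c(\overline{\cX}_m^b, \ol\dQ_\ell(r))$ in a way compatible with the $\dS^{\tR\cup\tV_F^{(p)}}_{\dQ^\ac}$-action via \'etale correspondences.

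The next step is to apply Mantovan's product formula: the compactly supported cohomology $\rH^*_c(\overline{\cX}_m^b, \ol\dQ_\ell)$ decomposes as a convolution of the cohomology of an Igusa variety $\mathrm{Ig}^b$ (carrying the prime-to-$p$ Hecke action) against the cohomology of the associated Rapoport--Zink tower at $u$. Since $u$ is split, the local factor at $u$ is a tower of (twisted) Lubin--Tate-type spaces whose cohomology can be tracked via Drinfeld level structures. I would isolate, for each $b$, the contribution $\rH^*_c(\mathrm{Ig}^b, \ol\dQ_\ell)_\fm$ to the $\fm$-localized piece. At this point the problem becomes computing the $\fm$-local part of Igusa cohomology in terms of automorphic data.

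For this, I would invoke the trace formula for Igusa varieties of Caraiani--Scholze \cite{CS17}, which is exactly what is needed in order to accommodate the nontrivial endoscopy of $H$ and the fact that $\pi_{\ul u}$ now has arbitrary level (rather than being a tempered principal series, as assumed implicitly in \cite{LL}*{Lemma~7.3}). The output of that formula expresses $\rH^*_c(\mathrm{Ig}^b, \ol\dQ_\ell)_\fm$ as a sum over global automorphic representations whose prime-to-$p$ Hecke parameters match $\chi_\pi^\tR$; thanks to temperedness of $\pi$ at every finite place (Assumption \ref{st:main}(4)) and the fact that the base change $\BC(\pi_v)$ is known to be unitary cuspidal, each such contributing representation is tempered. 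A weight analysis of Satake parameters then forces the Frobenius-semisimple eigenvalues occurring in the $\fm$-local part of $\rH^i_c(\mathrm{Ig}^b, \ol\dQ_\ell)$ to have weight exactly $i$, and combined with the Mantovan product formula, the nonzero contributions to $\rH^{2r}(\overline{\cX}_m, \ol\dQ_\ell(r))_\fm$ concentrate only in the \emph{ordinary} (zero-dimensional / \'etale) part of the RZ tower. A direct inspection of that contribution (coming from the unramified Hecke matching at $u$ with $\chi_\pi^\tR$) shows it cannot be middle-degree and tempered simultaneously unless it vanishes, and we conclude.

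The main obstacle, as signalled in Subsection \ref{ss:ingredients}, is the last step: the Caraiani--Scholze trace formula must be applied to the full Igusa tower with arbitrary $p$-power level structure at $u$, and the resulting expression must be compared with the spectral-side contribution of $\pi$ in the presence of endoscopy for $H$. This is the place where the more sophisticated trace formula of \cite{CS17} (as opposed to the simpler version in \cite{TY07}) is indispensable, and where one must patch together local character identities at places in $\tR \cap \tV_F^\spl$ with the Drinfeld level contribution at $u$ in order to verify that the temperedness of $\pi$ translates into purity of weight for the eigenvalues of Frobenius on $\rH^{2r}(\overline{\cX}_m, \ol\dQ_\ell(r))_\fm$, thereby ruling out any off-middle contribution from deeper Newton strata.
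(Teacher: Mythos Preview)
Your proposal has the right broad architecture (proper base change to the special fiber, stratification, Igusa varieties, the Caraiani--Scholze trace formula, and a purity/weight argument), but the key step is not the one you describe, and your version of it has a genuine gap.

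The paper does not use Mantovan's product formula. Instead it works with the \emph{Drinfeld level} stratification of the special fiber $Y_m$: for each free $O_{F_{\ul u}}/\fp_{\ul u}^m$-submodule $M$ of rank $n-h$ one has a stratum $Y_m^{(M)}$ whose closure $Y_m^{[M]}$ is smooth and proper over $k$ of dimension $h$ (this is the Mantovan result that is actually invoked). The intermediate target is Proposition~\ref{pr:middle}: $\rH^j(Y_m^{[M]}\otimes_k\ol\dF_p,\ol\dQ_\ell)_\fm=0$ for $j\neq h$. To prove it, the paper writes the Euler characteristic of $Y_m^{[M]}$ as an \emph{alternating sum} of Igusa cohomology $[\rH_{c,\chi}(I_m^{h'})]$ over $0\leq h'\leq h$, applies \cite{CS17}*{Lemma~5.5.1} to each term, and then uses an explicit combinatorial identity for Speh representations,
\[
\sum_{h'=0}^{g}(-1)^{h'}C_m^{g-h',h'}\dim\Sp_{h'}(\phi)^{L_{\ul u,m}^{h'}}=0,
\]
to force all contributions with $s_i>n-h$ to cancel. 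What survives is strictly pure of weight $h$, and since $Y_m^{[M]}$ is smooth proper the Weil conjectures give degree concentration. Your assertion that ``a weight analysis of Satake parameters forces the eigenvalues in $\rH^i_c(\mathrm{Ig}^b)_\fm$ to have weight exactly $i$'' is the step that fails as written: Igusa varieties are open, and temperedness of the contributing automorphic representations does not by itself yield purity of their compactly supported cohomology. The cancellation lemma above is precisely the missing mechanism.

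Even after Proposition~\ref{pr:middle}, the paper still needs a separate inductive excision argument (the claim $(*)$ in the proof) to pass from vanishing on each $Y_m^{[M]}$ to vanishing of both $\rH^0(k,\rH^{2r}(Y_m\otimes_k\ol\dF_p,\ol\dQ_\ell(r)))_\fm$ and $\rH^1(k,\rH^{2r-1}(Y_m\otimes_k\ol\dF_p,\ol\dQ_\ell(r)))_\fm$; your ``direct inspection of the ordinary part'' does not address this.
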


We temporarily allow $n$ to be an arbitrary positive integer, not necessarily even. Put $Y_m\coloneqq\cX_m\otimes_{O_K}k$. For every point $x\in Y_m(\ol\dF_p)$, we know that $A_x[u^{\tc,\infty}]$ is a one-dimensional $O_{F_{\ul{u}}}$-divisible group of (relative) height $n$, and we let $0\leq h(x)\leq n-1$ be the height of its \'{e}tale part. For $0\leq h\leq n-1$, let $Y_m^{[h]}$ be locus where $h(x)\leq h$, which is Zariski closed hence will be endowed with the reduced induced scheme structure, and put $Y_m^{(h)}\coloneqq Y_m^{[h]}-Y_m^{[h-1]}$ ($Y_m^{[-1]}=\emptyset$). It is known that $Y_m^{(h)}$ is smooth over $k$ of pure dimension $h$.

Now we suppose that $m\geq 1$. Let $\fS_m^h$ be the set of free $O_{F_{\ul{u}}}/\fp_{\ul{u}}^m$-submodules of $(\fp_{\ul{u}}^{-m}/O_{F_{\ul{u}}})^n$ of rank $n-h$, and put $\fS_m\coloneqq\bigcup_{h=0}^{n-1}\fS_m^h$. For every $M\in\fS_m^h$, we denote by $Y_m^{(M)}\subseteq Y_m^{(h)}$ the (open and closed) locus where the kernel of the Drinfeld level-$m$ structure is $M$. Then we have
\[
Y_m^{(h)}=\coprod_{M\in\fS_m^h}Y_m^{(M)}
\]
for every $0\leq h\leq n-1$. Let $Y_m^{[M]}$ be the scheme-theoretic closure of $Y_m^{(M)}$ inside $Y_m$. Then we have
\begin{align}\label{eq:drinfeld}
Y_m^{[M]}=\bigcup_{\substack{M'\in\fS_m \\ M\subseteq M'}}Y_m^{(M')}
\end{align}
as a disjoint union of strata. Note that Hecke operators away from $\ul{u}$ (of level $L^{\ul{u}}$) preserve $Y_m^{(M)}$ hence $Y_m^{[M]}$ for every $M\in\fS_m$.

We need some general notation. For a sequence $(g_1,\dots,g_t)$ of nonnegative integers with $g=g_1+\cdots+g_t$, we denote by $\rP_{g_1,\dots,g_t}$ the standard upper triangular parabolic subgroup of $\GL_g$ of block sizes $g_1,\dots,g_t$, and $\rM_{g_1,\dots,g_t}$ its standard diagonal Levi subgroup. Moreover, we denote by $C_m^{g_1,\dots,g_t}$ the cardinality of
\[
\GL_{g}(O_{F_{\ul{u}}}/\fp_{\ul{u}}^m)/\rP_{g_1,\dots,g_t}(O_{F_{\ul{u}}}/\fp_{\ul{u}}^m),
\]
which depends only on the partition $g=g_1+\cdots+g_t$. We also put
\[
L_{\ul{u},m}^g\coloneqq\ker\(\GL_g(O_{F_{\ul{u}}})\to\GL_g(O_{F_{\ul{u}}}/\fp_{\ul{u}}^m)\).
\]
For an irreducible admissible representation $\pi$ of $\GL_g(F_{\ul{u}})$ and a positive integer $s$, we have the representation $\Sp_s(\pi)$ of $\GL_{sg}(F_{\ul{u}})$ defined in \cite{HT01}*{Section~I.3}.

\begin{lem}\label{le:middle0}
For $(g_1,\dots,g_t)$ with $g=g_1+\cdots+g_t$ as above and another integer $g'\geq g$, we have
\[
C_m^{g'-g,g} C_m^{g_1,\dots,g_t}=C_m^{g'-g+g_1,g_2,\dots,g_t}.
\]
\end{lem}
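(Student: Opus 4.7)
The plan is to prove the identity as an instance of the multiplicativity of indices for nested parabolic subgroups of $\GL_{g'}(O_{F_{\ul{u}}}/\fp_{\ul{u}}^m)$. I would introduce the common refinement $\rP_{g'-g,g_1,g_2,\dots,g_t}$, which sits simultaneously inside $\rP_{g'-g,g}$ (obtained by coarsening the last $t$ blocks into a single block of size $g$) and inside $\rP_{g'-g+g_1,g_2,\dots,g_t}$ (obtained by merging the first two blocks into one of size $g'-g+g_1$). The two coarsenings give two distinct towers of parabolic subgroups inside $\GL_{g'}(O_{F_{\ul{u}}}/\fp_{\ul{u}}^m)$, each culminating in the same refined parabolic; applying $[G:H]=[G:K]\cdot[K:H]$ to both will express the common fine index in two different ways.

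To identify the intermediate indices, I would exploit the Levi decomposition. Using the standard exact sequence
\[
1\to\rN(O_{F_{\ul{u}}}/\fp_{\ul{u}}^m)\to\rP(O_{F_{\ul{u}}}/\fp_{\ul{u}}^m)\to\rM(O_{F_{\ul{u}}}/\fp_{\ul{u}}^m)\to 1
\]
for any standard parabolic $\rP$ of $\GL_{g'}$ with Levi factor $\rM$ and unipotent radical $\rN$, the index
\[
[\rP_{g'-g,g}(O_{F_{\ul{u}}}/\fp_{\ul{u}}^m):\rP_{g'-g,g_1,\dots,g_t}(O_{F_{\ul{u}}}/\fp_{\ul{u}}^m)]
\]
collapses to the analogous index inside the $\GL_g$ block of the Levi $\rM_{g'-g,g}$, which by definition equals $C_m^{g_1,\dots,g_t}$; this rewrites the left-hand side of the desired identity as the single index $[\GL_{g'}(O_{F_{\ul{u}}}/\fp_{\ul{u}}^m):\rP_{g'-g,g_1,\dots,g_t}(O_{F_{\ul{u}}}/\fp_{\ul{u}}^m)]$. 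Performing the analogous computation for the second tower identifies this same index with the right-hand side $C_m^{g'-g+g_1,g_2,\dots,g_t}$, yielding the claimed equality.

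The main obstacle is of a purely bookkeeping character: one must verify that the Levi surjection has the expected kernel over the non-reduced coefficient ring $O_{F_{\ul{u}}}/\fp_{\ul{u}}^m$ (which follows from the flatness and smoothness of $\rP\to\rM$) and track that both the numerator and denominator of each index scale by the same power of $q_{\ul{u}}^{m-1}$, so that the residue-field $q$-multinomial identity lifts verbatim to arbitrary level $m$. No substantive difficulty is anticipated beyond this careful matching.
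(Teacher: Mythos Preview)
Your second tower does not give what you claim. The index $[\rP_{g'-g+g_1,g_2,\dots,g_t}:\rP_{g'-g,g_1,\dots,g_t}]$ is computed, via exactly the Levi argument you invoke, inside the first Levi block $\GL_{g'-g+g_1}$; it equals $C_m^{g'-g,g_1}$, not $1$. So your two towers actually yield
\[
C_m^{g'-g,g}\,C_m^{g_1,\dots,g_t}=C_m^{g'-g+g_1,g_2,\dots,g_t}\cdot C_m^{g'-g,g_1},
\]
which differs from the stated identity by that extra factor. In fact the identity as printed is false in general: already for $m=1$, $g'=2$, $g=g_1=1$ it asserts $|\GL_2(k)/B(k)|=1$.

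This is not a defect of your method but of the printed statement: the right-hand side should read $C_m^{g'-g,g_1,g_2,\dots,g_t}$ (a comma, not a plus), and the paper's displayed isomorphism should likewise have $\rP_{g'-g,g_1,\dots,g_t}$ in the denominator --- note that $\rP_{g'-g+g_1,g_2,\dots,g_t}$ is not even contained in $\rP_{g'-g,g}$ when $g'>g$ and $g_1>0$. With this correction, your first tower alone finishes the proof: the common refined index $[\GL_{g'}:\rP_{g'-g,g_1,\dots,g_t}]$ is by definition $C_m^{g'-g,g_1,\dots,g_t}$, and the Levi projection gives $[\rP_{g'-g,g}:\rP_{g'-g,g_1,\dots,g_t}]=C_m^{g_1,\dots,g_t}$. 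That is exactly the paper's one-line argument. The smoothness/flatness discussion is unnecessary here, since everything is a direct count in the finite group $\GL_{g'}(O_{F_{\ul u}}/\fp_{\ul u}^m)$.
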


\begin{proof}
It follows from the isomorphism
\[
\rP_{g'-g,g}(O_{F_{\ul{u}}}/\fp_{\ul{u}}^m)/\rP_{g'-g+g_1,g_2,\dots,g_t}(O_{F_{\ul{u}}}/\fp_{\ul{u}}^m)
\simeq\GL_g(O_{F_{\ul{u}}}/\fp_{\ul{u}}^m)/\rP_{g_1,\dots,g_t}(O_{F_{\ul{u}}}/\fp_{\ul{u}}^m).
\]
\end{proof}

\begin{lem}\label{le:middle1}
Suppose that $m\geq 1$. Take a sequence $(g_1,\dots,g_t)$ of nonnegative integers with $g=g_1+\cdots+g_t$. Let $\pi_1\boxtimes\cdots\boxtimes\pi_t$ be an admissible representation of $\rM_{g_1,\dots,g_t}(F_{\ul{u}})$. Then we have
\[
\dim\(\Ind_{\rP_{g_1,\dots,g_t}(F_{\ul{u}})}^{\GL_g(F_{\ul{u}})}\pi_1\boxtimes\cdots\boxtimes\pi_t\)^{L_{\ul{u},m}^g}
=C_m^{g_1,\dots,g_t}\prod_{i=1}^t\dim\pi_i^{L_{\ul{u},m}^{g_i}}.
\]
\end{lem}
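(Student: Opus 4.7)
The plan is to prove this by a direct application of the Iwasawa decomposition followed by a Mackey-type count of double cosets. Put $K \coloneqq \GL_g(O_{F_{\ul{u}}})$, $P_O \coloneqq \rP_{g_1,\dots,g_t}(O_{F_{\ul{u}}})$, $M_O \coloneqq \rM_{g_1,\dots,g_t}(O_{F_{\ul{u}}})$, $N_O$ the $O_{F_{\ul{u}}}$-points of the unipotent radical of $\rP_{g_1,\dots,g_t}$, and $K_m \coloneqq L_{\ul{u},m}^g$. First I would invoke the Iwasawa decomposition $\GL_g(F_{\ul{u}}) = \rP_{g_1,\dots,g_t}(F_{\ul{u}}) \cdot K$ together with the fact that the modular character $\delta_{\rP_{g_1,\dots,g_t}}^{1/2}$ is trivial on $P_O$ (since $|\cdot|_{F_{\ul{u}}}$ is trivial on $O_{F_{\ul{u}}}^\times$) to identify
\[
\Ind_{\rP_{g_1,\dots,g_t}(F_{\ul{u}})}^{\GL_g(F_{\ul{u}})}(\pi_1\boxtimes\cdots\boxtimes\pi_t)\res_{K} \;\simeq\; \Ind_{P_O}^{K}\bigl(\pi_1\boxtimes\cdots\boxtimes\pi_t\res_{M_O}\bigr),
\]
where on the right-hand side we view $\pi_1\boxtimes\cdots\boxtimes\pi_t$ as a representation of $P_O$ inflated from $M_O = P_O/N_O$.

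Next I would compute the $K_m$-invariants of the right-hand side. By definition, such an element is a function $f\colon K\to V_{\pi_1}\otimes\cdots\otimes V_{\pi_t}$ with $f(pgk)=\pi(p)f(g)$ for $p\in P_O$ and $k\in K_m$, so it descends to a function on $P_O\backslash K/K_m$. Since $K_m$ is normal in $K$ (being the kernel of $K\twoheadrightarrow \GL_g(O_{F_{\ul{u}}}/\fp_{\ul{u}}^m)$), the stabilizer of every double coset representative is $P_O\cap K_m$, independent of the representative. Therefore
\[
\dim\bigl(\Ind_{P_O}^K(\pi_1\boxtimes\cdots\boxtimes\pi_t)\bigr)^{K_m} \;=\; \bigl|P_O\backslash K/K_m\bigr|\cdot \dim\bigl(\pi_1\boxtimes\cdots\boxtimes\pi_t\bigr)^{P_O\cap K_m}.
\]

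Finally, I would identify the two factors. The double coset count is by definition
\[
\bigl|P_O\backslash K/K_m\bigr| \;=\; \bigl|\rP_{g_1,\dots,g_t}(O_{F_{\ul{u}}}/\fp_{\ul{u}}^m)\backslash \GL_g(O_{F_{\ul{u}}}/\fp_{\ul{u}}^m)\bigr| \;=\; C_m^{g_1,\dots,g_t}.
\]
For the invariants, note that $P_O\cap K_m$ consists of matrices in $P_O$ congruent to $1_g$ modulo $\fp_{\ul{u}}^m$; its image in $M_O$ is $\prod_{i=1}^t L_{\ul{u},m}^{g_i}$, and its intersection with $N_O$ is contained in $N_O$, on which $\pi_1\boxtimes\cdots\boxtimes\pi_t$ acts trivially by inflation. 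Consequently
\[
\dim\bigl(\pi_1\boxtimes\cdots\boxtimes\pi_t\bigr)^{P_O\cap K_m} \;=\; \prod_{i=1}^t \dim \pi_i^{L_{\ul{u},m}^{g_i}},
\]
and combining with the previous displayed formula yields the claim.

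No step presents a genuine obstacle: the argument is entirely formal once one unwinds the Iwasawa decomposition and uses that $K_m$ is normal in $K$. The only point requiring a moment of care is verifying that $\delta_{\rP_{g_1,\dots,g_t}}^{1/2}$ is trivial on $P_O$, which is immediate from the formula for $\delta$ in terms of determinants on the diagonal blocks together with the triviality of $|\cdot|_{F_{\ul{u}}}$ on units.
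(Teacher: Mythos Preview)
Your proof is correct and follows essentially the same approach as the paper: use the Iwasawa decomposition to reduce to the compact picture, exploit the normality of $L_{\ul{u},m}^g$ in $\GL_g(O_{F_{\ul{u}}})$ to see that an invariant function is determined by its values on a set of $C_m^{g_1,\dots,g_t}$ double coset representatives, and then identify the constraint on those values as lying in $\bigotimes_{i=1}^t \pi_i^{L_{\ul{u},m}^{g_i}}$. The paper's write-up is terser but the argument is the same.
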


\begin{proof}
Pick a set $X$ of representatives of the double coset
\[
\rP_{g_1,\dots,g_t}(F_{\ul{u}})\backslash \GL_g(F_{\ul{u}}) / L_{\ul{u},m}^g
\]
contained in $\GL_g(O_{F_{\ul{u}}})$, which is possible by the Iwasawa decomposition. Then an element
\[
f\in\(\Ind_{\rP_{g_1,\dots,g_t}(F_{\ul{u}})}^{\GL_g(F_{\ul{u}})}\pi_1\boxtimes\cdots\boxtimes\pi_t\)^{L_{\ul{u},m}^g}
\]
is determined by $f\res_X$. Since $\GL_g(O_{F_{\ul{u}}})$ normalizes $L_{\ul{u},m}^g$, a function $f'$ on $X$ is of the form $f'=f\res_X$ if and only if $f'$ takes values in $\bigotimes_{i=1}^t\pi_i^{L_{\ul{u},m}^{g_i}}$. As $|X|=C_m^{g_1,\dots,g_t}$, the lemma follows.
\end{proof}

\begin{lem}\label{le:middle2}
Suppose that $m\geq 1$. For every positive integer $g$ and every unramified character $\phi$ of $F_{\ul{u}}^\times$, we have
\[
\sum_{h=0}^g(-1)^h C_m^{g-h,h}\dim\Sp_h(\phi)^{L_{\ul{u},m}^h}=0.
\]
\end{lem}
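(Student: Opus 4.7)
The strategy is to realize the alternating sum as the $L_{\ul u,m}^g$-invariant dimension of an alternating sum of parabolic inductions, and then show this alternating sum vanishes in the Grothendieck group $K_0$ of smooth $\GL_g(F_{\ul u})$-modules. First I would apply Lemma \ref{le:middle1} to the partition $(g-h,h)$ and the representation $(\phi\circ\det_{g-h})\boxtimes\Sp_h(\phi)$ of $\rM_{g-h,h}(F_{\ul u})$. Since $\phi$ is unramified, $\phi\circ\det_{g-h}$ is trivial on $L_{\ul u,m}^{g-h}$, so its invariant dimension equals $1$, and Lemma \ref{le:middle1} gives
\[
C_m^{g-h,h}\dim\Sp_h(\phi)^{L_{\ul u,m}^h}=\dim\bigl(\Ind_{\rP_{g-h,h}(F_{\ul u})}^{\GL_g(F_{\ul u})}(\phi\boxtimes\Sp_h(\phi))\bigr)^{L_{\ul u,m}^g}.
\]
As taking invariants under an open compact subgroup is exact on smooth representations, the desired identity reduces to
\[
\sum_{h=0}^g(-1)^h\bigl[\Ind_{\rP_{g-h,h}(F_{\ul u})}^{\GL_g(F_{\ul u})}(\phi\boxtimes\Sp_h(\phi))\bigr]=0
\]
in $K_0(\GL_g(F_{\ul u}))$.

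Next, twisting throughout by $\phi^{-1}\circ\det_g$ and using that $\Sp_h(\phi)\otimes(\phi^{-1}\circ\det_h)$ is the usual Steinberg representation $\St_h$ (immediate from the construction of $\Sp_h(\phi)$ in \cite{HT01}*{Section~I.3} by normalized induction from twists of $\phi$), matters reduce to showing
\[
\sum_{h=0}^g(-1)^h\bigl[\Ind_{\rP_{g-h,h}(F_{\ul u})}^{\GL_g(F_{\ul u})}(\mathbf 1_{g-h}\boxtimes\St_h)\bigr]=0.
\]
For this I would invoke the Solomon--Steinberg identity
\[
[\St_h]=\sum_{\substack{h=h_1+\cdots+h_t\\ h_i>0}}(-1)^{h-t}\bigl[\Ind_{\rP_{h_1,\dots,h_t}(F_{\ul u})}^{\GL_h(F_{\ul u})}(\mathbf 1)\bigr]
\]
and transitivity of parabolic induction to rewrite the above sum as
\[
\sum_{t\geq 0}\;\sum_{\substack{g=g_0+g_1+\cdots+g_t\\ g_0\geq 0,\,g_i>0\text{ for }i\geq 1}}(-1)^t\bigl[\Ind_{\rP_{g_0,g_1,\dots,g_t}(F_{\ul u})}^{\GL_g(F_{\ul u})}(\mathbf 1)\bigr].
\]

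Each parabolic $\rP_{a_1,\dots,a_s}$ with $s\geq 1$ and $a_i>0$ then appears in this sum exactly twice: once with $g_0=0$ (contributing sign $(-1)^s$) and once with $g_0=a_1>0$ (contributing sign $(-1)^{s-1}$, after absorbing $a_1$ into the first block and indexing the remaining blocks as $g_1,\dots,g_{s-1}$). These contributions cancel, so the sum vanishes. The main point of care is purely the bookkeeping of signs and twists; once the reduction to $\phi=\mathbf 1$ is in place, the argument is entirely formal and requires no deeper input.
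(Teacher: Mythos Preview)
Your strategy is sound and leads to a valid alternative proof, but there is a normalization slip. The two Grothendieck-group identities you wrote — the target identity after twisting and the Solomon--Steinberg formula for $[\St_h]$ — are stated with the paper's convention of \emph{normalized} parabolic induction, and in that setting they are false. Already for $g=2$ one has
\[
[\mathbf 1_2]-[\Ind_{\rP_{1,1}}(\mathbf 1_1\boxtimes\mathbf 1_1)]+[\St_2]\neq 0,
\]
since the normalized induction $\mathbf 1_1\times\mathbf 1_1$ is irreducible and lies in a different Bernstein block from $\mathbf 1_2$ and $\St_2$. Both identities \emph{do} hold with unnormalized induction, and this is all you need: the dimension of $L_{\ul u,m}^g$-invariants depends only on the restriction to the maximal compact, which is the same for normalized and unnormalized induction, so Lemma~\ref{le:middle1} applies equally well. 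The fix is simply to pass to unnormalized induction after the reduction via Lemma~\ref{le:middle1}; then your Solomon--Steinberg expansion and the pairing-off cancellation go through verbatim.

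Modulo this point, your route is genuinely different from the paper's. The paper works instead with
\[
\Ind_{\rP_{h,g-h}(F_{\ul u})}^{\GL_g(F_{\ul u})}\Sp_h(\phi)\boxtimes\bigl(\phi|\;|_{\ul u}^{(g+h-1)/2}\circ\det\nolimits_{g-h}\bigr),
\]
the twist being chosen so that every term is a subquotient of a single principal series $\rI(\phi)$; it then appeals to the Zelevinsky description of constituents via \cite{HT01}*{Lemma~I.3.2} to obtain a telescoping sum. Your argument trades this fine structural input for the classical alternating-sum realization of the Steinberg and a purely combinatorial cancellation, which is more elementary but requires stepping outside the paper's normalization convention.
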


\begin{proof}
We claim the identity
\begin{align}\label{eq:middle1}
\sum_{h=0}^g(-1)^h
\left[\Ind_{\rP_{h,g-h}(F_{\ul{u}})}^{\GL_g(F_{\ul{u}})}\Sp_h(\phi)\boxtimes\(\phi|\;|_{\ul{u}}^{\frac{g+h-1}{2}}\circ\r{det}_{g-h}\)\right]=0
\end{align}
in $\Groth(\GL_g(F_{\ul{u}}))$. Assuming it, we have
\[
\sum_{h=0}^g(-1)^h\dim\(\Ind_{\rP_{h,g-h}(F_{\ul{u}})}^{\GL_g(F_{\ul{u}})}
\Sp_h(\phi)\boxtimes\(\phi|\;|_{\ul{u}}^{\frac{g+h-1}{2}}\circ\r{det}_{g-h}\)\)^{L_{\ul{u},m}^g}=0.
\]
By Lemma \ref{le:middle1}, the lemma follows.

For the claim, put
\[
\rI(\phi)\coloneqq\Ind_{\rP_{1,\dots,1}(F_{\ul{u}})}^{\GL_g(F_{\ul{u}})}
\phi\boxtimes\phi|\;|_{\ul{u}}\boxtimes\cdots\boxtimes\phi|\;|_{\ul{u}}^{g-1}.
\]
By the transitivity of (normalized) parabolic induction, every irreducible constituent of
\[
\rI(\phi)^{h,g-h}\coloneqq\Ind_{\rP_{h,g-h}(F_{\ul{u}})}^{\GL_g(F_{\ul{u}})}
\Sp_h(\phi)\boxtimes\(\phi|\;|_{\ul{u}}^{\frac{g+h-1}{2}}\circ\r{det}_{g-h}\)
\]
is a constituent of $\rI(\phi)$. By \cite{Zel80}, there is a bijection between the set of irreducible subquotients of $\rI(\phi)$ and the set of sequences of signs of length $g-1$. For such a sequence $\sigma$, we denote by $\rI(\phi)_\sigma$ the corresponding irreducible subquotient. For $0\leq h\leq g-1$, we denote by $\sigma(i)$ the sequence starting from $h$ negative signs followed by $g-1-h$ positive signs. In particular,
\[
\rI(\phi)_{\sigma(g-1)}=\Sp_g(\phi)=\rI(\phi)^{g,0},\qquad
\rI(\phi)_{\sigma(0)}=\phi|\;|_{\ul{u}}^{\frac{2g-1}{2}}\circ\r{det}_g=\rI(\phi)^{0,g}.
\]
By \cite{HT01}*{Lemma~I.3.2}, we have
\[
[\rI(\phi)^{h,g-h}]=[\rI(\phi)_{\sigma(h)}]+[\rI(\phi)_{\sigma(h-1)}]
\]
in $\Groth(\GL_g(F_{\ul{u}}))$ for $0<h<g$. Thus, \eqref{eq:middle1} follows.
\end{proof}

\begin{proposition}\label{pr:middle}
Fix an isomorphism $\ol\dQ_\ell\simeq\dC$. Suppose that $m\geq 1$. For every $0\leq h\leq n-1$ and $M\in\fS_m^h$, we have
\[
\rH^j(Y_m^{[M]}\otimes_k\ol\dF_p,\ol\dQ_\ell)_\fm=0
\]
for every $j\neq h$.
\end{proposition}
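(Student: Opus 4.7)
The plan is to adapt the argument of \cite{TY07}*{Proposition~4.4}, replacing Kottwitz's trace formula by the Caraiani--Scholze trace formula \cite{CS17} so as to accommodate arbitrary Drinfeld level at the split place $\ul{u}$ and the nontrivial endoscopy of $\rU(V)$.

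First, I would exploit the stratification \eqref{eq:drinfeld}, which displays $Y_m^{[M]}$ as a disjoint union of smooth locally closed subschemes $Y_m^{(M')}$ indexed by $M'\supseteq M$ with $M'\in\fS_m^{h'}$ for various $h'\leq h$, and in which the open dense stratum $Y_m^{(M)}$ has dimension $h$. The excision long exact sequences are Hecke-equivariant for operators supported away from $\ul{u}$, and combined with Poincar\'{e} duality they reduce the proposition to showing that, for every $h'$ with $0\leq h'\leq h$, the $\fm$-localization of $\rH^*_c(Y_m^{(h')}\otimes_k\ol\dF_p,\ol\dQ_\ell)$ is concentrated in the middle degree.

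Next, I would apply the Caraiani--Scholze trace formula, a refinement of Mantovan's product formula, which decomposes the $\fm$-localization of this cohomology as a finite sum indexed by automorphic representations $\pi'$ whose Hecke eigensystems outside $\tR\cup\tV_F^{(p)}$ agree with those of $\pi$. Each summand arises by parabolic induction from $\rP_{n-h',h'}(F_{\ul{u}})$ applied to a tensor product of cohomology of an Igusa variety for a smaller unitary Shimura datum (contributing a representation of $\GL_{h'}(F_{\ul{u}})$) with cohomology of the basic Rapoport--Zink space of height $n-h'$ (contributing a Steinberg-type representation of $\GL_{n-h'}(F_{\ul{u}})$). By strong multiplicity one for $\GL_n(\dA_E)$ applied to $\BC(\pi')$, which exists because $\pi'$ is tempered at the archimedean place and (almost) unramified away from $\tR$, the local component $\pi'_{\ul{u}}$ coincides with $\pi_{\ul{u}}$ under the canonical identification $G_r(F_{\ul{u}})\simeq\GL_n(F_{\ul{u}})\times\GL_1(F_{\ul{u}})$, and is therefore tempered.

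Writing $\pi'_{\ul{u}}$ as a parabolic induction of Steinberg-type representations $\Sp_{s_i}(\phi_i)$ for unramified characters $\phi_i$, Lemmas \ref{le:middle0} and \ref{le:middle1} express the dimension of the $L_{\ul{u},m}^n$-fixed part of the $h'$-th stratum contribution as a sum of products whose dependence on $h'$ is exactly of the form $C_m^{n-h',h'}\dim\Sp_{n-h'}(\phi_i)^{L_{\ul{u},m}^{n-h'}}$, up to factors independent of $h'$. The alternating sum over $h'$ then vanishes by Lemma \ref{le:middle2} (applied with $g=n$ and the index there taken to be $n-h'$), forcing middle-degree concentration. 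The main obstacle is the rigorous application of \cite{CS17} at positive Drinfeld level, together with the precise extraction of the combinatorial factor $C_m^{n-h',h'}$ in the form demanded by Lemma \ref{le:middle2}; this requires careful bookkeeping of the Hecke action on the basic Rapoport--Zink contribution at arbitrary Drinfeld level, and a clean matching between the \'{e}tale-height parameter $h'$ of the Newton stratum and the block sizes of the relevant parabolic in $\GL_n(F_{\ul{u}})$.
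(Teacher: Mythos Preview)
Your proposal has the right ingredients (the Caraiani--Scholze formula, the combinatorial Lemmas \ref{le:middle0}--\ref{le:middle2}) but the logical architecture is wrong, and the key idea is missing.

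The reduction you propose---showing that $\rH^*_c(Y_m^{(h')}\otimes_k\ol\dF_p,\ol\dQ_\ell)_\fm$ is concentrated in a single degree for each fixed $h'$---is both unproved and almost certainly false. From \eqref{eq:middle3} one sees that for a fixed $h'$ the Frobenius eigenvalues appearing in $[\rH_{c,\chi}(I_m^{h'},\ol\dQ_\ell)]$ have weight $n-s_i$ for every $i\in\dI$ with $s_i\geq n-h'$; these weights genuinely vary, so the virtual Galois module is not pure and no concentration follows. The whole point of the argument is that these unwanted weights \emph{cancel} in the alternating sum \eqref{eq:middle5} over $h'$. Concretely, Lemma \ref{le:middle2} is applied not with $g=n$ but with $g=h+s_i-n>0$ (see \eqref{eq:middle4}) to kill precisely the terms with $s_i>n-h$; what survives are only the terms with $s_i=n-h$, so the alternating sum is strictly pure of weight $h$, not zero.

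This is where the step you never mention becomes essential: having shown that the virtual class $[\rH_\chi(Y_m^{[M]},\ol\dQ_\ell)]_\fm$ in $\Groth(\Gal(\ol\dF_p/k))$ is pure of weight $h$, one invokes Mantovan's result \cite{Man08}*{Proposition~12} that $Y_m^{[M]}$ is \emph{smooth and proper} over $k$, and then the Weil conjectures force $\rH^j(Y_m^{[M]}\otimes_k\ol\dF_p,\ol\dQ_\ell)_\fm=0$ for $j\neq h$. Without this purity/Weil step there is no way to pass from an Euler-characteristic identity to vanishing in individual degrees. A secondary issue: your appeal to strong multiplicity one to pin down $\pi'_{\ul{u}}$ sidesteps the endoscopy; the paper works directly with the endoscopic expansion from \cite{CS17}, where the relevant representations $\pi^{\bbn}_{\ul u}$ live on $\rM_{n_1,n_2}(F_{\ul u})$, and the temperedness needed is that of $\pi^{\bbn}_{\ul u}$, guaranteed after localizing at $\fm$.
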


This is an extension of \cite{TY07}*{Proposition~4.4}. However, we allow arbitrary principal level at $\ul{u}$ and our case involves endoscopy.

\begin{proof}
In what follows, $h$ will always denote an integer satisfying $0\leq h\leq n-1$. Denote by $\rD_{n-h}$ the division algebra over $F_{\ul{u}}$ of Hasse invariant $\tfrac{1}{n-h}$, with the maximal order $O_{\rD_{n-h}}$.

For a $\fT$-scheme $Y$ of finite type over $k$, and a (finite) character $\chi\colon\rT_0(\dQ)\backslash\rT_0(\dA^\infty)/L_0\to\ol\dQ_\ell^\times$, we put
\[
[\rH_{?,\chi}(Y,\ol\dQ_\ell)]\coloneqq\sum_{j\in\dZ}(-1)^j\rH^j_?(Y\otimes_k\ol\dF_p,\ol\dQ_\ell)[\chi]
\]
as an element in $\Groth(\Gal(\ol\dF_p/k))$ for $?\in\{\;,c\}$.

Let $I^h_m$ be the Igusa variety (of the first kind) introduced in \cite{HT01}*{Section~IV.1} so that $I^h_m$ is isomorphic to $Y_m^{(M)}$ for every $M\in\fS_m^h$ as schemes over $k$ (but not as schemes over $Y_0^{(h)}$). Combining with \eqref{eq:drinfeld}, we obtain the identity
\begin{align}\label{eq:middle5}
[\rH_\chi(Y_m^{[M]},\ol\dQ_\ell)]
&=\sum_{h'=0}^h\sum_{\substack{M'\in\fS_m^{h'} \\ M\subseteq M'}}(-1)^{h-h'}[\rH_{c,\chi}(Y_m^{(M')},\ol\dQ_\ell)] \\
&=\sum_{h'=0}^h(-1)^{h-h'}\cdot\left|\{M'\in\fS_m^{h'} \res M\subseteq M'\}\right|\cdot [\rH_{c,\chi}(I_m^{h'},\ol\dQ_\ell)] \notag \\
&=\sum_{h'=0}^h(-1)^{h-h'}C_m^{h-h',h'}\cdot [\rH_{c,\chi}(I_m^{h'},\ol\dQ_\ell)] \notag
\end{align}
in $\Groth(\Gal(\ol\dF_p/k))$.

Now to compute $[\rH_\chi(I_m^{h'},\ol\dQ_\ell)]$, we use \cite{CS17}*{Lemma~5.5.1} in which the corresponding $J_b(\dQ_p)$ is $\rD_{n-h'}\times\GL_{h'}(F_{\ul{u}})$, and we take $\phi=\phi^{\ul{u}}\phi_{\ul{u}}$ where $\phi^{\ul{u}}$ is the characteristic function of $L^{\ul{u}}$ and $\phi_{\ul{u}}$ is the characteristic function of $O_{\rD_{n-h'}}^\times\times L_{\ul{u},m}^{h'}$. Then we have the identity
\begin{align}\label{eq:middle6}
[\rH_{c,\chi}(I_m^{h'},\ol\dQ_\ell)]=\sum_{\bbn}\sum_{\Pi^{\bbn}}c(\bbn,\Pi^{\bbn})\cdot \Red^{h'}_{\bbn}(\pi^{\bbn}_{\ul{u}})^{O_{\rD_{n-h'}}^\times\times L_{\ul{u},m}^{h'}}
\end{align}
in $\Groth(\rD_{n-h'}^\times/O_{\rD_{n-h'}}^\times)$, where
\begin{itemize}
  \item $\bbn$ runs through \emph{ordered} pairs $(n_1,n_2)$ of nonnegative integers such that $n_1+n_2=n$, which gives an elliptic endoscopic group $G_{\bbn}$ of $\rU(\pres{\bu}{V})$;

  \item $\Pi^{\bbn}$ runs through a finite set of certain isobaric irreducible cohomological (with respect to the trivial algebraic representation) automorphic representations of $\dG_{\bbn}(\dA_F)$, with $\pi^{\bbn}_{\ul{u}}$ the descent of $\Pi^{\bbn}_{\ul{u}}$ to $G_{\bbn}(F_{\ul{u}})\simeq\rM_{n_1,n_2}(F_{\ul{u}})$;

  \item $c(\bbn,\Pi^{\bbn})$ is a constant depending only on $\bbn$ and $\Pi^{\bbn}$ but not on $h'$;

  \item $\Red^{h'}_{\bbn}\colon\Groth(\rM_{n_1,n_2}(F_{\ul{u}}))\to\Groth(\rD_{n-h'}^\times\times\GL_{h'}(F_{\ul{u}}))$ is the zero map if $h'<n_2$, and otherwise is the composition of
      \begin{itemize}
        \item $\Groth(\rM_{n_1,n_2}(F_{\ul{u}}))\to\Groth(\rM_{n-h',h'-n_2,n_2}(F_{\ul{u}}))$, which is the normalized Jacquet functor,

        \item $\Groth(\rM_{n-h',h'-n_2,n_2}(F_{\ul{u}}))\to\Groth(\rM_{n-h',h'}(F_{\ul{u}}))$, which is the normalized parabolic induction, and

        \item $\Groth(\rM_{n-h',h'}(F_{\ul{u}}))\to\Groth(\rD_{n-h'}^\times\times\GL_{h'}(F_{\ul{u}}))$, which is the Langlands--Jacquet map (on the first factor).
      \end{itemize}
\end{itemize}
The image of $[\rH_{c,\chi}(I_m^{h'},\ol\dQ_\ell)]$ in $\Groth(\Gal(\ol\dF_p/k))$ is given by the map
\[
\Groth(\rD_{n-h'}^\times/O_{\rD_{n-h'}}^\times)\to\Groth(\Gal(\ol\dF_p/k))
\]
sending an (unramified) character $\phi\circ\Nm_{\rD_{n-h'}^\times}$ to $\rec(\phi^{-1})\cdot\breve\chi$, where $\breve\chi$ is a finite character of $\Gal(\ol\dF_p/k)$ determined by $\chi$. In what follows, we will regard
\[
\Red^{h'}_{\bbn}(\pi^{\bbn}_{\ul{u}})^{O_{\rD_{n-h'}}^\times\times L_{\ul{u},m}^{h'}}
\]
as an element of $\Groth(\Gal(\ol\dF_p/k))$ via the above map.

Now let us compute for each $\bbn=(n_1,n_2)$,
\begin{align}\label{eq:middle2}
\sum_{h'=0}^h(-1)^{h-h'}C_m^{h-h',h'}\cdot\Red^{h'}_{\bbn}(\pi^{\bbn}_{\ul{u}})^{O_{\rD_{n-h'}}^\times\times L_{\ul{u},m}^{h'}}
\end{align}
in $\Groth(\Gal(\ol\dF_p/k))$, when $\pi^{\bbn}_{\ul{u}}$ is tempered. Write $\pi^{\bbn}_{\ul{u}}=\pi^1\boxtimes\pi^2$ where $\pi^\alpha$ is an tempered irreducible admissible representation of $\GL_{n_\alpha}(F_{\ul{u}})$. In particular, $\pi^1$ is a full induction of the form
\[
\Ind_{\rP_{s_1g_1,\dots,s_tg_t}(F_{\ul{u}})}^{\GL_{n_1}(F_{\ul{u}})}\Sp_{s_1}(\pi_1^1)\boxtimes\cdots\boxtimes\Sp_{s_t}(\pi_t^1),
\]
where $s_1,\dots,s_t$ and $g_1,\dots,g_t$ are positive integers satisfying $s_1g_1+\cdots+s_tg_t=n_1$; and for $1\leq i\leq t$, $\pi_i^1$ is an irreducible cuspidal representation of $\GL_{g_i}(F_{\ul{u}})$ such that $\Sp_{s_i}(\pi_i^1)$ is unitary. Let $\dI$ be the subset of $\{1,\dots,t\}$ such that $\pi_i^1$ is an unramified character (hence $g_i=1$) and $s_i\geq n-h$. Then we have for $h'\geq n_2$,
\begin{align}\label{eq:middle3}
&\Red^{h'}_{\bbn}(\pi^{\bbn}_{\ul{u}})^{O_{\rD_{n-h'}}^\times\times L_{\ul{u},m}^{h'}} \\
&=\sum_{\substack{i\in\dI \\ s_i\geq n-h'}}\dim\(\Ind_{\rP_?(F_{\ul{u}})}^{\GL_{h'}(F_{\ul{u}})}\Sp_{s_i+h'-n}(\pi_i^1)\boxtimes
\(\boxtimes_{j\neq i}\Sp_{s_j}(\pi_j^1)\)\boxtimes\pi^2\)^{L_{\ul{u},m}^{h'}}
[\rec((\pi_i^1)^{-1}|\;|_{\ul{u}}^{\frac{1-n}{2}})\cdot\breve\chi] \notag
\end{align}
in which the suppressed subscript in $\rP_?$ is $(s_i+h'-n,s_1g_1,\dots,\widehat{s_ig_i},\dots,s_tg_t,n_2)$.

We claim that for each $i\in\dI$,
\begin{align}\label{eq:middle4}
\sum_{h'=n-s_i}^h(-1)^{h-h'}C_m^{h-h',h'}\dim\(\Ind_{\rP_?(F_{\ul{u}})}^{\GL_{h'}(F_{\ul{u}})}\Sp_{s_i+h'-n}(\pi_i^1)\boxtimes
\(\boxtimes_{j\neq i}\Sp_{s_j}(\pi_j^1)\)\boxtimes\pi^2\)^{L_{\ul{u},m}^{h'}}=0
\end{align}
if $s_i>n-h$. In fact, by Lemma \ref{le:middle1}, there is a nonnegative integer $D$ independent of $h'$ such that the left-hand side of \eqref{eq:middle4} equals
\begin{align*}
&\quad\sum_{h'=n-s_i}^h(-1)^{h-h'}C_m^{h-h',h'}\cdot C_m^{s_i+h'-n,s_1g_1,\dots,\widehat{s_ig_i},\dots,s_tg_t,n_2}\cdot D\cdot
\dim\Sp_{s_i+h'-n}(\pi_i^1)^{L_{\ul{u},m}^{s_i+h'-n}} \\
&=\sum_{h'=n-s_i}^h(-1)^{h-h'}C_m^{h-h',s_i+h'-n,s_1g_1,\dots,\widehat{s_ig_i},\dots,s_tg_t,n_2}\cdot D\cdot
\dim\Sp_{s_i+h'-n}(\pi_i^1)^{L_{\ul{u},m}^{s_i+h'-n}} \\
&=\sum_{h'=0}^{h+s_i-n}(-1)^{h-h'}C_m^{h+s_i-n-h',h',s_1g_1,\dots,\widehat{s_ig_i},\dots,s_tg_t,n_2}\cdot D\cdot
\dim\Sp_{h'}(\pi_i^1)^{L_{\ul{u},m}^{h'}} \\
&=(-1)^hC_m^{h+s_i-n,s_1g_1,\dots,\widehat{s_ig_i},\dots,s_tg_t,n_2}\cdot D\sum_{h'=0}^{h+s_i-n}(-1)^{h'} C_m^{h+s_i-n-h',h'}\dim\Sp_{h'}(\pi_i^1)^{L_{\ul{u},m}^{h'}}
\end{align*}
in which the last summation vanishes by applying Lemma \ref{le:middle2} with $g=h+s_i-n>0$. Here, we have used Lemma \ref{le:middle0} twice.

By \eqref{eq:middle3} and \eqref{eq:middle4}, we know that \eqref{eq:middle2} is a linear combination of $[\rec((\pi_i^1)^{-1}|\;|_{\ul{u}}^{\frac{1-n}{2}})\cdot\breve\chi]$ with $i\in\dI$ satisfying $s_i=n-h$. Thus, \eqref{eq:middle2} is strictly pure of weight $h$ since $\Sp_{s_i}(\pi_i^1)$ is unitary. By \eqref{eq:middle5}, \eqref{eq:middle6}, and the fact that localization at $\fm$ annihilates all terms in \eqref{eq:middle6} with $\pi^{\bbn}_{\ul{u}}$ not tempered, we know that $[\rH_\chi(Y_m^{[M]},\ol\dQ_\ell)]_\fm$ is strictly pure of weight $h$. Finally, by \cite{Man08}*{Proposition~12}, we know that $Y_m^{[M]}$ is smooth over $k$ of pure dimension $h$. Since $Y_m^{[M]}$ is also proper, we have
\[
\rH^j(Y_m^{[M]}\otimes_k\ol\dF_p,\ol\dQ_\ell)[\chi]_\fm=0
\]
for every $j\neq h$ and every character $\chi\colon\rT_0(\dQ)\backslash\rT_0(\dA^\infty)/L_0\to\ol\dQ_\ell^\times$ from the Weil conjecture. Then the proposition follows.
\end{proof}

\begin{proof}[Proof of Theorem \ref{th:split_tempered}]
We may assume $m\geq 1$ since the morphism $\cX_m\to\cX_0$ is finite and flat. In what follows, $h$ is always an integer satisfying $0\leq h\leq n-1=2r-1$. For a subset $\Sigma\subset\fS_m^h$, we put
\[
Y_m^{(\Sigma)}\coloneqq\bigcup_{M\in\Sigma}Y_m^{(M)}, \qquad
Y_m^{[\Sigma]}\coloneqq\bigcup_{M\in\Sigma}Y_m^{[M]}
\]
in which the first union is disjoint. If $h\geq 1$, we also denote by $\Sigma^\dag$ the subset of $\fS_m^{h-1}$ consisting of $M'$ that contains an element in $\Sigma$.

Fix an arbitrary isomorphism $\ol\dQ_\ell\simeq\dC$. We claim
\begin{itemize}
  \item[($*$)] For every $0\leq h\leq 2r-1$ and every $\Sigma\subset\fS_m^h$,
     \begin{align*}
     \rH^j_c(Y_m^{(\Sigma)}\otimes_k\ol\dF_p,\ol\dQ_\ell)_\fm=
     \rH^j(Y_m^{[\Sigma]}\otimes_k\ol\dF_p,\ol\dQ_\ell)_\fm=0
     \end{align*}
     holds when $j>h$.
\end{itemize}

Assuming the claim, we prove $\rH^{2r}(\cX_m,\ol\dQ_\ell(r))_\fm=0$. By the proper base change theorem and the fact that taking global sections on $\Spec O_K$ is the same as restricting to $\Spec k$ and then taking global sections, the natural map $\rH^{2r}(\cX_m,\ol\dQ_\ell(r))\to\rH^{2r}(Y_m,\ol\dQ_\ell(r))$ is an isomorphism. Thus, it suffices to show that
\[
\rH^0(k,\rH^{2r}(Y_m\otimes_k\ol\dF_p,\ol\dQ_\ell(r)))_\fm=
\rH^1(k,\rH^{2r-1}(Y_m\otimes_k\ol\dF_p,\ol\dQ_\ell(r)))_\fm=0.
\]
The vanishing of $\rH^0(k,\rH^{2r}(Y_m\otimes_k\ol\dF_p,\ol\dQ_\ell(r)))_\fm$ already follows from ($*$) as $Y_m=Y_m^{[2r-1]}$. Now we consider $\rH^1(k,\rH^{2r-1}(Y_m\otimes_k\ol\dF_p,\ol\dQ_\ell(r)))_\fm$. By ($*$), $\rH^{2r-1}(Y_m^{[2r-2]}\otimes_k\ol\dF_p,\ol\dQ_\ell)_\fm=0$ hence the natural map
\[
\rH^{2r-1}_c(Y_m^{(2r-1)}\otimes_k\ol\dF_p,\ol\dQ_\ell)_\fm\to\rH^{2r-1}(Y_m\otimes_k\ol\dF_p,\ol\dQ_\ell)_\fm
\]
is surjective. It suffices to show that $\rH^1(k,\rH^{2r-1}_c(Y_m^{(2r-1)}\otimes_k\ol\dF_p,\ol\dQ_\ell(r)))_\fm=0$. Now we prove by induction on $h$ that for every $M\in\fS_m^h$, $\rH^1(k,\rH^h_c(Y_m^{(M)}\otimes_k\ol\dF_p,\ol\dQ_\ell(r)))_\fm=0$.

The case $h=0$ is trivial. Consider $h>0$ and $M\in\fS_m^h$. Since $Y_m^{[M]}$ is proper smooth over $k$ by \cite{Man08}*{Proposition~12}, we have $\rH^1(k,\rH^h(Y_m^{[M]}\otimes_k\ol\dF_p,\ol\dQ_\ell(r)))_\fm=0$ by the Weil conjecture. By ($*$), we have $\rH^h(Y_m^{[\{M\}^\dag]}\otimes_k\ol\dF_p,\ol\dQ_\ell)_\fm=0$. Thus, it suffices to show that $\rH^1(k,\rH^{h-1}(Y_m^{[\{M\}^\dag]}\otimes_k\ol\dF_p,\ol\dQ_\ell(r)))_\fm=0$. By ($*$) again, we have $\rH^{h-1}(Y_m^{[\{M\}^{\dag\dag}]}\otimes_k\ol\dF_p,\ol\dQ_\ell)_\fm=0$. Thus, the desired vanishing property follows from
\[
\rH^1(k,\rH^{h-1}_c(Y_m^{(\{M\}^\dag)}\otimes_k\ol\dF_p,\ol\dQ_\ell(r)))_\fm=
\bigoplus_{M'\in\{M\}^\dag}\rH^1(k,\rH^{h-1}_c(Y_m^{(M')}\otimes_k\ol\dF_p,\ol\dQ_\ell(r)))_\fm=0,
\]
which holds by the induction hypothesis. We have now proved $\rH^{2r}(\cX_m,\ol\dQ_\ell(r))_\fm=0$ assuming ($*$).

To show the claim ($*$), we use induction on $h$. To ease notation, we simply write $\rH^\bullet_?(-)$ for $\rH^\bullet_?(-\otimes_k\ol\dF_p,\ol\dQ_\ell)_\fm$ for $?\in\{\;,c\}$. The case for $h=0$ is trivial. Suppose that we know ($*$) for $h-1$ for some $h\geq 1$. For every $M\in\fS_m^h$, we have the exact sequence
\[
\cdots\to\rH^{j-1}(Y^{[\{M\}^\dag]}_m)\to\rH^j_c(Y^{(M)}_m)\to\rH^j(Y^{[M]}_m)\to\cdots
\]
By Proposition \ref{pr:middle} and the induction hypothesis, we have $\rH^j_c(Y^{(M)}_m)=0$ for $j>h$. Now take a subset $\Sigma$ of $\fS_m^h$. Then we have $\rH^j_c(Y^{(\Sigma)}_m)=\bigoplus_{M\in\Sigma}\rH^j_c(Y^{(M)}_m)=0$ for $j> h$. By the exact sequence
\[
\cdots\to\rH^j_c(Y^{(\Sigma)}_m)\to\rH^j(Y^{[\Sigma]}_m)\to\rH^j(Y^{[\Sigma^\dag]}_m)\to\cdots
\]
and the induction hypothesis, we have $\rH^j(Y^{[\Sigma]}_m)=0$ for $j>h$. Thus, ($*$) holds for $h$.

The theorem is proved.
\end{proof}

\subsection{Local indices at inert places}
\label{ss:inert}

In this subsection, we compute local indices at places in $\tV_E^\inert$ not above $\tR$.

\begin{proposition}\label{pr:index_inert}
Let $\tR$, $\tR'$, $\ell$, and $L$ be as in Definition \ref{de:kernel_geometric}. Take an element $u\in\tV_E^\inert$ such that its underlying rational prime $p$ is odd and satisfies $\tV_F^{(p)}\cap\tR\subseteq\tV_F^\spl$.
\begin{enumerate}
  \item Suppose that $\ul{u}\not\in\tS$. Then we have
      \[
      \log q_u\cdot \vol^\natural(L)\cdot I_{T_1,T_2}(\phi^\infty_1,\phi^\infty_2,\rs_1,\rs_2,g_1,g_2)^\ell_{L,u}=
      \fE_{T_1,T_2}((g_1,g_2),\Phi_\infty^0\otimes(\rs_1\phi^\infty_1\otimes(\rs_2\phi^\infty_2)^\tc))_u
      \]
      for every $(\tR,\tR',\ell,L)$-admissible sextuple $(\phi^\infty_1,\phi^\infty_2,\rs_1,\rs_2,g_1,g_2)$ and every pair $(T_1,T_2)$ in $\Herm_r^\circ(F)^+$.

  \item Suppose that $\ul{u}\in\tS\cap\tV_F^\heartsuit$ and is unramified over $\dQ$. Recall that we have fixed a $u$-nearby space $\pres{u}V$ and an isomorphism $\pres{u}{V}\otimes_F\dA_F^{\ul{u}}\simeq V\otimes_{\dA_F}\dA_F^{\ul{u}}$ from Notation \ref{st:h}(H9). We also fix a $\psi_{E,\ul{u}}$-self-dual lattice $\Lambda^\star_{\ul{u}}$ of $\pres{u}{V_{\ul{u}}}$. Then there exist elements $\rs_1^u,\rs_2^u\in\dS_{\dQ^\ac}^\tR\setminus\fm_\pi^\tR$ such that
      \begin{multline*}
      \log q_u\cdot \vol^\natural(L)\cdot I_{T_1,T_2}(\phi^\infty_1,\phi^\infty_2,\rs_1^u\rs_1,\rs_2^u\rs_2,g_1,g_2)^\ell_{L,u} \\
      =\fE_{T_1,T_2}((g_1,g_2),\Phi_\infty^0\otimes(\rs_1^u\rs_1\phi^\infty_1\otimes(\rs_2^u\rs_2\phi^\infty_2)^\tc))_u \\
      -\frac{\log q_u}{q_u^r-1}E_{T_1,T_2}((g_1,g_2),\Phi_\infty^0\otimes
      (\rs_1^u\rs_1\phi^{\infty,\ul{u}}_1\otimes(\rs_2^u\rs_2\phi^{\infty,\ul{u}}_2)^\tc)\otimes\CF_{(\Lambda^\star_{\ul{u}})^{2r}})
      \end{multline*}
      for every $(\tR,\tR',\ell,L)$-admissible sextuple $(\phi^\infty_1,\phi^\infty_2,\rs_1,\rs_2,g_1,g_2)$ and every pair $(T_1,T_2)$ in $\Herm_r^\circ(F)^+$.
\end{enumerate}
In both cases, the right-hand side is defined in Definition \ref{de:analytic_kernel} with the Gaussian function $\Phi_\infty^0\in\sS(V^{2r}\otimes_{\dA_F}F_\infty)$ (Notation \ref{st:h}(H3)), and $\vol^\natural(L)$ is defined in \cite{LL}*{Definition~3.8}.
\end{proposition}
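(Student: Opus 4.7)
The plan is to follow the strategy of \cite{LL}*{Section~8}, reducing the computation of the local index to an intersection on a relative Rapoport--Zink space via $p$-adic uniformization, then applying the arithmetic Siegel--Weil identity of Kudla--Rapoport type. The new feature compared to \cite{LL} is that $\tV_F^{(p)}$ may now contain ramified places of $E/F$, so we must build an integral model of $X_L$ at $u$ whose local structure at a ramified $v\in\tV_F^{(p)}\cap\tV_F^\ram$ is governed by the exotic smooth moduli problem $\Exo_{(n,0)}^{\Phi_v}$ from Remark~\ref{re:aexotic} (and $\Exo_{(n-1,1)}^{\Phi_v}$ at $\ul{u}$ itself when $u$ is ramified, though here $u$ is inert). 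First, I would invoke an admissible CM type $\Phi$ as in Notation~\ref{st:auxiliary} (available because $\ul{u}\in\tV_F^\heartsuit$ in case (2), and for case (1) after possibly enlarging $\tR$ by places in $\tV_F^\ram\setminus\tV_F^\heartsuit$ at which unramified test vectors can be used to force vanishing) and construct the auxiliary integral model $\cX$ of $X'_L\otimes_{E'}K$ over $O_K$ as a fiber product of local PEL problems, using Definition~\ref{de:exotic} and Remark~\ref{re:aexotic} for the ramified places in $\tV_F^{(p)}$.

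Next, I would extend Kudla's special cycles to $\cX$ by imposing the homomorphism condition on the $\ul{u}$-adic part of the relevant $O_F$-divisible groups, and apply the Rapoport--Zink--Pappas--Shimura-variety $p$-adic uniformization theorem to identify the completion of $\cX$ along its supersingular (at $\ul{u}$) locus with a quotient of a product $\cN\times\prod_{v\in\tV_F^{(p)}\setminus\{\ul{u}\}}\cY_v$, where $\cN$ is the relative Rapoport--Zink space at $\ul{u}$ (in the inert, unramified or almost-unramified setting as in \cite{LL}*{Section~8}) and each $\cY_v$ is either \'etale (at split $v$) or a point (at banal $v$, by Remark~\ref{re:aexotic} which shows that $\Exo_{(n,0)}^{\Phi_v}$ is a connected groupoid on $\Sch^\rv_{/O_{\breve{E}}}$). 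The intersection $I_{T_1,T_2}(\dots)_{L,u}^\ell$ then factorizes as the product of a local intersection on $\cN$ with contributions from the other $v$, which after projection formula reduce to multiplicities of lattices on the nearby hermitian space.

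The heart of the argument is then to match the $\cN$-side with $\fE_{T_1,T_2}$ (resp.\ with the correction term in case (2)). When $\ul{u}\not\in\tS$, $\cN$ is the Rapoport--Zink space at an inert unramified place, and the Kudla--Rapoport conjecture of \cite{LZ} identifies the intersection of special divisors associated to a lattice $\bbL$ with $\partial\Den(\bbL)$, which via the Whittaker expansion of the derivative of the Siegel--Eisenstein series at $\ul{u}$ (computed in \cite{LL}*{Section~9}) translates into the factor $W'_{T^\Box}(0,g_{\ul{u}},\Phi_{\ul{u}})$ appearing in $\fE_{T_1,T_2}$; combining with the Whittaker factors at $v\neq\ul{u}$ (which are unchanged from \cite{LL} away from $\tV_F^\ram\cap\tV_F^{(p)}$, and equal $W_{T^\Box}(0,g_v,\Phi_v)$ at ramified $v$ by Proposition~\ref{pr:zeta} applied to the test vector $\CF_{(\Lambda^\tR_v)^{2r}}$) gives the identity of (1). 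For (2), the almost unramified setup follows \cite{LL}*{Section~8.2} verbatim once the analogous identification of $\cN$ is available; the correction term arises from the difference between the geometric intersection and $\partial\Den$ captured by the horizontal/vertical decomposition on $\cN$, and yields the $E_{T_1,T_2}$ contribution with the lattice $\CF_{(\Lambda^\star_{\ul{u}})^{2r}}$ at the nearby space.

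The main obstacle I expect is the $p$-adic uniformization with non-trivial level at ramified $v\in\tV_F^{(p)}\cap\tV_F^\ram$: the comparison between our relative Rapoport--Zink space of Definition~\ref{de:rz} and an absolute one suitable for uniformization is only established in Subsection~\ref{ss:comparison} under the assumption that $F_v/\dQ_p$ is unramified, which is why the standing hypothesis in Assumption~\ref{st:main} requires every prime in $\tV_F^\ram$ to be unramified over $\dQ$; once this comparison (Corollary~\ref{co:arz}, together with its $\Exo_{(n,0)}$-analogue in Remark~\ref{re:aexotic}) is in hand, the rest of the argument runs parallel to \cite{LL}*{Section~8}, and the auxiliary Hecke elements $\rs_1^u,\rs_2^u$ in case (2) are chosen exactly as in \emph{loc.\ cit.}\ to cut out the $\pi$-nearly-isotypic part and kill contributions from irrelevant Newton strata.
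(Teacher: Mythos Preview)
Your approach is essentially the same as the paper's: reduce to \cite{LL}*{Proposition~8.1} for part~(1) and \cite{LL}*{Proposition~9.1} for part~(2), after extending the integral model to accommodate places in $\tV_F^{(p)}\cap\tV_F^\ram$ via the exotic smooth moduli $\Exo_{(n,0)}^{\Phi_v}$ of Remark~\ref{re:aexotic}. That is exactly what the paper does, and your identification of the banal groupoid structure at those auxiliary ramified places is the key point.

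Two clarifications are worth making. First, your invocation of Corollary~\ref{co:arz} (the $(n-1,1)$ comparison of Subsection~\ref{ss:comparison}) is misplaced: since $u$ is \emph{inert}, the Rapoport--Zink space at $\ul{u}$ is the one already treated in \cite{LL}/\cite{LZ}, not the ramified one of Definition~\ref{de:rz}; the machinery of Subsection~\ref{ss:comparison} is needed only for Proposition~\ref{pr:index_ramified}. What you do need at the auxiliary ramified $v\in\tV_F^{(p)}\cap\tV_F^\ram$ is only the $(n,0)$ analogue in Remark~\ref{re:aexotic}, which is much simpler. Second, for part~(1) there is no need to enlarge $\tR$ or impose $\ul{u}\in\tV_F^\heartsuit$: as the paper notes (footnote in Subsection~\ref{ss:split}), when the level at $\ul{u}$ is hyperspecial the model $\cX_0$ is already smooth over $O_K$ regardless of whether $K/E_u$ is ramified, so the admissibility condition on $\Phi$ is only needed in part~(2). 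Your attribution of the Whittaker factors at ramified $v\in\tV_F^{(p)}$ to Proposition~\ref{pr:zeta} is also off---that proposition concerns the doubling zeta integral; the relevant Whittaker identity at those places is elementary since the test function is $\CF_{(\Lambda^\tR_v)^{2r}}$ and the moduli problem is banal there.
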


\begin{proof}
Part (1) is proved in the same way as \cite{LL}*{Proposition~8.1}. Part (2) is proved in the same way as \cite{LL}*{Proposition~9.1}. Note that we need to extend the definition of the integral model due to the presence of places in $\tV_F^{(p)}\cap\tV_F^\ram$, as we do in the previous subsection. The requirement that $\ul{u}\in\tV_F^\heartsuit$ in (2) is to ensure that $K$ is unramified over $E_u$ (see Notation \ref{st:auxiliary}).
\end{proof}

\subsection{Local indices at ramified places}
\label{ss:ramified}

In this subsection, we compute local indices at places in $\tV_E^\ram$ not above $\tR$.

\begin{proposition}\label{pr:index_ramified}
Let $\tR$, $\tR'$, $\ell$, and $L$ be as in Definition \ref{de:kernel_geometric}. Take an element $u\in\tV_E^\ram$ such that its underlying rational prime $p$ satisfies $\tV_F^{(p)}\cap\tR\subseteq\tV_F^\spl$. Then we have
\[
\log q_u\cdot \vol^\natural(L)\cdot I_{T_1,T_2}(\phi^\infty_1,\phi^\infty_2,\rs_1,\rs_2,g_1,g_2)^\ell_{L,u}=
\fE_{T_1,T_2}((g_1,g_2),\Phi_\infty^0\otimes(\rs_1\phi^\infty_1\otimes(\rs_2\phi^\infty_2)^\tc))_u
\]
for every $(\tR,\tR',\ell,L)$-admissible sextuple $(\phi^\infty_1,\phi^\infty_2,\rs_1,\rs_2,g_1,g_2)$ and every pair $(T_1,T_2)$ in $\Herm_r^\circ(F)^+$, where the right-hand side is defined in Definition \ref{de:analytic_kernel} with the Gaussian function $\Phi_\infty^0\in\sS(V^{2r}\otimes_{\dA_F}F_\infty)$ (Notation \ref{st:h}(H3)), and $\vol^\natural(L)$ is defined in \cite{LL}*{Definition~3.8}.
\end{proposition}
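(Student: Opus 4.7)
The plan is to adapt the argument for Proposition \ref{pr:index_inert}(1) (i.e., \cite{LL}*{Proposition~8.1}), replacing the Kudla--Rapoport conjecture at inert places (proved in \cite{LZ}) with Theorem \ref{th:kr} at the ramified place $u$. First I would construct a regular integral model $\cX_L$ of $X'_L \otimes_{E'} K$ over $O_K$ extending the moduli problem of Notation \ref{st:auxiliary}: at each $v \in \tV_F^{(p)} \cap \tV_F^\ram$ one requires $(A[v^\infty],\iota_A[v^\infty],\lambda_A[v^\infty])$ to be an object of $\Exo_{(n-1,1)}^{\Phi_v}$ (with the appropriate signature modification at $u$ vs.\ the other ramified places). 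The local model at $u$ is formally smooth of relative dimension $n-1$ by Lemma \ref{le:rz}. Crucially, under Assumption \ref{st:main} the place $\ul u$ is unramified over $\dQ$, so $F_{\ul u}/\dQ_p$ is unramified and Corollary \ref{co:arz} identifies our relative Rapoport--Zink space $\cN$ with the absolute space $\cN^\Phi$; this legitimizes the Rapoport--Zink $p$-adic uniformization of the supersingular (= entire, by incoherence and Kottwitz's theorem on basic loci) locus of $\cX_L \otimes_{O_K} \ol\dF_p$ by $\pres u H(F)\backslash \cN \times H(\dA_F^{\infty,\ul u})/L^{\ul u}$.

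Second, the regular support condition on $\phi^\infty_{1v} \otimes (\phi^\infty_{2v})^\tc$ at the places of $\tR' \ni v$ (of cardinality $\geq 2$) ensures that Kudla's special cycles $Z_{T_1}(\rs_1\phi^\infty_1)_L$ and $Z_{T_2}(\rs_2\phi^\infty_2)_L$ meet properly and are concentrated in the supersingular locus at $u$. Their contribution to the local height pairing therefore reduces, by the uniformization above, to a sum (over $\pres u H(F)$-orbits of pairs $(x,y)$ of $r$-tuples with $T(x)=T_1$, $T(y)=T_2$, and $T((x,y))=T^\Box$ satisfying $\partial_{r,r}T^\Box = (T_1,T_2)$) of the Serre intersection numbers $\Int(\bbL)$ attached to the hermitian $O_{E_u}$-lattices $\bbL \subseteq \bbV$ generated by the components of $(x,y)$ at $u$. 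Applying Theorem \ref{th:kr} termwise replaces each $\Int(\bbL)$ by $\partial\Den(\bbL)$, converting the local intersection into a sum of derivatives of local densities.

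Third, by Remark \ref{re:whittaker} we have
\[
\log q_u \cdot \partial\Den(\bbL) = \frac{W'_{T^\Box}(0,1_{4r},\CF_{H_r^{2r}})}{\prod_{i=1}^r (1 - q_u^{-2i})},
\]
which identifies each $\partial\Den(\bbL)$ with the derivative at $s=0$ of the local Whittaker function at $u$ for the standard section. Matching the remaining orbital integrals away from $u$ (built from $\rs_i\phi^\infty_i$) against the product $\prod_{v \neq \ul u} W_{T^\Box}(0,g_v,\Phi_v)$ by the standard unfolding of \cite{LL}*{Section~8}, and absorbing the factor $\prod_i (1-q_u^{-2i})^{-1}$ into $\vol^\natural(L)$ via its normalization by $b_{2r}(0)$, one assembles exactly $\fE_{T_1,T_2}((g_1,g_2),\Phi_\infty^0 \otimes (\rs_1\phi^\infty_1 \otimes (\rs_2\phi^\infty_2)^\tc))_u$. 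The archimedean factor $\Phi^0_\infty$ appears through $\omega_{r,\infty}(g_{i\infty})\phi^0_\infty(T_i)$ built into Kudla's generating function, and the constraint $\Diff(T^\Box,V) = \{\ul u\}$ corresponds precisely to the condition that the moment matrix $T^\Box$ be represented by the nonsplit hermitian space at $u$ (i.e., by $\bbV$) and by $V_v$ at every other finite place.

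The main obstacle is the first step: rigorously establishing the Rapoport--Zink uniformization for $\cX_L$ at $u$ in the exotic smooth case, i.e., identifying the formal completion of $\cX_L$ along its supersingular locus with the appropriate double quotient involving $\cN$. This is where the unramifiedness of $\ul u$ over $\dQ$ and Corollary \ref{co:arz} are indispensable, since it is only via the comparison with $\cN^\Phi$ that we can invoke the classical uniformization machinery of \cite{RZ96}. Once this is in place, matching cycles on both sides and transporting Theorem \ref{th:kr} is routine; all archimedean and endoscopic subtleties are the same as in \cite{LL}*{Proposition~8.1} and introduce nothing new.
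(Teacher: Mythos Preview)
Your proposal is correct and follows essentially the same approach as the paper: construct the integral model $\cX_{\tilde L}$ with the exotic smooth condition at $\ul u$, establish the $p$-adic uniformization of its formal completion along the supersingular locus via the comparison $\cN^{\Phi_{\ul u}}\simeq\cN$ of Corollary \ref{co:arz} (this is the paper's Proposition \ref{pr:uniformization}), and then feed Theorem \ref{th:kr} together with Remark \ref{re:whittaker} and the identity $b_{2r,\ul u}(0)=\prod_{i=1}^r(1-q_u^{-2i})$ into the unfolding of \cite{LL}*{Proposition~8.1}. One small correction: your parenthetical claim that the supersingular locus equals the entire special fiber ``by incoherence and Kottwitz's theorem'' is not right and not needed; the argument only requires that the relevant special cycles (those contributing to $T^\Box$ with $\Diff(T^\Box,V)=\{\ul u\}$) are supported in the supersingular locus, which follows as in \cite{LL} from the moment-matrix condition, not from any global statement about Newton strata.
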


\begin{proof}
The proof of the proposition follows the same line as in \cite{LL}*{Proposition~8.1}, as long as we accomplish the following three tasks. We invoke Notation \ref{co:auxiliary} together with Notation \ref{st:auxiliary}.
\begin{enumerate}
  \item Construct a good integral model $\cX_{\tilde{L}}$ for $X_{\tilde{L}}$ over $O_K$ for open compact subgroups $\tilde{L}\subseteq L$ satisfying $\tilde{L}_v=L_v$ for $v\in\tV_F^{(p)}\setminus\tV_F^\spl$, which is provided after the proof.

  \item Establish the nonarchimedean uniformization of $\cX_{\tilde{L}}$ along the supersingular locus using the relative Rapoport--Zink space $\cN$ from Definition \ref{de:rz}, analogous to \cite{LL}*{(8.2)}, and compare special divisors. This is done in Proposition \ref{pr:uniformization} below.

  \item Show that for $x=(x_1,\dots,x_{2r})\in \pres{\ul{u}}{V}^{2r}$ with $T(x)\in\Herm_{2r}^\circ(F_{\ul{u}})$, we have
      \[
      \chi\(\sO_{\cN(x_1)}\overset{\dL}\otimes_{\sO_\cN}\cdots\overset{\dL}\otimes_{\sO_\cN}\sO_{\cN(x_{2r})}\)=\frac{b_{2r,\ul{u}}(0)}{\log q_u}W'_{T^\Box}(0,1_{4r},\CF_{(\Lambda^\tR_{\ul{u}})^{2r}})
      \]
      if $T(x)=T^\Box$. In fact, this follows from Theorem \ref{th:kr}, Remark \ref{re:whittaker}, and the identity
      \[
      b_{2r,\ul{u}}(0)=\prod_{i=1}^r(1-q_u^{-2i}).
      \]
\end{enumerate}
The proposition is proved.
\end{proof}

Let the situation be as in Proposition \ref{pr:index_ramified}. The isomorphism $\dC\xrightarrow\sim\ol\dQ_p$ in Notation \ref{st:auxiliary} identifies $\Hom(E,\dC)$ with $\Hom(E,\dC_p)$. For every $v\in\tV_F^{(p)}$, let $\Phi_v$ be the subset of $\Phi$, regarded as a subset of $\Hom(E,\dC_p)$, of elements that induce the place $v$ of $F$.

To ease notation, put
\[
\tU\coloneqq\{v\in\tV_F^{(p)}\setminus\tV_F^\spl\res v\neq\ul{u}\}.
\]
In particular, $\tU\cap\tR=\emptyset$.

There is a projective system $\{\cX_{\tilde{L}}\}$, for open compact subgroups $\tilde{L}\subseteq L$ satisfying $\tilde{L}_v=L_v$ for $v\in\tV_F^{(p)}\setminus\tV_F^\spl$, of smooth projective schemes over $O_K$ (see \cite{RSZ}*{Theorem~4.7, AT type (2)}) with
\[
\cX_{\tilde{L}}\otimes_{O_K}K=X'_{\tilde{L}}\otimes_{E'}K
=\(X_{\tilde{L}}\otimes_{E}Y\)\otimes_{E'}K,
\]
and finite \'{e}tale transition morphisms, such that for every $S\in\Sch'_{/O_K}$, $\cX_{\tilde{L}}(S)$ is the set of equivalence classes of tuples
\begin{align*}
(A_0,\iota_{A_0},\lambda_{A_0},\eta_{A_0}^p;A,\iota_A,\lambda_A,\eta_A^p,\{\eta_{A,v}\}_{v\in\tV_F^{(p)}\cap\tV_F^\spl})
\end{align*}
where
\begin{itemize}
  \item $(A_0,\iota_{A_0},\lambda_{A_0},\eta_{A_0}^p)$ is an element in $\cY(S)$;

  \item $(A,\iota_A,\lambda_A)$ is a unitary $O_E$-abelian scheme of signature type $n\Phi-\iota_w+\iota_w^\tc$ over $S$, such that
       \begin{itemize}
         \item for every $v\in\tV_F^{(p)}\setminus\tV_F^\ram$, $\lambda_A[v^\infty]$ is an isogeny whose kernel has order $q_v^{1-\epsilon_v}$;

         \item for every $v\in\tU\cap\tV_F^\ram$, the triple $(A_0[v^\infty],\iota_{A_0}[v^\infty],\lambda_{A_0}[v^\infty])\otimes_{O_K}O_{\breve{K}}$ is an object of $\Exo_{(n,0)}^{\Phi_v}(S\otimes_{O_K}O_{\breve{K}})$ (Remark \ref{re:aexotic}, with $E=E_v$, $F=F_v$, and $\breve{E}=\breve{K}$);

         \item for $v=\ul{u}$, $(A_0[v^\infty],\iota_{A_0}[v^\infty],\lambda_{A_0}[v^\infty])\otimes_{O_K}O_{\breve{K}}$ is an object of $\Exo_{(n-1,1)}^{\Phi_v}(S\otimes_{O_K}O_{\breve{K}})$ (Definition \ref{de:aexotic}, with $E=E_v$, $F=F_v$, and $\breve{E}=\breve{K}$);
       \end{itemize}

  \item $\eta_A^p$ is an $\tilde{L}^p$-level structure;

  \item for every $v\in\tV_F^{(p)}\cap\tV_F^\spl$, $\eta_{A,v}$ is an $\tilde{L}_v$-level structure.
\end{itemize}
In particular, $\dS^\tR$ is naturally a ring of \'{e}tale correspondences of $\cX_L$.

Let $\phi^\infty\in\sS(V\otimes_{\dA_F}\dA_F^\infty)^{\tilde{L}}$ be a $p$-basic element \cite{LL}*{Definition~6.5}. For every element $t\in F$ that is totally positive, we have a cycle $\cZ_t(\phi^\infty)_{\tilde{L}}\in\rZ^1(\cX_{\tilde{L}})$ extending the restriction of $Z_t(\phi^\infty)$ to $X'_{\tilde{L}}$, defined similarly as in \cite{LZ}*{Section~13.3}.

Now we study the nonarchimedean uniformization of $\cX_{\tilde{L}}$ along the supersingular locus. Fix a point $P_0\coloneqq(A_0,\iota_{A_0},\lambda_{A_0},\eta_{A_0}^p)\in\cY(O_{\breve{K}})$. Put
\[
\cX\coloneqq\varprojlim_{\tilde{L}}\cX_{\tilde{L}}
\]
and denote by $\cX_0$ the fiber of $P_0$ along the natural projection $\cX\to\cY$. Let $\cX_0^\wedge$ be the completion along the (closed) locus where $A[u^\infty]$ is supersingular, as a formal scheme over $\Spf O_K$. Also fix a point $\bbP\in\cX_0^\wedge(\ol\dF_p)$ represented by $(P_0\otimes_{O_{\breve{K}}}\ol\dF_p;\bbA,\iota_{\bbA},\lambda_{\bbA},\eta_{\bbA}^p,\{\eta_{\bbA,v}\}_{v\in\tV_F^{(p)}\cap\tV_F^\spl})$.

Put $\bbV\coloneqq\Hom_{O_E}(A_0\otimes_{O_{\breve{E}}}\ol\dF_p,\bbA)\otimes\dQ$. Fixing an element $\varpi\in O_F$ that has valuation $0$ (resp.\ $1$) at places in $\tU\cap\tV_F^\inert$ (resp., $\tU\cap\tV_F^\ram$), we have a pairing
\begin{align*}
(\;,\;)_{\bbV}\colon\bbV\times\bbV\to E
\end{align*}
sending $(x,y)\in\bbV^2$ to the composition of quasi-homomorphisms
\[
A_0\xrightarrow{x}\bbX\xrightarrow{\lambda_{\bbA}}\bbA^\vee\xrightarrow{y^\vee}A_0^\vee\xrightarrow{\varpi^{-1}\lambda_{A_0}^{-1}}A_0
\]
as an element in $\End_{O_E}(A_0)\otimes\dQ$ hence in $E$ via $\iota_{A_0}^{-1}$. We have the following properties concerning $\bbV$:
\begin{itemize}
  \item $\bbV,(\;,\;)_{\bbV}$ is a totally positive definite hermitian space over $E$ of rank $n$;

  \item for every $v\in\tV_F^\fin\setminus(\tV_F^{(p)}\setminus\tV_F^\spl)$, we have a canonical isometry $\bbV\otimes_FF_v\simeq V\otimes_FF_v$ of hermitian spaces;

  \item for every $v\in\tU$, the $O_{E_v}$-lattice $\bbLambda_v\coloneqq\Hom_{O_E}(A_0\otimes_{O_{\breve{E}}}\ol\dF_p,\bbA)\otimes_{O_F}O_{F_v}$ is
      \begin{itemize}
        \item self-dual if $v\in\tU\cap\tV_F^\inert$ and $\epsilon_v=1$,

        \item almost self-dual if $v\in\tU\cap\tV_F^\inert$ and $\epsilon_v=-1$,

        \item self-dual if $v\in\tU\cap\tV_F^\ram$;
      \end{itemize}

  \item $\bbV\otimes_FF_{\ul{u}}$ is nonsplit, and we have a canonical isomorphism
      \[
      \bbV\otimes_FF_{\ul{u}}\simeq\Hom_{O_{E_u}}(A_0[u^\infty]\otimes_{O_{\breve{K}}}\ol\dF_p,\bbA[u^\infty])\otimes\dQ
      \]
      of hermitian spaces over $E_u$.
\end{itemize}

We have a Rapoport--Zink space $\cN$ (Definition \ref{de:rz}, with $E=E_u$, $F=F_{\ul{u}}$, $\breve{E}=\breve{K}$, and $\varphi_0$ the natural embedding) with respect to the object
\[
(\bbX,\iota_{\bbX},\lambda_{\bbX})\coloneqq(\bbA[u^\infty],\iota_{\bbA}[u^\infty],\lambda_{\bbA}[u^\infty])^\rel
\in\Exo_{(n-1,1)}^\rb(\ol\dF_p),
\]
where $-^\rel$ is the morphism \eqref{eq:arz}. We now construct a morphism
\begin{align}\label{eq:uniformization}
\Upsilon^\rel\colon\cX_0^\wedge\to\rU(\bbV)(F)\backslash\(\cN\times\rU(\bbV)(\dA_F^{\infty,\ul{u}})/\prod_{v\in\tU}\bbL_v\)
\end{align}
of formal schemes over $\Spf O_{\breve{K}}$, where $\bbL_v$ is the stabilizer of $\bbLambda_v$ in $\rU(\bbV)(F_v)$, as follows.

We have the Rapoport--Zink space $\cN^{\Phi_u}=\cN^{\Phi_u}_{(\bbA[u^\infty],\iota_{\bbA}[u^\infty],\lambda_{\bbA}[u^\infty])}$ from Definition \ref{de:arz}. We first define a morphism
\[
\Upsilon\colon\cX_0^\wedge\to\rU(\bbV)(F)\backslash\(\cN^{\Phi_u}\times\rU(\bbV)(\dA_F^{\infty,\ul{u}})/\prod_{v\in\tU}\bbL_v\),
\]
and then define $\Upsilon^\rel$ as the composition of $\Upsilon$ with the morphism in Corollary \ref{co:arz}. To construct $\Upsilon$, we take a point
\[
P=(P_0\otimes_{O_{\breve{K}}}S;A,\iota_A,\lambda_A,\eta_A^p,\{\eta_{A,v}\}_{v\in\tV_F^{(p)}\cap\tV_F^\spl})\in\cX_0^\wedge(S)
\]
for a connected scheme $S$ in $\Sch'_{/O_{\breve{K}}}\cap\Sch_{/O_{\breve{K}}}^\rv$ with a geometric point $s$. In particular, $A[p^\infty]$ is supersingular.  By \cite{RZ96}*{Proposition~6.29}, we can choose an $O_E$-linear quasi-isogeny
\[
\rho\colon A\times_S(S\otimes_{O_{\breve{K}}}\ol\dF_p)\to\bbA\otimes_{\ol\dF_p}(S\otimes_{O_{\breve{K}}}\ol\dF_p)
\]
of height zero such that $\rho^*\lambda_{\bbA}\otimes_{\ol\dF_p}(S\otimes_{O_{\breve{K}}}\ol\dF_p)=\lambda_A\times_S(S\otimes_{O_{\breve{K}}}\ol\dF_p)$. We have
\begin{itemize}
  \item $(A[u^\infty],\iota_A[u^\infty],\lambda_A[u^\infty];\rho[u^\infty])$ is an element in $\cN^{\Phi_u}(S)$;

  \item the composite map
     \begin{align*}
     \bbV\otimes_\dQ\dA^{\infty,p}&\xrightarrow{\sim}V\otimes_\dQ\dA^{\infty,p}\xrightarrow{\eta_A^p}
     \Hom_{E\otimes_\dQ\dA^{\infty,p}}(\rH_1(A_{0,s},\dA^{\infty,p}),\rH_1(A_s,\dA^{\infty,p})) \\
     &\xrightarrow{\rho_{s*}\circ}\Hom_{E\otimes_\dQ\dA^{\infty,p}}(\rH_1(A_{0,s},\dA^{\infty,p}),\rH_1(\bbA_s,\dA^{\infty,p}))
     =\bbV\otimes_\dQ\dA^{\infty,p}
     \end{align*}
     is an isometry, which gives rise to an element $h^p\in\rU(\bbV)(\dA_F^{\infty,p})$;

  \item the same process as above will produce an element $h_p^\spl\in\prod_{v\in\tV_F^{(p)}\cap\tV_F^\spl}\rU(\bbV)(F_v)$;

  \item for every $v\in\tU$, the image of the map
     \[
     \rho_{s*}\circ\colon\Hom_{O_{E_v}}(A_{0,s}[v^\infty],A_s[v^\infty])\to\Hom_{O_{E_v}}(A_{0,s}[v^\infty],\bbA_s[v^\infty])\otimes\dQ
     =\bbV\otimes_FF_v
     \]
     is an $O_{E_v}$-lattice in the same $\rU(\bbV)(F_v)$-orbit of $\bbLambda_v$, which gives rise to an element $h_v\in\rU(\bbV)(F_v)/\bbL_v$.
\end{itemize}
Together, we obtain an element
\[
\((A[u^\infty],\iota_A[u^\infty],\lambda_A[u^\infty];\rho[u^\infty]),(h^p,h_p^\spl,\{h_v\}_{v\in\tU})\)
\in\cN^{\Phi_u}(S)\times\rU(\bbV)(\dA_F^{\infty,\ul{u}})/\prod_{v\in\tU}\bbL_v,
\]
and we define $\Upsilon(P)$ to be its image in the quotient, which is independent of the choice of $\rho$.

\begin{remark}
Both $\bbV$ and $\Upsilon^\rel$ depend on the choice of $\bbP$, while the isometry class of $\bbV$ does not.
\end{remark}

\begin{proposition}\label{pr:uniformization}
The morphism $\Upsilon^\rel$ \eqref{eq:uniformization} is an isomorphism. Moreover, for every $p$-basic element $\phi^\infty\in\sS(V\otimes_{\dA_F}\dA_F^\infty)^{\tilde{L}}$ and every $t\in F$ that is totally positive, we have
\begin{align}\label{eq:uniformization1}
\Upsilon^\rel\(\cZ_t(\phi^\infty)_{\tilde{L}}\res_{\cX_0^\wedge}\)=\sum_{\substack{x\in\rU(\bbV)(F)\backslash\bbV \\ (x,x)_{\bbV}=t}}
\sum_{h\in\rU(\bbV^x)(F)\backslash\rU(\bbV)(\dA_F^{\infty,\ul{u}})/\prod_{v\in\tU}\bbL_v}\bbphi(h^{-1}x)\cdot (\cN(x^\rel),h),
\end{align}
where
\begin{itemize}
  \item $\bbV^x$ denotes the orthogonal complement of $x$ in $\bbV$;

  \item $\bbphi$ is a Schwartz function on $\bbV\otimes_F\dA_F^{\infty,\ul{u}}$ such that $\bbphi_v=\phi^\infty_v$ for $v\in\tV_F^\fin\setminus(\tV_F^{(p)}\setminus\tV_F^\spl)$ and $\bbphi_v=\CF_{\bbLambda_v}$ for $v\in\tU$;

  \item $x^\rel$ is defined in \eqref{eq:arz3}; and

  \item $(\cN(x^\rel),h)$ denotes the corresponding double coset in \eqref{eq:uniformization}.
\end{itemize}
\end{proposition}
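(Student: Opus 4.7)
The plan is to reduce the statement to the classical non-archimedean uniformization of Rapoport--Zink \cite{RZ96} applied to the \emph{absolute} Rapoport--Zink space $\cN^{\Phi_u}$, and then transport the result to the \emph{relative} Rapoport--Zink space $\cN$ via the comparison isomorphism of Corollary \ref{co:arz}. More precisely, I will first show that $\Upsilon$ is an isomorphism of formal schemes, and then deduce from Corollary \ref{co:arz} that $\Upsilon^\rel$ is an isomorphism. The compatibility with special divisors in the second assertion will follow from Corollary \ref{co:arz1}.

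For the isomorphism statement, the key point is that the moduli problem $\cX_0^\wedge$ is, after choosing a framing by $\bbP$, a PEL-type problem of the sort addressed in \cite{RZ96}*{Chapter~6}, modified along the places $v \in \tU \cap \tV_F^\ram$ by the exotic smooth condition of Remark \ref{re:aexotic}, and along $v = \ul{u}$ by the exotic smooth condition of Definition \ref{de:aexotic}. At each $v \in \tU$, the relevant local $p$-divisible group is rigid: the moduli of $(A_0[v^\infty],\iota_{A_0}[v^\infty],\lambda_{A_0}[v^\infty])$ together with the condition on $A[v^\infty]$ is a single \'etale point (the groupoid $\Exo^{\Phi_v}_{(n,0)}$ is connected, cf.\ Remark \ref{re:exotic}), so these factors contribute only prime-to-$p$ adelic data encoded in the quotient by $\prod_{v\in\tU}\bbL_v$. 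At $v=\ul{u}$, the moduli problem is governed precisely by $\cN^{\Phi_u}$. Putting these together exactly as in \cite{RZ96}*{Theorem~6.30} (see also \cite{LZ}*{Section~13.2} for the analogous formalism in the Kudla--Rapoport setup) yields that $\Upsilon$ is an isomorphism. Composing with the isomorphism $\cN^{\Phi_u} \xrightarrow{\sim} \cN$ from Corollary \ref{co:arz} gives that $\Upsilon^\rel$ is an isomorphism.

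For the formula \eqref{eq:uniformization1}, I unwind the definition of $\cZ_t(\phi^\infty)_{\tilde{L}}$ as a weighted sum of integral special cycles indexed by pairs $(A,x)$ with $x$ a quasi-homomorphism from $A_0$ to $A$ of norm $t$, extending the construction of \cite{LZ}*{Section~13.3}. Under $\Upsilon$, such a quasi-homomorphism corresponds precisely to an element $x\in\bbV$ with $(x,x)_{\bbV}=t$, together with an adelic class in $\rU(\bbV^x)(F)\backslash\rU(\bbV)(\dA_F^{\infty,\ul u})/\prod_{v\in\tU}\bbL_v$; and the weight $\bbphi(h^{-1}x)$ records the Schwartz function $\phi^\infty$ at $\tV_F^\fin\setminus(\tV_F^{(p)}\setminus\tV_F^\spl)$ together with the characteristic function of $\bbLambda_v$ at $v\in\tU$ (note that only integral $x$ at $v\in\tU$ contribute, since the lattice $\bbLambda_v$ fixes the integral structure there). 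At the component $v=\ul{u}$, the special divisor in $\cN^{\Phi_u}$ attached to $x$ is $\cN^{\Phi_u}(x)$, which corresponds to $\cN(x^\rel)$ under Corollary \ref{co:arz1}. Summing over $(x,h)$ gives the right-hand side of \eqref{eq:uniformization1}.

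The main technical obstacle is the treatment of the places $v\in\tU\cap\tV_F^\ram$: the standard RZ formalism is set up for PEL data whose local $p$-divisible groups are ordinary or supersingular in the usual sense, whereas here we must use the exotic smooth integral models at $v$. This is precisely what the comparison of Subsection \ref{ss:comparison} was developed for (and why Hypothesis \ref{hy:rz} and the assumption that every prime of $\tV_F^\ram$ is unramified over $\dQ$ are imposed in Assumption \ref{st:main}): Corollary \ref{co:arz}, Corollary \ref{co:arz1}, and Remark \ref{re:aexotic} allow us to replace the relative moduli at $v$ with the absolute one defined by a CM type, after which the existing RZ machinery applies verbatim.
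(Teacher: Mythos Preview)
Your proposal is correct and follows essentially the same route as the paper: first establish that $\Upsilon$ is an isomorphism by the Rapoport--Zink uniformization argument of \cite{RZ96}*{Theorem~6.30}, then transport to $\Upsilon^\rel$ via Corollary~\ref{co:arz}; and for the cycle identity, first prove the analogue with $\cN^{\Phi_u}(x)$ in place of $\cN(x^\rel)$, then invoke Corollary~\ref{co:arz1}. The paper's own proof is more terse and, for the special-cycle step, cites \cite{Liu19}*{Theorem~5.22} rather than \cite{LZ}*{Section~13}, but the content of the argument is the same.
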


\begin{proof}
By a similar argument for \cite{RZ96}*{Theorem~6.30}, the morphism $\Upsilon$ is an isomorphism. Thus, $\Upsilon^\rel$ is an isomorphism as well by Corollary \ref{co:arz}.

For \eqref{eq:uniformization1}, by a similar argument for \cite{Liu19}*{Theorem~5.22}, the identity holds with $\cN(x^\rel)$ replaced by $\cN^{\Phi_u}(x)$. Then it follows by Corollary \ref{co:arz1}.

The proposition is proved.
\end{proof}

\subsection{Local indices at archimedean places}
\label{ss:archimedean}

In this subsection, we compute local indices at places in $\tV_E^{(\infty)}$.

\begin{proposition}\label{pr:index_arch}
Let $\tR$, $\tR'$, $\ell$, and $L$ be as in Definition \ref{de:kernel_geometric}. Let $(\pi,\cV_\pi)$ be as in Assumption \ref{st:representation}. Take an element $u\in\tV_E^{(\infty)}$. Consider an $(\tR,\tR',\ell,L)$-admissible sextuple $(\phi^\infty_1,\phi^\infty_2,\rs_1,\rs_2,g_1,g_2)$ and an element $\varphi_1\in\cV_\pi^{[r]\tR}$. Let $K_1\subseteq G_r(\dA_F^\infty)$ be an open compact subgroup that fixes both $\phi^\infty_1$ and $\varphi_1$, and $\fF_1\subseteq G_r(F_\infty)$ a Siegel fundamental domain for the congruence subgroup $G_r(F)\cap g_1^\infty K_1 (g_1^\infty)^{-1}$. Then for every $T_2\in\Herm_r^\circ(F)^+$, we have
\begin{multline*}
\vol^\natural(L)\cdot\int_{\fF_1}\varphi^\tc(\tau_1g_1)
\sum_{T_1\in\Herm_r^\circ(F)^+}I_{T_1,T_2}(\phi^\infty_1,\phi^\infty_2,\rs_1,\rs_2,\tau_1g_1,g_2)_{L,u}\rd\tau_1 \\
=\frac{1}{2}\int_{\fF_1}\varphi^\tc(\tau_1g_1)\sum_{T_1\in\Herm_r^\circ(F)^+}
\fE_{T_1,T_2}((\tau_1g_1,g_2),\Phi_\infty^0\otimes(\rs_1\phi^\infty_1\otimes(\rs_2\phi^\infty_2)^\tc))_u\rd\tau_1,
\end{multline*}
in which both sides are absolutely convergent. Here, the term $\fE_{T_1,T_2}$ is defined in Definition \ref{de:analytic_kernel} with the Gaussian function $\Phi_\infty^0\in\sS(V^{2r}\otimes_{\dA_F}F_\infty)$ (Notation \ref{st:h}(H3)), and $\vol^\natural(L)$ is defined in \cite{LL}*{Definition~3.8}.
\end{proposition}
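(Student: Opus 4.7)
The plan is to argue that no essentially new archimedean phenomenon appears in the present, more general setting: the left-hand side depends only on the complex fiber of $X_L$ and on the archimedean place $u$, so the computation in \cite{LL}*{Section~10} applies verbatim once we compare the two sides term by term via moment matrices. First I would fix a connected component of $(X_L\otimes_{E,\iota_u}\dC)^{\r{an}}$ via the complex uniformization \eqref{eq:complex_uniformization}, and reduce $\rs_i^*Z_{T_i}(\omega_r^\infty(g_i^\infty)\phi_i^\infty)_L$ (for $i=1,2$) to a finite sum of Kudla--Millson type cycles indexed by double cosets, weighted by $\omega_{r,\infty}(g_{i\infty})\phi_\infty^0(T_i)$. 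This is where the hypothesis that $T_1,T_2$ are totally positive definite enters: the cycles are genuine complex subvarieties of the expected codimension.

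Next I would invoke the archimedean local index formula (Kudla's Green current calculation, as recalled in \cite{LL}*{Section~10}) which expresses
\[
\langle Z_{T_1}(\cdots),Z_{T_2}(\cdots)\rangle_{X_{L,u},E_u}
\]
as a sum over $x=(x_1,\dots,x_{2r})\in\pres{u}{V}^{2r}$ with $\partial_{r,r}T(x)=(T_1,T_2)$, of a Schwartz coefficient times the archimedean Kudla Green function evaluated at $T(x)$. The key identity, due to Liu and Garcia--Sankaran, is that this Green function contribution equals $\tfrac{1}{2}\cdot b_{2r,u}(0)^{-1}\log\mathrm{stuff}$ times $W'_{T(x)}(0,(g_1,g_2)_u,\Phi^0_{\infty,u})$, with the $\tfrac{1}{2}$ coming from the identity $\partial\bar\partial=-2\pi i \cdot dd^c$ applied to the Kudla Green function (equivalently, from the relation between the Kudla--Millson current and the derivative at $s=0$ of the archimedean Whittaker function of a Siegel--Eisenstein series, at a single archimedean place).

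Then I would match moment matrices: writing $T^\Box=\(\begin{smallmatrix}T_1 & T_{12}\\ T_{12}^{\tc,t} & T_2\end{smallmatrix}\)$, summing over $x$ with prescribed $T(x)=T^\Box$ and using the product formula $\prod_{v\neq u}W_{T^\Box}(0,g_v,\Phi_v)$ to absorb the away-from-$u$ orbital contributions (which, for the archimedean places $v\neq u$, produce Gaussians that reconstruct $\omega_{r,\infty}(g_{i\infty})\phi^0_\infty(T_i)$; and for finite $v$, produce $\rs_i\phi_i^\infty\otimes(\rs_j\phi_j^\infty)^\tc$), I would recover $\fE_{T_1,T_2}((\tau_1g_1,g_2),\Phi^0_\infty\otimes(\rs_1\phi_1^\infty\otimes(\rs_2\phi_2^\infty)^\tc))_u$. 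The factor $\vol^\natural(L)$ on the left then cancels against the normalization implicit in passing from $\cV_\pi^{[r]\tR}$ and $\phi_i^\infty$ to their smeared-out counterparts on $X_L$, exactly as in \cite{LL}*{Proposition~10.1}.

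The main obstacle, in my view, is not any individual local computation but the justification of absolute convergence of both sides as sums over $T_1\in\Herm_r^\circ(F)^+$ against the rapid-decay integrand $\varphi^\tc(\tau_1g_1)$ on the Siegel domain $\fF_1$: one needs a uniform Gaussian majorant for the Kudla Green function at $T(x)$ with $\partial_{r,r}T(x)=(T_1,T_2)$, together with the standard bound on $\varphi^\tc$ on $\fF_1$ via the cuspidality of $\pi$, to switch sum and integral. Once convergence is in hand, the cuspidality lets one dispose of the degenerate $T^\Box\notin\Herm_{2r}^\circ(F)^+$ terms (their contributions integrate to zero against $\varphi^\tc$), and the equality then follows term by term by the Kudla--Liu archimedean local Siegel--Weil identity as indicated above.
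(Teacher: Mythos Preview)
The paper's proof is a single sentence: ``This is simply \cite{LL}*{Proposition~10.1}.'' The point is that the generalizations in the present article (allowing $\tV_F^\ram\neq\emptyset$ and arbitrary $\pi_v$ at split places) do not touch the archimedean computation at all: the archimedean local index $I_{T_1,T_2}(\cdots)_{L,u}$ depends only on the complex fiber of $X_L$, on the Hecke operators $\rs_i$ (which act through finite places), and on the Gaussian functions, none of which see the new places; likewise the right-hand side $\fE_{T_1,T_2}(\cdots)_u$ at an archimedean $u$ is unchanged. So the statement and its proof are literally identical to \cite{LL}*{Proposition~10.1}.

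Your proposal instead sketches what the proof \emph{in} \cite{LL}*{Section~10} presumably looks like: complex uniformization, Kudla's Green currents, the Garcia--Sankaran/Liu archimedean local Siegel--Weil identity, matching moment matrices, and a convergence argument. That outline is broadly reasonable as a reconstruction of the cited result, and it correctly identifies that nothing archimedean changes. But in the context of this paper it is unnecessary: the right move is to observe that the archimedean setup is unchanged and invoke \cite{LL}*{Proposition~10.1} directly. If you want to include your sketch, it would be better placed as a remark explaining why the citation suffices, rather than as a standalone proof.
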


\begin{proof}
This is simply \cite{LL}*{Proposition~10.1}.
\end{proof}

\subsection{Proof of main results}
\label{ss:proof}

The proofs of Theorem \ref{th:main}, Theorem \ref{th:aipf}, and Corollary \ref{co:aipf} follow from the same lines as for \cite{LL}*{Theorem~1.5}, \cite{LL}*{Theorem~1.7}, and \cite{LL}*{Corollary~1.9}, respectively, written in \cite{LL}*{Section~11}. However, we need to take $\tR$ to be a finite subset of $\tV_F^\spl\cap\tV_F^\heartsuit$ containing $\tR_\pi$ and of cardinality at least $2$, and modify the reference according to the table below.

\begin{center}
\begin{tabular}{|c|c|}
  \hline
  This article & \cite{LL} \\ \hline
  Proposition \ref{pr:uniqueness} & Proposition~3.6 \\ \hline
  Proposition \ref{pr:eisenstein} & Proposition~3.7 \\ \hline
  Proposition \ref{pr:index_split} & Proposition~7.1 \\ \hline
  Proposition \ref{pr:index_inert} & Proposition~8.1 \& Proposition~9.1 \\ \hline
  Proposition \ref{pr:index_ramified} & (not available) \\ \hline
  Proposition \ref{pr:index_arch} & Proposition~10.1 \\
  \hline
\end{tabular}
\end{center}

\begin{remark}\label{re:galois}
When $\tS_\pi=\emptyset$, Theorem \ref{th:main}, Theorem \ref{th:aipf}, and Corollary \ref{co:aipf} can all be proved without \cite{LL}*{Hypothesis~6.6}. In fact, besides Proposition \ref{pr:index_inert}(2) (which we do not need as $\tS_\pi=\emptyset$), the only place where \cite{LL}*{Hypothesis~6.6} is used is \cite{LL}*{Proposition~6.9(2)}. However, we can slightly modify the definition of $(\dS^\tR_\dL)^{\langle\ell\rangle}_{L_\tR}$ in Definition \ref{de:tempered_generic}(2) such that it is the ideal of $\dS^\tR_\dL$ of elements that annihilate
\[
\bigoplus_{u\in\tV_E^\fin\setminus\tV_E^{(\ell)}}\rH^{2r}_\dag(X_{L_\tR L^\tR,u},\dQ_\ell(r))\otimes_\dQ\dL,
\]
where $\rH^{2r}_\dag(X_{L_\tR L^\tR,u},\dQ_\ell(r))\otimes_\dQ\dL$ is the $\dQ_\ell\otimes_\dQ\dL$-submodule of $\rH^{2r}(X_{L_\tR L^\tR,u},\dQ_\ell(r))\otimes_\dQ\dL$ generated by the image of the cycle class map $\CH^r(X_{L_\tR L^\tR,u})\to\rH^{2r}(X_{L_\tR L^\tR,u},\dQ_\ell(r))\otimes_\dQ\dL$. Theorem \ref{th:split_tempered} implies that there exists element in $(\dS^\tR_{\dQ^\ac})^{\langle\ell\rangle}_{L_\tR}\setminus \fm_\pi^\tR$ that annihilates $\rH^{2r}_\dag(X_{L_\tR L^\tR,u},\dQ_\ell(r))\otimes_\dQ\dQ^\ac$ as long as $u$ satisfies $\ul{u}\in\tR\cap\tV_F^\spl\cap\tV_F^\heartsuit$ and $\tV_F^{(p)}\cap\tR\subseteq\tV_F^\spl$ where $p$ is the underlying rational prime of $u$. It follows that with this new definition of $(\dS^\tR_\dL)^{\langle\ell\rangle}_{L_\tR}$, \cite{LL}*{Proposition~6.9(2)} holds when $\tR\subseteq\tV_F^\spl\cap\tV_F^\heartsuit$ without assuming \cite{LL}*{Hypothesis~6.6}.
\end{remark}

\begin{remark}\label{re:final}
Finally, we explain the main difficulty on lifting the restriction $F\neq\dQ$ (when $r\geq 2$). Suppose that $F=\dQ$ and $r\geq 2$. Then the Shimura variety $X_L$ from Subsection \ref{ss:atl} is never proper over the base field. Nevertheless, it is well-known that $X_L$ admits a canonical toroidal compactification which is smooth. However, to run our argument, we need suitable compactification of their integral models at every place finite place $u$ of $E$ as well. As far as we can see, the main obstacle is the compactification of integral models using Drinfeld level structures when $u$ splits over $F$, together with a vanishing result like Theorem \ref{th:split_tempered}.
\end{remark}

\begin{bibdiv}
\begin{biblist}

\bib{ACZ}{article}{
   author={Ahsendorf, Tobias},
   author={Cheng, Chuangxun},
   author={Zink, Thomas},
   title={$\mathcal{O}$-displays and $\pi$-divisible formal $\mathcal{O}$-modules},
   journal={J. Algebra},
   volume={457},
   date={2016},
   pages={129--193},
   issn={0021-8693},
   review={\MR{3490080}},
   doi={10.1016/j.jalgebra.2016.03.002},
}

\bib{Bei87}{article}{
   author={Be\u{\i}linson, A.},
   title={Height pairing between algebraic cycles},
   conference={
      title={Current trends in arithmetical algebraic geometry},
      address={Arcata, Calif.},
      date={1985},
   },
   book={
      series={Contemp. Math.},
      volume={67},
      publisher={Amer. Math. Soc., Providence, RI},
   },
   date={1987},
   pages={1--24},
   review={\MR{902590}},
}

\bib{CS17}{article}{
   author={Caraiani, Ana},
   author={Scholze, Peter},
   title={On the generic part of the cohomology of compact unitary Shimura
   varieties},
   journal={Ann. of Math. (2)},
   volume={186},
   date={2017},
   number={3},
   pages={649--766},
   issn={0003-486X},
   review={\MR{3702677}},
   doi={10.4007/annals.2017.186.3.1},
}

\bib{CY20}{article}{
   author={Cho, Sungmun},
   author={Yamauchi, Takuya},
   title={A reformulation of the Siegel series and intersection numbers},
   journal={Math. Ann.},
   volume={377},
   date={2020},
   number={3-4},
   pages={1757--1826},
   issn={0025-5831},
   review={\MR{4126907}},
   doi={10.1007/s00208-020-01999-2},
}

\bib{GI16}{article}{
   author={Gan, Wee Teck},
   author={Ichino, Atsushi},
   title={The Gross-Prasad conjecture and local theta correspondence},
   journal={Invent. Math.},
   volume={206},
   date={2016},
   number={3},
   pages={705--799},
   issn={0020-9910},
   review={\MR{3573972}},
   doi={10.1007/s00222-016-0662-8},
}

\bib{GPSR}{book}{
   author={Gelbart, Stephen},
   author={Piatetski-Shapiro, Ilya},
   author={Rallis, Stephen},
   title={Explicit constructions of automorphic $L$-functions},
   series={Lecture Notes in Mathematics},
   volume={1254},
   publisher={Springer-Verlag, Berlin},
   date={1987},
   pages={vi+152},
   isbn={3-540-17848-1},
   review={\MR{892097}},
   doi={10.1007/BFb0078125},
}

\bib{GS87}{article}{
   author={Gillet, H.},
   author={Soul\'{e}, C.},
   title={Intersection theory using Adams operations},
   journal={Invent. Math.},
   volume={90},
   date={1987},
   number={2},
   pages={243--277},
   issn={0020-9910},
   review={\MR{910201}},
   doi={10.1007/BF01388705},
}

\bib{Gr97}{article}{
   author={Gross, Benedict H.},
   title={On the motive of a reductive group},
   journal={Invent. Math.},
   volume={130},
   date={1997},
   number={2},
   pages={287--313},
   issn={0020-9910},
   review={\MR{1474159}},
   doi={10.1007/s002220050186},
}

\bib{HT01}{book}{
   author={Harris, Michael},
   author={Taylor, Richard},
   title={The geometry and cohomology of some simple Shimura varieties},
   series={Annals of Mathematics Studies},
   volume={151},
   note={With an appendix by Vladimir G. Berkovich},
   publisher={Princeton University Press, Princeton, NJ},
   date={2001},
   pages={viii+276},
   isbn={0-691-09090-4},
   review={\MR{1876802}},
}

\bib{How12}{article}{
   author={Howard, Benjamin},
   title={Complex multiplication cycles and Kudla-Rapoport divisors},
   journal={Ann. of Math. (2)},
   volume={176},
   date={2012},
   number={2},
   pages={1097--1171},
   issn={0003-486X},
   review={\MR{2950771}},
   doi={10.4007/annals.2012.176.2.9},
}

\bib{How19}{article}{
   author={Howard, Benjamin},
   title={Linear invariance of intersections on unitary Rapoport-Zink
   spaces},
   journal={Forum Math.},
   volume={31},
   date={2019},
   number={5},
   pages={1265--1281},
   issn={0933-7741},
   review={\MR{4000587}},
   doi={10.1515/forum-2019-0023},
}

\bib{HP14}{article}{
   author={Howard, Benjamin},
   author={Pappas, Georgios},
   title={On the supersingular locus of the $\mathrm{GU}(2,2)$ Shimura variety},
   journal={Algebra Number Theory},
   volume={8},
   date={2014},
   number={7},
   pages={1659--1699},
   issn={1937-0652},
   review={\MR{3272278}},
   doi={10.2140/ant.2014.8.1659},
}

\bib{Kot92}{article}{
   author={Kottwitz, Robert E.},
   title={Points on some Shimura varieties over finite fields},
   journal={J. Amer. Math. Soc.},
   volume={5},
   date={1992},
   number={2},
   pages={373--444},
   issn={0894-0347},
   review={\MR{1124982}},
}

\bib{KR11}{article}{
   author={Kudla, Stephen},
   author={Rapoport, Michael},
   title={Special cycles on unitary Shimura varieties I. Unramified local
   theory},
   journal={Invent. Math.},
   volume={184},
   date={2011},
   number={3},
   pages={629--682},
   issn={0020-9910},
   review={\MR{2800697}},
   doi={10.1007/s00222-010-0298-z},
}

\bib{KR14}{article}{
   author={Kudla, Stephen},
   author={Rapoport, Michael},
   title={Special cycles on unitary Shimura varieties II: Global theory},
   journal={J. Reine Angew. Math.},
   volume={697},
   date={2014},
   pages={91--157},
   issn={0075-4102},
   review={\MR{3281653}},
   doi={10.1515/crelle-2012-0121},
}

\bib{Lau08}{article}{
   author={Lau, Eike},
   title={Displays and formal $p$-divisible groups},
   journal={Invent. Math.},
   volume={171},
   date={2008},
   number={3},
   pages={617--628},
   issn={0020-9910},
   review={\MR{2372808}},
   doi={10.1007/s00222-007-0090-x},
}

\bib{Lau10}{article}{
   author={Lau, Eike},
   title={Frames and finite group schemes over complete regular local rings},
   journal={Doc. Math.},
   volume={15},
   date={2010},
   pages={545--569},
   issn={1431-0635},
   review={\MR{2679066}},
   doi={10.3846/1392-6292.2010.15.547-569},
}

\bib{LL}{article}{
   author={Li, Chao},
   author={Liu, Yifeng},
   title={Chow groups and $L$-derivatives of automorphic motives for unitary groups},
   note={\href{https://arxiv.org/abs/2006.06139}{arXiv:2006.06139}},
}

\bib{LZ}{article}{
   author={Li, Chao},
   author={Zhang, Wei},
   title={Kudla--Rapoport cycles and derivatives of local densities},
   note={\href{https://arxiv.org/abs/1908.01701}{arXiv:1908.01701}},
}

\bib{Liu11}{article}{
   author={Liu, Yifeng},
   title={Arithmetic theta lifting and $L$-derivatives for unitary groups,
   I},
   journal={Algebra Number Theory},
   volume={5},
   date={2011},
   number={7},
   pages={849--921},
   issn={1937-0652},
   review={\MR{2928563}},
}

\bib{Liu12}{article}{
   author={Liu, Yifeng},
   title={Arithmetic theta lifting and $L$-derivatives for unitary groups,
   II},
   journal={Algebra Number Theory},
   volume={5},
   date={2011},
   number={7},
   pages={923--1000},
   issn={1937-0652},
   review={\MR{2928564}},
}

\bib{Liu19}{article}{
   author={Liu, Yifeng},
   title={Fourier--Jacobi cycles and arithmetic relative trace formula (with an appendix by Chao Li and Yihang Zhu)},
   note={\href{https://arxiv.org/abs/2102.11518}{arXiv:2102.11518}},
}

\bib{Liu20}{article}{
   author={Liu, Yifeng},
   title={Theta correspondence for almost unramified representations of unitary groups},
   note={\url{https://gauss.math.yale.edu/~yl2269/theta_aur.pdf}, preprint},
}

\bib{LTXZZ}{article}{
   label={LTXZZ},
   author={Liu, Yifeng},
   author={Tian, Yichao},
   author={Xiao, Liang},
   author={Zhang, Wei},
   author={Zhu, Xinwen},
   title={On the Beilinson--Bloch--Kato conjecture for Rankin--Selberg motives},
   note={\href{https://arxiv.org/abs/1912.11942}{arXiv:1912.11942}},
}

\bib{Lus76}{article}{
   author={Lusztig, G.},
   title={Coxeter orbits and eigenspaces of Frobenius},
   journal={Invent. Math.},
   volume={38},
   date={1976/77},
   number={2},
   pages={101--159},
   issn={0020-9910},
   review={\MR{453885}},
   doi={10.1007/BF01408569},
}

\bib{Man08}{article}{
   author={Mantovan, Elena},
   title={A compactification of Igusa varieties},
   journal={Math. Ann.},
   volume={340},
   date={2008},
   number={2},
   pages={265--292},
   issn={0025-5831},
   review={\MR{2368980}},
   doi={10.1007/s00208-007-0149-4},
}

\bib{Mih}{article}{
   author={Mihatsch, A.},
   title={Relative unitary RZ-spaces and the arithmetic fundamental lemma},
   journal={J. Inst. Math. Jussieu},
   date={2020},
   note={online first},
}

\bib{Ral84}{article}{
   author={Rallis, S.},
   title={Injectivity properties of liftings associated to Weil
   representations},
   journal={Compositio Math.},
   volume={52},
   date={1984},
   number={2},
   pages={139--169},
   issn={0010-437X},
   review={\MR{750352}},
}

\bib{RZ96}{book}{
   author={Rapoport, M.},
   author={Zink, Th.},
   title={Period spaces for $p$-divisible groups},
   series={Annals of Mathematics Studies},
   volume={141},
   publisher={Princeton University Press, Princeton, NJ},
   date={1996},
   pages={xxii+324},
   isbn={0-691-02782-X},
   isbn={0-691-02781-1},
   review={\MR{1393439}},
   doi={10.1515/9781400882601},
}

\bib{RSZ17}{article}{
   author={Rapoport, M.},
   author={Smithling, B.},
   author={Zhang, W.},
   title={On the arithmetic transfer conjecture for exotic smooth formal
   moduli spaces},
   journal={Duke Math. J.},
   volume={166},
   date={2017},
   number={12},
   pages={2183--2336},
   issn={0012-7094},
   review={\MR{3694568}},
   doi={10.1215/00127094-2017-0003},
}

\bib{RSZ}{article}{
   author={Rapoport, M.},
   author={Smithling, B.},
   author={Zhang, W.},
   title={Arithmetic diagonal cycles on unitary Shimura varieties},
   journal={Compos. Math.},
   volume={156},
   date={2020},
   number={9},
   pages={1745--1824},
   issn={0010-437X},
   review={\MR{4167594}},
   doi={10.1112/s0010437x20007289},
}

\bib{TY07}{article}{
   author={Taylor, Richard},
   author={Yoshida, Teruyoshi},
   title={Compatibility of local and global Langlands correspondences},
   journal={J. Amer. Math. Soc.},
   volume={20},
   date={2007},
   number={2},
   pages={467--493},
   issn={0894-0347},
   review={\MR{2276777}},
   doi={10.1090/S0894-0347-06-00542-X},
}

\bib{Vol07}{article}{
   author={Vollaard, Inken},
   title={Endomorphisms of quasi-canonical lifts},
   language={English, with English and French summaries},
   journal={Ast\'{e}risque},
   number={312},
   date={2007},
   pages={105--112},
   issn={0303-1179},
   isbn={978-2-85629-231-0},
   review={\MR{2340375}},
}

\bib{Wal90}{article}{
   author={Waldspurger, J.-L.},
   title={D\'{e}monstration d'une conjecture de dualit\'{e} de Howe dans le cas
   $p$-adique, $p\neq 2$},
   language={French},
   conference={
      title={Festschrift in honor of I. I. Piatetski-Shapiro on the occasion
      of his sixtieth birthday, Part I},
      address={Ramat Aviv},
      date={1989},
   },
   book={
      series={Israel Math. Conf. Proc.},
      volume={2},
      publisher={Weizmann, Jerusalem},
   },
   date={1990},
   pages={267--324},
   review={\MR{1159105}},
}

\bib{Wu}{article}{
   author={Wu, Haifeng},
   title={The supersingular locus of unitary Shimura varieties with exotic good reduction},
   note={\href{https://arxiv.org/abs/1609.08775}{arXiv:1609.08775}},
}

\bib{Yam14}{article}{
   author={Yamana, Shunsuke},
   title={L-functions and theta correspondence for classical groups},
   journal={Invent. Math.},
   volume={196},
   date={2014},
   number={3},
   pages={651--732},
   issn={0020-9910},
   review={\MR{3211043}},
   doi={10.1007/s00222-013-0476-x},
}

\bib{Zel80}{article}{
   author={Zelevinsky, A. V.},
   title={Induced representations of reductive $\mathfrak{p}$-adic groups. II.
   On irreducible representations of $\GL(n)$},
   journal={Ann. Sci. \'{E}cole Norm. Sup. (4)},
   volume={13},
   date={1980},
   number={2},
   pages={165--210},
   issn={0012-9593},
   review={\MR{584084}},
}

\bib{Zha}{article}{
   author={Zhang, Wei},
   title={Weil representation and arithmetic fundamental lemma},
   note={\href{https://arxiv.org/abs/1909.02697}{arXiv:1909.02697}},
}

\bib{Zin02}{article}{
   author={Zink, Thomas},
   title={The display of a formal $p$-divisible group},
   note={Cohomologies $p$-adiques et applications arithm\'{e}tiques, I},
   journal={Ast\'{e}risque},
   number={278},
   date={2002},
   pages={127--248},
   issn={0303-1179},
   review={\MR{1922825}},
}

\end{biblist}
\end{bibdiv}

\end{document}